\newcommand{\lie}[1]{\mathfrak{#1}}
\newcommand{\inv}{^{-1}}
\newcommand{\tH}{\text{H}}
\newcommand{\ov}{\overline}		
\newcommand{\add}[1]{\mathbin{\lower 5pt%
    \hbox{${\stackrel{\textstyle +}{\scriptscriptstyle #1}}$}}}
\renewcommand{\phi}{\varphi}
\newcommand{\chigh}{{\raise1.5pt\hbox{$\chi$}}}
\DeclareMathOperator{\pr}{pr}
\DeclareMathOperator{\Ppr}{Pr}
\newcommand{\duer}{%
\mathbin{\raisebox{3pt}{\varhexstar}\kern-3.70pt{\rule{0.15pt}{4pt}}}\,}
\newcommand{\id}{\text{\upshape id}}
\newcommand{\R}{\mathbb{R}}
\newcommand{\N}{\mathbb{N}}
\newcommand{\dr}{\mathbf{d}}
\newcommand{\an}[1]{\arrowvert_{#1}}
\newcommand{\mx}{\mathfrak{X}}
\newcommand{\thbr}[2]%
{\rule[-1pt]{1pt}{10pt}\hspace{2pt} #1,\, #2\hspace{1pt}\rule[-1pt]{1pt}{10pt}}
\newcommand{\newpair}[2]%
{\talloblong\hspace{1pt} #1,\, #2\hspace{0.5pt}\talloblong}
\newcommand{\ldr}[1]{{{\pounds}}_{#1}}
\newcommand{\act}{\mathbin{\hbox{$<\kern-.4em\mapstochar\kern.4em$}}}
\newcommand{\ract}{\mathbin{\hbox{$\mapstochar\kern-.3em>$}}}
\newcommand{\epf}{\hspace*{\fill}\lower0pt\hbox{$\Box$}\medskip} 
\newtheorem{theorem}{Theorem}[section]
\newtheorem{corollary}[theorem]{Corollary}
\newtheorem*{main}{Main Theorem}
\newtheorem*{theorem*}{Theorem}
\newtheorem*{lemma*}{Lemma}
\newtheorem*{proposition*}{Proposition}
\newtheorem*{corollary*}{Corollary}
\newtheorem{lemma}[theorem]{Lemma}
\newtheorem{proposition}[theorem]{Proposition}
\theoremstyle{definition}
\newtheorem{definition}[theorem]{Definition}
\newtheorem{remark}[theorem]{Remark}
\newtheorem{example}[theorem]{Example}
\newtheorem{ex}[theorem]{Examples}
\title[] 
      {On the homotopy invariance of the twisted Lie algebroid cohomology.}
\author[M.~Jotz and R.~Marchesini$^*$]{}
\subjclass{Primary: 
58A12,  
57R99, 
	17B56, 
	53C05; 
Secondary: 
57R55, 
53D17, 
	53C12, 
	53C15. 
}
 \keywords{Lie algebroids, Lie algebras, representations, homotopy, cohomology, pullbacks of Lie algebroids, pullbacks of representations, homotopy invariance, Poincar\'e lemma, Mayer-Vietoris, K\"unneth formula,  deformation theory, linear vector fields.}
 \email{madeleine.jotz@uni-wuerzburg.de}
\email{rosa.marchesini@mathematik.uni-goettingen.de}
\thanks{This project was partially funded by the RTG 2491 in G\"ottingen.}
\thanks{$^*$Corresponding author}
\begin{document}
\begin{abstract}
Twisted Lie algebroid cohomologies, i.e.~with values in representations, are shown to be Lie algebroid homotopy-invariant. Several important classes of examples are discussed. As an application, a generalized version of the Poincar\'e lemma is given in the transitive case. Together with the Mayer-Vietoris theorem, which holds in this more general context as well, this leads to a K\"unneth formula for the cohomologies of Lie algebroids with values in representations. In particular, this comprehensive paper gives a systematic way to compute explicitly the twisted Lie algebroid cohomologies of some transitive and almost transitive Lie algebroids.
\end{abstract}

\maketitle

\centerline{\scshape M.~Jotz}
\medskip
{\footnotesize
 \centerline{Institut für Mathematik}
   \centerline{Julius-Maximilians-Universit\"at W\" urzburg}
   \centerline{Germany}
} 

\medskip

\centerline{\scshape R.~Marchesini$^*$}
\medskip
{\footnotesize

\centerline{Mathematisches Institut}
    \centerline{Georg-August-Universit\"at G\"ottingen}
   \centerline{Germany}
}

\bigskip

\tableofcontents

\section{Introduction}

 The extensively studied de Rham, Chevalley-Eilenberg, BRST, Poisson, Foliated and Equivariant de Rham cohomologies can be treated at once as special cases of  Lie algebroid cohomology, which was introduced by Pradines in 1967 \cite{Pradines67}. This generality of the Lie algebroid cohomology brings pathologies that make it difficult to control and compute.
 Lie algebroid cohomology has been studied for a few decades now -- the latest references on the subject include \cite{Kub02},  \cite{Cr03}, \cite{KuMish04}, \cite{Mackenzie05}, \cite{Mish11}, \cite{Bal12} , \cite{Kub13}, \cite{Br17}, \cite{Wal23} and~\cite{MaSc24}.
 Lie algebroid cohomology contains obstructions for the existence of geometric objects such as flat connection and infinitesimal ideal systems (see e.g.~\cite{Jotz22}). The more general \emph{twisted} Lie algebroid cohomologies provide further geometric information. For instance, the second cohomology vector space $\tH^2_{\nabla}(A,E)$ is in natural bijection with the set of equivalence classes of extensions of the Lie algebroid $A$ by a vector bundle $E$ whose induced representation is $\nabla$ (see \cite[Proposition 7.1.13]{Mackenzie05} and also \cite[XIV.5]{CaEi56} for the original case of Lie algebras).

  The main goal of this paper is a systematic study of 
  homotopy of Lie algebroids
   and of its relations with further classical tools of differential topology adapted to twisted Lie algebroid cohomology.
  \bigskip

 A Lie algebroid is a smooth vector bundle $A \to M$ with a Lie bracket on its space of sections, that is \emph{anchored} by the geometry of the basis manifold $M$, in the sense that a vector bundle homomorphism $\rho\colon A\to TM$ over the identity on $M$ allows for differentiation of smooth functions on $M$ \emph{along} sections of $A$; an operation that is compatible with the Lie bracket on $\Gamma(A)$ (see Definition \ref{def:LA}).
 
The tangent bundles of smooth manifolds on the one hand, and Lie algebras on the other hand, are prototypes of Lie algebroids.
 The de Rham cohomology of a smooth manifold is defined by the geometry of its tangent bundle. Precisely, given a smooth manifold $M$, its de Rham differential is defined on the exterior algebra of sections of $T^*M$. Similarly, the Chevalley-Eilenberg differential associated to a Lie algebra $\mathfrak g$ lives on $\bigwedge^\bullet \mathfrak g^*$.
 In general, the Lie algebroid differential of a Lie algebroid $A\to M$ is a differential operator on $\Gamma(\bigwedge^\bullet A^*)$, that is defined 
 in the same manner as the de Rham differential, but using the anchor $\rho\colon A\to TM$ in order to differentiate smooth functions on $M$ along sections of $A$.
 The resulting cohomology is called the \emph{Lie algebroid cohomology} and encompasses all types of cohomologies listed above. 
  More generally, a Lie algebroid representation of $A\to M$ on a vector bundle $E\to M$ defines the \emph{Lie algebroid cohomology of $A$ with values in the representation $E$}. It is also called 
  the \emph{twisted} Lie algebroid cohomology of $A$ with coefficients in $E$.

\medskip

 This paper studies LA-homotopies, i.e.~homotopies in the category of Lie algebroids. 
 Two Lie algebroid morphisms $(\Phi_0,\phi_0)$, $(\Phi_1,\phi_1)$ from a Lie algebroid $A\to M$ to a Lie algebroid $B\to N$ are \textbf{LA-homotopic} if there exists an LA-morphism $(\Phi,\phi)$ of the form
    	\begin{equation*}
    		\begin{xy}
    			\xymatrix{ TI \times A \ar[r]^{\quad\Phi}\ar[d]& B\ar[d]\\
    				I \times M \ar[r]_{\quad\phi} & N}
    		\end{xy}
    	\end{equation*}
    		such that $\Phi_j= \Phi(0_j, \cdot)$ for $j=0,1$. The pair $(\Phi,\phi)$ is then called an \textbf{LA-homotopy} between $(\Phi_0,\phi_0)$ and  $(\Phi_1,\phi_1)$.
    	    	   
 This notion of LA-homotopy seems natural and sporadically appears in the literature (e.g.~\cite{Bal12} and \cite{Str05}).
  It agrees with the notion of homotopy of $A$-paths used by Crainic and Fernandes \cite{CrFer01}. The invariance of the Lie algebroid cohomology under LA-homotopy is given in \cite{Bal12}, as a corollary of a Stokes' theorem for Lie algebroids.
  Brahic and Pasievitch give an alternative proof in \cite{BrPa20} and show that LA-homotopies integrate to smooth families of natural transformations between Lie groupoids.
  
The following extension to twisted Lie algebroid cohomology is proved here, and a study of examples and further applications for computing the cohomology itself is made.

 \begin{main}[see \ref{th:hi}]
 	The twisted Lie algebroid cohomology is LA-homotopy-invariant.
More precisely, consider two LA-homotopic LA-morphisms \[(\Phi_0,\phi_0),(\Phi_1,\phi_1)\colon A \to B\] via an LA-homotopy
     \[\Phi\colon TI\times A\to B\]
     over $\phi\colon I\times M\to N$, as well as a vector bundle $E\to N$ with a flat $B$-connection $\nabla\colon \Gamma(B) \times \Gamma(E) \to \Gamma(E)$.
    	Then the maps in twisted cohomology induced by the Lie algebroid inclusions $\mathcal{I}_t\colon A \to TI \times A$, $a\mapsto (0_t,a)$,     	
	\begin{equation*}
	\overline{\mathcal{I}_{t}^*} \colon \tH_{\Phi^*\nabla}^{\bullet}(TI \times A, \phi^!E) \to \tH_{\Phi_t^*\nabla}^{\bullet}(A, \phi_t^!E) 
	\end{equation*}
    		are isomorphisms for all $t\in I$, and the following diagram commutes
\[\begin{tikzcd}
	{H_\nabla(B,E)} && {H_{\Phi_0^*\nabla}(A,\phi_0^!E)} \\
	\\
	&& {H_{\Phi_1^*\nabla}(A,\phi_1^!E).}
	\arrow["{\overline{\Phi_0^*}}", from=1-1, to=1-3]
	\arrow["{\overline{\Phi_1^*}}"', from=1-1, to=3-3]
	\arrow["{\overline{\mathcal{I}_{1}^*} \circ (\overline{\mathcal{I}_{0}^*})}^{-1}", from=1-3, to=3-3]
\end{tikzcd}\]
Moreover, there exists a gauge equivalence $\theta\colon \phi_0^!E\to \phi_1^!E$ from $\Phi_0^*\nabla$ to $\Phi_1^*\nabla$ such that \[\overline{\mathcal{I}_{1}^*} \circ (\overline{\mathcal{I}_{0}^*})^{-1}= \theta_*.\]
    In other words, $(\Phi_0,\phi_0),(\Phi_1,\phi_1)\colon A \to B$ induce, up to a gauge isomorphism, the same morphism in LA-cohomology.
    	
%
%
%

 \end{main}

After the first version of this work was completed, it has been brought to the authors' attention that an alternative proof of Theorem \ref{th:hi} can be provided with techniques and results studied in \cite{Fr19}, despite the statement not appearing there in this generality (see Remark \ref{relationswithPedroswork} for more details).

\medskip
The key for proving the main theorem above is the study of the behavior of a Lie algebroid representation when pulled back by an LA-homotopy. This is studied in detail in Section \ref{ap:A}.
 Note that  the notion of  \emph{pullback} of a Lie algebroid is crucial itself for constructing examples of LA-homotopies\footnote{It turns out that it was already suggested in \cite{Sev05}.} (see Corollary \ref{cor:00} and Lemma \ref{cor:000}).
 The behavior of pullback Lie algebroids under homotopic maps is therefore investigated (see Proposition \ref{pr:mein}). 
 This leads as well to new insights about the twisted cohomology of pullback Lie algebroids.

 	\begin{theorem*}[see \ref{cor:0}]
 		The twisted cohomology of a transitive Lie algebroid is isomorphic to the twisted cohomology of its pullback Lie algebroid under a homotopy equivalence of the base manifolds. Under suitable hypotheses, the same result holds also for non-transitive Lie algebroids.
 	\end{theorem*}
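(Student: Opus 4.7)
The plan is to reduce the statement to the Main Theorem by producing LA-homotopies between the relevant compositions of pullback morphisms. Let $A\to N$ be a transitive Lie algebroid, $\phi\co M\to N$ a homotopy equivalence with homotopy inverse $\psi\co N\to M$, and let $\nabla$ be a flat $A$-connection on $E\to N$. Since $A$ is transitive, the anchor is surjective, so every smooth map into $N$ is transverse to $\rho$, and the pullbacks $\phi^!A$, $\psi^!\phi^!A$ are well defined with canonical LA-morphisms
\[ \Phi\co \phi^!A\to A, \qquad \Psi\co \psi^!\phi^!A\to \phi^!A. \]
By the functoriality of pullback, $\psi^!\phi^!A\cong (\phi\circ\psi)^!A$ and $\Phi\circ\Psi$ corresponds to the canonical LA-morphism $(\phi\circ\psi)^!A\to A$ over $\phi\circ\psi$.

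First, I would invoke the main theorem with the flat connection $\Phi^*\nabla$ on $\phi^!E$ to get the induced map
\[ \overline{\Phi^*}\co \tH_\nabla^\bullet(A,E)\longrightarrow \tH_{\Phi^*\nabla}^\bullet(\phi^!A,\phi^!E), \]
and similarly $\overline{\Psi^*}$. Their composition equals $\overline{(\Phi\circ\Psi)^*}$, i.e.~the map induced by the canonical LA-morphism $(\phi\circ\psi)^!A\to A$. Since $\phi\circ\psi\simeq \id_N$ via a smooth homotopy $h\co I\times N\to N$, Proposition \ref{pr:mein} provides an LA-homotopy $TI\times (\phi\circ\psi)^!A\to A$ (or, more precisely, between the LA-morphism $(\phi\circ\psi)^!A\to A$ and the canonical identification $(\id_N)^!A\cong A$); applying the Main Theorem to this LA-homotopy yields that $\overline{(\Phi\circ\Psi)^*}$ is a gauge isomorphism onto $\tH_\nabla^\bullet(A,E)$. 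The same argument with the roles of $\phi$ and $\psi$ interchanged shows that $\overline{\Psi^*}\circ \overline{\Phi^*}$ is also a gauge isomorphism, and therefore $\overline{\Phi^*}$ is itself an isomorphism. This gives the transitive case.

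For the non-transitive case, the only obstacle to running the same argument is the existence of the pullback Lie algebroids and of the LA-homotopy lifting the base homotopy, both of which require the relevant maps to be transverse to the anchor. The "suitable hypotheses" to be stated are therefore: the maps $\phi,\psi$ and the homotopies $h\co I\times N\to N$, $k\co I\times M\to M$ witnessing $\phi\circ\psi\simeq\id_N$ and $\psi\circ\phi\simeq\id_M$ are all transverse to $\rho$ (so that every pullback Lie algebroid in the above diagram exists, and the homotopies lift to LA-homotopies via Proposition \ref{pr:mein}). Once this is ensured, the proof proceeds verbatim.

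The main obstacle is showing that a smooth homotopy on the base lifts to an LA-homotopy between the pullback LA-morphisms at its endpoints; this is exactly the content of Proposition \ref{pr:mein}, which in the transitive case comes for free from surjectivity of the anchor, and in general needs transversality of the homotopy. Granted that proposition, the rest of the proof is a formal two-out-of-six style argument using the Main Theorem applied four times.
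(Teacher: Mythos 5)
Your strategy for the transitive case is essentially the paper's proof of Theorem \ref{cor:0}: lift the two base homotopies $\phi\circ\psi\simeq\id_N$ and $\psi\circ\phi\simeq\id_M$ to LA-homotopies via Lemma \ref{cor:000} (built on Proposition \ref{pr:mein}), apply the homotopy invariance of Theorem \ref{th:hi}, and conclude by the two-out-of-six bookkeeping ($\overline{\Psi^*}\circ\overline{\Phi^*}$ a gauge isomorphism, the analogous composition one pullback level higher a gauge isomorphism, hence $\overline{\Psi^*}$ bijective, hence $\overline{\Phi^*}$ bijective). One small slip in your write-up: the ``argument with the roles interchanged'' does not reprove that $\overline{\Psi^*}\circ\overline{\Phi^*}$ is an isomorphism (that was the first step); it shows that the composition involving $\phi^{!!}\psi^{!!}\phi^{!!}A$ is one, which is what supplies the injectivity of $\overline{\Psi^*}$. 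Also mind the notation: $\phi^{!}A$ in this paper is the vector bundle pullback, while the pullback Lie algebroid is $\phi^{!!}A$.

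There is, however, a genuine problem with the ``suitable hypotheses'' you propose for the non-transitive case. Transversality of $\phi$, $\psi$ and of the homotopies $h$, $k$ to the anchor guarantees that all the pullback Lie algebroids exist, but it does \emph{not} guarantee that the homotopies lift in the sense of Proposition \ref{pr:mein}. That proposition needs a smooth section $\tilde y$ of $h^{!!}A$ projecting to $\partial_t$, which forces $Th(TI\times 0)\subseteq\operatorname{Im}(\rho_A)$ (Remark \ref{remark_pr:mein}), i.e.\ the homotopy must deform along the (possibly singular) foliation of $A$; Remark \ref{re:mein} gives a regular Lie algebroid and a map transverse to the anchor for which the conclusion of Proposition \ref{pr:mein} fails, so this condition cannot be replaced by transversality. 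The hypotheses actually used in Theorem \ref{cor:0} are that $f^{!!}B$ exists and that the pairs $(B,\varphi)$ and $(f^{!!}B,\eta)$ satisfy the hypotheses of Proposition \ref{pr:mein}; in the regular case this amounts to existence of the pullbacks together with $T\varphi(TI\times 0)\subseteq\operatorname{Im}(\rho_B)$ and $T\eta(TI\times 0)\subseteq\operatorname{Im}(\rho_{f^{!!}B})$. With the hypotheses corrected in this way, your argument goes through as in the paper.
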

 	
 	A direct consequence of this is a generalized Poincar\'e lemma for transitive Lie algebroids.

  \begin{lemma*}[see \ref{Th:PL}]
  	The twisted Lie algebroid cohomology of a transitive Lie algebroid over a contractible manifold 
	is isomorphic to the Chevalley-Eilenberg cohomology of its isotropy Lie algebra, with values in the restricted representation.
  \end{lemma*}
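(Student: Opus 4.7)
The plan is to obtain the Poincaré lemma as a direct corollary of the preceding theorem (see \ref{cor:0}) on the invariance of twisted cohomology under pullback along a homotopy equivalence of bases.

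First, I would fix a point $m\in M$ and consider the inclusion $\iota\colon \{m\}\hookrightarrow M$. Since $M$ is contractible, $\iota$ is a homotopy equivalence, with homotopy inverse any constant map $c\colon M\to \{m\}$. This is exactly the hypothesis required to invoke the cited theorem: the twisted cohomology $\tH_\nabla(A,E)$ is canonically isomorphic to $\tH_{\iota^*\nabla}(\iota^!A,\iota^!E)$ (the transitivity of $A$ is used precisely here to avoid the extra technical hypotheses needed in the non-transitive case).

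Second, I would identify the pullback algebroid and the pullback representation explicitly. By definition, the pullback $\iota^!A$ over the point $\{m\}$ consists of pairs $(a,v)\in A_m\times T_m\{m\}$ with $\rho(a)=T_m\iota(v)$; since $T\{m\}=0$, this forces $\rho(a)=0$, so $\iota^!A=\ker(\rho_m)=\mathfrak{g}_m$, the isotropy Lie algebra at $m$. Its algebroid bracket is inherited from $A$ and coincides with the usual Lie algebra bracket on $\mathfrak{g}_m$ induced by the bracket on kernel sections. Analogously, the pullback vector bundle $\iota^!E$ over $\{m\}$ is simply the fibre $E_m$, and the pulled-back $\mathfrak{g}_m$-connection $\iota^*\nabla$ is the restriction of $\nabla$ to $\mathfrak{g}_m$-sections acting on $E_m$, i.e.\ the restricted representation.

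Third, I would observe that a Lie algebroid over a point is a Lie algebra, and in that case the twisted Lie algebroid cohomology reduces by definition to the Chevalley-Eilenberg cohomology $\tH^\bullet_{\rm CE}(\mathfrak{g}_m,E_m)$ with values in the restricted representation. Composing the two isomorphisms yields the claim.

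I do not anticipate a serious obstacle: the only subtle points are checking that $\iota$ satisfies the transversality condition required for the pullback algebroid to exist (which follows from the surjectivity of $\rho$, i.e.\ transitivity), and verifying that the pullback representation agrees with the restricted representation at the point. Both are direct unravelings of the definitions, so the bulk of the work is really the theorem that has already been established.
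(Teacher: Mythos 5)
Your proposal is correct and follows essentially the same route as the paper: both invoke Theorem \ref{cor:0} for the homotopy equivalence between the contractible base and a point, and then identify the pullback Lie algebroid over the point with the isotropy Lie algebra $\ker(\rho)_x=\mathfrak g_x$ carrying the restricted representation. The only cosmetic difference is notation (you write $\iota^!A$ where the Lie algebroid pullback $\iota^{!!}A$ is meant), and your explicit remarks on transversality and on the pullback representation being the restriction are correct unravelings of the definitions that the paper leaves implicit.
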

  
  Theorem \ref{cor:0} and Lemma \ref{Th:PL} unravel the full potential of the Mayer-Vietoris principle and allow explicit computations for the twisted Lie algebroid cohomology. Some of these computations will be done in the second author's PhD thesis in preparation.\\
  
  The explicit computation of the boundary map of the Mayer-Vietoris sequence is done in Theorem \ref{MV} and then used to prove the following K\"unneth formula, which generalizes results of Mishchenko and Kubarski in \cite{Kub02}, and of Waldron in \cite{Wal23}.
  \begin{theorem*}[see \ref{th:k1}]
  	The K\"unneth isomorphism holds for the twisted Lie algebroid cohomology of the direct product Lie algebroid $A \times B$, where $A$ is any Lie algebroid and $B$ is a transitive Lie algebroid over a manifold admitting a finite good cover.
  \end{theorem*}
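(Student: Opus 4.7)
The plan is to adapt the classical Mayer–Vietoris induction used to establish the de Rham Künneth formula. First, I would define the Künneth morphism
\[K\colon \tH^\bullet_{\nabla_1}(A,E_1)\otimes_{\R}\tH^\bullet_{\nabla_2}(B,E_2)\;\longrightarrow\; \tH^\bullet_{\nabla_1\boxtimes\nabla_2}(A\times B,\, E_1\boxtimes E_2)\]
by the external cup product $[\alpha]\otimes[\beta]\mapsto[\pr_A^*\alpha\wedge\pr_B^*\beta]$, where $\pr_A\colon A\times B\to A$ and $\pr_B\colon A\times B\to B$ are the canonical LA-projections. Since both are LA-morphisms and $\nabla_1\boxtimes\nabla_2$ is the direct-sum representation, $K$ is a chain map and descends to cohomology. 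The proof then proceeds by induction on the cardinality $k$ of a finite good cover $\{U_1,\ldots,U_k\}$ of $N$.

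For the base case $k=1$, the manifold $N$ is contractible. The generalized Poincaré lemma (Lemma~\ref{Th:PL}) identifies $\tH^\bullet_{\nabla_2}(B,E_2)$ with $\tH^\bullet_{\mathrm{CE}}(\mathfrak{g},E_{2,n})$, where $\mathfrak{g}$ is the isotropy Lie algebra at some $n\in N$. Moreover, $i\colon M\times\{n\}\hookrightarrow M\times N$ is a homotopy equivalence of bases, and a direct matching of anchors shows that the pullback Lie algebroid $i^!(A\times B)$ coincides with the direct product $A\times\mathfrak{g}$ over $M$. Invoking Theorem~\ref{cor:0} in its non-transitive formulation then yields $\tH^\bullet(A\times B,E_1\boxtimes E_2)\cong\tH^\bullet(A\times\mathfrak{g},E_1\boxtimes E_{2,n})$. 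Since the cross brackets in $A\times\mathfrak{g}$ vanish, the twisted Chevalley–Eilenberg complex factors canonically as a tensor product of complexes of $\R$-vector spaces
\[\bigl(\Gamma(\wedge^\bullet A^*\otimes E_1),d_A\bigr)\otimes_{\R}\bigl(\wedge^\bullet\mathfrak{g}^*\otimes E_{2,n},d_{\mathfrak{g}}\bigr),\]
and the algebraic Künneth theorem over the field $\R$ delivers the base case.

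For the induction step, split $N=V\cup W$ with $V$, $W$ and $V\cap W$ each admitting a good cover of cardinality strictly less than $k$. Applying the Mayer–Vietoris theorem (Theorem~\ref{MV}) both to $B|_V,B|_W$ and to $(A\times B)|_{M\times V},(A\times B)|_{M\times W}$ produces two long exact sequences; tensoring the first with $\tH^\bullet_{\nabla_1}(A,E_1)$ preserves exactness since tensoring with an $\R$-vector space is exact. The external cup product then maps this tensored sequence into the second one, the vertical maps over $V$, $W$ and $V\cap W$ are isomorphisms by induction, and the Five Lemma concludes that $K$ is an isomorphism over $N$.

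The principal obstacle is to verify commutativity of the resulting ladder — in particular, that $K$ intertwines, up to a sign, the two Mayer–Vietoris connecting homomorphisms. The explicit formula for $\delta$ provided by Theorem~\ref{MV} is essential here: a partition of unity $\{\chi_V,\chi_W\}$ on $N$ lifts via $\pr_N^*$ to a partition of unity on $M\times N$ subordinate to the product cover, and the boundary of a cocycle representative $\pr_A^*\alpha\wedge\pr_B^*\beta$ reduces, by the Leibniz rule and the fact that $\pr_B^*$ commutes with the respective differentials, to $\pm\pr_A^*\alpha\wedge\pr_B^*(\delta\beta)$; the remaining work is purely sign bookkeeping.
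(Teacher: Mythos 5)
Your proposal is correct and follows essentially the same route as the paper: the external cup product as the Künneth map, the base case handled by reducing a contractible $N$ to the isotropy Lie algebra $\mathfrak g$ via the generalized Poincaré lemma and the algebraic Künneth theorem, and the inductive step via Mayer--Vietoris, exactness of tensoring over $\R$, the Five Lemma, and the explicit partition-of-unity formula for the connecting homomorphism. The only point you compress is the assertion that the complex for $A\times\mathfrak g$ ``factors canonically'' as a tensor product -- the paper devotes a separate lemma to proving that this map of complexes is bijective (fibrewise injectivity plus a surjectivity argument with local frames and a partition of unity) -- but the idea is identical.
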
 
Concerning computational tools for Lie algebroid cohomology, it is relevant to mention that a new spectral sequence for Lie algebroid cohomology is studied in the recent work \cite{MaSc24}. Submersions by Lie algebroid, studied in \cite{Fr19} give other computational insights. The invariance of twisted Lie algebroid cohomology under Morita equivalence is studied in \cite{Cr03} -- some relations with this work are explained in Section \ref{subs:5.3}. Further relations with the considerable amount of existing literature on Lie algebroids are indicated along the paper. For instance, relations between LA-homotopy and deformation theory are investigated in Proposition \ref{homotopy_lie_algebras}. More precisely,
 LA-homotopies between Lie algebra morphisms are shown to be smooth curves of Lie algebra morphisms, the variations of which are controlled by the adjoint representation of the target Lie algebras.
  This implies that the chosen notion of LA-homotopy is compatible with the deformation theory of Lie algebras (see Proposition \ref{LA_Hom_def}).
  
   Equivariant de Rham cohomology and foliated de Rham cohomology are not homotopy-invariant in general. However Theorem \ref{th:hi} implies that they are LA-homotopy-invariant. This remark is consistent with the literature, since LA-homotopy specialises to the notion of equivariant homotopy and of integral homotopy, in the respective categories. Those notions of homotopies are known to preserve the respective cohomologies (see Examples \ref{ex:Gbundles} and \ref{ex:foliations} and references therein).
	
	\medskip

		Note finally that the results in this paper should be extendable to general differential graded manifolds. This is current work in progress, using a notion of homotopy compatible with the definition of this paper and the ones of \cite{Sev05}, \cite{Cam20} and \cite{Royt10}.

\bigskip

  \subsection*{Outline of the paper}
Section \ref{sec:prelim} introduces necessary concepts and results for the rest of the paper. Particular attention is dedicated here to the notion of pullback under Lie algebroid morphisms of forms with values in representations. This is a natural definition and likely to be folklore knowledge, although hard to find in the literature. A detailed treatment is provided in Subsection \ref{subs:2.4} for future reference. The other parts of Section \ref{sec:prelim} are intended for readers who are not familiar with Lie algebroids and their cohomology, so very detailed as well.\\

Section \ref{sec:H} defines the notion of homotopy of Lie algebroids, called LA-ho\-mo\-to\-py, and carefully justifies it.  
The main theorem on the LA-homotopy invariance of the twisted LA-cohomology is given in Section \ref{sub:proof}. Its proof is technical and therefore splits between this section and Section \ref{ap:A}. It uses a study in Section \ref{sec:6} of flows of special linear vector fields on Lie algebroids.
Section \ref{subs:3.60} studies the homotopy-invariance of pullback Lie algebroids. As a consequence, a large class of examples of LA-homotopies is constructed in \ref{subs:3.6}.\\

The special case of Lie algebras is investigated in Section \ref{subs:3.1} and \ref{subs:3.2}. In particular Section \ref{subs:3.2} explains how LA-homotopy turns out to be compatible with the deformation theory of morphisms of Lie algebras. \\

Results on the twisted cohomology of pullback Lie algebroids via homotopy equivalences are studied in Section \ref{subs:5.1}.
A relevant application is a generalized Poincaré-Lemma for transitive Lie algebroids, which is explained in Section \ref{subs:5.2}. Relations with the existing literature on Lie algebroid cohomology of Morita equivalent Lie algebroids are investigated in Section \ref{subs:5.3}.\\

Section \ref{sec:MV} observes that the Mayer-Vietoris principle holds for twisted Lie algebroid cohomology. This result provides in particular an alternative proof of the finite dimensionality of the twisted LA-cohomology of transitive Lie algebroids over a compact base manifold. By combining Mayer-Vietoris with the generalized Poincaré-Lemma, a K\"unneth formula is then proved in Section \ref{sec:K}.\\

Section \ref{sec:6} contains a detailed treatment of the invariance under LA-homotopies of representations of Lie algebroids. This specializes to the invariance of flat connections under classical homotopies.

  \subsection*{Notation and conventions}

  All manifolds and vector bundles in this paper are smooth and real.
  Vector bundle projections are written $q_E\colon E\to M$, except for the projections of tangent bundles, which are written
  $p_M\colon TM\to M$.  Given a section
  $\varepsilon$ of $E^*$, the map $\ell_\varepsilon\colon E\to \R$ is
  the linear function associated to it, i.e.~the function defined by
  $e_m\mapsto \langle \varepsilon(m), e_m\rangle$ for all $e_m\in E$.
  The set of global smooth sections of a vector bundle $E\to M$ is denoted by
  $\Gamma(E)$, $\mx(M)=\Gamma(TM)$ is the space of smooth vector fields on a
  smooth manifold $M$, and
  $\Omega_{\text{dR}}^\bullet(M)=\Gamma(\bigwedge^\bullet T^* M)$ is the space of smooth
  forms on $M$.
  
  Several notions of pullbacks are defined and used in this paper. The (classical) pullback of a vector bundle $B \to N$ and of a section $b \in \Gamma(B)$ through a map $f\colon M\to N$ are denoted  by $f^!B\to M$ and $f^!b\in\Gamma(f^!B)$, respectively. $f^{!!}B$ denotes the pullback of a \emph{Lie algebroid} $B \to N$ under a smooth map $f\colon M\to N$  (\cite{Man05}, see Section \ref{sec:prelim} below for the details). The canonical Lie algebroid morphism over $f$ resulting from the Lie algebroid pullback of $B\to N$ via $f$ is written  $p_{B,f}\colon f^{!!}B \to B$.  $\Phi^*\omega\in\Omega^\bullet(f^{!!}B, f^!E)$ stands for the pullback of a form $\omega \in \Omega^{\bullet}(B, E)$ and similarly, $\Phi^*\nabla\colon \Gamma(f^{!!}B)\times \Gamma(f^!E)\to\Gamma(f^!E)$ is the pullback of a connection $\nabla\colon\Gamma(B)\times\Gamma(E)\to\Gamma(E)$.

  \subsection*{Acknowledgement}
 
 This paper contains some of the results of the PhD thesis in preparation of the second author.
 
She would like to thank T.~Schick and C.~Zhu for their great support. The constant interaction with them, the Topology and the Higher Structures groups  as well as the Research Training Group 2491 in G\"ottingen, was a precious source of inspiration for her work on this project. She thanks R.~Nest for 
long and invaluable discussions and precious encouragement, during her research stays in Copenhagen, supported by both the Centre for Geometry and Topology in Copenhagen and the RTG 2491 in G\"ottingen. She thanks the Topology group in Copenhagen for stimulating questions during a talk on this paper.
She also thanks M.~Crainic, C.~Laurent-Gengoux, I.~Marcut, E.~Meinrenken, J.~Schnitzer and P.~Ševera for interesting discussions. 

  The authors thank C.~Blohmann and his research group for useful observations after a talk on this paper at a joint Bonn-G\"ottingen-W\"urzburg retreat. They thank K.-H.~Neeb and I.~Marcut for suggesting the solution studied in Subsection \ref{subs:3.2} (and K.-H.~Neeb for the first part of Remark \ref{rem_LA_hom_LAlgebras}), K.~Singh for suggesting  as well the connections with deformation theory in the Lie algebra case, T.~Schick for proofreading Appendix \ref{ap:A} and a comment leading to a simplification of the exposition,  as well as O.~Brahic and P.~Freijlich for pointing out the relation of this paper to their work.
  R.~Fernandes, D.~Roytenberg and T.~Strobl suggested some references after a talk on this paper at the ``Singular foliations and dg-manifolds" workshop in Paris, Institut Henri-Poincaré. Finally, the authors are grateful to V.~Ginzburg for his help on verifying some references.

\section{Preliminaries}\label{sec:prelim}
This section collects necessary background on Lie algebroids, their representations and their cohomology, as well as on vector-valued forms and their pullbacks under Lie algebroid morphisms. 
	\subsection{The category of Lie algebroids}
     The definitions of Lie algebroid and of Lie algebroid morphisms are recalled here for the convenience of the reader. See e.g.~\cite{Mackenzie05} for more detail.
   
     \begin{definition}
     	\label{def:LA}
     	A \textbf{Lie algebroid} is a triple $(A\to M, [\cdot, \cdot], \rho_A)$, where $A \to M$ is a vector bundle, $[\cdot, \cdot]$ is a $\mathbb R$-Lie algebra bracket on $\Gamma(A)$, that is \emph{anchored} by a vector bundle morphism $\rho_A \colon A \to TM$, called the \textbf{anchor}:
     	\[[a_1, f\cdot a_2]=f\cdot [a_1, a_2]+ \ldr{\rho_A(a_1)}(f)
     	\cdot a_2 \]
     	for all $a_1, a_2 \in  \Gamma(A)$ and $f \in C^{\infty}(M)$. $\ldr{\rho_A(a_1)}(f)$ denotes the Lie derivative of $f \in C^{\infty}(M)$ with respect to $\rho_A(a_1) \in \mathfrak{X}(M)$, it is often written $\rho_A(a_1)(f)$.
     \end{definition}
   
     It follows that the anchor is compatible with the bracket in the following sense:
     \[[\rho_A(a_1), \rho_A(a_2)]= \rho_A[a_1,a_2] \]
     for all $a_1, a_2 \in  \Gamma(A)$.
     
     \medskip

     A Lie algebroid $A\to M$ is called \textbf{regular} if $\operatorname{Im}(\rho_A)$ is a subbundle of $TM$. \textbf{Transitive Lie algebroids}, i.e.~Lie algebroids with surjective anchor, are special cases of regular Lie algebroids.
      
     \begin{ex} 
     	\begin{itemize}
     		
     	\item For $M=\{*\}$, the anchor map vanishes and $A$ is a real vector space with a Lie bracket, i.e.~a real Lie algebra.
        \item The tangent bundle $TM\to M$ is a Lie algebroid with the identity as anchor and the Lie bracket of vector fields. Involutive subbundles are exactly the infinitesimal description of foliations (by Frobenius' Theorem, \cite{Deahna1840,Clebsch1866}). 
        Therefore, involutive subbundles are often simply called \emph{foliations}. An involutive subbundle  $F \subseteq TM$ of the tangent bundle of a smooth manifold $M$ is a Lie algebroid with the inclusion $\iota\colon F\to TM$ as anchor.
        \item Let $G$ be a Lie group and consider a principal $G$-bundle $\pi\colon P\to M$. That is, $P$ is equipped with a free and proper smooth right action of the Lie group $G$, $M\simeq P/G$ and $\pi\colon P \to P/G=M$ is the orbit projection, a surjective submersion. The associated algebroid, called \textbf{Atiyah Lie algebroid}, has
     	\[A(P):= TP/G \to M\]
     	as underlying vector bundle.
     	The space of sections is isomorphic to the space of $G$-equivariant vector fields on $P$: $\Gamma(TP/G) \cong\mathfrak{X}^G(P)$ and the Lie bracket of $G$-equivariant vector fields on $P$ is again $G$-equivariant.
	Hence the $C^{\infty}(M)$-module of $G$-equivariant vector fields in $P$ is equipped with the Lie bracket
     	\[ \left[\overline{X}, \overline{Y}\right]_{TP/G}(m):= \overline{[X, Y]}(m) \]
     	for all $X,Y\in\mx^G(P)$ and all $p \in \pi^{-1}(m)$. 
	Here, $\overline{X}\in \Gamma(TP/G)$ is the section defined by $X$:
	\[ \overline{X}(m)=[X(p)]\in TP/G
	\]
	for all $m\in M$ and $p\in \pi^{-1}(m)$.
Since the projection $\pi$ is $G$-invariant, its tangent map factors to 
\[ \rho:=\overline{T\pi}\colon TP/G\to TM, \qquad \rho[v_p]=T_p\pi v_p
\]	
for all $v_p\in T_pP$. This induced vector bundle morphism $\rho$ over the identity on $M$ anchors the Lie bracket on $A(P)=TP/G$ and 
$(A(P)\to M, [\cdot\,,\cdot]_{TP/G}, \rho)$ is a Lie algebroid.
        \end{itemize}
     	\end{ex}
 
     Morphisms of Lie algebroids are vector bundle morphisms with an addition compatibility with the anchors and the Lie brackets. 
    If $A,B\to M$ are Lie algebroids over a same base, then the definition is straightforward: A vector bundle  morphism $\Phi\colon A\to B$ over the identity on $M$ is a Lie algebroid morphism if $\rho_B\circ \Phi =\rho_A$ and 
    \[ \Phi[ a_1,a_2]_A=[\Phi(a_1), \Phi(a_2)]_B
    \]
    for all $a_1,a_2\in\Gamma(A)$.
     In general, the bracket condition of a morphism of Lie algebroids over different bases is more difficult to formulate, since the composition of sections of a vector bundle $A\to M$ with a vector bundle morphism $\Phi\colon A\to B$  are generally not sections of the codomain vector bundle $B\to N$. The bracket condition is written using an appropriate pullback construction of the codomain vector bundle via the base map $\phi\colon M \to N$, as in the following definition.     \begin{definition}
     	A vector bundle map $(\Phi,\phi)$ between Lie algebroids $A \to M$ and $B \to N$ 
     	\begin{equation*}
     	\begin{xy}
     		\xymatrix{A \ar[r]^{\Phi}\ar[d]& B \ar[d]\\
     			M \ar[r]_{\phi} & N}
     	\end{xy}
       \end{equation*}
       is a \textbf{Lie algebroid morphism} if the anchor maps satisfy
       $\rho_B \circ \Phi = T\phi \circ \rho_A $
       and if the Lie brackets satisfy the condition described in the following.
       Let \[\Phi^! \colon A \to \phi^!B,\qquad a\mapsto (q_A(a), \Phi(a))\] be vector bundle morphism (over the identity on $M$) induced by $\Phi$ to the pullback of $B\to N$ through $\phi\colon M \to N$. If $a_1,a_2 \in \Gamma(A)$ are such that $\Phi^!(a_1)= \sum_i f^1_{i} \varphi^!b^1_{i}$ and $\Phi^!(a_2)= \sum_j f^2_{j}  \varphi^!b^2_{j}$, for $b^1_{i}, b^2_{j} \in \Gamma(B)$ and $f^1_{i}, f^2_{j} \in C^{\infty}(M)$, then the bracket condition is
       \small{
       \[\Phi^!([a_1,a_2]_A)= \sum_{ij} f^1_{i}\, f^2_{j} \varphi^![b^1_{i},b^2_{j}]_B 
       + \sum_j \rho_A(a_1)(f^2_{j})\varphi^!b^2_{j} - \sum_i \rho_A(a_2)(f^1_{i})\varphi^!b^1_{i}.\]}
     \end{definition}
 The space of Lie algebroid morphisms from $A$ to $B$ is denoted by $\text{Hom}_{LA}(A,B)$.\\
 The following are examples of Lie algebroid morphisms.
 \begin{ex}
 	\label{ex:LAmorph}
 	\begin{itemize}
 \item	If $A$ and $B$ are Lie algebras, i.e.~if $M$ and $N$ are just a point, then a Lie algebroid morphism $A\to B$ is a  Lie algebra morphism.
 \item Let $M$ and $N$ be smooth manifolds and let $f\colon M\to N$ be a smooth map.
 	Then $Tf\colon TM\to TN$ is a Lie algebroid morphism over $f\colon M\to N$. If $F_M\subseteq TM$ and $F_N\subseteq TN$ are involutive subbundles and $Tf(F_M)\subseteq F_N$, then $Tf$ restricts to a Lie algebroid morphism $Tf\colon F_M\to F_N$.
\item Let $\pi_P\colon P\to M$ and $\pi_Q\colon Q\to N$ be principal $G$-bundles and let $f\colon P\to Q$ be a smooth $G$-equivariant map. Then $f$ factors to a smooth map $\bar f\colon M\to N$ such that $\bar f\circ \pi_P=\pi_Q\circ f$ and  $Tf$ factors to a Lie algebroid morphism $\overline{Tf}\colon A(P)\to A(Q)$ over $\bar f\colon M\to N$.
   \end{itemize}
 \end{ex}
 
 Later Proposition \ref{prop:0} shows that a Lie algebroid morphism $A\to B$ over $M\to N$ induces a map in cohomology, and can alternatively be defined as a vector bundle morphism that defines a cochain map $\Gamma(\wedge^\bullet B^*)\to \Gamma(\wedge^\bullet A^*)$ with respect to the Lie algebroid differentials.
      
      \bigskip
   
     The remainder of this section collects some further useful notions and constructions around Lie algebroids.

	 \begin{definition}
	 Given two Lie algebroids $A \to M$ and $B \to N$, the product  $A \times B \to M \times N$ of vector bundles comes naturally equipped with the  \textbf{direct product Lie algebroid} structure. Note that if $\pr_M \colon M \times N \to M$ and $\pr_N \colon M \times N \to N$ are the projections, then $A\times B$ is isomorphic to the Whitney sum $\pr_M^!A\oplus \pr_N^!B$, as vector bundles over $M\times N$.
	 The anchor $\rho_{A \times B} \colon A \times B \to TM \times TN$ is the product $\rho_A\times \rho_B\colon (a_m,b_n)\mapsto (\rho_A(a_m),\rho_B(b_n))\in T_mM\times T_nN$.
	 Since the $C^{\infty}(M \times N)$-module of sections of $A \times B$ is generated by pullback sections of the form  $\pr_M^!a\colon M\times N\to A\times B$, $(m,n)\mapsto (a(m),0^B_n)$ and $\pr_N^!b$ with $a \in \Gamma(A)$ and $b \in \Gamma(B)$, the bracket is uniquely (well-)defined by the Leibniz rule and 
	 \[\left[\pr_M^!a_1, \pr_M^!a_2 \right]_{A \times B}= \pr_M^!\left[a_1, a_2\right]_A, \qquad 
	\left[\pr_N^!b_1, \pr_N^!b_2 \right]_{A \times B}= \pr_N^!\left[b_1, b_2\right]_B\]
	 \[\left[\pr_M^!a, \pr_N^!b \right]_{A \times B}=0=\left[\pr_N^!b, \pr_M^!a\right]_{A \times B}\]
	 for $a, a_1, a_2 \in \Gamma(A)$ and $b, b_1, b_2 \in \Gamma(B)$.
	 \end{definition}

	\bigskip 
      As already mentioned in the introduction, the key idea of the research on Lie algebroid cohomology and of this paper is to understand standard de Rham cohomology
      as the Lie algebroid cohomology of the tangent bundles of manifolds, and then attempt to adapt known results in this special case to the general setting of Lie algebroids. It is easy to see that the pullback of a tangent bundle under a smooth map is not a tangent bundle anymore.
      The correct notion of pullback Lie algebroid remedies to this -- in this setting the pullback of a tangent bundle is again a tangent bundle.
The following construction is standard, see \cite{Mackenzie05}. It is given in detail here because the notion of pullback of Lie algebroids is central in this paper.

      Let $B \to N$ be a Lie algebroid and consider a smooth map $f \colon M \to N$.
      Set 
      \[f^{!!}B:= TM \times_{TN} B:=\left\{(v,b)\in  TM \times B \mid Tf(v)=\rho_B(b)\right\} \subseteq TM \oplus f^!B.\]
      If $f^{!!}B$ has constant rank over $M$, then it is a smooth subbundle of $TM\oplus f^! B$. It is then equipped with a Lie algebroid structure defined below, which is called 
      \textbf{pullback of the Lie algebroid $B\to N$ under the smooth map $f \colon M \to N$}.
      
      Note that the vector bundle $f^{!!}B\to M$ is then a sub-vector bundle of $TM \times B \to M \times N$, over the graph $\operatorname{graph}(f)\subseteq M\times N$ of $f$.
\[\begin{tikzcd}
	{f^{!!}B} & {TM\times B} && {(v_m,b_{f(m)})}& {(v_m,b_{f(m)})}\\
	 M& {M\times N} && m & {(m,f(m))}
	\arrow[from=1-1, to=2-1]
	\arrow[hook, from=1-1, to=1-2]
	\arrow[from=1-2, to=2-2]
	\arrow[hook, from=2-1, to=2-2]
	\arrow[maps to, from=1-4, to=1-5]
	\arrow[maps to, from=1-4, to=2-4]
	\arrow[maps to, from=2-4, to=2-5]
	\arrow[maps to, from=1-5, to=2-5]
\end{tikzcd}\]
 It turns out that $f^{!!}B\to M$ is via this inclusion a Lie subalgebroid of the product Lie algebroid $TM\times B\to M\times N$.
 The anchor in $TM\times TN$ of $(v_m, b_{f(n)})\in f^{!!}B\subseteq TM\times B$ is 
 \[ (v_m, \rho_B(b_{f(m)}))=(v_m, T_mfv_m)\in T_{(m,f(m))}\operatorname{graph}(f).
 \]
It is easy to see that a section $\sigma$ of $TM\times B\to M\times N$ that restricts to a section of $f^{!!}B$ over $M\simeq \operatorname{graph}(f)$
 can without loss of generality\footnote{
 A priori a section of $TM\times B\to M\times N$ can be written
 \[ \sigma=\sum_jF_j\pr_M^!X_j+\sum_iH_i\pr_N^!b_i
 \]
 with $F_j, H_i\in C^\infty(M\times N)$, $X_j\in\mx(M)$ and $b_i\in\Gamma(B)$, for all $i,j$.
 $\sigma$ restricts to a section of $f^{!!}B\to \operatorname{graph}(f)$ if and only if for all $m\in M$,
 \[\sum_jF_j(m, f(m))X_j(m)=\sum_iH_i(m,f(m))\rho_B(b_i(f(m))).
 \]
 Define for all $i, j$ the smooth functions $f_j\in C^\infty(M)$ and $h_i\in C^\infty(M)$ by $f_j(m)=F_j(m,f(m))$ and $h_i(m)=H_i(m,f(m))$ for all $m\in M$, and set $X:=\sum_jf_jX_j\in\mx(M)$. Then on $\operatorname{graph}(f)$, the section $\sigma$ coincides with 
 \[\pr_M^!X+\sum_i\pr_M^*h_i\cdot\pr_N^!b_i.
 \]
 Since the anchor restricted to $f^{!!}B$ has image in $\operatorname{graph}(f)$, only the values of $\sigma$ over  $\operatorname{graph}(f)$ are relevant for computing the restriction to $\operatorname{graph}(f)$ of its Lie brackets with other sections with the same restriction property.} 
 be written
\[ \sigma= \pr_M^!X+\sum_{i}\pr_M^*h_i\cdot \pr_N^!b_i
\]
with $X\in\mx(M)$, $h_i\in C^\infty(M)$ and $b_i\in\Gamma(B)$, such that 
\[ T_mfX(m)=\sum_ih_i(m)\rho_B(b_i(f(m)))
\]
for all $m\in M$. This means that for all $s\in C^\infty(N)$, 
\[X(f^*s)=\sum_i h_i\cdot f^*(\rho_B(b_i)(s)),
\] 
see \cite[Proof of Proposition 4.4.3]{Mackenzie05}. Using this, it is easy to check as in \cite{Mackenzie05} that 
if 
\[ \tau= \pr_M^!Y+\sum_{j}\pr_M^*g_j\cdot \pr_N^!c_j
\]
is a second section of $TM\times B\to M\times N$ that restricts to a section of $f^{!!}B$ over $M\simeq \operatorname{graph}(f)$, then for all $s\in C^\infty(N)$
\begin{equation*}
\begin{split}
[X,Y](f^*s)=\,&X\left(\sum_j g_j\cdot f^*(\rho_B(c_j)(s))\right)-Y\left(
\sum_i h_i\cdot f^*(\rho_B(b_i)(s))
\right)\\
=\,&\sum_j X(g_j)\cdot f^*(\rho_B(c_j)(s))-
\sum_i Y(h_i)\cdot f^*(\rho_B(b_i)(s))\\
&+\sum_{i,j} h_ig_j\cdot f^*([\rho_B(b_i),\rho_B(c_j)](s)).
\end{split}
\end{equation*}
That is, the bracket 
\[ \left(\pr_M^![X,Y], \sum_j \pr_M^*(X(g_j))\cdot \pr_N^!c_j-
\sum_i \pr_M^*(Y(h_i))\cdot \pr_N^!b_i\\
+\sum_{i,j} \pr_M^*(h_ig_j)\cdot \pr_N^![b_i, c_j]
\right)
\]
 of $\sigma$ with $\tau$ in in $TM\times B\to M\times N$ restricts to a section of $f^{!!}B$ over $\operatorname{graph}(f)$.
      	
As a summary, the space of sections of $f^{!!}B$ as a vector bundle over $M$, is
      	\[\Gamma(f^{!!}B)=\left\{\left.\left(X, \sum_i h_i \cdot f^!b_i\right) \right| \begin{array}{c}
	X \in \Gamma(TM), b_i \in \Gamma(B), h_i \in C^{\infty}(M),  \text{ such that } \\
	Tf(X(m))= \sum_i h_i(m) \cdot \rho_B(b_i(f(m)))  \text{ for all } m\in M
	\end{array}\right\}.\]
      	and the Lie algebroid structure inherited from $TM\times B\to M\times N$ on this vector bundle is given by 
	\[\rho_{f^{!!}B}(v,b)=v
	\]
	for all $(v,b)\in f^{!!}B$ and 
      	 
      	 \begin{equation*}
      	 	\begin{split} 
		&\left[\left(X, \sum_i h_{i} \cdot f^! b_{i})\right), \left(Y, \sum_j g_{j} \cdot f^!c_{j}\right)\right]\\
		&=
      	 \left([X,Y], \sum_{ij} h_{i}g_j \cdot f^![b_{i},c_{j}]+ \sum_j X(g_{j})\cdot f^!c_{j}- \sum_i Y(h_{i})\cdot f^!b_{i}\right).
      	 \end{split}
      	 \end{equation*}
      	 
	 Take two Lie algebroids $A\to M$ and $B\to N$ and  a vector bundle map $(\Phi,\phi)$ 
     	\begin{equation*}
     	\begin{xy}
     		\xymatrix{A \ar[r]^{\Phi}\ar[d]& B \ar[d]\\
     			M \ar[r]_{\phi} & N}
     	\end{xy}.
       \end{equation*}
 If the Lie algebroid pullback $\phi^{!!}B\to M$ exists, then $\Phi$ is a Lie algebroid morphism if and only if 
 \[ \Phi^{!!}\colon A\to \phi^{!!}B, \qquad a\mapsto (\rho(a), \Phi(a))
 \]
 is a Lie algebroid morphism over the identity on $M$, see 
 \cite[Theorem 4.3.6]{Mackenzie05}.
 
 \medskip
      
   It is important to repeat here that the category of Lie algebroids is not closed under such pullback because in general  $f^{!!}B\to M$ is not a vector bundle.  
   A sufficient (but not necessary) condition to the existence of the pullback Lie algebroid is transversality between the map $f\colon M \to N$ and the anchor map $\rho_B \colon B \to TN$, namely 
   \[Tf(T_m M) + \rho_B(B_{f(m)})=T_{f(m)}N \qquad \forall m \in M.\]
	
		\begin{lemma}
		\label{le:0}
		Let  $B \to N$ be a Lie algebroid and let $f \colon M \to N$ be a smooth map such that the pullback Lie algebroid $f^{!!}B \to M$ exists. The map 		\begin{equation*}
			\begin{split}
		     p_{B,f}\colon f^{!!}B \to B, \qquad (v,b) \mapsto b
		    \end{split}
		\end{equation*}
		 is then a Lie algebroid morphism over the smooth map $f\colon M\to N$.
	\end{lemma}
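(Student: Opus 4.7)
The plan is to verify directly the two defining conditions of a Lie algebroid morphism over the base map $f$, using the explicit description of the bracket on $f^{!!}B$ that was just derived in the preceding discussion.

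First, I would check anchor compatibility. Take $(v_m,b_{f(m)})\in f^{!!}B$, so by definition $T_mf(v_m)=\rho_B(b_{f(m)})$. Then
\[
\rho_B\bigl(p_{B,f}(v_m,b_{f(m)})\bigr)=\rho_B(b_{f(m)})=T_mf(v_m)=T_mf\bigl(\rho_{f^{!!}B}(v_m,b_{f(m)})\bigr),
\]
since the anchor on the pullback is the first projection. This gives $\rho_B\circ p_{B,f}=Tf\circ \rho_{f^{!!}B}$.

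Next, I would verify the bracket condition. The auxiliary map involved in the definition of a Lie algebroid morphism is here
\[
p_{B,f}^{!}\colon f^{!!}B\to f^{!}B,\qquad (v_m,b_{f(m)})\longmapsto (m,b_{f(m)}).
\]
A generic section of $f^{!!}B\to M$ can be written as $\alpha=(X,\sum_i h_i\,f^{!}b_i)$, with $b_i\in\Gamma(B)$, $h_i\in C^\infty(M)$ and $X\in\mx(M)$ satisfying $Tf\circ X=\sum_i h_i\cdot (\rho_B(b_i)\circ f)$. Under $p_{B,f}^{!}$ this section maps to $\sum_i h_i\,f^{!}b_i$, so a presentation $\alpha=(X,\sum_i h_i f^{!}b_i)$ is exactly the data of a decomposition of $p_{B,f}^{!}(\alpha)$ as in the definition of LA-morphism, with $\rho_{f^{!!}B}(\alpha)=X$. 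Given a second section $\beta=(Y,\sum_j g_j\,f^{!}c_j)$, the explicit formula
\[
[\alpha,\beta]_{f^{!!}B}=\Bigl([X,Y],\sum_{i,j}h_ig_j\,f^{!}[b_i,c_j]+\sum_j X(g_j)\,f^{!}c_j-\sum_i Y(h_i)\,f^{!}b_i\Bigr)
\]
yields, after applying $p_{B,f}^{!}$, precisely
\[
\sum_{i,j}h_ig_j\,f^{!}[b_i,c_j]+\sum_j \rho_{f^{!!}B}(\alpha)(g_j)\,f^{!}c_j-\sum_i \rho_{f^{!!}B}(\beta)(h_i)\,f^{!}b_i,
\]
which is exactly the expression required by the definition of a Lie algebroid morphism over $f$.

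The proof is therefore essentially tautological once the bracket formula on $f^{!!}B$ is in place; there is no genuine obstacle, only the clerical task of matching the two formulas term by term. As a conceptual remark I would add that the same conclusion follows by observing that $f^{!!}B\hookrightarrow TM\times B$ is a Lie subalgebroid and that $p_{B,f}$ is the restriction of the projection $\mathrm{pr}_B\colon TM\times B\to B$ (which is an LA-morphism over $\mathrm{pr}_N$) along the identification $M\simeq\operatorname{graph}(f)\subseteq M\times N$.
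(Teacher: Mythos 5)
Your proof is correct and is exactly the direct verification the paper intends: the paper leaves this lemma to the reader (pointing to Mackenzie's book), and the argument is precisely your term-by-term matching of the explicit pullback bracket formula with the bracket condition in the definition of an LA-morphism, together with the immediate anchor check $\rho_B\circ p_{B,f}=Tf\circ\rho_{f^{!!}B}$ coming from the defining equation of $f^{!!}B$. There is nothing to add.
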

	\begin{proof}
		This is left to the reader. See e.g.~\cite{Mackenzie05}.
	\end{proof}
	
	\subsection{The LA-cohomology}
	Let $(A, [\cdot,\cdot]_A, \rho_A)$ be a Lie algebroid over a base manifold $M$, with $\text{rank}(A)=n$. Define the space of \textbf{$A$-forms of degree $k\in\mathbb N$} as $\Omega^{k}(A):=\Gamma(\bigwedge^{k}(A^*))$ and the differential $\dr_A\colon \Omega^\bullet(A)\to\Omega^{\bullet+1}(A)$ by
		\begin{equation*}
		\begin{split}
			(\dr_A\omega)(a_1 \dots, a_{k+1})&:= \sum_{j=1}^{k+1} (-1)^{j+1}\, \rho_A(a_j)(\omega(a_1, \dots, \widehat{a_j}, \dots,  a_{k+1}))\\ 
			&\qquad +\sum_{j <l} (-1)^{j+l} \omega([a_j, a_l], a_1, \dots, \widehat{a}_j, \dots, \widehat{a}_l, \dots, a_{k+1})
		\end{split}
	\end{equation*}
    for $\omega\in \Omega^k(A)$ and $a_j \in \Gamma(A)$, $j=1, \dots, k+1$. Here, $\rho_A(a_j)(f):= \ldr{\rho_A(a_j)}(f)$ is the Lie derivative of the function $f \in C^{\infty}(M)$ with respect to $\rho_A(a_j) \in \mathfrak{X}(M)$ and $\widehat{a_j}$ means that the section $a_j$ is omitted. It is immediate by the definition of $\dr_A$ and by the Jacoby identity for $[\cdot, \cdot]_A$ that $\dr_A^2=0$, so the cochain complex
    \[0 \rightarrow \Omega^0(A)=C^\infty(M) \xrightarrow{\dr_A} \Omega^1(A) \xrightarrow{\dr_A} \cdots \Omega^n(A) \to 0\]
    induces the \textbf{Lie algebroid cohomology vector spaces} (or \textbf{LA-cohomology vector spaces}) of the Lie algebroid $A\to M$:
 
    \begin{equation*}
    	\begin{aligned}
    		\tH^k(A):= \text{ker}(\dr_k)/\text{im}(\dr_{k-1})
    	\end{aligned}
    \end{equation*}
	with the notation $\dr_k:= \dr_A\an{\Omega^k(A)}\colon \Omega^k(A) \to \Omega^{k+1}(A)$ for all $k\in\mathbb N$. $k$-forms in $\text{ker}(\dr_k)=: \Omega^k_{cl}(A)$ are called \textbf{closed}. The graded algebra of closed forms is written $\Omega^{\bullet}_{cl}(A):= \bigoplus_k \Omega^k_{cl}(A)$.\\
	\begin{remark}
		The LA-cohomology has been intensively studied for decades. See for instance the works of Crainic \cite{Cr03}, Ginzburg \cite{Ginz01}, Kubarski \cite{Kub02, Ku03, Kub13},  Mackenzie \cite{Mackenzie05}, Mishchenko \cite{KuMish04, Mish11}, Oliveira \cite{MishOliv23} and Waldron \cite{Wal23}. Some of these authors' specific contributions are mentioned along the paper.
	\end{remark}
	
	\begin{ex}
		\label{ex:LAcohom}
		\begin{itemize}
		 
		\item For $A=\lie {g} \to \{{\rm pt}\}$ the LA-cohomology is the Chevalley-Eilenberg cohomology $\tH^\bullet(\lie g)=\tH_{\text{ce}}^{\bullet}(\lie g)$.
		\item If $A=TM \to M$, then $\tH^{\bullet}(TM)=\tH^{\bullet}_{\rm dR}(M)$ by definition. For foliations, i.e.~involutive subbundles $F \hookrightarrow TM$, the Lie algebroid cohomology is isomorphic to the foliated de Rham cohomology. This is proved for instance in \cite[Corollary 3.3.10]{Schl21}. 		\item For Atiyah Lie algebroids associated to principal bundles, 
		 the LA-cohomology is isomorphic to the equivariant de Rham cohomology (see \cite[Proposition 5.3.11]{Mackenzie05}).
		 Also this cohomology has already been studied extensively, see in particular \cite{Greub76} and \cite{GoZo19}.
		 \end{itemize}
	\end{ex}
    \begin{example}
    	The LA-cohomology of a given Lie algebroid can be extremely ``large".
	 For instance, if $A\to M$ is a trivial Lie algebroid, i.e.~a Lie algebroid with zero anchor and zero bracket, then the differential $\dr_A$ vanishes by definition. Hence
    	\begin{equation*}
    		\begin{aligned}
    			\tH^0(A)=C^{\infty}(M) \quad \text{ and } \quad \tH^k(A)=\Omega^k(A)
    		\end{aligned}
    	\end{equation*}
	for all $k>0$.
	\label{ex:1}
    \end{example}

\medskip
	
	As mentioned earlier, the notion of LA-morphism finds a justification also in relation with the LA-cohomology.  
	
	\begin{proposition}
		A vector bundle morphism $(\Phi,\phi)$ is a Lie algebroid morphism between $A \to M$ and $B \to N$ if and only if the pullback map on sections
		\[\Phi^* \colon \Omega^{\bullet}(B) \to \Omega^{\bullet}(A) \]
		\[(\Phi^*\omega_B)_m(a_1, \dots, a_{\bullet}):=(\omega_B)_{\phi(m)}(\Phi(a_1), \dots, \Phi(a_{\bullet})) \]
		is a cochain map (i.e.~it intertwines the differentials $\dr_A$ and $\dr_B$).
		\label{prop:0}
	\end{proposition}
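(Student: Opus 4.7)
The plan is to reduce the cochain identity $\Phi^*\dr_B=\dr_A\Phi^*$ to checks in degrees $0$ and $1$, then to identify these two checks with the anchor and bracket compatibility conditions appearing in the definition of LA-morphism. Since $\Phi^*$ is a morphism of graded exterior algebras and $\dr_A,\dr_B$ are graded derivations, a direct computation shows that if $\Phi^*\dr_B=\dr_A\Phi^*$ holds on two forms $\omega_1,\omega_2\in\Omega^\bullet(B)$, then it also holds on $\omega_1\wedge\omega_2$. As $\Omega^\bullet(B)$ is generated as a $C^\infty(N)$-algebra by $\Omega^1(B)=\Gamma(B^*)$, it suffices to verify the identity on $\Omega^0(B)=C^\infty(N)$ and on $\Omega^1(B)$.

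At degree zero, for $f\in C^\infty(N)$ and $a\in A_m$, the two sides of $\Phi^*\dr_B f=\dr_A\Phi^*f$ evaluated at $a$ are $\rho_B(\Phi(a))(f)$ and $(T_m\phi\,\rho_A(a))(f)$ respectively, so the identity on $\Omega^0(B)$ is equivalent to $\rho_B\circ\Phi=T\phi\circ\rho_A$. At degree one, fix $\beta\in\Gamma(B^*)$ and $a_1,a_2\in\Gamma(A)$, and decompose $\Phi^!a_i=\sum_k f_k^i\,\phi^!b_k^i$ as in the definition of LA-morphism. Expanding
\[(\Phi^*\dr_B\beta)(a_1,a_2)(m)=(\dr_B\beta)_{\phi(m)}(\Phi(a_1(m)),\Phi(a_2(m)))\]
using the $C^\infty(N)$-bilinearity of $\dr_B\beta$ together with the Koszul formula, doing the same for $(\dr_A\Phi^*\beta)(a_1,a_2)$, and invoking the already-established anchor compatibility to rewrite
\[\rho_A(a_1)(\phi^*g)=\sum_i f_i^1\,\phi^*(\rho_B(b_i^1)(g)),\qquad g\in C^\infty(N),\]
the difference of the two expressions collapses to the pairing of $\beta$ with
\[\Phi([a_1,a_2]_A)-\sum_{ij}f_i^1f_j^2\,\phi^![b_i^1,b_j^2]_B-\sum_j\rho_A(a_1)(f_j^2)\,\phi^!b_j^2+\sum_i\rho_A(a_2)(f_i^1)\,\phi^!b_i^1,\]
viewed as an element of $\phi^!B$ at $m$. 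Since $\Gamma(B^*)$ separates vectors in $B$, the identity at degree one for all such $\beta$ is equivalent to the bracket compatibility condition.

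The main obstacle is the degree-one bookkeeping: the product rule applied to $\rho_A(a_i)(f_k^j\cdot\phi^*\beta(b_k^j))$ produces several terms that must be grouped carefully, in conjunction with the $C^\infty(N)$-bilinearity of $\dr_B\beta$, to match the three correction terms appearing on the right-hand side of the bracket condition. This accounting is of the same nature as the routine verification that the bracket condition itself is independent of the chosen decomposition of $\Phi^!a_i$, and once written out it is essentially mechanical.
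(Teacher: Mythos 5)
Your argument is correct. Note, however, that the paper does not actually write out a proof of Proposition \ref{prop:0}: it simply attributes the statement to Vaintrob and cites \cite{Vaintrob97}, where the result is phrased in the language of homological vector fields on graded (super)manifolds --- a Lie algebroid structure on $A$ is the same thing as a degree-one differential on $\Gamma(\wedge^\bullet A^*)$, and morphisms correspond to maps of the resulting dg-algebras. Your proposal supplies the direct, hands-on verification underlying that observation: since $\Phi^*$ is a morphism of graded algebras and $\dr_A$, $\dr_B$ are graded derivations, the defect $\Phi^*\dr_B-\dr_A\Phi^*$ satisfies a Leibniz-type identity, so vanishing on $\Omega^0(B)$ and $\Omega^1(B)$ forces vanishing everywhere (using that, locally near any $\phi(m)$, forms are generated by functions and $1$-forms, and that both sides of the identity are local in $\omega$ around $\phi(m)$); the degree-zero identity is precisely the anchor condition, and the degree-one identity, after substituting the anchor condition, pairs $\phi^!\beta$ against exactly the expression whose vanishing is the bracket condition. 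Your bookkeeping in degree one matches the paper's sign conventions for the bracket condition, and the separation of points of $B$ by sections of $B^*$ closes the equivalence in both directions. The only caveat worth making explicit is the locality/partition-of-unity step in the reduction to low degrees, which you gesture at and which is standard. In short: your route is the elementary computational proof of a statement the paper outsources to a reference, and it buys a self-contained argument at the cost of the degree-one accounting you correctly identify as the only real work.
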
 
	\begin{proof}
		This is a standard result, which is due to Vaintrob \cite{Vaintrob97}.
	\end{proof}
	\medskip
	
	\begin{remark}
	Let $A\to M$ be a Lie algebroid. Then
		$\tH^{\bullet}(A):= \bigoplus_{k\in\mathbb N} \tH^k(A)$ is a graded algebra,
		with cup product induced by the wedge product of $A$-forms, exactly as in  the case of de Rham cohomology (see e.g.~\cite{BoTu82}).    
		A Lie algebroid morphism induces a morphism of rings in cohomology. In particular, the map
		\begin{equation*}
			\begin{aligned}
				\rho_A^*\colon \tH^{\bullet}_{\rm dR}(M) \to \tH^{\bullet}(A)
			\end{aligned}
		\end{equation*}
		induced by the anchor $\rho_A$ is a morphism of rings.
	\end{remark}

   \subsection{LA-cohomology with values in representations}
   
   A more general Lie algebroid cohomology can be defined using `twists' with respect to linear flat Lie algebroid connections. All necessary basic definitions are recalled before introducing the \emph{twisted LA-cohomology}, also called \emph{LA-cohomology with values on representations}.
    \begin{definition}
     	Let $A\to M$ be a Lie algebroid. A \textbf{linear $A$-connection} on the vector bundle $E \to M$ is an $\mathbb R$-bilinear map
     	\[\nabla \colon \Gamma(A) \times \Gamma(E) \to \Gamma(E)\]
     	such that 
     	\[\nabla_a(f\cdot e)= f\cdot \nabla_ae + \rho_A(a)(f)\cdot e\]
	and 
	\[\nabla_{f\cdot a}e= f\cdot \nabla_ae \]
     	for $a \in \Gamma(A)$, $e \in \Gamma(E)$ and $f\in C^\infty(M)$. 
     \end{definition}

    	Let $A \to M$ be a Lie algebroid and let $E \to M$ be a vector bundle. 
	Define \[\Omega^{\bullet}(A, E):=\Gamma(\wedge^{\bullet}A^* \otimes E).\] A linear $A$-connection $\nabla\colon \Gamma(A) \times \Gamma(E) \to \Gamma(E)$ is equivalent to an operator 
    \[\dr_{\nabla}\colon \Omega^{\bullet}(A,E) \to \Omega^{\bullet+1}(A, E)\] defined by the conditions
    \begin{equation*}
    		    \dr_{\nabla} e= \nabla_{\cdot}e \in \Omega^1(A,E)
    \end{equation*}
    \begin{equation*}
    	        \dr_{\nabla}(\omega \wedge \eta)= (\dr_A \omega) \wedge\eta + (-1)^k \omega \wedge \dr_{\nabla} \eta,
    \end{equation*}
     for $e \in \Gamma(E)=\Omega^0(A,E)$, $\omega \in \Omega^k(A)$ and $\eta \in \Omega^{\bullet}(A,E)$. 
    
    \begin{remark}
    	If $E= \R \times M\to M$ with the canonical connection $\nabla_{a}f= \mathcal{L}_{\rho_A(a)}(f)$, then $\Omega^{\bullet}(A,E) = \Omega^{\bullet}(A)$ and $\dr_{\nabla}=\dr_A$.
    \end{remark}

    The linear connection $\nabla$ is called \textbf{flat} if the corresponding curvature $R_\nabla\in\Omega^2(A, \operatorname{End}(E))$ defined by 
    \[R_{\nabla}(a_1, a_2):= \nabla_{a_1} \circ \nabla_{a_2}-\nabla_{a_2} \circ \nabla_{a_1}- \nabla_{[a_1,a_2]}\] 
     for every $a_1, a_2 \in \Gamma(A)$ vanishes identically. This is equivalent (see for instance \cite{Jotz22}) to
    \[0 \equiv \dr^2_{\nabla}\colon \Omega^{\bullet}(A,E) \to \Omega^{\bullet+2}(A,E). \]
    
     The cohomology induced by this differential is the \textbf{LA-cohomology with values in the representation $\nabla$} (or LA-cohomology with values in the representation $E$, or simply \textbf{twisted LA-cohomology}), since flat Lie algebroid connections are the \textbf{representations} of Lie algebroids.
      For a Lie algebroid $A\to M$, a vector bundle $E \to M$ and a flat connection $\nabla\colon \Gamma(A) \times \Gamma(E) \to \Gamma(E)$, the corresponding cohomology is denoted by
     \[\operatorname{H}_{\nabla}^k(A,E):= \text{ker}(\dr_k)/\text{im}(\dr_{k-1}) \] 
     with $\dr_k:=(\dr_{\nabla})_{|_{\Omega^k(A,E)}} \colon \Omega^k(A,E) \to \Omega^{k+1}(A,E)$. A form $\omega \in \text{ker}(\dr_k)=: \Omega^k_{cl}(A)$ is called a \textbf{closed form}.
     \begin{lemma}
     	$\operatorname{H}_{\nabla}^{\bullet}(A,E)$ is an $\operatorname{H}^{\bullet}(A)$-module. 
     \end{lemma}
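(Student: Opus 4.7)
The plan is to define the module structure via the wedge product $\Omega^k(A) \otimes \Omega^l(A,E) \to \Omega^{k+l}(A,E)$, $(\omega,\eta) \mapsto \omega \wedge \eta$, and show that it descends to cohomology. The key tool is the Leibniz rule
\[
\dr_\nabla(\omega \wedge \eta) = (\dr_A \omega) \wedge \eta + (-1)^k \omega \wedge \dr_\nabla \eta,
\]
which was used above to characterize $\dr_\nabla$. First I would check associativity and bilinearity of the wedge at the level of forms, which is standard and inherited from the wedge on $\Gamma(\wedge^\bullet A^*)$ extended by the identity on the $E$-factor.

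Next I would verify that the wedge descends to cohomology. For closed $\omega \in \Omega^k_{\rm cl}(A)$ and closed $\eta \in \Omega^l_{\rm cl}(A,E)$, the Leibniz rule immediately gives $\dr_\nabla(\omega \wedge \eta) = 0$, so $\omega \wedge \eta$ is closed. For well-definedness on cohomology classes, I would check that replacing either factor by a coboundary produces a coboundary. If $\omega = \dr_A \omega'$ with $\omega'\in\Omega^{k-1}(A)$ and $\dr_\nabla\eta = 0$, then
\[
\dr_\nabla(\omega' \wedge \eta) = (\dr_A \omega') \wedge \eta + (-1)^{k-1} \omega' \wedge \dr_\nabla \eta = \omega \wedge \eta,
\]
so $\omega \wedge \eta$ is exact. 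Symmetrically, if $\eta = \dr_\nabla \eta'$ and $\dr_A\omega = 0$, then
\[
\dr_\nabla(\omega \wedge \eta') = (-1)^k \omega \wedge \dr_\nabla \eta' = (-1)^k \omega \wedge \eta,
\]
so $\omega \wedge \eta = (-1)^k \dr_\nabla(\omega \wedge \eta')$ is exact. Hence the pairing
\[
\tH^k(A) \otimes \tH^l_\nabla(A,E) \longrightarrow \tH^{k+l}_\nabla(A,E), \qquad [\omega]\otimes[\eta] \longmapsto [\omega \wedge \eta],
\]
is well-defined.

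Finally, the module axioms (associativity $(\omega_1 \wedge \omega_2) \wedge \eta = \omega_1 \wedge (\omega_2 \wedge \eta)$ and compatibility with the unit $1 \in \Omega^0(A) = C^\infty(M)$) follow directly from the corresponding properties of the wedge on forms before passing to cohomology. No serious obstacle is expected: the content is entirely bookkeeping driven by the graded Leibniz rule, which is precisely the compatibility built into the definition of $\dr_\nabla$.
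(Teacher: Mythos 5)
Your proposal is correct and follows essentially the same route as the paper: define the module structure by the wedge product, use the graded Leibniz rule for $\dr_\nabla$ to show that the wedge of closed forms is closed, and check that coboundaries produce coboundaries. The only cosmetic difference is that you verify well-definedness one factor at a time (which, with bilinearity, suffices), whereas the paper expands $(\omega+\dr_A\theta)\wedge(\eta+\dr_\nabla\xi)$ in a single computation.
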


     \begin{proof}
     	Define the module structure via
     	\begin{equation}\label{cup}
	\ov{\omega} \cup \ov{\eta}:= \ov{\omega \wedge \eta}
	\end{equation}
     	with $\omega \in \Omega_{\text{cl}}^{\bullet}(A)$, $\eta \in \Omega_{\text{cl}}^{\bullet}(A,E)$.
     	It remains to prove that this is well-defined. For $\omega \in \Omega_{\text{cl}}^{k}(A)$, $\eta \in \Omega_{\text{cl}}^{l}(A,E)$, 
     	\[\dr_{\nabla}(\omega \wedge \eta)= \dr_{A} \omega \wedge \eta + (-1)^k \omega \wedge \dr_{\nabla} \eta =0. \]
	That is, if $\omega \in \Omega^{\bullet}_{\text{cl}}(A)$ and $\eta \in \Omega^{\bullet}_{\text{cl}}(A,E)$, then  also $\omega\wedge \eta\in \Omega^{\bullet}_{\text{cl}}(A,E)$ and $\ov{\omega\wedge\eta}$ exists.
     	Moreover, for $\theta \in \Omega^{k-1}(A)$ and $\xi \in \Omega^{l-1}(A,E)$
     	\begin{equation*} \begin{split}
     			(\omega\,+\, \dr_{A} \theta) \wedge (\eta\, +\, \dr_{\nabla} \xi)&= \omega \wedge \eta \,+\, \omega \wedge \dr_{\nabla} \xi\, +\, \dr_A \theta \wedge \eta\, +\, \dr_{A} \theta \wedge \dr_{\nabla} \xi\\
     			&= \omega \wedge \eta\, +\, \dr_{\nabla}((-1)^k\omega \wedge \xi\, +\, \theta \wedge \eta \,+\, \theta \wedge \dr_{\nabla}\xi)
     	\end{split} \end{equation*}
     	where the last passage holds because the forms $\omega$ and $\eta$ are closed and $\dr_\nabla^2=0$. 
	Hence \eqref{cup} is well-defined.
     \end{proof}

   \medskip
  
  The notion of gauge equivalence of (flat) connections is crucial for one of the main statement of this paper (see Theorem \ref{th:hi} below).
   \begin{definition}\label{def:ga}
 Let $F,F'\to M$ be smooth vector bundles and let $A\to M$ be a Lie algebroid.
 Two $A$-connections $\nabla$ and $\nabla'$  on $F$ and $F'$ are called \textbf{gauge equivalent} if 
 there exists an isomorphism $\theta\colon F\to F'$ over the identity on $M$, such that 
 \begin{equation}\label{ga}\nabla_a e = \theta^{-1}\left(\nabla'_a (\theta(e))\right)\end{equation}
 for all $a\in\Gamma(A)$ and $e\in\Gamma(E)$.
 The map $\theta$ is then called a \textbf{gauge equivalence} from  $\nabla$ to $\nabla'$. 
 \end{definition}
 
If $\nabla$ and $\nabla'$ are gauge equivalent as above, then $\nabla$ is flat if and only if $\nabla'$ is flat. 
It is then easy to see that 
\[ \theta_*\colon \operatorname{H}^\bullet_\nabla(A,F)\to \operatorname{H}^\bullet_{\nabla'} (A,F'), \quad [\omega]\mapsto [\theta\circ \omega]
\]
is a well-defined isomorphism, called a \textbf{gauge isomorphism}.

\subsection{Pullback of $A$-forms with values in a representation}
\label{subs:2.4}
   The authors could not find in the literature a precise treatment of the notion of pullback of $A$-forms with values in a representation. 
      This subsection
   remedies to that and discusses these pullbacks in detail. 
   
  \begin{definition}\label{def_pullback_forms}
   Let $A \to M$, $B \to N$ and $E \to N$ be vector bundles.  Let $(\Phi, \phi)\colon A \to B$ be a vector bundle morphism. Then the pullback $\Phi^*\omega\in\Omega^k(A,\phi^!E)$ of  $\omega\in \Omega^{k}(B,E)$ is defined by
  	\begin{equation}
  		\begin{aligned}
  			(\Phi^*\omega)_m(a_1, \dots, a_k):= (m, \omega_{\phi(m)}(\Phi_m(a_1), \dots, \Phi_m(a_k))) \; \in(\phi^!E)_m ,
  			\label{eq:DR2}
  		\end{aligned}
  	\end{equation}
  	for all $m\in M$, and $a_1, \ldots, a_k \in A_m$.
  \end{definition}
 In the situation of the previous definition,  	\begin{equation*}
	\begin{split}
  		(\Phi^*(\omega \wedge e))_m(a_1, \dots, a_k)&=(m, (\omega \wedge e)_{\phi(m)}(\Phi(a_1), \dots, \Phi(a_k))\\
&		=((\Phi^*\omega) \wedge (\phi^!e))_m(a_1, \dots, a_k)
		\end{split}
  	\end{equation*}
  	for $\omega\in \Omega^{k}(B)$ and $e \in \Gamma(E)$, for all $m\in M$, and $a_1,\ldots, a_k \in A_m$.
	Hence 
	\begin{equation}\label{rk:1} \Phi^*(\omega\wedge e)=\Phi^*\omega \wedge \phi^!e
	\end{equation}
	for $\omega\in \Omega^{k}(B)$ and for $e \in \Gamma(E)$.
	In a similar way, it is easy to check that
	\begin{equation}\label{rk:2} \Phi^*(\omega\wedge \eta)=\Phi^*\omega \wedge \phi^* \eta
	\end{equation}
	for $\omega\in \Omega^{k}(B)$ and for $\eta \in \Omega^l(B,E)$.

\begin{remark}
The proofs of the two following properties of the pullback are left to the reader.
  	\label{re:5}
	\begin{enumerate}
  	\item The pullback in Definition \ref{def_pullback_forms} is functorial. That is, if $A\to M$, $B\to N$ and $C\to P$ are vector bundles, $(\Phi,\phi)\colon A\to B$ and $(\Psi,\psi)\colon B\to C$ are vector bundle morphisms and $E\to P$ is a vector bundle. Then for each $\omega\in\Omega^\bullet(C,E)$,
	\[ (\Psi\circ\Phi)^*\omega=\Phi^*(\Psi^*\omega).
	\]
	\item If $A\to M$ and  $B\to N$ are vector bundles, $(\Phi,\phi)\colon A\to B$ is a vector bundle morphism and $E,F\to N$ are vector bundles, then for each $\omega\in\Omega^\bullet(B,\operatorname{Hom}(E,F))$ 
	\[ \Phi^*\omega\quad \in\quad \Omega^\bullet(A, \operatorname{Hom}(\varphi^!E, \varphi^!F))=\Omega^\bullet(A, \varphi^!\operatorname{Hom}(E,F))
	\]
	and for each $e\in\Gamma(E)$,
	\[ \Phi^*(\omega(e))=(\Phi^*\omega)(\varphi^!e) \quad \in\quad \Omega^\bullet(A,\varphi^!F).
	\]
	\end{enumerate}
  \end{remark}

  \begin{example}
  	Consider $A=TM$, $B=TN$ and $E=N \times \R$ with the standard representation, as well as the vector bundle morphism $(T\varphi,\varphi)$ for a smooth map $\varphi\colon M\to N$.  In this case, for $\omega\in\Omega^k(TN,E)=\Omega_{\text{dR}}^k(N)$,
  	the form $(T\varphi)^*\omega$ is the usual pullback $\varphi^*\omega$, since by definition for  all $m\in M$ and all $v_1,\ldots, v_k\in T_mM$
	\[ ((T\varphi)^*\omega)_m(v_1,\ldots, v_k)=(m, \omega_{\varphi(m)}(T_m\varphi (v_1),\ldots, T_m\varphi (v_k)))\in (\varphi^!E)_m=\{m\}\times \mathbb R.
	\]
  \end{example}

  \medskip
 
  \begin{definition}\label{def_restriction}
  	Let  $i\colon N \hookrightarrow M$ be an embedded submanifold such that the pullback $i^{!!}A$ is as vector bundle.
  	Consider the canonical vector bundle morphism $p_{A,i}\colon i^{!!}A \to A$ (see Lemma \ref{le:0}). The \textbf{restriction of a form $\omega\in \Omega^\bullet(A,E)$ to $N$} is defined to be the form $p_{A,i}^*\omega \in \Omega^{\bullet}(i^{!!}A, i^{!}E)$. 
  \end{definition}
  If $N$ is open in  $M$, then
  the restriction of $\omega$ to $N$ is denoted by $\omega\an{N}$, since $i^{!!}A \cong A\an{N}$ and therefore 
  \[\Omega^k(A\an{N}, E\an{N}) := \Gamma(\wedge^{k}A^*\an{N}\otimes E\an{N}) \cong \Gamma(\wedge^{k}(i^{!!}A)^*\otimes i^!E)=: \Omega^k(i^{!!}A, i^!E).\qedhere  \]  
  \begin{remark}
  	Building restrictions as in Definition \ref{def_restriction} is transitive. More precisely, if $\omega \in \Omega^{\bullet}(A,E)$, $j\colon S \hookrightarrow N$ and $i\colon N \hookrightarrow M$ are embedded submanifolds and both $i^{!!}A$ and $j^{!!}(i^{!!}A)$ exist, then 
  	\[p_{A,i \circ j}^*\omega \cong p_{A,j}^*(p_{A,i}^*\omega).\]
  	It follows by functoriality of the pullback of Lie algebroids (see \cite{Mackenzie05}) and by functoriality of the pullback of forms (see Remark \ref{re:5}).
  \end{remark}
  
  \begin{remark}
  	 Recall that using restrictions of vector-valued forms to open subsets,  for each $k \in \N$ the space of forms $\Omega^{k}(A,E)$ defines a sheaf on $M$.
  	 \label{Mv:prop3}
  \end{remark}

  \bigskip
  
  Going back to the situation of Definition \ref{def_pullback_forms}, let now $A\to M$ and $B\to N$ be equipped with Lie algebroid structures, let $(\Phi,\varphi)\colon A\to B$ be a morphism of Lie algebroids\footnote{These conditions can of course be weakened to $A\to M$ and $B\to N$ being anchored vector bundles and $(\Phi,\varphi)$ being a morphism of anchored vector bundles.}, and consider a linear connection $\nabla\colon \Gamma(B) \times \Gamma(E) \to \Gamma(E)$.
  The definition of the pullback of $\nabla$ under $(\Phi, \varphi)$ is induced as follows by the previous notion of pullback of forms\footnote{Note that the pullback of a connection under a smooth map is standard. Here this is extended in a straightforward manner to a notion of pullback under a Lie algebroid morphism. This is discussed here in detail because this notion is central in this paper.}. The linear connection 
    \[(\Phi^*\nabla)\colon \Gamma(A) \times \Gamma(\phi^!E) \to \Gamma(\phi^!E) \]
    is defined on pullback sections by 
  \[(\Phi^*\nabla)_{a_m}(\phi^!e) :=(\Phi^*(\nabla_{\cdot}e))_m(a_m)=(m, \nabla_{\Phi(a_m)}e), \]
  for every $m\in M$, $a_m \in A$ and $e\in\Gamma(E)$. First, note that \eqref{eq:DR2}
   implies the second equality, since $\nabla_{\cdot}e \in \Omega^1(B,E)$. Then 
  check as follows that for $f \in C^{\infty}(N)$, $e \in \Gamma(E)$ and $a_m\in A$
  	\begin{equation}\label{pullback_connection_comp_module}
  		(\Phi^*\nabla)_{a_m} (\phi^!(fe))=\rho_A(a_m)(\phi^*f)\cdot \phi^!e +(\phi^* f)\cdot (\Phi^*\nabla)_{a_m}(\phi^!e).
  	\end{equation}
  For $m\in M$, simply compute  
  	\begin{equation*} 
  		\begin{split} (\Phi^*\nabla)_{a_m} (\phi^!(fe))&= (m, \nabla_{\Phi(a_m)}(fe))\\
		&=(m, \rho_B(\Phi(a_m))(f) \cdot e_{\phi(m)} + f(\phi(m)) \cdot \nabla_{\Phi(a_m)}e) \\
  			&=(m, T_m\phi(\rho_A(a_m))(f) \cdot e_{\phi(m)} + (\phi^*f)(m) \cdot \nabla_{\Phi(a_m)}e)\\ 
			&=\rho_A(a_m)(\phi^*f) \cdot (\phi^!e)(m) + (\phi^*f)(m)\cdot (\Phi^*\nabla)_{a_m}(\phi^!e). 
  		\end{split} 
  	\end{equation*}
  	As a consequence, the pullback connection $\Phi^*\nabla$ which was so far only defined on pullback sections can be extended via the Leibniz rule and  defined explicitly as 
  \begin{equation*}
  	\begin{aligned}
  		(\Phi^*\nabla)_{a}\left(\sum_i f_i \cdot \phi^!e_i \, \right)=\sum_i \left(\rho_A(a)(f_i) \cdot \phi^!e_i + f_i \cdot (\Phi^*\nabla)_{a} (\phi^!e_i)\right), 
  	\end{aligned}
  \end{equation*}
  for $e_i \in \Gamma(E)$, $f_i \in C^{\infty}(M)$.
  
  \begin{example}\label{example_pullback_trivial}
  	The pullback via a LA-morphism $\Phi\colon A \to B$ over $\phi\colon M \to N$ of the canonical flat $B$-connection $\nabla$ on the trivial bundle $\R \times N$ is the canonical flat $A$-connection on $\R \times M\simeq \varphi^!(\mathbb R\times N)$.
  	\label{dR:lemma0}
 Since the pullback of a connection is completely determined by its values on pullback sections and by the Leibniz identity, it is sufficient to compute
  	\begin{equation*} \begin{split}
  			(\Phi^*\nabla)_{a_m}(\phi^*f)&:= (m, \nabla_{\Phi(a_m)}f)= (m, (\rho_B \circ \Phi)(a_m)(f) )\\
			 &=(m, (T\phi \circ \rho_A)(a_m)(f))= \rho_A(a_m)(\phi^*f)(m).
  	\end{split}\end{equation*}
  for $f \in C^{\infty}(N)$ and $a \in \Gamma(A)$.

  	\label{ex:2}
   \end{example}

  The next lemma shows that building the pullback of a flat connection under a Lie algebroid morphism induces a map in cohomology.
  \begin{lemma}\label{pullback_ce}
  In the situation above,
  	the pullback of forms under the LA-morphism $(\Phi,\phi)$ intertwines the differentials $\dr_\nabla$ and $\dr_{\Phi^*\nabla}$:
	\[ \dr_{\Phi^*\nabla}\circ \Phi^*=\Phi^*\circ \dr_\nabla.
	\]
	\label{le:00}
  \end{lemma}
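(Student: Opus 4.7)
The plan is to reduce to two already-established facts: the untwisted version of the intertwining property (Proposition \ref{prop:0}) and the degree-$0$ behaviour of the pullback connection, which is essentially built into its definition.

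First, I would verify the claim in degree zero. For $e\in\Gamma(E)=\Omega^0(B,E)$, one has $\dr_\nabla e=\nabla_\cdot e\in\Omega^1(B,E)$ and $\Phi^* e=\phi^!e\in\Omega^0(A,\phi^!E)$. By the very definition of the pullback connection, $(\Phi^*\nabla)_{a_m}(\phi^!e)=(m,\nabla_{\Phi(a_m)}e)=(\Phi^*(\nabla_\cdot e))_m(a_m)$, so $\dr_{\Phi^*\nabla}\Phi^*e=\Phi^*\dr_\nabla e$ holds immediately.

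Next, since both $\Phi^*$ and the differentials are local operators (Remark \ref{Mv:prop3}), it suffices to verify the identity on forms of the type $\omega\wedge e$ with $\omega\in\Omega^k(B)$ and $e\in\Gamma(E)$: locally, any element of $\Omega^\bullet(B,E)$ is a finite sum of such decomposable forms (using a local frame of $E$), and both sides are $\mathbb{R}$-linear. On such a decomposable form, I would compute both sides using the Leibniz rule that characterises $\dr_\nabla$ and $\dr_{\Phi^*\nabla}$, together with the multiplicative properties \eqref{rk:1}--\eqref{rk:2} of the pullback of forms. Explicitly, on the one hand
\[
\Phi^*\bigl(\dr_\nabla(\omega\wedge e)\bigr)
=\Phi^*\bigl(\dr_B\omega\wedge e+(-1)^k\omega\wedge\dr_\nabla e\bigr)
=\Phi^*(\dr_B\omega)\wedge\phi^!e+(-1)^k\Phi^*\omega\wedge\Phi^*(\dr_\nabla e),
\]
while on the other hand
\[
\dr_{\Phi^*\nabla}\bigl(\Phi^*(\omega\wedge e)\bigr)
=\dr_{\Phi^*\nabla}(\Phi^*\omega\wedge\phi^!e)
=\dr_A(\Phi^*\omega)\wedge\phi^!e+(-1)^k\Phi^*\omega\wedge\dr_{\Phi^*\nabla}(\phi^!e).
\]
Proposition \ref{prop:0} gives $\Phi^*\dr_B=\dr_A\Phi^*$, and the degree-zero step gives $\Phi^*(\dr_\nabla e)=\dr_{\Phi^*\nabla}(\phi^!e)$, so the two expressions agree.

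The only delicate point is the justification that decomposable forms suffice; once one observes that $\dr_\nabla,\dr_{\Phi^*\nabla}$ and $\Phi^*$ all commute with restriction to open subsets and are compatible with the sheaf structure from Remark \ref{Mv:prop3}, the argument localises and the proof is complete. I do not anticipate any genuine obstacle, as the statement is essentially a formal consequence of the Leibniz rule combined with the previously established compatibility of $\Phi^*$ with the untwisted differentials and with pullback of the connection on degree-zero elements.
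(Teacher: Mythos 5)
Your proposal is correct and follows essentially the same route as the paper: verify the identity on sections $e\in\Gamma(E)$ using the definition of $\Phi^*\nabla$, then extend to decomposable forms $\omega\wedge e$ via the Leibniz rule, the multiplicativity properties \eqref{rk:1}--\eqref{rk:2}, and Proposition \ref{prop:0}. Your additional remark on localising via the sheaf structure to justify that decomposable forms generate is a small but welcome point of rigour that the paper leaves implicit.
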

  \begin{proof}
  	For $ e \in \Gamma(E)$, $m\in M$ and $a\in A_m$
  	\[\Phi^*(\dr_{\nabla}e)(a)=(m, \dr_{\nabla}e(\Phi(a)))=(m, \nabla_{\Phi(a)}e)={(\Phi^*\nabla)_{a}}(\phi^!e)=\dr_{\Phi^*\nabla}(\phi^!e)(a). \]
  	More generally, this yields  for $\omega \in \Omega^{k}(B)$ and $e \in \Gamma(E)$   	
  		\begin{equation*}
		\begin{split}
  			\Phi^* (\dr_{\nabla}(\omega \wedge e))&=\Phi^* ((\dr_B \omega) \wedge e+ (-1)^k \omega \wedge \dr_{\nabla}e) \\
			&\overset{\eqref{rk:1} \eqref{rk:2} }{=} \dr_{A} (\Phi^*\omega) \wedge \phi^!e + (-1)^k  \Phi^*\omega \wedge \dr_{\Phi^*\nabla}(\phi^!e)\overset{\eqref{rk:1}}{=} \dr_{\Phi^*\nabla}(\Phi^*(\omega \wedge e)).
  		\end{split}
  	\end{equation*}
  	Proposition \ref{prop:0} is used in the second passage.
  \end{proof}

  \begin{corollary}
  Let $A\to M$ and $B\to N$ be equipped with Lie algebroid structures, let $(\Phi,\varphi)\colon A\to B$ be a morphism of Lie algebroids, and consider a linear connection $\nabla\colon \Gamma(B) \times \Gamma(E) \to \Gamma(E)$. Then 
  \[ \Phi^\star R_{\nabla}=R_{\Phi^\star\nabla} \quad \in\quad \Omega^2(A, \operatorname{End}(\varphi^! E))=\Omega^2(A, \varphi^! \operatorname{End}(E)).
  \]
  \end{corollary}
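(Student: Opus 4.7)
The plan is to deduce the corollary from Lemma \ref{pullback_ce} together with the standard characterization of the curvature as the square of the connection differential. First I would recall (or verify in one line via the Leibniz rule) that for any linear $B$-connection $\nabla$ on $E$, the operator $\dr_\nabla^2\colon \Omega^\bullet(B,E)\to\Omega^{\bullet+2}(B,E)$ is $C^\infty(N)$-linear and equals wedging with the curvature: $\dr_\nabla^2\eta=R_\nabla\wedge\eta$, where the wedge is the one from Remark \ref{re:5}(2) induced by the evaluation pairing $\operatorname{End}(E)\otimes E\to E$. This is in fact implicitly invoked in the excerpt right after the definition of $R_\nabla$. The analogous identity holds for $\Phi^*\nabla$ on $\varphi^!E$.

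Next I would apply Lemma \ref{pullback_ce} twice to an arbitrary section $e\in\Gamma(E)=\Omega^0(B,E)$, obtaining
\[
R_{\Phi^*\nabla}\wedge \varphi^!e \;=\; \dr_{\Phi^*\nabla}^{\,2}(\varphi^!e)\;=\;\dr_{\Phi^*\nabla}\bigl(\Phi^*(\dr_\nabla e)\bigr)\;=\;\Phi^*\bigl(\dr_\nabla^{\,2}e\bigr)\;=\;\Phi^*(R_\nabla\wedge e).
\]
Using Remark \ref{re:5}(2) for the bilinear pairing $\operatorname{End}(E)\otimes E\to E$, the right-hand side equals $(\Phi^*R_\nabla)\wedge \varphi^!e$. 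Hence $R_{\Phi^*\nabla}\wedge \varphi^!e=(\Phi^*R_\nabla)\wedge \varphi^!e$ for every $e\in\Gamma(E)$.

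Finally, pullback sections $\varphi^!e$ generate $\Gamma(\varphi^!E)$ as a $C^\infty(M)$-module, and both $R_{\Phi^*\nabla}$ and $\Phi^*R_\nabla$ are tensorial sections of $\wedge^2 A^*\otimes \varphi^!\operatorname{End}(E)$; the equality above therefore forces them to coincide pointwise. The main obstacle, such as it is, lies in cleanly packaging the identity $\dr_\nabla^2=R_\nabla\wedge(\cdot)$ in the $E$-valued setting and checking its compatibility with the pullback formalism of Remark \ref{re:5}(2); once that bookkeeping is in place, the result is a two-line consequence of Lemma \ref{pullback_ce}.
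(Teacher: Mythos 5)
Your proof is correct and follows essentially the same route as the paper: evaluate both curvatures on a pullback section $\varphi^!e$, identify $R_\nabla(e)$ with $\dr_\nabla^2 e$, apply Lemma \ref{pullback_ce} twice, and invoke Remark \ref{re:5}(2) for the pairing $\operatorname{End}(E)\otimes E\to E$. The only difference is that you make explicit the final tensoriality/generation-by-pullback-sections argument, which the paper leaves implicit.
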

  
  \begin{proof}
  For all $e\in\Gamma(E)$, the form $R_{\Phi^\star\nabla}(\varphi^!  e)\in\Omega^2(A,\varphi^!E)$ is given by 
  \begin{equation*}
  \begin{split}
  R_{\Phi^*\nabla}(\varphi^! e)&=\dr_{\Phi^*\nabla}^2(\varphi^!e)=(\dr_{\Phi^*\nabla}^2\circ \Phi^*)(e)=(\Phi^*\circ \dr_\nabla^2)(e)=\Phi^*(R_\nabla(e)),
  \end{split}
  \end{equation*}
  with Lemma \ref{pullback_ce} used in the third equality.
 Then
  \[\Phi^*(R_\nabla(e))=(\Phi^*R_\nabla)(\varphi^! e)
  \]
  by Remark \ref{re:5}.
\end{proof}
The following result is then immediate.
  \begin{corollary}
  	In the situation above, if $\nabla$ is flat, then also $\Phi^*\nabla$ is flat.
  	\label{Cor:00}
  \end{corollary}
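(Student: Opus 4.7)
The statement to prove is that flatness of $\nabla$ is preserved under pullback by a Lie algebroid morphism. Given the preceding corollary, which identifies $\Phi^*R_\nabla$ with $R_{\Phi^*\nabla}$ inside $\Omega^2(A,\varphi^!\operatorname{End}(E))$, the conclusion is essentially a one-line consequence, so my plan is to simply invoke that identification.

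The plan is as follows. First, recall that $\nabla$ flat means by definition that its curvature tensor $R_\nabla \in \Omega^2(B,\operatorname{End}(E))$ vanishes identically. Then I would observe that the pullback operation $\Phi^*\colon \Omega^\bullet(B,\operatorname{End}(E)) \to \Omega^\bullet(A,\varphi^!\operatorname{End}(E))$ is pointwise $\mathbb{R}$-linear by its very definition (formula \eqref{eq:DR2}), so in particular it sends the zero form to the zero form. Hence $\Phi^*R_\nabla = 0$.

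Finally, I would apply the preceding corollary, which states the identity
\[
R_{\Phi^*\nabla} = \Phi^*R_\nabla \quad\in\quad \Omega^2(A,\varphi^!\operatorname{End}(E)).
\]
Combining this with $\Phi^*R_\nabla = 0$ yields $R_{\Phi^*\nabla} = 0$, which is exactly the flatness of the pullback connection $\Phi^*\nabla$. There is no real obstacle here: the previous corollary, together with Lemma \ref{pullback_ce} on which it rested, already packages all the non-trivial compatibility between pullback and the differential $\dr_\nabla$. The present corollary is merely the flatness specialization, and the proof amounts to a single display after unwinding definitions.
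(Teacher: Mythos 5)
Your proposal is correct and matches the paper's approach: the paper states this corollary as an immediate consequence of the preceding identity $\Phi^*R_\nabla=R_{\Phi^*\nabla}$, exactly as you argue. The only content is that the pointwise-defined pullback sends the zero form to zero, which you note explicitly.
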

  This leads to the following corollary. 
  \begin{corollary}
  	Given a LA-morphism $\Phi\colon A \to B$  over $\phi\colon M \to N$ and given a vector bundle $E\to N$  endowed with a flat $B$-connection $\nabla$, the pullback of forms induces a map in cohomology 
  	\[ \Phi^*\colon \operatorname{H}^{\bullet}_\nabla(B;E) \to \operatorname{H}^{\bullet}_{\Phi^*\nabla}(A; \phi^!E)\]
  	\[ [\omega] \mapsto [\Phi^*\omega]     \]
  	for all closed $\omega \in \Omega_{\text{cl}}^{\bullet}(B,E)$. $\Phi^*$ is a morphism of modules over the ring homomorphism $\Phi^*\colon \tH^\bullet(B)\to \tH^\bullet(A)$.
  \end{corollary}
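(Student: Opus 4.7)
The proof is essentially a packaging of the results already established in this subsection. The plan is to verify in order: (i) the target cohomology is well-defined, (ii) pullback of forms descends to cohomology, and (iii) the resulting map respects the $\tH^\bullet(B)$-module structure via the ring homomorphism $\Phi^*\colon \tH^\bullet(B)\to \tH^\bullet(A)$.

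First, Corollary \ref{Cor:00} guarantees that $\Phi^*\nabla$ is a flat $A$-connection on $\phi^!E$, so the twisted cohomology $\operatorname{H}^{\bullet}_{\Phi^*\nabla}(A;\phi^!E)$ is well-defined. Next, Lemma \ref{pullback_ce} shows that $\Phi^*\colon \Omega^\bullet(B,E)\to \Omega^\bullet(A,\phi^!E)$ is a cochain map with respect to $\dr_\nabla$ and $\dr_{\Phi^*\nabla}$. Consequently, $\Phi^*$ sends $\Omega^\bullet_{\text{cl}}(B,E)$ into $\Omega^\bullet_{\text{cl}}(A,\phi^!E)$ and maps $\dr_\nabla$-exact forms to $\dr_{\Phi^*\nabla}$-exact forms, so the assignment $[\omega]\mapsto [\Phi^*\omega]$ is well-defined on cohomology classes. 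That it is $\mathbb R$-linear is immediate since pullback of vector-valued forms is fibrewise linear.

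It remains to check the module compatibility. By Proposition \ref{prop:0}, the induced map $\Phi^*\colon \tH^\bullet(B)\to \tH^\bullet(A)$ is a ring homomorphism. The cup product on $\operatorname{H}^\bullet_\nabla(B,E)$ over $\tH^\bullet(B)$ is given by $[\omega]\cup[\eta]=[\omega\wedge \eta]$ for $\omega\in\Omega^\bullet_{\text{cl}}(B)$ and $\eta\in\Omega^\bullet_{\text{cl}}(B,E)$, and similarly on the target. Using \eqref{rk:2}, which yields $\Phi^*(\omega\wedge\eta)=\Phi^*\omega\wedge\Phi^*\eta$ at the level of forms, one obtains
\[
\Phi^*([\omega]\cup[\eta])=[\Phi^*(\omega\wedge\eta)]=[\Phi^*\omega\wedge\Phi^*\eta]=\Phi^*[\omega]\cup\Phi^*[\eta],
\]
which is exactly the module morphism condition over the ring homomorphism $\Phi^*\colon \tH^\bullet(B)\to\tH^\bullet(A)$.

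There is no real obstacle here: every ingredient has already been isolated earlier in the subsection. The only small point requiring care is making sure that in the module compatibility formula the pullback of the scalar factor $\omega\in\Omega^\bullet(B)$ lands in $\Omega^\bullet(A)$ (not in $\Omega^\bullet(A,\phi^!\mathbb{R})$), which is harmless because of the canonical identification noted in Example \ref{example_pullback_trivial}, and that the cup product used on the right-hand side is precisely the one defined via \eqref{cup} applied after the identification $\phi^!(N\times\mathbb R)\cong M\times\mathbb R$.
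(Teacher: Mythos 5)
Your proof is correct and follows essentially the same route as the paper: well-definedness from Corollary \ref{Cor:00}, descent to cohomology from the cochain-map property of Lemma \ref{le:00}, and the module compatibility from the multiplicativity of the pullback in \eqref{rk:2} together with the observation (Example \ref{ex:2}) that $\Phi^*$ of a closed scalar form is closed. The additional remarks about linearity and the identification $\phi^!(N\times\mathbb R)\cong M\times\mathbb R$ are harmless elaborations of points the paper leaves implicit.
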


  \begin{proof}
  	The first part of the statement follows immediately by Corollary \ref{Cor:00} and Lemma \ref{le:00}. The extra structure as module homomorphism follows by
  	\begin{equation*} \begin{split}
  			(\Phi^*(\omega \wedge \eta))_m(a_1, \dots, a_{k+l})
  			&= (m, (\omega \wedge \eta)_{\phi(m)}(\Phi_m(a_1), \dots, \Phi_m(a_{k+l}))\\
  			& \overset{\eqref{rk:2}}{=} ((\Phi^*\omega) \wedge (\Phi^*\eta))_m(a_1, \dots, a_{k+l})
  		\end{split}
  	\end{equation*}
  	for $\omega \in \Omega_{\text{cl}}^k(B)$, and $\eta \in \Omega_{\text{cl}}^l(B,E)$. Notice that $(\Phi^*\omega) \in \Omega^{\bullet}_{cl}(A)$ by Example \ref{ex:2}.
  \end{proof}
  
  \begin{remark}
  Let  $\Phi\colon A \to B$ be an LA-morphism over a smooth map $\phi\colon M \to N$, 
  and let the $2$-term complex $\partial\colon E_0\to E_1$ of vector bundles over $N$ carry a representation up to homotopy $(\nabla^0, \nabla^1, \omega)$ of $B$.
  That is, $\nabla^i\colon \Gamma(B)\times \Gamma(E_i)\to \Gamma(E_i)$ is a linear connection for $i=0,1$ and $\omega\in\Omega^2(B, \operatorname{Hom}(E_1, E_0))$ is a form such that 
  \begin{enumerate}
  \item $\partial\circ\nabla^0 =\nabla^1\circ\partial$,
  \item $R_{\nabla^0}=\omega\circ\partial$ and $R_{\nabla^1}=\partial\circ\omega$,
  \item $\dr_{\nabla^{1,0}}\omega=0$,
  \end{enumerate}
  where $\nabla^{1,0}$ is the induced connection by $\nabla^0$ and $\nabla^1$ on $\operatorname{Hom}(E_1, E_0)$.
  It is  easy to check that $(\Phi^*\nabla^0, \Phi^*\nabla^1, \Phi^*\omega)$ is a $2$-term representation of $A$ on $\varphi^!\partial\colon \varphi^!E_0\to \varphi^!E_1$.
  \end{remark}

       \section{LA-homotopy}
     \label{sec:H}
    The definition of homotopy already appears sporadically in the literature, for instance in \cite{Meinrenken17}, \cite{Bal12} and \cite{Str05}. In the more general framework of dg-manifolds it appears in this form for instance in \cite{Sev05} and \cite{Royt10}, but it has remained largely unknown in the community. In the case of Lie algebroids, this paper shows that this notion has remarkable properties: the definition is compatible with related notions, a large class of examples can be found, two LA-homotopic LA-morphisms induce the same map in LA-cohomology with values on representations. This section introduces and discusses the definition itself, as well as some examples, and states the main theorem. The rest of the paper then focuses on applications.\\

    Let $A\to M$ be a Lie algebroid, and in the following let $I$ be an open interval containing $0$ and $1$. Then $TI\times A$ is equipped with the product Lie algebroid structure over $I\times M$. For $t\in I$ the inclusion LA-morphisms 
    \begin{equation}\label{inclusion}
    		\begin{xy}
    			\xymatrix{A \ar[r]^{\mathcal{I}_t\quad }\ar[d]& TI \times A \ar[d]\\
    				M \ar[r]_{\iota_j\quad } & I \times M}
    		\end{xy}
    	\end{equation}
    	are defined by $\mathcal{I}_t(a):=(0_t,a)$ and $\iota_t(m):=(t,m)$ for all $a\in A$ and all $m\in M$.
	For each $t\in I$ this Lie algebroid morphism is right-inverse to the Lie algebroid morphism 
	\begin{equation}\label{projection}
    		\begin{xy}
    			\xymatrix{TI\times A \ar[r]^{\Ppr_A}\ar[d]& A \ar[d]\\
    				I\times M \ar[r]_{\pr_M } & M}
    		\end{xy}
    	\end{equation}
	defined by 
	$\Ppr_A(v,a):=a$ and $\pr_M(t,m):=m$ for all $(t,m)\in I\times M$ and $(v,a)\in TI\times A$.

    \begin{definition}\label{def:hom1}
Let $A\to M$ and $B \to N$ be Lie algebroids. Two Lie algebroid morphisms $(\Phi_0,\phi_0)$, $(\Phi_1,\phi_1)$ from $A$ to $B$ are said to be \textbf{LA-homotopic} if there exists an LA-morphism $(\Phi,\phi)$ of the form
    	\begin{equation*}
    		\begin{xy}
    			\xymatrix{ TI \times A \ar[r]^{\quad\Phi}\ar[d]& B\ar[d]\\
    				I \times M \ar[r]_{\quad\phi} & N}
    		\end{xy}
    	\end{equation*}
	such that $\Phi_j= \Phi \circ \mathcal{I}_j$ for $j=0,1$. The pair $(\Phi,\phi)$ is then called an \textbf{LA-homotopy} between $(\Phi_0,\phi_0)$ and  $(\Phi_1,\phi_1)$.
    	    	    \end{definition}
		    
		     Fo simplicity, just the Lie algebroid morphism $\Phi\colon TI \times A \to B$ is often called a LA-homotopy. However, the reader is invited to bear in mind that the Lie algebroids $A$ and $B$ can have different base manifolds in general. 

    \begin{remark}
    	To make LA-homotopy an equivalence relation, relax the definition above by calling two Lie algebroid morphisms $(\Phi_j,\phi_j) \colon A \to B$, for $j=0,1$, LA-homotopic if there exist a continuous vector bundle morphism 
    	$\Phi\colon T[0,1] \times A \to B$ over a piecewise smooth map $\phi\colon [0,1] \times M \to N$, such that $\Phi\colon T[0,1] \times A \to B$ is a LA-morphism over each closed subinterval where $\phi$ is smooth.
	
	Since it is not important in this paper that LA-homotopy is an equivalence relation, the authors chose to keep the more simple Definition \ref{def:hom1}.
    \end{remark}
    	
    \begin{example}
    	For $A=TM$ and $B=TN$ all LA-morphisms are differentials of (piecewise) smooth maps between the base manifolds. This follows from the anchor condition in the definition of LA-morphisms. Consequently, Definition \ref{def:hom1} agrees with the usual notion of smooth homotopy between manifolds.
    \end{example}

    \begin{remark}
    	In the special case of $A$-paths, Crainic and Fernandes use an e\-qui\-va\-lent definition of LA-homotopy in \cite{CrFer01}. The same definition has been used by Zhu and Brahic in \cite{BrZhu11}, where they introduce homotopy (bundles of) groups for Lie algebroids, generalizing the standard notion of homotopy groups of a smooth manifold. 
    \end{remark}
    
    \begin{definition}
    	\label{def:he}
    	Two Lie algebroids $A\to M$ and $B\to N$ are said to be \textbf{LA-homotopy equivalent} if there exist Lie algebroid morphisms $F\colon A \to B$ and $G\colon B \to A$ such that $F \circ G$ is LA-homotopic to the identity map $\text{Id}_B$ and $G \circ F$ is LA-homotopic to the identity $\text{Id}_A$. 
    \end{definition}
       \begin{remark}
    	\label{productHE}
    	It is immediate to show that if $F_1\colon A_1 \to B_1$ and $F_2 \colon A_2 \to B_2$ define LA-homotopy equivalences between Lie algebroids, then $F_1 \times F_2 \colon A_1 \times A_2 \to B_1 \times B_2$ defines an LA-homotopy equivalence between the product Lie algebroids. 
    \end{remark}
       
   \begin{example}
   	\label{ex:homotopyequivalent}
   Let $f \colon M \to N$ be a homotopy equivalence between manifolds and let $B$ be a Lie algebroid over an arbitrary base $P$. Then $TM \times B$ and $TN \times B$ are LA-homotopy equivalent Lie algebroids, via the LA-homotopy equivalence $F \colon TM \times B \to TN \times B$, $F:=Tf \times \operatorname{Id}_{B}$ (see also Remark \ref{productHE}). For instance, for $m > n \in \N^+$, and $\mathfrak{h}$ a Lie algebra, the Lie algebroid $T\R^n \times 0_{\R^{m-n}} \times \mathfrak{h} \to \R^m$ is LA-homotopy equivalent to $0_{\R^{m-n}} \times \mathfrak{h} \to \R^{m-n}$.
    \end{example}

    \subsection{LA-homotopy invariance of twisted Lie algebroid cohomology}
    \label{sub:proof}
This section explains and proves the main result of this paper.
    \begin{theorem}
     Consider two LA-homotopic LA-morphisms $(\Phi_0,\phi_0),(\Phi_1,\phi_1)\colon A \to B$ via an LA-homotopy
     \[\Phi\colon TI\times A\to B\]
     over $\phi\colon I\times M\to N$, as well as a vector bundle $E\to N$ with a flat $B$-connection $\nabla\colon \Gamma(B) \times \Gamma(E) \to \Gamma(E)$.
    	Then the maps in twisted cohomology induced by the Lie algebroid inclusions $\mathcal{I}_t\colon A \to TI \times A$, $a\mapsto (0_t,a)$ (see \eqref{inclusion}),     	
	\begin{equation}\label{def_I}
	\overline{\mathcal{I}_{t}^*} \colon \tH_{\Phi^*\nabla}^{\bullet}(TI \times A, \phi^!E) \to \tH_{\Phi_t^*\nabla}^{\bullet}(A, \phi_t^!E) 
	\end{equation}
    		are isomorphisms for all $t\in I$, and the following diagram commutes
\[\begin{tikzcd}
	{H_\nabla(B,E)} && {H_{\Phi_0^*\nabla}(A,\phi_0^!E)} \\
	\\
	&& {H_{\Phi_1^*\nabla}(A,\phi_1^!E).}
	\arrow["{\overline{\Phi_0^*}}", from=1-1, to=1-3]
	\arrow["{\overline{\Phi_1^*}}"', from=1-1, to=3-3]
	\arrow["{\overline{\mathcal{I}_{1}^*} \circ (\overline{\mathcal{I}_{0}^*})}^{-1}", from=1-3, to=3-3]
\end{tikzcd}\]
Moreover, there exists a gauge equivalence $\theta\colon \phi_0^!E\to \phi_1^!E$ from $\Phi_0^*\nabla$ to $\Phi_1^*\nabla$ such that \[\overline{\mathcal{I}_{1}^*} \circ (\overline{\mathcal{I}_{0}^*})^{-1}= \theta_*.\]
    In other words, $(\Phi_0,\phi_0),(\Phi_1,\phi_1)\colon A \to B$ induce, up to a gauge isomorphism, the same morphism in LA-cohomology.

    	\label{th:hi}
    \end{theorem}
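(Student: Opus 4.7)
The plan is to lift the classical de Rham homotopy argument (chain homotopy via contraction with $\partial_t$ integrated over $I$) to the twisted Lie algebroid setting. Two obstacles go beyond the classical case: (i) the fiber $E_{\phi(t,m)}$ of $\phi^!E$ varies with $t$, so $\mathcal{I}_0^*$ and $\mathcal{I}_1^*$ land in different cochain complexes; (ii) the pullback connection $\Phi^*\nabla$ couples the $I$-direction and the $A$-direction. Both are resolved by $\Phi^*\nabla$-parallel transport in the $I$-direction, which simultaneously produces the gauge equivalence $\theta$ and a cochain-level inverse of $\mathcal{I}_0^*$.

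Concretely, I would first define $\theta_t\colon\phi_0^!E\to\phi_t^!E$ fiberwise as the parallel transport of $\Phi^*\nabla$ along the section $\tilde{\partial}_t:=(\partial_t,0_A)\in\Gamma(TI\times A)$, and set $\theta:=\theta_1$. The key point is that $\theta_t$ intertwines $\Phi_0^*\nabla$ with $\Phi_t^*\nabla$ in the sense of Definition \ref{def:ga}. This uses the flatness of $\Phi^*\nabla$ (Corollary \ref{Cor:00}) together with the bracket identity $[\tilde{\partial}_t,\pr_M^!a]=0$ in the product Lie algebroid $TI\times A$: for $a\in\Gamma(A)$ and $e\in\Gamma(\phi_0^!E)$, vanishing of $R_{\Phi^*\nabla}(\tilde{\partial}_t,\pr_M^!a)$ applied to $\theta_{(\cdot)}e$ gives $\nabla_{\tilde{\partial}_t}\nabla_{\pr_M^!a}(\theta_{(\cdot)}e)=0$, so $\nabla_{\pr_M^!a}(\theta_t e)$ is parallel along $\tilde{\partial}_t$ and hence equals $\theta_t((\Phi_0^*\nabla)_a e)$. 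Next, I would build a cochain-level inverse of $\mathcal{I}_0^*$ using $\theta$. Define $\tilde{P}^*\colon\Omega^\bullet(A,\phi_0^!E)\to\Omega^\bullet(TI\times A,\phi^!E)$ by
\[
(\tilde{P}^*\omega)_{(t,m)}(\sigma_1,\ldots,\sigma_k):=\theta_t\bigl(\omega_m(\Ppr_A\sigma_1,\ldots,\Ppr_A\sigma_k)\bigr).
\]
Since $\tilde{P}^*\omega$ is a pure $A$-form (no $dt$-component), and since the family $\theta_t\omega$ is parallel along $\tilde{\partial}_t$ while $\theta_t$ is a gauge equivalence, $\tilde{P}^*$ is a cochain map and $\mathcal{I}_0^*\circ\tilde{P}^*=\operatorname{Id}$ on cochains.

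The main step is to construct a chain homotopy $L\colon\Omega^k(TI\times A,\phi^!E)\to\Omega^{k-1}(TI\times A,\phi^!E)$ with
\[
\operatorname{Id}-\tilde{P}^*\circ\mathcal{I}_0^*=d_{\Phi^*\nabla}\circ L+L\circ d_{\Phi^*\nabla}.
\]
Decomposing $\omega=\alpha+dt\wedge\beta$ (with $\alpha,\beta$ of pure $A$-type, both viewed as $t$-families), set
\[
(L\omega)_{(t,m)}(\sigma_1,\ldots,\sigma_{k-1}):=\int_0^t\theta_t\theta_s^{-1}\bigl(\beta_s(\Ppr_A\sigma_1,\ldots,\Ppr_A\sigma_{k-1})\bigr)\,ds.
\]
Verifying the identity reduces to two facts: (a) $d_{\Phi_t^*\nabla}(\theta_t\gamma)=\theta_t(d_{\Phi_0^*\nabla}\gamma)$ by the gauge property of $\theta_t$; (b) $\nabla_{\tilde{\partial}_t}(\theta_t X(t))=\theta_t(\partial_t X(t))$ for any $t$-family $X$ in the fixed bundle $\wedge^\bullet A^*\otimes \phi_0^!E$, which is the defining ODE of parallel transport. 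Combined with the fundamental theorem of calculus, these give the homotopy identity. It follows that $\overline{\mathcal{I}_0^*}$ is an isomorphism with inverse $\overline{\tilde{P}^*}$. Evaluating $\tilde{P}^*$ at $t=1$ gives $\mathcal{I}_1^*\circ\tilde{P}^*=\theta_*$ on cochains, hence $\overline{\mathcal{I}_1^*}\circ(\overline{\mathcal{I}_0^*})^{-1}=\theta_*$; the triangle in the statement then commutes because $\Phi_j^*=\mathcal{I}_j^*\circ\Phi^*$ for $j=0,1$.

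The main technical obstacle is the bookkeeping in the chain-homotopy computation: one has to carefully expand $d_{\Phi^*\nabla}$ on the bigraded decomposition $\Omega^\bullet(TI\times A,\phi^!E)=\Omega^{0,\bullet}\oplus dt\wedge\Omega^{0,\bullet-1}$, identify the mixed $dt$-component of $d_{\Phi^*\nabla}$ with the ``covariant time derivative'' $\nabla_{\tilde{\partial}_t}$, and track signs. The construction of $\theta$ via a parameter-dependent ODE and the interaction between the gauge property and the integration in $t$ is where the flatness of $\Phi^*\nabla$ does all the work.
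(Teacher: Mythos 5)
Your proposal is correct, and it rests on the same two pillars as the paper's proof: parallel transport of $\Phi^*\nabla$ along the section $(\partial_t,0_A)$ of $TI\times A$ supplies the gauge equivalence $\theta$, and a Bott--Tu-style integration over the $I$-factor supplies the chain homotopy. The difference is organizational. The paper first proves the ``product'' case in Theorem \ref{th:4} (for a connection of the form $\Ppr_A^*\nabla$ on $\pr_M^!F$, the explicit operator $K$ is a chain homotopy between $\operatorname{id}$ and $\Ppr_A^*\circ\mathcal I_0^*$), and then shows in Section \ref{ap:A}, Steps 1--3, that the general $\Phi^*\nabla$ is gauge equivalent --- via the trivialization $\Theta^t$ built from the flow of the linear vector field $\widehat{(\Phi^*\nabla)_{(\partial_t, 0)}}$ --- to $\Ppr_A^*\mathcal I_t^*\nabla$, before assembling everything in a diagram of gauge isomorphisms. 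Your $\tilde P^*$ and $L$ are precisely the conjugates $(\Theta^0)^{-1}\circ\Ppr_A^*$ and $(\Theta^0)^{-1}\circ K\circ\Theta^0$ written out fiberwise, so you merge the two stages into one computation; the price is that the chain-homotopy identity must be verified directly in the twisted setting rather than quoted from the untwisted product case, but the computation you outline does close. You also replace the paper's machinery on flows of Lie algebroid derivations (Lemma \ref{lemma_flat_sections}, Proposition \ref{invariance_a}, Corollary \ref{cor_semi_direct}) by the more elementary observation that $R_{\Phi^*\nabla}(\tilde\partial_t,\pr_M^!a)=0$ together with $[\tilde\partial_t,\pr_M^!a]=0$ forces $(\Phi^*\nabla)_{\pr_M^!a}(\theta_{(\cdot)}e)$ to be parallel in $t$; this is a genuine shortcut for the intertwining property, which buys a shorter proof of this theorem, whereas the paper's flow formalism is set up once and reused elsewhere (e.g.\ in Proposition \ref{pr:mein}).
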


         \begin{remark} 
        	The gauge equivalence realizing the map $\overline{\mathcal{I}_{1}^*} \circ (\overline{\mathcal{I}_{0}^*})^{-1}$in cohomology is explicitly constructed as follows. Consider the flat $TI\times A$-connection $\Phi^*\nabla$ on $\phi^!E$. Then the derivation $(\Phi^*\nabla)_{(\partial t, 0^A)}$ of $\phi^! E$ defines a \emph{linear vector field} $X\in\mx(\phi^!E)$, see Section \ref{sec:6}. Its flow $\Xi$ makes the diagram 
        	\[\begin{tikzcd}
        		{\tilde\Omega:=\{(\lambda, e_{(r,m)})\in \mathbb R\times \phi^!E\mid \lambda +r\in I\}} && {\phi^!E} \\
        		&& {} \\
        		{\Omega:=\{(\lambda,(r,m))\in \mathbb R\times (I\times M)\mid \lambda +r\in I\}} && {I\times M} \\
        		{(\lambda,(r,m))} && {(\lambda+r,m)}
        		\arrow["\Xi", from=1-1, to=1-3]
        		\arrow[from=1-1, to=3-1]
        		\arrow[from=1-3, to=3-3]
        		\arrow["\xi"', from=3-1, to=3-3]
        		\arrow[shorten <=10pt, shorten >=10pt, maps to, from=4-1, to=4-3]
        	\end{tikzcd}\]
        	commute, and is by isomorphisms of vector bundles, see Lemma \ref{lemma_linear_flow} or \cite{Jotz24}.
        	The smooth map $\theta$ is then defined by 
        	\[ \theta\colon \phi_0^!E=\iota_0^!(\phi^!E)\to \phi_1^!E=\iota_1^!(\phi^!E), \qquad \theta(e)=\Xi(1, e)\quad \in \quad (\phi^!E)\arrowvert_{\{1\}\times M}\]
        	for all $e\in \phi_0^!E=\iota_0^!(\phi^!E)=(\phi^!E)\arrowvert_{\{0\}\times M}$.
        	Its smooth inverse is similarly defined by the restriction to $(\phi^!E)\arrowvert_{\{1\}\times M}$ of $\Xi_{-1}$.
        \end{remark}

        \begin{remark}\label{relationswithPedroswork}
        \begin{enumerate}
\item    	The LA-cohomology with values on the trivial representation was first  found to be LA-homotopy invariant by Balcerzak in \cite{Bal12}, as a Corollary of a Stokes' theorem proved there. For dg-manifolds over the same base manifold, LA-homotopy invariance is proven  in \cite{Cam20} via an equivalent definition.
   
    \item	
    Theorem \ref{th:hi} is given in \cite[Corollary 4]{Fr19} in the case where $TI\times A$ is the pullback of $B$ under a smooth homotopy $\varphi\colon I\times M\to N$ of the base manifolds, $\Phi$ is the induced Lie algebroid morphism from $\varphi^{!!}B$ to $B$ (see Lemma \ref{le:0}), and under the additional assumption that for all $t\in I$, the smooth map $\varphi_t\colon M\to N$ is transverse to $B$.

 In fact, some results studied in \cite{Fr19} (in particular Theorem 2 and Theorem 4 in this paper) yield a different proof of Theorem \ref{th:hi} in the context of submersions by Lie algebroids. 
    Consider the Lie algebroid $TI \times A \to I \times M$ and the surjective submersion $p \colon I \times M \to I$. This pair is a \emph{trivial submersion by Lie algebroids}, see \cite[Definition 1]{Fr19}. Assume that $TI\times A$ is represented on $E\to I\times M$. By \cite[Theorem 1]{Fr19} and the discussion before \cite[Theorem 4]{Fr19} 
the inclusion of Lie algebroids \[\iota_t^{!!}(TI \times A) \simeq \{0_t\} \times A\hookrightarrow TI \times A\] induces an isomorphism 
\[\overline{\mathcal{I}_{t}^*} \colon \tH_{\nabla}^{\bullet}(TI \times A, E) \to \tH_{\mathcal I_t^*\nabla}^{\bullet}(A, \iota_t^!E) \]
in twisted Lie algebroid cohomology, for each $t \in I$.
    Theorem 4 in \cite{Fr19} interprets as well the composition $\overline{\mathcal{I}_{1}^*} \circ (\overline{\mathcal{I}_{0}^*})^{-1}$ as parallel transport along $c=\id_I\colon I \to I$ with respect to the complete Ehresmann connection $TI \times 0_M$.  
    \end{enumerate}\end{remark}

	The first part of the proof of Theorem \ref{th:hi} consists in showing that taking its direct product with the tangent bundle of an open interval does not affect the twisted Lie algebroid cohomologies of a Lie algebroid. In other words, the map $\overline{\mathcal I_t^*}$ in \eqref{def_I} is an isomorphism for $t=0,1$. The precise statement is formalized in Theorem \ref{th:4} below. Note that for twists given by the standard representation on the trivial line bundle, i.e.~for the usual Lie algebroid cohomology Theorem \ref{th:4} is sufficient to conclude using Example \ref{example_pullback_trivial}.

    \begin{theorem}
    	\label{th:4}
    	Let $A \to M$ be a Lie algebroid and let $F \to M$ be a vector bundle with an $A$-representation $\nabla$. Then
    	\[\overline{\operatorname{Pr}_A^*}\colon \operatorname{H}_{\nabla}^{\bullet}(A,F) \rightarrow \operatorname{H}^{\bullet}_{\operatorname{Pr}_A^*\nabla}(TI \times A, \operatorname{pr}_M^!F)  \]
    	is a module isomorphism over the ring homomorphism $\overline{\operatorname{Pr}_A^*}\colon \operatorname{H}^{\bullet}(A) \to \operatorname{H}^{\bullet}(TI \times A)$, where $\operatorname{Pr}_A\colon TI \times A \to A$ and $\operatorname{pr}_M\colon I \times M \to M$ are the projection maps to the second components. 
	The inverse of $\overline{\Ppr_A^*}$ is given by \[
	\overline{\mathcal I_t^*}\colon \operatorname{H}^{\bullet}_{\operatorname{Pr}_A^*\nabla}(TI \times A, \operatorname{pr}_M^!F)\to \operatorname{H}_{\mathcal I_t^*\operatorname{Pr}_A^*\nabla}^{\bullet}(A,\iota_t^!\pr_M^!F)=\operatorname{H}_{\nabla}^{\bullet}(A,F)
	\] for any $t\in I$, with $\mathcal I_t$ defined as in \eqref{inclusion} and right-inverse to $\Ppr_A$.
    \end{theorem}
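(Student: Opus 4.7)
The plan is to adapt the chain-homotopy proof of the classical de Rham Poincar\'e lemma for $M \times I$ to this twisted Lie algebroid setting. Since $TI \times A = \pr_I^! TI \oplus \pr_M^! A$ as vector bundles over $I \times M$ and $T^*I$ has rank one, every form $\omega \in \Omega^k(TI \times A, \pr_M^! F)$ decomposes uniquely as $\omega = \omega_1 + dt \wedge \omega_2$ with $\omega_1, \omega_2$ having no $dt$ component. Via the canonical identification $\iota_t^! \pr_M^! F \cong F$ this is equivalent to a pair of smooth families $\omega_1 \in C^\infty(I, \Omega^k(A, F))$ and $\omega_2 \in C^\infty(I, \Omega^{k-1}(A, F))$.

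The core technical step is then to derive the formula
\begin{equation*}
\dr_{\Ppr_A^*\nabla}(\omega_1 + dt \wedge \omega_2) \;=\; \dr_\nabla \omega_1 \;+\; dt \wedge \bigl(\partial_t \omega_1 - \dr_\nabla \omega_2\bigr)
\end{equation*}
for the twisted differential in this decomposition, where on the right-hand side $\dr_\nabla$ is applied fiberwise in $t \in I$. The main inputs are: (i) the bracket identity $[(\partial_t, 0), (0, a)] = 0$ for every $a \in \Gamma(A)$ in the product Lie algebroid $TI \times A$; (ii) the equality $\Ppr_A(\partial_t, 0) = 0$, together with the Leibniz rule, which forces $(\Ppr_A^*\nabla)_{(\partial_t, 0)}$ to act on sections of $\pr_M^! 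F$ as the plain partial derivative $\partial_t$ (compare Example \ref{example_pullback_trivial}); (iii) the sign convention $\dr(dt \wedge \omega_2) = -dt \wedge \dr \omega_2$ coming from the closedness of $dt$.

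With this in hand, I would define the chain homotopy
\begin{equation*}
K\omega \;:=\; \int_0^{(\cdot)} \omega_2(s)\, ds \;\in\; \Omega^{k-1}(TI \times A, \pr_M^! F),
\end{equation*}
with no $dt$ component, where $(\cdot)$ denotes the value of the $I$-coordinate at the base point. The formula above, together with the fundamental theorem of calculus, then yields in a short direct computation
\begin{equation*}
\bigl(\dr_{\Ppr_A^*\nabla} K + K\, \dr_{\Ppr_A^*\nabla}\bigr)\,\omega \;=\; \omega \,-\, \Ppr_A^* \mathcal I_0^*\, \omega.
\end{equation*}
Hence $\overline{\Ppr_A^*} \circ \overline{\mathcal I_0^*} = \id$ in cohomology. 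Combined with the evident identity $\mathcal I_t^* \circ \Ppr_A^* = \id$ on forms (valid for every $t \in I$, since $\Ppr_A \circ \mathcal I_t = \id_A$), this proves that $\overline{\Ppr_A^*}$ is an isomorphism with inverse $\overline{\mathcal I_0^*}$, and consequently $\overline{\mathcal I_t^*} = \overline{\Ppr_A^*}^{-1}$ for every $t \in I$. The module-homomorphism part of the statement is then immediate from the multiplicativity \eqref{rk:2} of the pullback of vector-valued forms.

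The main obstacle will be the clean derivation of the differential formula in the decomposition $\omega = \omega_1 + dt \wedge \omega_2$: it requires unwinding simultaneously the product Lie algebroid structure on $TI \times A$ and the definition of the pullback connection $\Ppr_A^*\nabla$, and keeping track of signs in the mixed terms. Once that formula is established, both the construction of $K$ and the verification of the chain-homotopy identity reduce to a textbook computation.
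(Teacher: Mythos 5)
Your proposal is correct and follows essentially the same route as the paper: the paper also proves injectivity from $\Ppr_A\circ\mathcal I_t=\id_A$ and surjectivity via a chain homotopy $K$ given by integration of the $\dr t$-component over $[0,t]$, working with the local generators $h\cdot\Ppr_A^*\omega$ and $g\,\dr t\wedge\Ppr_A^*\eta$ instead of your global decomposition $\omega=\omega_1+\dr t\wedge\omega_2$. The only differences are cosmetic: the paper absorbs a sign $(-1)^{k-1}$ into $K$ so that the homotopy identity reads $\dr K-K\dr$ rather than your anticommutator convention.
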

  
    The proof of this theorem relies on the following lemma, which is standard.
     \begin{lemma}
     	The space $\Omega^k(TI \times A, \operatorname{pr}^!_MF)$ is (locally) generated as a $C^\infty(I\times M)$-module by forms of the following type:
     	\begin{enumerate}[(i)]
     		\item  $\operatorname{Pr}_A^* \omega$    \label{item:dR1}

     		\item $\dr t \wedge \operatorname{Pr}_A^* \eta$ \label{item:dR2}
     	\end{enumerate}
     	with $\omega \in \Omega^k(A,F)$ and $\eta \in \Omega^{k-1}(A,F)$.
     	\label{le:1}
     \end{lemma}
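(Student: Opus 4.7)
The plan is to exploit the direct-sum decomposition of the Lie algebroid $TI\times A \to I\times M$ as a vector bundle. Writing $\pr_I\colon I\times M\to I$ and $\pr_M\colon I\times M\to M$, we have
\[
TI\times A \;\cong\; \pr_I^{!}\,TI \,\oplus\, \pr_M^{!}\,A
\]
as vector bundles over $I\times M$, with $\Ppr_A$ being the projection to the second summand in this decomposition. Dualizing and using that $TI$ has rank one (so $\wedge^{\geq 2}T^*I=0$) gives
\[
\wedge^k(TI\times A)^{*} \;\cong\; \wedge^k\pr_M^{!}A^{*} \;\oplus\; \bigl(\pr_I^{!}T^*I \otimes \wedge^{k-1}\pr_M^{!}A^{*}\bigr).
\]
Tensoring with $\pr_M^{!}F$ on the right yields the corresponding decomposition of $\wedge^k(TI\times A)^{*}\otimes \pr_M^{!}F$ into two $C^\infty(I\times M)$-submodules.

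Next I would identify the generators of each summand. For the first summand $\wedge^k\pr_M^{!}A^{*}\otimes \pr_M^{!}F \cong \pr_M^{!}(\wedge^kA^{*}\otimes F)$, the pullback sections $\pr_M^{!}\omega$ for $\omega\in\Gamma(\wedge^kA^*\otimes F)=\Omega^k(A,F)$ (locally) generate the module of sections, and under the identifications these correspond precisely to $\Ppr_A^*\omega$ in the notation of Definition \ref{def_pullback_forms}. For the second summand, a global frame of $\pr_I^{!}T^*I$ is given by $\dr t$ (the coordinate function $t$ on $I$ being globally defined), so every section of $\pr_I^{!}T^*I\otimes \wedge^{k-1}\pr_M^{!}A^{*}\otimes \pr_M^{!}F$ can be written as $\dr t \wedge \alpha$, where $\alpha$ is a section of $\wedge^{k-1}\pr_M^{!}A^{*}\otimes \pr_M^{!}F$, which in turn is (locally) generated by $\Ppr_A^*\eta$ for $\eta\in\Omega^{k-1}(A,F)$ as in the first case.

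Combining these two steps, every element of $\Omega^k(TI\times A,\pr_M^{!}F)$ is locally a $C^\infty(I\times M)$-linear combination of forms of type \eqref{item:dR1} and \eqref{item:dR2}, proving the lemma.

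The proof is essentially bookkeeping with direct sums, and the only conceptual step is the rank-one collapse of exterior powers of $T^*I$. The mild subtlety — and the reason for the word \emph{locally} — is that while $\dr t$ gives a global frame of $\pr_I^{!}T^*I$, the factors $\Ppr_A^*\omega$ can only be written in this explicit pullback form once a local frame of $\wedge^\bullet A^*\otimes F$ has been chosen on $M$; globally one needs a partition-of-unity argument, but for the intended use (proving a Poincaré-style lemma for $\Ppr_A^*$) the local statement suffices.
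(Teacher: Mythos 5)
Your proof is correct. The paper actually omits a proof of this lemma entirely (it is declared ``standard''), and your argument --- decomposing $TI\times A\cong \pr_I^{!}TI\oplus\pr_M^{!}A$ as in the paper's own description of the direct product Lie algebroid, collapsing $\wedge^{\geq 2}T^*I=0$ by rank one, and generating each summand by pullback sections --- is exactly the standard argument being alluded to, with the role of ``locally'' correctly identified.
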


     \begin{remark}
     Note that the cohomology maps $\overline{\mathcal I_t^*}$ in Theorem \ref{th:hi} and in Theorem \ref{th:4} are not the same maps, although both induced by the Lie algebroid morphism $\mathcal I_t$ in \eqref{inclusion}. The first one is the cohomology map defined by pulling back $\Phi^*\nabla\colon \Gamma(TI\times A)\times \Gamma(\varphi^!E)\to \Gamma(\varphi^!E)$ along $\mathcal I_t$, with $\nabla\colon \Gamma(B)\times \Gamma(E)\to\Gamma(E)$ a general Lie algebroid representation. The second one is defined by pulling back $\Ppr_A^*\nabla\colon \Gamma(TI\times A)\times \Gamma(\pr_M^!F)\to\Gamma(\pr_M^!F)$, for $\nabla\colon \Gamma(A)\times\Gamma(F)\to \Gamma(F)$ a Lie algebroid representation. However which version of $\overline{\mathcal I_t^*}$ is meant is clear from the context, so the notation is taken to be the same for simplicity.
        \end{remark}

      \begin{proof}[Proof of Theorem \ref{th:4}]
      	      		For simplicity, use the notation \[\nabla^{TI \times A}:= \operatorname{Pr}_A^*\nabla\colon\Gamma(TI\times A)\times \Gamma(\pr_M^!F)\to\Gamma(\pr_M^!F).\] Recall the Lie algebroid morphisms (inclusions) $\mathcal{I}_t\colon A \to TI \times A$, $\mathcal{I}_t(a):= (0_t, a)$ over  $\iota_t\colon M \to I \times M,\, \iota_t(m):=(t,m)$ for all $t\in I$.\medskip

	The map $\overline{\operatorname{Pr}^*_A}$ is injective because the identities $\pr_A \circ \iota_t= \text{id}_M$ and $\operatorname{Pr}_A \circ \mathcal{I}_t= \text{id}_A$  imply $\iota_t^!(\pr_A^!F)=(\pr_A \circ \iota_t)^!F=F$ and 
      	$\overline{\mathcal{I}_t^*} \circ \overline{\operatorname{Pr}^*_A}= \text{id}_{H^{\bullet}_\nabla(A, F)}$ in cohomology, with
      	\[\operatorname{H}_{\nabla}^{\bullet}(A,F)  \xrightarrow{\ov{\operatorname{Pr}^*_A}} \operatorname{H}_{\nabla^{TI \times A}}^{\bullet}(TI \times A, \pr_M^!F) \xrightarrow{\ov{\mathcal{I}_t^*}} \operatorname{H}_{\nabla}^{\bullet}(A,F).\]

To show the surjectivity of $\overline{\operatorname{Pr}^*_A}$, define as follows an explicit homotopy operator. 
Inspired by \cite{BoTu82} and using Lemma (\ref{le:1}), define the operator 
\[K\colon  \Omega^k(TI \times A, \pr_M^!F) \to \Omega^{k-1}(TI \times A, \pr_M^! F)\]
by
\[K (h\cdot\Ppr_A^*\omega):= 0\]
and   
\[K(g\cdot \dr t \wedge \Ppr_A^*\eta)\arrowvert_{(t,m)}:= (-1)^{k-1}\,\int_0^t g(s,m)ds\cdot (\Ppr_A^*\eta)\arrowvert_{(t,m)},\] for all $t \in I$ and $m\in M$,
where  $h,g \in C^{\infty}(I \times M)$, $\omega\in \Omega^k(A,F)$, $\eta\in\Omega^{k-1}(A,F)$.

\bigskip
Now check as follows that $K$ is indeed a homotopy operator.
On a  form $\xi=  h\cdot \text{Pr}_A^* \omega$ of type (\ref{item:dR1}), 
\[((\text{id}-\operatorname{Pr}_A^* \circ \mathcal{I}^*_0)(\xi))\arrowvert_{(t,m)}= (h(t,m)-h(0,m)) \cdot(\text{Pr}^*_A \omega)\arrowvert_{(t,m)}.\]
Let $k$ be the degree of $\omega$. Compute using $K(\xi)=0$
	\begin{equation*}
	 \begin{split}
	&(\dr_{\nabla^{TI \times A}}K-K\dr_{\nabla^{TI \times A}})(\xi)=-K\, \dr_{\nabla^{TI \times A}}(\xi)\\
	&\,=
	-K (\dr_{TI \times A}h\wedge \Ppr_A^*\omega+
	(-1)^k h\cdot \dr_{\nabla^{TI \times A}}(\Ppr_A^*\omega))\\
	&\,=-K (\dr_{TI \times A}h\wedge \Ppr_A^*\omega+
	(-1)^k h\cdot \Ppr_A^*(\dr_{\nabla}\omega))\\
	&\,=-K (\dr_{TI \times A}h\wedge \Ppr_A^*\omega).
	\end{split}
   \end{equation*}
Choose a local frame $(a_1,\ldots, a_r)$ of $A$ and the dual frame $(\alpha_1, \ldots, \alpha_r)$ of $A^*$.
   Then 
   \[ \dr_{TI \times A}h=\partial_t h\cdot \dr t+
   \sum_{j=1}^r\rho(a_j)(h)\cdot \Ppr_A^*\left(\alpha_j
   \right).
   \]
   Hence 
   \[(\dr_{\nabla^{TI \times A}}K-K\dr_{\nabla^{TI \times A}})(\xi) =-K (\partial_t h\cdot \dr t\wedge \Ppr_A^*\omega).
 \]  
which is 
   \begin{equation*}
	 \begin{split}
	& -(-1)^k \int_0^t \partial_sh(s,m) ds\cdot (\Ppr^*_A \omega)\arrowvert_{(t,m)}=-(-1)^k(h(t,m)-h(0,m)) (\Ppr^*_A \omega)\arrowvert_{(t,m)}
	\end{split}
   \end{equation*}
   when evaluated at $(t,m)\in I\times M$. This shows that 
   \[(\text{id}-\operatorname{Pr}_A^* \circ \mathcal{I}^*_0)(\xi)=(-1)^{|\xi|-1}\cdot (\dr_{\nabla^{TI \times A}}K-K\dr_{\nabla^{TI \times A}})(\xi)
   \]
for $\xi$ of type (\ref{item:dR1}).

\medskip

On a  form of type $\xi=g\cdot \dr t \wedge \text{Pr}_A^* \eta$ of type (\ref{item:dR2}) with $\eta\in\Omega^{k-1}(A,F)$, compute
\[(\text{id}- \text{Pr}_A^* \circ \mathcal{I}^*_0)(\xi)=\xi \]
because $\mathcal{I}^*_0(\dr t)=\dr_{\nabla}(\iota^*_0t)=\dr_{\nabla} (0)=0$.  The identity
\[\dr_{TI \times A}(g \dr t)= \sum_{j=1}^r\rho(a_j)(g)\cdot \Ppr_A^*(\alpha_j)\wedge \dr t \]
 yields
 
\begin{equation*}
	\begin{split}
	\dr_{\nabla^{TI \times A}}\xi
	=& -\sum_{j=1}^r\rho(a_j)(g)\dr t\wedge \Ppr_A^*(\alpha_j)\wedge \Ppr_A^*(\eta) -g\dr t \wedge \dr_{\nabla^{TI \times A}}(\Ppr_A^*\eta)\\
	=& -\sum_{j=1}^r\rho(a_j)(g)\dr t\wedge \Ppr_A^*(\alpha_j\wedge \eta)-g\dr t \wedge \Ppr_A^*(\dr_{\nabla}\eta)
	\end{split}
\end{equation*}
and so
\begin{equation*}
	\begin{split}
	K(\dr_{\nabla^{TI \times A}}\xi)\arrowvert_{(t,m)}&=-(-1)^{k} \sum_{j=1}^r\int_0^t \rho(a_j)(g)(s,m)ds\cdot \Ppr_A^*(\alpha_j\wedge \eta)\arrowvert_{(t,m)}\\
&\qquad -(-1)^{k}\int_0^t g(s,m)ds \cdot (\Ppr_A^*(\dr_{\nabla}\eta))\arrowvert_{(t,m)}.
     \end{split}
\end{equation*}

On the other hand, 
\[K(\xi)\arrowvert_{(t,m)}=(-1)^{k-1}\,\int_0^t g(s,m)ds\cdot  (\Ppr_A^* \eta)\arrowvert_{(t,m)},\] for all $t \in I$ and $m\in M$,
and the fact that $g$ is smooth yields as above in local coordinates
\begin{equation*} \begin{split}
	(\dr_{\nabla^{TI \times A}}(K(\xi)))\arrowvert_{(t,m)}
	=&\,
	(-1)^{k-1}\cdot \sum_{j=1}^r\int_0^t\rho(a_j)g(s,m)ds\cdot\Ppr_A^*(\alpha_j\wedge\eta)\arrowvert_{(t,m)} \\ 
	&+(-1)^{k-1}\cdot g(t,m)\cdot\dr t\wedge \Ppr_A^*(\eta)\arrowvert_{(t,m)} \\ 
	&+(-1)^{k-1}\int_0^t g(s,m)ds\cdot \Ppr_A^*(\dr_\nabla\eta)\arrowvert_{(t,m)}\\ 
	=&\,K(\dr_{\nabla^{TI \times A}}\xi)\arrowvert_{(t,m)}+(-1)^{k-1}\xi\arrowvert_{(t,m)}.
\end{split}\end{equation*}
This shows 
\[\dr_{\nabla^{TI \times A}}(K(\xi))- K(\dr_{\nabla^{TI \times A}}(\xi))=(-1)^{|\xi|-1}\cdot\xi, \]
and so
 \[(\text{id}-\operatorname{Pr}_A^* \circ \mathcal{I}^*_0)(\xi)=(-1)^{|\xi|-1}\cdot (\dr_{\nabla^{TI \times A}}K-K\dr_{\nabla^{TI \times A}})(\xi)
   \]
for $\xi$ of type (\ref{item:dR2}).
      \end{proof}

The second part of the proof of Theorem \ref{th:hi} is done in Section \ref{ap:A},  using results on the flows of linear vector fields on vector bundles and of linear Lie algebroid derivations, that are proved in Section \ref{sec:6}.

		 \bigskip
    This section ends with some examples.
    
    \begin{example}
    	\label{ex:Gbundles}
         Let $P \to M$ and $Q \to N$ be $G$-principal bundles and let $\phi_0,\phi_1 \colon P \to Q$ be $G$-equivariant maps. Suppose that there exists a smooth $G$-equivariant homotopy $\phi \colon I \times P \to Q$, such that $\phi(0,\cdot):=\phi_0$ and $\phi(1,\cdot):=\phi_1$, where the $G$-action is extended to $I \times P$ trivially on the first factor, then it is known that $\phi_0$ and $\phi_1$ induce the same map in equivariant de Rham cohomology (see e.g.~ \cite{GoZo19}). Theorem \ref{th:hi} generalizes this result to the equivariant de Rham cohomology with coefficients. The proof goes as follows. The above mentioned $G$-equivariant maps factor to maps $\bar{\phi}\colon I \times M \to N$ and $\bar \phi_0, \bar{\phi_1} \colon M \to N$. They induce LA-morphisms between the corresponding Atiyah Lie algebroids $\overline{T\phi} \colon A(I \times P) \cong TI \times A(P) \to A(Q)$ and $\overline{T\phi_0}, \overline{T\phi_1} \colon A(P) \to A(Q)$ (see Example \ref{ex:LAmorph}). By Theorem \ref{th:hi} conclude that $\overline{T\phi_0^*} \equiv \overline{T\phi_1^*}$ in the twisted Lie algebroid cohomology, which is isomorphic to the twisted equivariant de Rham cohomology (see Example \ref{ex:LAcohom}).
         \end{example} 
   \begin{example}
   	\label{ex:foliations}
    	It is well known that the foliated (de Rham) cohomology is not homotopy invariant (see \cite{Hae71}). 
   For instance, the bundle of rank zero $0_U \to U$ over a contractible manifold has zero differential, because both its bracket and its anchor map are zero. Its foliated de Rham cohomology therefore coincides with the space of forms. In particular, $\operatorname{H}^{0}(0_U)= C^{\infty}(U)$, and so the Poincar\'e Lemma cannot hold here.
    	
    	However, Theorem \ref{th:hi} shows that the foliated de Rham cohomology is LA-homotopy invariant, since foliations are special cases of Lie algebroids. The notion of LA-homotopy in the category of foliations reads as follows.
    	
    	Let $(M,F_M)$ and $(N, F_N)$ be foliated manifolds, i.e.~$F_M\subseteq TM$ and $F_N\subseteq TN$ are involutive subbundles. Consider smooth maps $\phi_0,\phi_1\colon M \to N$ such that 
    	$T\phi_j(F_M)\subseteq F_N$ for $j=0,1$, i.e.~Lie algebroid morphisms $T\phi_j\colon F_M\to F_N$ over $\phi_j\colon M\to N$. 
    	Equivalently, for each leaf $L$ of $F_M$ there exists leaves $K_0$ and $K_1$ of $F_N$ such that $\phi_j(L) \subseteq K_j$, for $j=0,1$.
    	Note that in the following, \emph{a leaf of $F_M$} is short for \emph{a maximal integral leaf of $F_M$}.
    	
    	Then if $\phi_0$ and $\phi_1$ are LA-homotopic, then there exists a Lie algebroid morphism $\Phi\colon TI \times F_M \to  F_N$
    	over a smooth map $\phi\colon I \times M \to N$. But then the anchor condition 
    	\[\begin{tikzcd}
    		& {TI\times TM} && TN \\
    		{TI\times F_M} && {F_N} \\
    		{I\times M} && N
    		\arrow["T\phi"{description}, from=1-2, to=1-4]
    		\arrow[dotted, from=1-2, to=3-1]
    		\arrow[from=1-4, to=3-3]
    		\arrow[hook, from=2-1, to=1-2]
    		\arrow["\Phi", from=2-1, to=2-3]
    		\arrow[from=2-1, to=3-1]
    		\arrow[hook, from=2-3, to=1-4]
    		\arrow[from=2-3, to=3-3]
    		\arrow["\phi"{description}, from=3-1, to=3-3]
    	\end{tikzcd}\]
    	shows that $\Phi=T\phi\arrowvert_{TI\times F_M}$ and that $T\phi(TI\times F_M)\subseteq F_N$.
    	A leaf of $TI\times F_M\subseteq TI\times TM$ is a product $I\times L$ with $L$ a leaf of $F_M$. The inclusion 
    	$T\phi(TI\times F_M)\subseteq F_N$ implies that for each leaf $L$ of $F_M$, there exists a leaf $K\subseteq N$ such that 
    	\[ \phi(I\times L)\subseteq K.
    	\]
    	In particular, 
    	\[ \phi_j(L)=\phi(\{j\}\times L)\subseteq K
    	\]
    	for $j=0,1$, i.e.~$\phi_0$ and $\phi_1$ map each leaf of $F_M$ in  the same leaf of $F_N$.
    	
    	This shows that a LA-homotopy between morphisms $\phi_0,\phi_1\colon (M,\mathcal F_M)\to (N,\mathcal F_N)$ of foliated manifolds 
    	is a \textbf{homotopy along the foliations}, i.e.~a smooth map $\phi\colon I\times M\to N$ such that $\phi\circ\iota_j=\phi_j$ for $j=0,1$ and for each leaf $L$ of $\mathcal F_M$ there exists a leaf $K$ of $\mathcal F_N$ such that $\phi(I\times L)\subseteq K$. By Theorem \ref{th:hi}, if such a smooth homotopy exist, then the maps induced by $T\phi_0\arrowvert{F_M}$ and $T\phi_1\arrowvert{F_M}$ coincide in foliated cohomology.

 LA-homotopy in the case of foliations coincides with the notion of integrable homotopy, introduced by Haefliger in \cite{Hae71}. The name integrable homotopy is first used in \cite{Az83}. Foliated cohomology is known there to be invariant under integrable homotopy, see \cite{Schl21} for more details.
    	    \end{example}

    \subsection{Pullback of Lie algebroids via homotopies}
    \label{subs:3.60}

    Pullback Lie algebroids are useful to systematically construct examples of LA-morphisms, and LA-homotopies (see Corollary \ref{cor:00}). In the transitive case, similar constructions are done in \cite{Mein21}. 

    \begin{proposition}\label{pr:mein}
	Let $B \to N$ be a Lie algebroid and let  $M$ be a smooth manifold. Consider a smooth homotopy 
	\begin{equation*}
		\begin{split}
			\phi \colon I \times M &\to N, \qquad (t, m) \mapsto \phi_t(m) 
		\end{split}
	\end{equation*}
	such that $\phi^{!!}B$ exists.
	If there exists a smooth section $\tilde y$ of $\phi^{!!}B$ of the form $(s,m) \mapsto (\partial_t\an{s}, 0_m,y(s,m)) \in (T_sI \times T_mM) \times_{TN} B_{\phi(t,m)} $, for some $y \colon I \times M \to B$, then the induced pullback Lie algebroids
	$\phi^{!!}_tB \to M$ are all isomorphic.
\end{proposition}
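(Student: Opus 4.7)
The plan is to use the section $\tilde y$ to generate a time-dependent flow on $\phi^{!!}B$ whose time-$r$ map gives the desired isomorphism between the slices. The point is that $\tilde y$, being a section of the Lie algebroid $\phi^{!!}B$, determines a linear Lie algebroid derivation / complete lift on the total space of $\phi^{!!}B$ whose flow consists of Lie algebroid automorphisms (this is precisely the machinery developed in Section~\ref{sec:6} and used elsewhere in the paper).

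First I would observe that the anchor $\rho_{\phi^{!!}B}(\tilde y)$ is, by definition of the pullback Lie algebroid, the projection of $\tilde y$ onto $TI\times TM$, which by hypothesis equals the vector field $(s,m)\mapsto (\partial_t|_s,0_m)$ on $I\times M$. Its flow is just translation in the first coordinate,
\[
\xi_r\colon (s,m)\longmapsto (s+r,m),
\]
defined on the open subset $\Omega=\{(r,s,m)\in\mathbb R\times I\times M\mid s+r\in I\}$. Since $\phi^{!!}B$ is a vector bundle and the complete lift associated with $\tilde y$ is a linear vector field covering $\rho_{\phi^{!!}B}(\tilde y)$, its flow $\Xi_r$ exists wherever $\xi_r$ exists (linear vector fields on vector bundles cannot blow up relative to their base flow) and is a vector bundle morphism covering $\xi_r$. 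By the general theory recalled in Section~\ref{sec:6}, each $\Xi_r$ is moreover a Lie algebroid automorphism of $\phi^{!!}B$.

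Next I would identify the slices. Using the embedding $\iota_t\colon M\hookrightarrow I\times M$, $m\mapsto(t,m)$, and the definitions, one checks immediately that
\[
\iota_t^{!!}(\phi^{!!}B)\;\cong\;\phi_t^{!!}B
\]
canonically as Lie algebroids: a pair $(w,(v,w',b))$ lies in $\iota_t^{!!}(\phi^{!!}B)$ iff $v=0$ and $w=w'$, reducing to the defining condition $T_m\phi_t(w)=\rho_B(b)$. Geometrically, $\phi_t^{!!}B$ is the restriction of $\phi^{!!}B$ to the slice $\{t\}\times M\subseteq I\times M$.

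Finally, since $\Xi_r$ covers $\xi_r\colon (s,m)\mapsto (s+r,m)$, it carries the slice at time $s$ to the slice at time $s+r$. Restricting to these slices gives a Lie algebroid isomorphism
\[
\Xi_r\arrowvert_{\phi_s^{!!}B}\colon\phi_s^{!!}B\longrightarrow \phi_{s+r}^{!!}B
\]
whenever $s,s+r\in I$. Taking $s=0$ and varying $r$ over $I$ shows that all the $\phi_t^{!!}B$ are isomorphic. The main technical point is really the use of the linear-flow lemma from Section~\ref{sec:6} to lift $\xi_r$ to a vector bundle (and in fact Lie algebroid) automorphism $\Xi_r$; once this is granted, the argument reduces to the canonical identification of $\phi_t^{!!}B$ with the slice $\iota_t^{!!}(\phi^{!!}B)$, which is a straightforward unwinding of the definitions.
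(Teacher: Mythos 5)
Your proposal is correct and follows essentially the same route as the paper: take the adjoint derivation $[\tilde y,\cdot]$ of $\phi^{!!}B$ (your ``complete lift''), whose linear flow over the translation flow of $\partial_t\times 0$ is by Lie algebroid automorphisms by the results of Section~\ref{sec:6}, and restrict it to the slices $\phi_s^{!!}B\cong\iota_s^{!!}(\phi^{!!}B)$. The only step worth spelling out a little more is why $\Xi_r$ carries $\phi_s^{!!}B$ into $\phi_{s+r}^{!!}B$ (and not merely into the full restriction of $\phi^{!!}B$ over $\{s+r\}\times M$, of which $\phi_{s+r}^{!!}B$ is a proper subbundle): this follows from anchor-compatibility, since $\rho_{\phi^{!!}B}\circ\Xi_r=T\xi_r\circ\rho_{\phi^{!!}B}$ and $T\xi_r$ preserves vectors with vanishing $TI$-component, exactly as the paper checks.
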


Note that in the proposition above,  the existence of the section $\tilde{y}$  implies that $TI \times 0 \subseteq \operatorname{Im}(\rho_{\phi^{!!}B})$. As a consequence, the inclusions $\iota_t \colon M \to I \times M$, $m\mapsto (t,m)$, are transversal to the anchor map $\rho_{\phi^{!!}B}$, and so the pullback Lie algebroids $\iota_t^{!!}\phi^{!!}B\cong \phi_t^{!!}B$ exist for all $t\in I$.

\begin{remark}\label{remark_pr:mein}
	In Proposition \ref{pr:mein}, a necessary condition for the existence of the section $\tilde{y}$ is $TI \times 0 \subseteq \operatorname{Im}\rho_{\phi^{!!}B}$.
	This is equivalent to $\operatorname{Im}(T\phi\an{(TI \times 0)}) \subseteq \operatorname{Im}(\rho_B)$, i.e.~to the homotopy $\phi$ deforming along the, possibly singular, foliation defined by the Lie algebroid.  Indeed, recall that
	\[\phi^{!!}B:=\{((r,v),b) \in T_tI \times T_mM \times B_{\phi(t,m)} \mid 
	T_{(t,m)}\phi(r,v)= \rho_B(b), (t,m)\in I \times M\}\] with anchor map $\rho_{\phi^{!!}B}((r,v),b):=(r,v)$. The condition  $\text{Im}(T\phi\an{(TI \times 0)}) \subseteq \text{Im}(\rho_B)$ means that for each vector $r \in T_tI$ and each $m\in M$ there exists $b \in B_{\phi(t,m)}$ such that $T\phi(r,0_m)= \rho_B(b)$. Then $(r,0_m)= \rho_{\phi^{!!}B}((r,0_m),b)\in \text{Im}(\rho_{\phi^{!!}B})$. 
	This shows the inclusion $TI \times 0 \subseteq \text{Im}(\rho_{\phi^{!!}B})$. The inverse implication follows from the definition of the anchor map of the pullback Lie algebroid. For each vector $r \in T_tI$ and each $m \in M$ the condition $(r,0_m) \in \operatorname{Im}(\phi^{!!}B)$ implies that there exists an element $b \in B_{\phi(t,m)}$ such that $T\phi(r,0_m)= \rho_B(b)$. 
\end{remark}

\begin{proof}[Proof of Proposition \ref{pr:mein}]
Consider the derivation $D:=[\tilde{y}, \cdot]$ of the Lie algebroid $\phi^{!!}B$, with symbol $X:=\frac{\partial}{\partial t}\in\mx(I\times M)$. Then $\widehat{D}\in\mx^l(\phi^{!!}B)$ is linear over $X$.
	Set $\Omega:=\{(t,s)\in\mathbb R\times I\mid t+s\in I\}$. Since $X$ has the flow 
	\[\psi\colon \Omega\times M \ni (t, s,m) \mapsto (s+t, m) \in I \times M,\]
	the flow $\Psi$ of $\widehat{D}\in\mathfrak{X}^l(\phi^{!!}B)$ is
	a map
	\[ \Psi\colon \Omega\times \phi^{!!}B\to \phi^{!!}B
	\]
	over $\psi$ by Lemma \ref{lemma_linear_flow}.
		By Proposition \ref{invariance_a}, for each $t \in \mathbb R$ the map \[\Psi_t\colon \phi^{!!}B\arrowvert_{(I\cap(I-t))\times M}\to \phi^{!!}B\arrowvert_{(I\cap(I+t))\times M}\] is further an isomorphism of Lie algebroids over \[\psi_t\colon (I\cap(I-t))\times M\to (I\cap(I+t))\times M, \qquad(s,m)\mapsto (s+t,m).
\]
	
	Take $s\in I$ and consider the Lie algebroid $\phi_s^{!!}B\to M$. The underlying vector bundle is given by
	\[\phi_s^{!!}B:=\{(v,b)\in T_mM\times B_{\phi_s(m)}\mid T_m\phi_s(v)=\rho_B(b), m\in M\}.	\]
	Since $T_m\phi_s(v)=T_{(s,m)}\phi(0_s,v)$ for all $m\in M$ and $v\in T_mM$, the vector bundle $\phi_s^{!!}B$ comes with the embedding $\phi_s^{!!}B\hookrightarrow \phi^{!!}B$
	\[ (v,b)\mapsto (0,v,b)
	\]
	 over $M\hookrightarrow I\times M$, $m\mapsto (s,m)$. This embedding is a morphism of Lie algebroids by definition. Take $t\in I$. The image of $\phi_s^{!!}B$ under $\Psi_{t-s}$ equals $\phi_{t}^{!!}B$: for all $(v,b)\in \phi_s^{!!}B$
	 \[ \rho_{\phi^{!!}B}(\Psi_{t-s}(0_s,v,b))=T\psi_{t-s}(\rho_{\phi^{!!}B}(0_s,v,b))=T\psi_{t-s}(0_s,v)=(0_{t},v).
	 \]
	 Hence $\Psi_{t-s}(0_s,v,b)=(0_{t},v,c)$ for some $c\in B_{\phi(t,m)}$, i.e.~$\Psi_{t-s}(0_s,v,b)\in \phi_{t}^{!!}B\subseteq \phi^{!!}B$. Since $\Psi_{t-s}\colon \phi^{!!}B\to \phi^{!!}B$ is an isomorphism of Lie algebroids, it thus restricts to an isomorphism of the Lie algebroids
	 \[ \Psi_{t-s}\arrowvert_{\phi_s^{!!}B}\colon \phi_s^{!!}B\to \phi_{t}^{!!}B
	 \] 
	 over $\psi_{t-s}\colon \{s\}\times M\simeq M\to \{t\}\times M\simeq M$.
		\end{proof}

\begin{remark}\label{re:1}
Proposition \ref{pr:mein} holds for a regular Lie algebroid $B\to N$ and a smooth homotopy $\phi \colon I \times M \to N$, with $\text{Im}(T\phi\an{(TI \times 0)}) \subseteq \text{Im}(\rho_B)$ and such that $\phi^{!!}B$  exists. Indeed, since $B\to N$ is a regular Lie algebroid and $q \colon \phi^{!!}B \to I \times M$ is its (by assumption existing) pullback Lie algebroid, the latter is also regular. More precisely, the canonical Lie algebroid morphism 
	\[ \phi^{!!}B\to B, \qquad T_tI\times T_mM\times B_{\varphi_t(m)}\ni (r,v,b)\mapsto b
	\]
	over $\phi\colon I\times M\to N$ restricts fibrewise to vector space isomorphisms 
	\[ \text{ker}(\rho_{\phi^{!!}B})\arrowvert_{(t,m)} \to  \text{ker}(\rho_B)\arrowvert_{\varphi_t(m)}
	\]
	for all pairs $(t,m)\in I\times M$.\\
    A consequence of the regularity of $\phi^{!!}B$ is that
	\[\text{ker}(\rho_{\phi^{!!}B}) \hookrightarrow \phi^{!!}B \overset{\rho_{\phi^{!!}B}}{\longrightarrow}  \text{Im}(\rho_{\phi^{!!}B})\]
	is a short exact sequence of vector bundles over $I\times M$.
	Choose a smooth vector bundle morphism $j \colon \text{Im}(\rho_{\phi^{!!}B}) \to \phi^{!!}{B}$ splitting that sequence, i.e.~such that 
	\[\text{Id}_{\operatorname{Im}(\rho_{\phi^{!!}B})}= \rho_{\phi^{!!}B} \circ j\]
	Define a smooth section of the vector bundle $\phi^{!!}B$ by lifting $X:=\frac{\partial}{\partial t}$ to $j(X) \in \Gamma(\phi^{!!}B)$. Denote it by $\tilde{y}:=j(X)$ and apply Proposition \ref{pr:mein}.

\end{remark}
  
    \begin{remark}
    	\label{re:mein}
    	The condition $\text{Im}(T\phi\an{(TI \times 0)}) \subseteq \text{Im}(\rho_B)$ cannot be dropped. Choose a non-abelian Lie algebra $\mathfrak{g}$ and define the regular Lie algebroid with underlying vector bundle
    	\[B:=I\times \mathfrak{g} \to I, \]
    	the zero anchor and the (pointwise) Lie bracket $[(t,v), (t,w)]:=(t,t[v,w]_{\mathfrak{g}})$.
    	Choose the smooth homotopy $\phi = \text{Id}_{I} \colon I\to I$. 
	Then it is easy to see that $\phi^{!!}B=0_I\times_I B\simeq B$, and that $\phi_t^{!!}B=0_{\{t\}}\times B_t\simeq \{t\}\times \mathfrak g\simeq \mathfrak g$ for all $t\in I$, with the Lie bracket
	\[ [v,w]_t=t[v,w]
	\]
	for all $v,w\in\mathfrak g$.
	Then $\phi^{!!}_0B$ is an abelian algebra and $\phi^{!!}_1B$ is not, since it is the Lie algebra $\mathfrak g$. Therefore, $\phi^{!!}_0B$ and $\phi^{!!}_1B$ cannot be isomorphic. 
    \end{remark}
    
    \bigskip
   
    If $B\to N$ is a transitive Lie algebroid, pullbacks always exist and are transitive, and the required inclusion between the images of $T \phi \an{(TI \times 0)}$ and $\rho_B$ holds. This particular case of Proposition \ref{pr:mein} and Remark \ref{re:1} is known, see for instance \cite{Mein21}.

Proposition \ref{pr:mein} yields the following splitting  result for transitive Lie algebroids. 
The paper \cite{Mein21} proves Proposition \ref{pr:mein} in the transitive case, as well as this corollary, so its proof is omitted here.
See also \cite{Mackenzie05} and \cite{CrFer01} for different proofs.
	 	\begin{corollary}
	 		\label{localtriviality}
	 		Let $B \to N$ be a transitive Lie algebroid with anchor $\rho_B\colon B\to TN$. For each $x \in N$ the vector space $\operatorname{ker}(\rho_B)_x=:\mathfrak g_x$ inherits a Lie algebra structure, the \textbf{isotropy Lie algebra of $B$ at $x$}. 
	 		\begin{enumerate}
	 			\item If $N$ is connected, then for each $x,y \in N$, $\operatorname{ker}(\rho_B)_x$ and $\operatorname{ker}(\rho_B)_y$ are isomorphic Lie algebras. 
				\end{enumerate}
				Denote the obtained constant isotropy Lie algebra by $\mathfrak g$. It  is the \textbf{isotropy Lie algebra of $B$}.
\begin{enumerate}\setcounter{enumi}{1}
	 			\item If $N$ is contractible, then $B$ is isomorphic to the direct product Lie algebroid $TN \times \mathfrak{g}$.
	 		\end{enumerate}
	 	\end{corollary}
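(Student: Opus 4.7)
The plan is to reduce both assertions to Proposition \ref{pr:mein}, after first verifying that $\mathfrak{g}_x := \ker(\rho_B)_x$ inherits a Lie algebra structure. Transitivity of $B$ implies $\ker\rho_B$ has constant rank, hence is a smooth subbundle of $B$. For $\xi,\eta \in \Gamma(\ker\rho_B)$ and $f \in C^\infty(N)$ the Leibniz rule gives $[\xi, f\eta] = f[\xi,\eta] + \rho_B(\xi)(f)\eta = f[\xi,\eta]$, so the bracket on $\Gamma(\ker\rho_B)$ is $C^\infty(N)$-bilinear and descends to a fiberwise Lie bracket.

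For part (1), I would join $x$ and $y$ by a smooth path $\gamma \colon I \to N$, viewed as a smooth homotopy with trivial base $M = \{\mathrm{pt}\}$. Transitivity of $B$ makes $\gamma$ automatically transverse to $\rho_B$, so $\gamma^{!!}B$ exists, and a smooth splitting $\sigma \colon TN \to B$ of the surjective anchor supplies the section $\tilde y := (\partial_t, 0, \sigma\circ\dot\gamma)$ of $\gamma^{!!}B$ required by Proposition \ref{pr:mein}. The pullback over a point satisfies
\[
\gamma_t^{!!}B = \{0\}\times_{T_{\gamma(t)}N} B_{\gamma(t)} \cong \ker(\rho_B)_{\gamma(t)} = \mathfrak{g}_{\gamma(t)},
\]
with the inherited Lie algebra structure equal to the isotropy bracket from the first paragraph. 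Proposition \ref{pr:mein} thus produces the desired Lie algebra isomorphism $\mathfrak{g}_x \cong \mathfrak{g}_y$.

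For part (2), fix $x \in N$ and choose a smooth contraction $\phi \colon I \times N \to N$ with $\phi_0 = \mathrm{id}_N$ and $\phi_1 \equiv x$. The same splitting construction provides $\tilde y$, so Proposition \ref{pr:mein} yields an isomorphism of Lie algebroids $\phi_0^{!!}B \cong \phi_1^{!!}B$. The left hand side is canonically isomorphic to $B$ itself via $(\rho_B(b), b) \leftrightarrow b$. For the right hand side, factor $\phi_1 = \iota_x \circ p$ through $p \colon N \to \{x\}$ and $\iota_x \colon \{x\} \hookrightarrow N$. Functoriality of the Lie algebroid pullback gives $\phi_1^{!!}B \cong p^{!!}(\iota_x^{!!}B) = p^{!!}\mathfrak{g}$, and since $Tp = 0$ all mixed terms in the pullback bracket formula drop, identifying $p^{!!}\mathfrak{g}$ with the direct product Lie algebroid $TN \times \mathfrak{g}$. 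Composing the isomorphisms yields $B \cong TN \times \mathfrak{g}$.

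The only genuinely non-formal steps are the identifications $\gamma_t^{!!}B \cong \mathfrak{g}_{\gamma(t)}$ and $p^{!!}\mathfrak{g} \cong TN \times \mathfrak{g}$ as Lie algebroids; both follow directly from the explicit bracket formula for pullback Lie algebroids recalled in Section \ref{sec:prelim}. Every other ingredient is transitivity (used both for transversality and to split $\rho_B$) together with Proposition \ref{pr:mein}.
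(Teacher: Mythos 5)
Your argument is correct and is exactly the deduction from Proposition \ref{pr:mein} that the paper intends: the paper omits the proof (deferring to \cite{Mein21}, which proves Proposition \ref{pr:mein} in the transitive case together with this corollary), but presents the statement precisely as a consequence of that proposition, and your use of a splitting $\sigma$ of the anchor to produce $\tilde y$ matches the construction in Remark \ref{re:1}. The two non-formal identifications you flag, $\gamma_t^{!!}B\cong\mathfrak g_{\gamma(t)}$ and $p^{!!}\mathfrak g\cong TN\times\mathfrak g$, are indeed immediate from the pullback bracket formula in Section \ref{sec:prelim}, so the write-up is complete.
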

		
%

    	Consider a transitive Lie algebroid $B$ over a contractible base manifold $N$. Then as above $B\simeq TN \times \mathfrak{g}$, where $\mathfrak{g}$ is the isotropy Lie algebra of $B$ at a fixed $x\in N$. Example \ref{ex:homotopyequivalent} then yields that $B$ and $\mathfrak{g}$ are LA-homotopy equivalent. In particular, this provides a proof for the generalized Poincar\'e Lemma, see Lemma \ref{Th:PL} later on.

  \subsection{A construction of examples}
  \label{subs:3.6}  
 Proposition \ref{pr:mein} can be used to construct examples of LA-homotopies.

	\begin{corollary}
		\label{cor:00}
		In the situation of Proposition \ref{pr:mein}, 
		\begin{enumerate}
			\item there exists an LA-isomorphim $\Pi\colon TI \times \phi_0^{!!}B\to \phi^{!!}B$ over the identity on $I\times M$, and 
			\item the composition $p_{B,\phi}\circ \Pi$ 
					  defines an LA-homotopy 
			\begin{equation*}
				\begin{xy}
					\xymatrix{TI \times \phi_0^{!!}B \ar[rr]^{\,\,p_{B,\phi}\circ \Pi} \ar[d]&& B \ar[d]\\
					I\times M \ar[rr]_{\phi} && N,}
				\end{xy}
			\end{equation*} between $p_{B,\phi} \circ \Pi \circ \mathcal{I}_0 \colon \phi^{!!}_0 B \to B$ and $p_{B,\phi} \circ \Pi \circ \mathcal{I}_1 \colon \phi^{!!}_0B \to B$.
		\end{enumerate}
	\end{corollary}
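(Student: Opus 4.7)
The plan is to define $\Pi\colon TI \times \phi_0^{!!}B \to \phi^{!!}B$ fiberwise over $(s, m) \in I \times M$ by
\[
\Pi\bigl(r\partial_t|_s, (v, c)\bigr) := r \cdot \tilde y(s, m) + \iota_s\bigl(\Psi_s(v, c)\bigr),
\]
where $\iota_s\colon \phi_s^{!!}B \hookrightarrow \phi^{!!}B$ denotes the canonical embedding $(v, c) \mapsto (0_s, v, c)$ over $m \mapsto (s,m)$, and $\Psi$ is the flow from the proof of Proposition \ref{pr:mein} of the linear vector field lifting $\partial_t$ along the derivation $[\tilde y, \cdot]$; by that proof $\Psi_s$ restricts to a Lie algebroid isomorphism $\phi_0^{!!}B \to \phi_s^{!!}B$ over the identity on $M$ (via the identification $\{s\}\times M \simeq M$). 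I will first verify via the explicit decomposition $(r\partial_t|_s, v, c) = r\tilde y(s,m) + \iota_s(v, c - r\cdot y(s,m))$, valid in $(\phi^{!!}B)_{(s,m)}$ thanks to the anchor identity $T\phi(r\partial_t, v) = \rho_B(c)$, that $\Pi$ is a fiberwise linear isomorphism, hence a smooth vector bundle isomorphism over $\operatorname{id}_{I \times M}$.

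\textbf{Anchor and first bracket checks.} Next I will show $\Pi$ is a Lie algebroid morphism by testing it on generating sections of $TI \times \phi_0^{!!}B$. Anchor preservation reduces to the identity $\rho_{\phi^{!!}B}(\iota_s\Psi_s(v,c)) = (0_s, v)$, which holds because $\Psi_s$ is an LA-morphism over $\operatorname{id}_M$. Under $\Pi$, the section $\pr_I^!(f\partial_t)$ goes to $\pr_I^*f\cdot \tilde y$, and $\pr_{\phi_0^{!!}B}^!\sigma$ goes to $\tilde\sigma(s,m) := \iota_s\Psi_s(\sigma(m))$. By construction $\tilde\sigma$ is $\Psi$-invariant, which is equivalent to $[\tilde y, \tilde\sigma]_{\phi^{!!}B} = 0$; combined with $\rho_{\phi^{!!}B}(\tilde\sigma)$ having vanishing $TI$-component, this handles the cross-bracket $[\pr_I^*f\cdot \tilde y, \tilde\sigma]_{\phi^{!!}B}$ and the self-brackets $[\pr_I^*f\cdot \tilde y, \pr_I^*g\cdot \tilde y]_{\phi^{!!}B}$ via the Leibniz rule, matching the corresponding brackets in $TI \times \phi_0^{!!}B$.

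\textbf{The main bracket identity.} The central step is to verify $[\tilde\sigma_1, \tilde\sigma_2]_{\phi^{!!}B} = \widetilde{[\sigma_1, \sigma_2]_{\phi_0^{!!}B}}$. The argument will proceed in two steps. First, the Jacobi identity together with $[\tilde y, \tilde\sigma_i] = 0$ implies $[\tilde y, [\tilde\sigma_1, \tilde\sigma_2]_{\phi^{!!}B}] = 0$, so $[\tilde\sigma_1, \tilde\sigma_2]_{\phi^{!!}B}$ is $\Psi$-invariant and hence uniquely determined by its restriction to $\{0\}\times M$. Second, since $\iota_0\colon \phi_0^{!!}B \to \phi^{!!}B$ is a Lie algebroid morphism and $\iota_0^!\tilde\sigma_i = \iota_0^!\sigma_i$ (because $\tilde\sigma_i|_{\{0\}\times M} = \iota_0 \circ \sigma_i$), the Lie algebroid bracket condition for the morphism $\iota_0$ applied with the $\tilde\sigma_i$ as extensions directly yields $\iota_0([\sigma_1, \sigma_2]_{\phi_0^{!!}B}(m)) = [\tilde\sigma_1, \tilde\sigma_2]_{\phi^{!!}B}(0, m)$. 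Propagating this identity over $I \times M$ via the $\Psi$-invariance of both sides completes the verification and shows $\Pi$ is an LA-isomorphism.

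\textbf{Part (2) and main obstacle.} Once (1) is in place, (2) follows at once: the composition $p_{B,\phi} \circ \Pi\colon TI \times \phi_0^{!!}B \to B$ is an LA-morphism over $\phi\colon I \times M \to N$, since $p_{B,\phi}$ is an LA-morphism by Lemma \ref{le:0} and $\Pi$ is an LA-isomorphism; by Definition \ref{def:hom1} this is precisely an LA-homotopy between $(p_{B,\phi} \circ \Pi) \circ \mathcal{I}_0$ and $(p_{B,\phi} \circ \Pi) \circ \mathcal{I}_1$, both LA-morphisms $\phi_0^{!!}B \to B$. I expect the bracket identity of paragraph three to be the main technical obstacle, specifically the step of matching $[\tilde\sigma_1, \tilde\sigma_2]_{\phi^{!!}B}|_{\{0\}\times M}$ with $\iota_0 \circ [\sigma_1, \sigma_2]_{\phi_0^{!!}B}$: this requires unwinding the formal Lie algebroid bracket condition for the morphism $\iota_0$ on the carefully chosen sections $\tilde\sigma_i$, but once the setup is correct the rest reduces to the automorphism property of $\Psi$ from Proposition \ref{pr:mein}.
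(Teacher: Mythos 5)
Your proposal is correct, but it takes a genuinely different route from the paper. The paper does not build $\Pi$ by hand: it introduces the rescaled two-parameter homotopy $\alpha\colon I\times(I\times M)\to N$, $\alpha(s,t,m):=\phi(st,m)$, checks that $\alpha$ satisfies the hypotheses of Proposition \ref{pr:mein} (using $\alpha=\phi\circ\pi$ with $\pi$ a surjective submersion to guarantee existence of the pullbacks, and $(T\alpha)(TI\times 0\times 0)=(T\phi)(TI\times 0)\subseteq\rho_B(B)$), and then obtains $\Pi$ as the isomorphism $\alpha_0^{!!}B=TI\times\phi_0^{!!}B\;\cong\;\alpha_1^{!!}B=\phi^{!!}B$ furnished by that proposition; part (2) is then immediate exactly as in your last paragraph. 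You instead construct $\Pi$ explicitly from the data already present for $\phi$ itself: the splitting $(\phi^{!!}B)_{(s,m)}=\mathbb R\,\tilde y(s,m)\oplus(\phi_s^{!!}B)_m$ coming from the anchor identity, together with the flow $\Psi$ of $\widehat{[\tilde y,\cdot\,]}$, and you re-verify the anchor and bracket conditions on generators (the $\Psi$-invariance of $\tilde\sigma$, the vanishing $[\tilde y,\tilde\sigma]=0$ via Lemma \ref{lemma_flat_sections}, and the restriction of $[\tilde\sigma_1,\tilde\sigma_2]$ to the invariant slice $\{0\}\times M$). All of these steps check out — in effect you are redoing, in a slightly different packaging, the argument of Proposition \ref{pr:mein} and of Proposition \ref{invariance_a} rather than invoking them as black boxes. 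What the paper's route buys is brevity and no new bracket computations (at the cost of having to justify the existence of the auxiliary pullbacks $\alpha^{!!}B$ and $\alpha_s^{!!}B$ and of a section $\tilde y$ for $\alpha$); what your route buys is a completely explicit formula for one admissible $\Pi$ and independence from the reparametrization trick. Note that the paper itself remarks that $\Pi$ is not unique, so the two constructions need not produce the same isomorphism.
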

	\begin{proof}		Consider the canonical Lie algebroid morphism
			\begin{equation*}
				\begin{xy}
					\xymatrix{\phi^{!!}B:= (TI \times TM)\times_{TN}B  \ar[r]^{\qquad \qquad \qquad p_{B,\phi}}\ar[d]& B \ar[d]\\
					I\times M \ar[r]_{\phi} & N,}
				\end{xy}
			\end{equation*}
			see Lemma \ref{le:0}.
			Since $\phi_t$ equals $\phi\an{\{t\} \times M}$, the vector bundle $\phi_t^{!!}B$ can be identified with the subbundle  $((\{0_t\} \times TM) \times_{TN}B)$ of $\phi^{!!}B$ over $\{t\}\times M$, and the Lie algebroid morphism $p_{B,\phi}$ above restricts to the Lie algebroid morphism $p_{B,\phi_t}= p_{B,\phi}\an{(\{0_t\} \times TM) \times_{TN}B}$.
			
			Moreover, $\phi$ is homotopic to the function  $p \colon I \times M \to N$, $p(t,m):=\phi_0(m)$ via the homotopy \[\alpha \colon I \times (I \times M)\to N, \qquad \alpha(s,t,m):= \phi(st, m).\] Define the surjective submersions $\pi \colon  I^2\times M \to   I\times M$, $((s,t),m)\mapsto (st,m)$ and $\pi_s \colon I\times M\to I\times M$, $(t,m) \to (st,m)$ (for $s\ne 0$). Then $\alpha= \phi \circ \pi$ and $\alpha_s=\phi \circ \pi_s$ for all $s\in I\setminus\{0\}$. Since the pullback of a Lie algebroid under a surjective submersion always exists, the pullback Lie algebroids $\alpha^{!!}B$ and $\alpha_s^{!!}B$ exist by functoriality, for  all $s\in I\setminus\{0\}$. For $s=0$, $\alpha_0^{!!}B$ is obtained by pulling back $\phi_0^{!!}B$ via the map $0 \times \operatorname{id}_M\colon I\times M\to \{0\}\times M$.
			Since any Lie algebroid $A\to \{0\}\times M\simeq M$ pulls back under $0\times \id_M$ to the Lie algebroid $TI\times A\to I\times M$,
			 also $\alpha_0^{!!}B$ exists as a pullback Lie algebroid. 
			 
			 Observe that $(T\alpha)(TI \times 0 \times 0) = (T\phi)(TI \times 0) \subseteq \rho_B(B)$ by definition of $\alpha$. All the hypotheses of Proposition \ref{pr:mein} for the homotopy $\alpha$ are hence valid, and so 
			 $\alpha_0^{!!}B=p^{!!}B=TI\times \phi_0^{!!}B$ and $\alpha_1^{!!}B= \phi^{!!}B$ are isomorphic Lie algebroids, via an  isomorphism $\Pi \colon TI \times \phi_0^{!!}B \to \phi^{!!}B$ over the identity on $I\times M$. (Which is not unique in general, and can be constructed as in the proof of Proposition \ref{pr:mein}.) This proves the first statement.
			 
			 \medskip
			Composing $p_{B,\phi}$ with $\Pi$ defines  the LA-homotopy
	         \[\Phi:= p_{B,\phi}\circ \Pi\colon TI \times \phi_0^{!!}B \to B\]
	         between
	         \[
	         \Phi\circ\mathcal I_0=p_{B,\phi}\circ \Pi\circ \mathcal I_0 \colon \phi_0^{!!} B \to B 
	         \]
	        and
	          \[
	         \Phi\circ\mathcal I_1=p_{B,\phi}\circ \Pi\circ \mathcal I_1\colon \phi_0^{!!} B \to B 
	         \]

\end{proof}

\begin{lemma}
	\label{cor:000}
		In the situation of Proposition \ref{pr:mein}, assume that $M=N$ and $\phi_0= \operatorname{id}_N$. Then there always exists an LA-isomorphism $\tilde{\Pi} \colon TI \times B \to \phi^{!!}B$ such that the composition $p_{B,\phi} \circ \tilde{\Pi} \colon TI \times B \to B$ defines an LA-homotopy between $\operatorname{Id}_B\colon B \to B$ and $p_{B,\phi} \circ \tilde{\Pi} \circ \mathcal{I}_1\colon B \to B$.   
\end{lemma}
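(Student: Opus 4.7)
The plan is to construct $\tilde{\Pi}$ directly by a flow argument parallel to the proof of Proposition \ref{pr:mein}, but starting from the canonical inclusion of $B$ into $\phi^{!!}B$ at $t=0$. Since $\phi_0=\id_N$, the pullback $\phi_0^{!!}B=\{(v,b)\in TM\times B\mid v=\rho_B(b)\}$ is canonically identified with $B$ via the LA-isomorphism $\sigma\colon b \mapsto (\rho_B(b),b)$. Composing $\sigma$ with the canonical LA-inclusion $\phi_0^{!!}B\hookrightarrow\phi^{!!}B$, $(v,b)\mapsto(0_0,v,b)$, yields the LA-morphism
\[
J\colon B\to\phi^{!!}B,\qquad b\mapsto(0_0,\rho_B(b),b),
\]
covering $\iota_0\colon M\to I\times M$, and satisfying $p_{B,\phi}\circ J=\id_B$.

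Let $\tilde{y}$ be the lift of $\partial_t$ provided by Proposition \ref{pr:mein}, and let $\hat{D}\in\mx(\phi^{!!}B)$ be the linear vector field associated to the derivation $[\tilde{y},\cdot]$ of $\phi^{!!}B$ (see Section \ref{sec:6}). Its flow $\Psi$ covers $\psi_s(t,m)=(t+s,m)$, and each $\Psi_s$ restricts to an LA-isomorphism carrying $\phi_0^{!!}B|_m$ onto $\phi_s^{!!}B|_m$, exactly as in the proof of Proposition \ref{pr:mein}. I would then define the candidate isomorphism
\[
\tilde{\Pi}\colon TI\times B \to \phi^{!!}B, \qquad \tilde{\Pi}(r_0\,\partial_t|_t,\,b_m) := \Psi_t(J(b_m)) + r_0\cdot\tilde{y}(t,m),
\]
which visibly covers $\id_{I\times M}$ and, because $\Psi_0=\id$, satisfies $p_{B,\phi}\circ\tilde{\Pi}\circ\mathcal{I}_0=\id_B$ by the defining property of $J$.

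There remain three properties to check: that $\tilde\Pi$ is a fiberwise isomorphism, that it respects anchors, and that it respects brackets. The first is a dimension count: $\Psi_t(J(B_m))=\phi_t^{!!}B|_m$ has rank $\operatorname{rk}(B)$, and $\tilde{y}(t,m)$ has nonzero $\partial_t$-component so lies outside $\phi_t^{!!}B|_m$, filling out $\phi^{!!}B|_{(t,m)}$. The second follows from $\Psi$ covering $\psi$ and $\rho_{\phi^{!!}B}(\tilde{y})=\partial_t$. For the third, on the product-Lie-algebroid generators one has $\tilde\Pi(\pr_I^!\partial_t)=\tilde{y}$ and $\tilde\Pi(\pr_B^!b)$ equals the unique section of $\phi^{!!}B$ over $I\times M$ that restricts to $J\circ b$ at $t=0$ and is flow-invariant, i.e., killed by $[\tilde{y},\cdot]$. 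Flow-invariance directly matches $[\pr_I^!\partial_t,\pr_B^!b]=0$; and the bracket of two flow-invariant sections is flow-invariant by Jacobi, agrees at $t=0$ with $J([b_1,b_2](m))$ by the LA-morphism property of $J$, hence equals $\tilde\Pi(\pr_B^![b_1,b_2])$.

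The last of these identifications is the main technical obstacle: it uses that two flow-invariant sections of $\phi^{!!}B$ which agree on $\{0\}\times M$ agree everywhere (a consequence of the nowhere-vanishing of the symbol $\partial_t$ of $\hat D$ and uniqueness for the corresponding linear ODE along each line $t\mapsto(t,m)$), together with the LA-morphism property of $J$ to identify the bracket at $t=0$. Once these three properties are established, $p_{B,\phi}\circ\tilde{\Pi}\colon TI\times B\to B$ is an LA-homotopy from $\id_B$ to $p_{B,\phi}\circ\tilde{\Pi}\circ\mathcal{I}_1$, as required.
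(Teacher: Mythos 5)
Your proof is correct, and it takes a genuinely different route from the paper's. The paper obtains $\tilde\Pi$ indirectly: it first invokes Corollary \ref{cor:00} (which runs Proposition \ref{pr:mein} on the rescaled homotopy $\alpha(s,t,m)=\phi(st,m)$) to produce \emph{some} LA-isomorphism $\Pi\colon TI\times\phi_0^{!!}B\to\phi^{!!}B$, then shows that the resulting time-zero map $\Phi\circ\mathcal I_0=p_{B,\phi}\circ\Pi\circ\mathcal I_0$ is an LA-automorphism of $B$, and finally corrects the normalization by setting $\tilde\Pi:=\Pi\circ(\id_{TI}\times(\Phi\circ\mathcal I_0)^{-1})$. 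You instead build the correctly normalized isomorphism in one step, $\tilde\Pi(r\partial_t\an{t},b_m)=\Psi_t(J(b_m))+r\,\tilde y(t,m)$, with the flow $\Psi$ of $\widehat{[\tilde y,\cdot]}$ acting on $\phi^{!!}B$ itself and the canonical inclusion $J$ fixing the $t=0$ slice. The price is that the LA-morphism property of $\tilde\Pi$ must be verified by hand, which you do correctly via flow-invariance: $[\tilde y,\cdot]$ is a bracket derivation, so the bracket of two flow-invariant sections is flow-invariant, and flow-invariant sections are determined by their restriction to $\{0\}\times M$ (Lemma \ref{lemma_flat_sections}). What your approach buys is explicitness (the time-zero map is the identity by construction, so no correcting automorphism appears) and independence from the auxiliary homotopy $\alpha$; what the paper's approach buys is that the isomorphism property of $\Pi$ comes for free from Proposition \ref{invariance_a}, at the cost of an opaque time-zero automorphism that must then be undone. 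The one step worth spelling out slightly more carefully in your argument is the identification of $\left[\tilde\Pi(\pr_M^!b_1),\tilde\Pi(\pr_M^!b_2)\right]\an{\{0\}\times M}$ with $J[b_1,b_2]$: it uses that sections of $\phi^{!!}B$ whose anchors are tangent to $\{0\}\times M$ along that slice have a well-defined restricted bracket in $\iota_0^{!!}\phi^{!!}B\simeq\phi_0^{!!}B\simeq B$ --- the same folklore fact the paper invokes when it calls the embedding $\phi_t^{!!}B\hookrightarrow\phi^{!!}B$ a morphism of Lie algebroids.
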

\begin{proof}
Since $\phi_0=\id_N$, the Lie algebroid $B\to N$ is canonically isomorphic to $\phi_0^{!!}B\to N=M$ via the identification  $B \to \phi_0^{!!}B=\operatorname{id}_N^{!!}B$, $b \mapsto (\rho_B(b),b)$.
Applying Corollary \ref{cor:00} to $\phi$ and $B\to N$ then yields an LA-morphism
	 \[\Phi \colon TI \times B \xrightarrow{\Pi} \phi^{!!}B \xrightarrow{p_{B,\phi}} B,\]
	\[(r_t,b_n) \mapsto (r_t, \rho_{B}(b_n), y(r_t,b_n)_{\phi(t,n)}) \mapsto y(r_t,b_n)_{\phi(t,n)},\]
	defined over $\phi \colon I \times N \to N$, with $\Pi$ a (non-canonical) Lie algebroid isomorphism over the identity map $\operatorname{id}_{I \times N}$. For each pair $(r_t, b_n)\in TI\times B$, the element
	  $y(r_t,b_n)_{\phi(t,n)} \in B_{\phi(t,n)}$ satisfies \[T\phi(r_t, \rho_B(b_n))= \rho_B(y(r_t,b_n)_{\phi(t,n)}).\] $\Phi$ is an LA-homotopy between 
	$\Phi \circ \mathcal{I}_0=p_{B, \phi} \circ \Pi \circ \mathcal{I}_0$, defined over $\operatorname{id}_N$, and $\Phi \circ \mathcal{I}_1=p_{B, \phi} \circ \Pi \circ \mathcal{I}_1$, defined over $\phi_1$.
	
	It is then possible to modify the above as follows to another LA-homotopy $\tilde{\Phi}\colon TI \times B \to B$, such that  $\tilde{\Phi} \circ \mathcal{I}_0=\operatorname{Id}_B$ and $\tilde{\Phi} \circ \mathcal{I}_1=p_{B,\phi_1} \circ \tilde{\Pi} \circ \mathcal{I}_1$, for some isomorphism $\tilde{\Pi}\colon TI \times B \to \phi^{!!}B$.
	First prove that the map
	\[\Phi \circ \mathcal{I}_0 \colon B \xrightarrow{\mathcal{I}_0} TI \times B \xrightarrow{\Pi} \phi^{!!}B \xrightarrow{p_{B,\phi}} B \]
	\[b_n \mapsto (0_0, b_n) \mapsto (0_0, \rho_B(b_n),y(0_0,b_n) ) \mapsto y(0_0,b_n)\in B_n\]
	is an LA-automorphism of $B$ over the identity map. In particular, then
	$\rho_B(b_n)= \rho_B(y(0_0,b_n))$.
	
	Since $\Pi$ is an isomorphism, $\Pi \circ \mathcal{I}_0 = \Pi\an{0_0 \times B}\colon 0_0 \times B \to \phi^{!!}B$ is injective and it is an isomorphism to its image, which is contained in $\phi^{!!}_0(B) \subseteq \phi^{!!}B$. Since $\phi_0=\id_N$, $\phi_0^{!!}B\simeq B$ and so $\Pi\circ\mathcal I_0$ must map surjectively to $\phi_0^{!!}B$ for dimension reasons.
	
	Since $\phi_0=\id_N$, the LA-morphism $p_{B,\phi} \an{\phi^{!!}_0B} \colon \phi^{!!}_0B \to B$ is the isomorphism
	$p_{B,\phi_0}\colon \phi_0^{!!}B\to B$ with inverse
	$b \mapsto (\rho_B(b), b)$. This shows that $\Phi\circ\mathcal I_0$ is an isomorphism of Lie algebroids.

	\bigskip
	Use the inverse $(\Phi\circ \mathcal I_0)^{-1}$  of the Lie algebroid automorphism $\Phi\circ \mathcal I_0$ to define the new isomorphism 
	$\tilde \Pi\colon TI\times B\to \phi^{!!}B$ of Lie algebroid as in the following diagram
\[\begin{tikzcd}
	{TI\times B} &&& {TI\times B} \\
	\\
	&&& {\phi^{!!}B}
	\arrow["{\operatorname{id}_{TI}\times(\Phi\circ\mathcal I_0)^{-1}}", from=1-1, to=1-4]
	\arrow["{\tilde\Pi}"', from=1-1, to=3-4]
	\arrow["\Pi", from=1-4, to=3-4]
\end{tikzcd}\]
	\[(r_t,b_n) \mapsto (r_t, \rho_B(b_n), y(r_t, (\Phi \circ \mathcal{I}_0)^{-1}(b_n))) \] 
	over the identity map $\operatorname{id}_{I \times N}$. 
		It is an isomorphism of Lie algebroids as a composition of Lie algebroid isomorphisms over the identity on $I\times N$.
	
	Use $\tilde{\Pi}$ to define the LA-homotopy 
	\[\tilde{\Phi} \colon TI \times B \xrightarrow{\tilde{\Pi}} \phi^{!!}B \xrightarrow{p_{B,\phi}} B\]
	\[(r_t, b_n) \mapsto (r_t, \rho_B(b_n), y(r_t, (\Phi \circ \mathcal{I}_0)^{-1}b_n) \mapsto y(r_t, (\Phi \circ \mathcal{I}_0)^{-1}b_n) \]
	over $\phi\colon I\times N\to N$.
	The map $\tilde{\Phi}$ can also be written  as
	\[\tilde{\Phi}(r_t, y):= \Phi(r_t, (\Phi \circ \mathcal{I}_0)^{-1}(y)). \]
	It is an LA-homotopy between \[\tilde{\Phi} \circ \mathcal{I}_0= p_{B,\phi} \circ \tilde{\Pi} \circ \mathcal{I}_0= \Phi \circ \mathcal{I}_0 \circ (\Phi \circ \mathcal{I}_0)^{-1}=\operatorname{Id}_B\] and 
	\[\tilde{\Phi} \circ \mathcal{I}_1=p_{B,\phi} \circ \tilde{\Pi} \circ \mathcal{I}_1= p_{B,\phi}\an{\phi^{!!}_1B} \circ \tilde{\Pi} \circ \mathcal{I}_1=  p_{B,\phi_1} \circ \tilde{\Pi} \circ \mathcal{I}_1.\]
\end{proof}

Note that later Lemma \ref{cor:000} will find a direct application in Theorem \ref{cor:0}.

\medskip

 Finally, Corollary \ref{cor:00} yields examples of LA-homotopy between Atiyah Lie algebroids.
    \begin{example}
    	Let $A(P) \to N$ be an Atiyah Lie algebroid and let $\phi \colon I \times M \to N$ be a smooth homotopy. Corollary \ref{cor:00} implies that there exists a Lie algebroid of the form
    	\[TI \times A(\phi_0^*P) \to A(P).\]
 
 This is an immediate consequence of the well known fact that $A(\phi_0^*P) \cong \phi_0^{!!}A(P)$ (see for instance \cite{Mackenzie05}) and Corollary \ref{cor:00}. All hypotheses are satisfied because Atiyah Lie algebroids are transitive.
      \end{example}

        \section{The Lie algebra case} 
        
Lie algebras are prototypical examples of Lie algebroids. They are probably the most ``rigid'' class of Lie algebroids, i.e.~where examples of LA-homotopies are the rarest. Nevertheless, it is interesting to measure how restrictive this notion is and how it relates to the classical deformation theory of morphisms of Lie algebras.This section shows that the study of LA-homotopy in the subcategory of Lie algebras provides an equivalent characterization expressed in classical Lie theory terms, so accessible to a reader with no interest in general Lie algebroids.

       \subsection{LA-homotopy for Lie algebras}
    \label{subs:3.1}
    This section proves that LA-homotopies between Lie algebra morphisms are smooth curves of Lie algebra morphisms, the variations of which are controlled by the adjoint representation of the target Lie algebras.

    \begin{proposition}\label{homotopy_lie_algebras}
    	Let $\mathfrak{g}$ and $\mathfrak{h}$ be Lie algebras. Consider a vector bundle morphism
    	\begin{equation*}
    		\begin{xy}
    			\xymatrix{TI \times \mathfrak{g} \ar[r]^{\Phi}\ar[d]& \mathfrak{h} \ar[d]\\
    				I\times \{*\} \ar[r]_{\phi} & \{*\}}
    		\end{xy}
    	\end{equation*}
    Then $(\Phi,\phi)$ is a LA-morphism, i.e.~an LA-homotopy between $\Phi_0:=\Phi(0\an{0},\cdot)$ and $\Phi_1:=\Phi(0\an{1},\cdot)\colon \mathfrak g\to\mathfrak h$, if and only if 
    	it can be rewritten by means of smooth curves $c\colon I \rightarrow \mathfrak{h}$ and $\psi\colon I \rightarrow \operatorname{Hom}(\mathfrak{g},\mathfrak{h})$, as 
    	\begin{equation}
    		\Phi(r\partial_t{_{|_s}}, x):= r c(s)+ \psi_s (x)
    		\label{eq:1}
    	\end{equation}
    	with $r\in\R$, $s\in I$  and $x\in\mathfrak{g}$,  
    	satisfying the partial differential equation 
    	\begin{equation}(\partial_t \psi)_s(x)=-\operatorname{ad}_{c(s)}(\psi_s(x)):=-[c(s),\psi_s(x)]
	\label{eq:7865}
    	\end{equation}
    	\label{pr:lah}
	for each $s\in I$ and each $x \in \mathfrak{g}$.
    \end{proposition}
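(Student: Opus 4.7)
My plan is to split the proof into the two natural halves of the \emph{iff}, and in both directions to use the fact that sections of $TI\times\mathfrak{g}$ are locally $C^\infty(I)$-generated by the two canonical types of pullback sections: the vector field $\pr_I^!(\partial_t)$, and constant sections $\pr_{\{\ast\}}^!(x)$ for $x\in\mathfrak{g}$. Since any vector bundle morphism $\Phi\colon TI\times\mathfrak{g}\to\mathfrak{h}$ over $\phi\colon I\times\{\ast\}\to\{\ast\}$ restricts fiberwise to a linear map $T_sI\oplus\mathfrak{g}\to\mathfrak{h}$, the trivialization $\partial_t|_s$ of $TI$ immediately yields the decomposition \eqref{eq:1} with $c(s):=\Phi(\partial_t|_s,0)$ and $\psi_s(x):=\Phi(0_s,x)$; smoothness of $\Phi$ gives smoothness of $c$ and $\psi$. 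The anchor condition $\rho_{\mathfrak h}\circ\Phi=T\phi\circ\rho_{TI\times\mathfrak{g}}$ holds automatically since $\rho_{\mathfrak h}=0$ and $T\phi\colon TI\to T\{\ast\}=0$, so the whole content is in the bracket condition.

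To analyze the bracket condition, I would use the $\Phi^!$-formulation: here $\phi^!\mathfrak{h}=I\times\mathfrak{h}$, and for a basis $(e_i)$ of $\mathfrak h$ the pullback sections $\phi^!e_i$ are the constant sections $s\mapsto(s,e_i)$. Writing $\Phi^!(\pr_I^!\partial_t)=\sum_i c_i(s)\,\phi^! e_i$ (with $c(s)=\sum_ic_i(s)e_i$) and $\Phi^!(\pr_{\{\ast\}}^!x)=\sum_i q_i^x(s)\,\phi^! e_i$ (with $\psi_s(x)=\sum_iq_i^x(s)e_i$), the bracket condition reduces, by $C^\infty$-linearity and the Leibniz identity baked into the definition, to being checked on the three types of pairs of generators.

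The three checks are then routine. Since $[\pr_{\{\ast\}}^!x,\pr_{\{\ast\}}^!y]=\pr_{\{\ast\}}^![x,y]_{\mathfrak{g}}$ and the anchor of constant sections vanishes, the bracket formula collapses to $\psi_s([x,y]_{\mathfrak{g}})=[\psi_s(x),\psi_s(y)]_{\mathfrak{h}}$, i.e.\ $\psi_s$ is a Lie algebra morphism for every $s\in I$. Since $[\pr_I^!\partial_t,\pr_{\{\ast\}}^!x]=0$ in $TI\times\mathfrak{g}$, and $\rho(\pr_I^!\partial_t)=\partial_t$ differentiates the coefficients $q_i^x$ in $s$, the formula yields
\[
0 \;=\; \sum_{i,j} c_i(s)q_j^x(s)[e_i,e_j] \;+\; \sum_j (q_j^x)'(s)\,e_j \;=\; [c(s),\psi_s(x)]_{\mathfrak h}+(\partial_t\psi)_s(x),
\]
which is exactly the PDE \eqref{eq:7865}. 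The last pair $[\pr_I^!\partial_t,\pr_I^!\partial_t]=0$ gives a tautology, and brackets of multiplied generators like $f\pr_I^!\partial_t+g\pr_{\{\ast\}}^!x$ produce no new content by the Leibniz rule of the formula.

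The converse direction reverses these calculations: given $c$ and $\psi_s$ satisfying \eqref{eq:7865} with each $\psi_s$ a Lie algebra morphism, all three checks succeed, so $\Phi$ is an LA-morphism. The only subtlety I expect to be the main obstacle is that the statement only mentions the PDE and does not explicitly demand $\psi_s\in\mathrm{Hom}_{\mathrm{LieAlg}}(\mathfrak{g},\mathfrak{h})$; but the LA-homotopy context forces $\psi_0=\Phi_0$ to be a Lie algebra morphism, and then the function $F(s):=\psi_s([x,y]_{\mathfrak g})-[\psi_s(x),\psi_s(y)]_{\mathfrak h}$ satisfies, by the PDE together with Jacobi in $\mathfrak h$, the linear ODE $F'(s)=-[c(s),F(s)]_{\mathfrak h}$ with $F(0)=0$; uniqueness of solutions then forces $F\equiv 0$, so $\psi_s$ is a Lie algebra morphism for every $s$. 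This bootstrap from the endpoint back to a fiberwise Lie algebra morphism condition is the only nontrivial piece of the argument; everything else is bookkeeping in the generator formalism.
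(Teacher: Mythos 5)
Your proposal is correct and follows essentially the same route as the paper: the decomposition \eqref{eq:1} is read off fiberwise, the bracket condition is tested on generating sections (the paper works with the equivalent map $\Phi^{!!}\colon TI\times\mathfrak g\to TI\times\mathfrak h$ applied to time-dependent sections $(f\partial_t,x)$ and then specializes $f,g$ to $0$ and $1$, which produces exactly your two identities $\psi_s\in\operatorname{Hom}_{LA}(\mathfrak g,\mathfrak h)$ and \eqref{eq:7865}), and your ODE/uniqueness bootstrap showing that the fiberwise Lie-algebra-morphism condition follows from the PDE and $\psi_0=\Phi_0$ is precisely the argument the paper records in Remark \ref{rem_LA_hom_LAlgebras}. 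No gaps.
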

\begin{remark}\label{rem_LA_hom_LAlgebras}
   \begin{enumerate}
   \item The curve $\psi\colon I \rightarrow \operatorname{Hom}(\mathfrak{g},\mathfrak{h})$ is automatically a smooth curve of morphisms of Lie algebras. In order to see this, use the differential equation \eqref{eq:7865} and the Jacobi identity and compute
    	\begin{equation}
    		\label{eq:curve of Lie algebra Hom}
    		\begin{split}
    			&\partial_t\left(\left[\psi_t(x),\psi_t(y)\right]-\psi_t[x,y]\right)
    			=\left[c(t),\psi_t[x,y]-\left[\psi_t(x),\psi_t(y)\right]\right]
    		\end{split}
    	\end{equation}
        for $x,y\in\mathfrak g$. It is a linear nonautonomous ordinary differential equation with initial condition 
    	\[\left[\psi_0(x),\psi_0(y)\right]-\psi_0[x,y]=0 
    	\]
    	since $\psi_0$ is a morphism of Lie algebras.
    	The constant curve $I\to \mathfrak h$, $t\mapsto 0$ is a solution of \eqref{eq:curve of Lie algebra Hom} for each $x,y\in \mathfrak g$, and so it is its unique solution.
    
    \item	By definition, $\Phi_j=\psi_j$ for $j=0,1$.
    \end{enumerate}
    \end{remark}

    \begin{proof}
    	$\Phi$ is a Lie algebroid morphism if and only if the vector bundle morphism
    	\[ \Phi^{!!}\colon TI \times \mathfrak{g} \rightarrow TI \times \mathfrak{h}=\varphi^{!!}\lie h \]
    	\[(r{{\partial_t}_{|}}_s, x)  \mapsto (r {{\partial_t}_{|}}_s, r \, c(s)+ \psi_s(x)) \] 
	for all $r\in\mathbb R$, $s\in I$ and all $x\in \lie g$, is a Lie algebroid morphism (see e.g. \cite{Mackenzie05}).
	
	A pair of a smooth function $f\in C^\infty(I)$ and $x\in C^\infty(I,\lie g)$ defines a smooth section
	$(f\partial_t,x)\colon s\to (f(s)\partial_t\an{s},x(s))$ of $TI\times \lie g\to I\times\{*\}$. This smooth section is sent by $\Phi^{!!}$ to 
	the smooth section $s\mapsto (f(s)\partial_t\an{s}, f(s)c(s)+\psi_s(x(s)))$ of $TI\times \lie h\to I\times\{*\}$.
	
    	In the following calculations, use the usual simplified notation $f':= \partial_t f$, $x':=\partial_tx$ and  $\psi':=\partial_t\psi$. Then
	for $f,g\in C^\infty(I)$ and $x,y\in C^\infty(I,\lie g)$, 
    	\begin{equation*}\begin{split} \Phi^{!!}[(f \partial_t,x),(g \partial_t,y)]&=\Phi^{!!}((fg' -gf') \partial_t, [x,y] +f y'-gx')\\
    	   			&=((fg' -gf') \partial_t,(fg'-gf')\,c +\psi([x,y]+fy'-gx')) \label{eq:Q} \end{split} 
    	\end{equation*}    	
    	Moreover, 
    	\begin{equation*} \begin{split}
    			&[\Phi^{!!}(f \partial_t,x), \Phi^{!!}(g \partial_t,y)]= [(f \partial_t,fc+\psi(x)),( g \partial_t,gc+\psi(y))]\\
    			&=((f g'-g f')\partial_t,f \, \partial_t(gc+\psi(y))-g \, \partial_t(fc+\psi(x))+[fc+\psi(x),gc+\psi(y)].
    	\end{split} \end{equation*}
    	
    	By imposing equality between the last two expressions for all smooth functions $f,g\in C^\infty(I)$, the following conditions are found:
    	\[\psi([x,y])=[\psi(x),\psi(y)] \]
	by setting $f=0=g$ in the equations above, and 
    	\[\cancel{\psi(y')} = [c,\psi(y)] + \partial_t(\psi(y))= \psi'(y) +\cancel{\psi(y')}+[c,\psi(y)]  \]
	by then setting $f=1$ and $g=0$ in the equations above.
    So $\psi$ is a curve of LA-morphisms satisfying the  claimed partial differential equation with the curve $c\colon I\to \mathfrak h$.
    \end{proof}
    
    \begin{corollary}
    	\label{co:2}
    	In the situation of Proposition \ref{homotopy_lie_algebras}, assume that $\mathfrak{h}$ is abelian. Then $\partial_t \psi \equiv 0$. It implies that 
    	two Lie algebra morphisms $\Phi_0,\Phi_1\colon \mathfrak{g} \to \mathfrak{h}$ are LA-homotopic if and only if they are equal. 
    	    \end{corollary}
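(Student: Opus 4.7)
The plan is immediate from Proposition \ref{homotopy_lie_algebras}. First I would invoke the characterization given there: any LA-homotopy $\Phi$ between $\Phi_0$ and $\Phi_1$ decomposes as in \eqref{eq:1} via a smooth curve $c\colon I\to\mathfrak h$ and a smooth curve $\psi\colon I\to\operatorname{Hom}(\mathfrak g,\mathfrak h)$ satisfying the ODE \eqref{eq:7865}, namely $(\partial_t\psi)_s(x)=-[c(s),\psi_s(x)]$. When $\mathfrak h$ is abelian, the right-hand side vanishes identically, so $\partial_t\psi\equiv 0$ and $\psi$ is constant on $I$.

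From the constancy of $\psi$ I would conclude the equivalence. Indeed, by Remark \ref{rem_LA_hom_LAlgebras}(2), $\Phi_j=\psi_j$ for $j=0,1$, so constancy of $\psi$ forces $\Phi_0=\psi_0=\psi_1=\Phi_1$. This proves the ``only if'' direction. For the ``if'' direction, if $\Phi_0=\Phi_1$, define the constant homotopy by $\psi_s:=\Phi_0$ and $c\equiv 0$; this trivially satisfies \eqref{eq:7865} and assembles via \eqref{eq:1} into an LA-morphism $\Phi\colon TI\times\mathfrak g\to\mathfrak h$ with $\Phi\circ\mathcal I_0=\Phi\circ\mathcal I_1=\Phi_0$.

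There is no real obstacle here: the statement is essentially a direct reading of the ODE in Proposition \ref{homotopy_lie_algebras} under the abelian hypothesis. The only thing to be careful about is to mention explicitly that the identification $\Phi_j=\psi_j$ is the one recorded in Remark \ref{rem_LA_hom_LAlgebras}, so that the conclusion $\Phi_0=\Phi_1$ really does follow from $\psi$ being constant rather than from some weaker statement about the curve $\psi$.
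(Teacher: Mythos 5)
Your proof is correct and follows essentially the same route as the paper: both read off $\partial_t\psi\equiv 0$ from the ODE \eqref{eq:7865} under the abelian hypothesis and then use the identification $\Phi_j=\psi_j$ to conclude $\Phi_0=\Phi_1$. The only difference is that you also spell out the converse direction via the constant homotopy with $c\equiv 0$, which the paper leaves implicit; this is a harmless and welcome addition.
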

    \begin{proof}
    Since $\partial_t \psi \equiv 0$, the curve $\psi\colon I\to \operatorname{Hom}_{\rm LA}(\mathfrak{g},\mathfrak{h})$ is constant. Hence $\Phi$ is given by
    \[\Phi(r\partial_t{_{|_s}}, x):= r c(s)+ \psi (x)\]
    with some fixed $\psi\in  \operatorname{Hom}_{\rm LA}(\mathfrak{g},\mathfrak{h})$. Since $\Phi_j=\Phi\circ \mathcal{I}_j$ for $j=0,1$, this yields 
    $\Phi_j(x)=\Phi(0\cdot \partial_t\an{j},x)=\psi(x)$ for $j=0,1$ and all $x\in\lie g$.
    \end{proof}
 
    \subsection{LA-homotopy for Lie algebras versus trivial deformations}
    \label{subs:3.2}
    
%
%
%
     
 Let $\mathfrak g$ and $\mathfrak h$ be two finite-dimensional Lie algebras
 and let $\psi_0\colon \mathfrak g\to \mathfrak h$ be a morphism of Lie algebras. Let $\psi\colon I \to \mathfrak g^*\otimes \mathfrak h$ be a smooth curve defined on an open interval $I$ containing $0$, such that $\phi(t)\colon \mathfrak g\to \mathfrak h$ is a morphism of Lie algebras for all $t\in I$, and such that $\psi(0)=\psi_0$. Then $\psi$ is a \textbf{deformation} of $\psi_0$, see \cite{CrScSt14} and references therein. Let $H$ be the simply connected Lie group integrating $\mathfrak h$. Then $\psi$ is a \textbf{trivial deformation} of $\psi_0$ if there is a smooth curve $h\colon I\to H$ such that 
    \[ \psi(t)=\operatorname{Ad}_{h(t)}\circ \psi_0
    \]
    for all $t\in I$. This section proves the following result,  which confirms that the notion of LA-homotopy is extremely rigid in the case of Lie algebras.
    \begin{proposition}\label{LA_Hom_def}
    Let $\mathfrak g$ and $\mathfrak h$ be two finite-dimensional Lie algebras and let 
  $\Phi\colon TI\times\mathfrak g\to \mathfrak h$ be an LA-homotopy. Then with the notation of Proposition \ref{homotopy_lie_algebras}, 
  the curve $\psi\colon I\to \mathfrak g^*\otimes \mathfrak h$ is a trivial deformation of $\Phi_0=\psi(0) \colon \mathfrak g\to \mathfrak h$.
     \end{proposition}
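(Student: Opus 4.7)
The plan is to produce the candidate conjugating curve $h\colon I\to H$ by solving a time-dependent right-invariant ODE on $H$ driven by $-c$, and then to recognize both $\operatorname{Ad}_{h(\cdot)}\circ\Phi_0$ and $\psi$ as solutions to the same time-dependent linear ODE on $\operatorname{Hom}(\mathfrak{g},\mathfrak{h})$, concluding by uniqueness.

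First I would record what Proposition \ref{homotopy_lie_algebras} and Remark \ref{rem_LA_hom_LAlgebras} already provide: the curve $\psi\colon I\to\operatorname{Hom}(\mathfrak{g},\mathfrak{h})$ is smooth, takes values in the subset of Lie algebra morphisms, satisfies $\psi(0)=\Phi_0$, and is constrained by the linear ordinary differential equation (\ref{eq:7865}),
\[ \partial_t\psi_s = -\operatorname{ad}_{c(s)}\circ\psi_s. \]
Thus $\psi$ is already a deformation of $\Phi_0$ in the sense recalled before the statement, and the only point to be established is the existence of an $H$-valued path implementing $\psi$ by conjugation.

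Next, let $H$ be the simply connected Lie group with Lie algebra $\mathfrak{h}$. I would let $h\colon I\to H$ be the unique solution of the right-invariant time-dependent ODE
\[ h'(t)h(t)^{-1} = -c(t), \qquad h(0)=e. \]
Global existence of $h$ on the whole of $I$ is the standard completeness result for right-invariant time-dependent vector fields on a finite-dimensional Lie group: right-invariance makes the length of the existence neighbourhood around any initial time independent of the initial value in $H$, so a maximal solution cannot shrink away from the endpoints of $I$. The standard group-theoretic identity
\[ \partial_t \operatorname{Ad}_{h(t)} = \operatorname{ad}_{h'(t)h(t)^{-1}}\circ \operatorname{Ad}_{h(t)} \]
then becomes $\partial_t\operatorname{Ad}_{h(t)} = -\operatorname{ad}_{c(t)}\circ\operatorname{Ad}_{h(t)}$ by construction of $h$.

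Setting $\tilde{\psi}(t):=\operatorname{Ad}_{h(t)}\circ\Phi_0$, the identity above gives $\partial_t\tilde{\psi}(t)(x) = -[c(t),\tilde{\psi}(t)(x)]$ for every $x\in\mathfrak{g}$, so $\tilde\psi$ satisfies the same linear ODE (\ref{eq:7865}) as $\psi$, and $\tilde{\psi}(0)=\operatorname{Ad}_e\circ\Phi_0=\Phi_0=\psi(0)$. Uniqueness of solutions to linear time-dependent ODEs on the finite-dimensional vector space $\operatorname{Hom}(\mathfrak{g},\mathfrak{h})$ then forces $\psi(t)=\operatorname{Ad}_{h(t)}\circ\Phi_0$ for all $t\in I$, which is exactly the assertion that $\psi$ is a trivial deformation of $\Phi_0$. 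The only point that goes beyond routine verification is the global existence of $h$ on the whole of $I$; as indicated, this is the standard completeness fact for right-invariant time-dependent vector fields on a Lie group, and does not present a real obstacle here.
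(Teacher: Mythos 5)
Your proposal is correct and follows essentially the same route as the paper: solve the right-invariant time-dependent ODE $\dot h(t)h(t)^{-1}=-c(t)$, $h(0)=e$, on the simply connected group $H$, observe that $\operatorname{Ad}_{h(t)}\circ\Phi_0$ satisfies the same linear ODE \eqref{eq:7865} with the same initial value as $\psi$, and conclude by uniqueness. The only cosmetic differences are that the paper verifies $\partial_t\operatorname{Ad}_{h(t)}=-\operatorname{ad}_{c(t)}\circ\operatorname{Ad}_{h(t)}$ by a direct chain-rule computation rather than quoting the logarithmic-derivative identity, and secures existence of $h$ on $[0,1]$ by citing Duistermaat--Kolk instead of invoking completeness of right-invariant time-dependent vector fields; both are valid.
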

     Recall that by Remark \ref{rem_LA_hom_LAlgebras}, the fact that the curve $\psi\colon I \rightarrow \operatorname{Hom}(\mathfrak{g},\mathfrak{h})$ of Proposition \ref{homotopy_lie_algebras} has image in $ \operatorname{Hom}_{LA}(\mathfrak{g},\mathfrak{h})$ actually follows from \eqref{eq:7865}.

\begin{proof}
    This proof studies the  non-autonomous ordinary differential equation \eqref{eq:7865}
     with initial  condition $\psi(0)=\psi_0\in \operatorname{Hom}_{LA}(\mathfrak{g},\mathfrak{h})$. Let $H$ be the simply-connected Lie group integrating the finite-dimensional Lie algebra $\mathfrak h$. $I$ denotes as usual an open interval containing $[0,1]$.
    Consider the time-dependent vector field $X\colon I\times H\to TH$
    \[ X(t,h)=T_er_h(-c(t))
    \]
    for all $h\in H$ and all $t\in I$. The flow line
     $\Phi_{(0,e)}\colon I_{(0,e)} \to H$ starting at $e\in H$ at time $0$ satisfies then 
    \[ \dot \Phi_{(0,e)}(t)=X\left(t, \Phi_{(0,e)}(t)\right)=T_er_{ \Phi_{(0,e)}(t)}(-c(t)) \quad \Leftrightarrow \quad -c(t)=\dot\Phi_{(0,e)}(t)\cdot (\Phi_{(0,e)}(t))^{-1}
    \]
    for all $t\in I$, where $I_{(0,e)}\subseteq I$ is an interval containing $0$.
    According to \cite[Proposition 1.13.4]{DuKo00} there exists a $C^1$-path $h\colon [0,1]\to H$ starting in $e_H$ at time $0$ such that 
    \[ -c(t)=(T_er_{h(t)})^{-1}\dot h(t)=T_{h(t)}r_{h(t)^{-1}}\dot h(t)
    \]
    for all $t\in [0,1]$. But then this path is smooth since $\dot h(t)=X(t,h(t))$
    for all $t\in [0,1]$, and the flow line $\Phi_{(0,e)}$ is defined on an open interval $I_{(0,e)}$ containing $[0,1]$. For simplicity denote by $h\colon I_{(0,e)}\mapsto H$ the flow line $\Phi_{(0,e)}$. Since only the solution of \eqref{eq:7865} on $[0,1]$ is relevant, assume without loss of generality that $I_{(0,e)}=I$.

    \medskip
    
    Consider the curve 
    \[ \psi\colon I\to \operatorname{Hom}(\mathfrak g, \mathfrak h), \qquad  t\mapsto \operatorname{Ad}_{h(t)}\circ \psi_0
    \]
    i.e.~\[\psi(t)(x)=\operatorname{Ad}(h(t),\psi_0(x))
    \]
    for all $x\in \mathfrak g$ and all $t\in I$.
    Then for all $x\in\mathfrak g$
    \begin{equation*}
    	\begin{split}
    		\dot\psi(t)(x)&=T_{(h(t), \psi_0(x))}\operatorname{Ad}\left(\dot h(t), 0_{\psi_0(x)}\right)\\
    		&=T_{(e_H, \psi_0(x))}(\operatorname{Ad}\circ(r_{h(t)}\times\id_{\mathfrak h}))\left(-c(t), 0_{\psi_0(x)}\right)
    		\\
    		&=T_{(e_H, \psi_0(x))}(\operatorname{Ad}\circ(\id_H\times\operatorname{Ad}_{h(t)}))\left(-c(t), 0_{\psi_0(x)}\right)
    		\\
    		&=T_{(e_H, \psi(t)(x))}\operatorname{Ad}\left(-c(t), 0_{\psi(t)(x)}\right)	 =\operatorname{ad}_{-c(t)}(\psi(t)(x)) 
    	\end{split}
    \end{equation*}
    for all $t\in I$ shows that $\psi$ is a solution of \eqref{eq:7865} starting in $\psi_0$ at time $0$.
    \end{proof}

        \section{A generalized Poincaré lemma for Lie algebroid cohomology}
   
   This section  studies the LA-cohomology of pullback Lie algebroids under smooth homotopies, and then proves a Poincar\'e Lemma for Lie algebroid cohomology in the transitive case.
         \subsection{LA-cohomology of pullback Lie algebroids}
     \label{subs:5.1}
      Corollary \ref{cor:00} and Lemma \ref{cor:000} have a direct application in the computation of the twisted Lie algebroid cohomology of some Lie algebroids.
      	
      	\begin{theorem}
      		\label{cor:0}
      		Let $f\colon M \to N$ and $g\colon N \to M$ define a smooth homotopy equivalence between the manifolds $M$ and $N$, with homotopies $\eta\colon I \times M \to M$ and $\varphi\colon I \times N \to N$.  Let $B\to N$ be a Lie algebroid. Assume that  $f^{!!}B$ exists and the pairs $B$ and $\varphi$, and $f^{!!}B$ and $\eta$ satisfy the hypotheses of Proposition \ref{pr:mein}.
      		
      		         		Then for each flat $B$-connection $\nabla\colon \Gamma(B) \times \Gamma(E) \to \Gamma(E)$ on a vector bundle $E \to N$, the morphism
      		\[ \operatorname{H}_{\nabla}^{\bullet}(B,E) \xrightarrow{\overline{p^*_{B,f}}} \operatorname{H}^{\bullet}_{p_{B,f}^*\nabla}(f^{!!}B,f^!E) \]
      		is an isomorphism of modules over the ring isomorphism $\overline{p_{B,f}^*}\colon \operatorname{H}^{\bullet}(B) \to \operatorname{H}^{\bullet}(f^{!!}B)$.
      	\end{theorem}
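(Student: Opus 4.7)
The plan is to establish an LA-homotopy equivalence between $f^{!!}B$ and $B$ and then invoke Theorem \ref{th:hi} to transfer this to an isomorphism in twisted cohomology.

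First I would apply Lemma \ref{cor:000} to the pair $(\varphi, B)$. Since $\varphi_0=\id_N$, the lemma produces an LA-homotopy $\tilde\Phi_N\colon TI\times B\to B$ between $\id_B$ and an LA-morphism $\Psi\colon B\to B$ over $\varphi_1=f\circ g$. Using functoriality of the Lie algebroid pullback, one identifies $(f\circ g)^{!!}B\cong g^{!!}(f^{!!}B)$ and factors $p_{B,f\circ g}=p_{B,f}\circ p_{f^{!!}B,g}$. Tracing through the construction of $\Psi$ in Lemma \ref{cor:000}, this yields $\Psi=p_{B,f}\circ G$ for an LA-morphism $G\colon B\to f^{!!}B$ over $g$. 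In particular $\tilde\Phi_N$ is an LA-homotopy from $\id_B$ to $p_{B,f}\circ G$, so by Theorem \ref{th:hi} the composition $G^*\circ p_{B,f}^*=\Psi^*$ coincides with the identity up to a gauge equivalence; in particular $p_{B,f}^*$ is split injective.

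Symmetrically, I would apply Lemma \ref{cor:000} to $(\eta, f^{!!}B)$ to obtain an LA-homotopy $\tilde\Phi_M\colon TI\times f^{!!}B\to f^{!!}B$ between $\id_{f^{!!}B}$ and an LA-morphism $\Psi'\colon f^{!!}B\to f^{!!}B$ over $\eta_1=g\circ f$. The technical core of the proof is to show that $\Psi'$ and $G\circ p_{B,f}$ induce the same cohomology map modulo gauge. I would try to achieve this by aligning the flow constructions provided by Proposition \ref{pr:mein} in the two applications of Lemma \ref{cor:000}: a careful choice of compatible linear lifts of $\partial_t$ on $\varphi^{!!}B$ and on $\eta^{!!}(f^{!!}B)$, intertwined by $p_{B,f}$, should force $\Psi'=G\circ p_{B,f}$, or at least produce an explicit LA-homotopy between them. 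I expect this compatibility check to be the main obstacle of the proof, as the LA-isomorphism $\tilde\Pi$ in Lemma \ref{cor:000} depends on a non-canonical section and the two sides of the story must be matched.

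Once that is in place, the pair $(p_{B,f}, G)$ is an LA-homotopy equivalence between $f^{!!}B$ and $B$, and Theorem \ref{th:hi} implies that both $G^*\circ p_{B,f}^*$ and $p_{B,f}^*\circ G^*$ are gauge isomorphisms of the corresponding twisted cohomologies. Hence $p_{B,f}^*$ is simultaneously split injective and split surjective, thus an isomorphism from $\operatorname{H}_{\nabla}^{\bullet}(B,E)$ to $\operatorname{H}^{\bullet}_{p_{B,f}^*\nabla}(f^{!!}B,f^!E)$. The ring isomorphism statement is the specialization to $E=N\times\R$ with its canonical flat connection (see Example \ref{example_pullback_trivial}), and the module compatibility follows because pullback of forms along an LA-morphism is a module homomorphism over the pullback of Lie algebroid cohomologies.
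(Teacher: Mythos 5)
Your overall strategy (two applications of Lemma \ref{cor:000}, one to $(\varphi,B)$ and one to $(\eta,f^{!!}B)$, combined with Theorem \ref{th:hi}) is the paper's, but the step you yourself flag as ``the technical core'' is a genuine gap, and it is also an unnecessary detour. You propose to show that the endomorphism $\Psi'$ of $f^{!!}B$ over $g\circ f$ produced by the second application of Lemma \ref{cor:000} agrees (up to LA-homotopy) with $G\circ p_{B,f}$, by ``aligning the flow constructions''. There is no reason this alignment can be carried out: the isomorphisms $\tilde\Pi\colon TI\times B\to\varphi^{!!}B$ and $\tilde\Theta\colon TI\times f^{!!}B\to\eta^{!!}f^{!!}B$ come from Proposition \ref{pr:mein} applied to two unrelated pullback algebroids over different bases ($I\times N$ and $I\times M$), each built from an independent, non-canonical choice of section $\tilde y$ lifting $\partial_t$ and its flow; nothing intertwines these two choices through $p_{B,f}$. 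So the claim that $(p_{B,f},G)$ is an LA-homotopy equivalence is not established, and your argument for surjectivity of $\overline{p_{B,f}^*}$ does not go through as written.

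The paper avoids this entirely with a sandwich argument in which the map $\overline{p_{f^{!!}B,g}^*}$ is the pivot. From the first homotopy, $F\circ G\simeq\operatorname{Id}_B$ with $F=p_{B,f}$ and $G=p_{f^{!!}B,g}\circ\tilde\Pi\circ\mathcal I_1$; by Theorem \ref{th:hi}, $\overline{G^*}\circ\overline{F^*}$ is a gauge isomorphism, so $\overline{F^*}$ is injective and $\overline{G^*}$ is surjective; since $\tilde\Pi\circ\mathcal I_1$ is an isomorphism of Lie algebroids onto $\varphi_1^{!!}B=g^{!!}(f^{!!}B)$, this forces $\overline{p_{f^{!!}B,g}^*}$ to be surjective. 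From the second homotopy, $\tilde G\circ\tilde F\simeq\operatorname{Id}_{f^{!!}B}$ with $\tilde G=p_{f^{!!}B,g}$ and a \emph{different} right factor $\tilde F=p_{g^{!!}(f^{!!}B),f}\circ\tilde\Theta\circ\tilde{\mathcal I}_1$; this forces $\overline{p_{f^{!!}B,g}^*}$ to be injective. Hence $\overline{p_{f^{!!}B,g}^*}$ is bijective, hence so is $\overline{G^*}$, and then $\overline{F^*}=\overline{p_{B,f}^*}$ is bijective because $\overline{G^*}\circ\overline{F^*}$ is. No compatibility between the two homotopy ``inverses'' is ever needed. Your remarks on split injectivity from the first homotopy and on the ring/module structure are fine; replacing the alignment step by this two-sided injectivity/surjectivity argument closes the gap.
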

	
	\begin{remark}
	Note that Theorem \ref{cor:0} holds in particular in the following two situations.
	\begin{enumerate}
	 \item $B\to N$ is a regular Lie algebroid such that $f^{!!}B$,		    $\varphi^{!!}B$ and $\eta^{!!}f^{!!}B$ exist, and such that $T\eta(TI \times 0) \subseteq \operatorname{Im}(\rho_{f^{!!}B})$ and $T\varphi(TI \times 0) \subseteq \operatorname{Im}(\rho_{B})$, i.e.~ the homotopies deform along the leaves of the target manifold.
      		    \item $B\to N$ is transitive.
      		 \end{enumerate}
The transitivity of $B\to N$ implies the condition of the theorem,  because the hypotheses of Proposition \ref{pr:mein} are satisfied for a transitive Lie algebroid and the pullback of a transitive Lie algebroid exists and is transitive. Similarly, (1) implies the condition of the theorem by Remark \ref{re:1}. 
	\end{remark}

      \begin{proof}[Proof of Theorem \ref{cor:0}]
      	Consider the canonical LA-morphisms
      	\begin{equation*}
      		\begin{xy}
      			\xymatrix{
      			f^{!!}B  \ar[r]^{\quad p_{B,f} \quad } \ar[d]& B \ar[d]
      			& f^{!!}B \ar[d] &&\ar[ll]_{\quad p_{f^{!!}B,g} \quad}  g^{!!}(f^{!!}B) \ar[d]\\
      			M \ar[r]^{f} & N & M && \ar[ll]_ {g} N }
      		\end{xy}
      	\end{equation*}
      	Let the smooth homotopy $\phi\colon  I \times N \to N$  be such that $\phi_0=\text{id}_N$ and $\phi_1= f \circ g$.
      	By Lemma \ref{cor:000} there exists an LA-morphism
      	$\tilde{\Phi}\colon TI \times B \to B$
      	\[\tilde{\Phi} \colon TI \times B \xrightarrow{\tilde{\Pi}} \phi^{!!}B \xrightarrow{p_{B,\phi}} B,\]
      	which is an LA-homotopy between
      	\[\tilde{\Phi} \circ \mathcal{I}_0= p_{B,\phi} \circ \tilde{\Pi} \circ \mathcal{I}_0= \Phi \circ \mathcal{I}_0 \circ (\Phi \circ \mathcal{I}_0)^{-1}=\operatorname{Id}_B\] and 
      	\[\tilde{\Phi} \circ \mathcal{I}_1=p_{B,\phi} \circ \tilde{\Pi} \circ \mathcal{I}_1= p_{B,\phi}\an{\phi^{!!}_1B} \circ \tilde{\Pi} \circ \mathcal{I}_1=  p_{B,f} \circ p_{f^{!!}B,g} \circ \tilde{\Pi} \circ \mathcal{I}_1.\] 
	The above uses that the image of $\tilde{\Pi} \circ \mathcal{I}_1$ is contained in  $\phi^{!!}_1B \subseteq \phi^{!!}B$, and that 
	$p_{B,\phi}$ coincides with $p_{B,\phi_1}=p_{B, f\circ g}=p_{B,f} \circ p_{f^{!!}B,g}$ on this subbundle.
	Recall that by Proposition \ref{pr:mein} (and Remark \ref{remark_pr:mein}), $\varphi_1^{!!}B\simeq \varphi_0^{!!}B=\id_N^{!!}B=B$. In addition, $\tilde \Pi\colon TI\times B\to \varphi^{!!}B$ is an LA-isomorphism over the identity on $I\times N$. As a consequence,  
	the map $\tilde{\Pi} \circ \mathcal{I}_1\colon B\to \varphi_1^{!!}B$ (with restricted codomain) is injective, and since $B$ and $\varphi_1^{!!}B$ have the same rank, it is an isomorphism. It hence defines an isomorphism 
	\[
	\overline{\left(\tilde{\Pi} \circ \mathcal{I}_1\right)^*}\colon \operatorname{H}_{p_{B, f\circ g}^*\nabla}\left((f\circ g)^{!!}B, (f\circ g)^!E\right)\rightarrow \operatorname{H}_\nabla(B,E)
	\]
	in cohomology.

Set 
      	\[F:= p_{B,f}\colon f^{!!}B \to B, \qquad F(v_m, b_{f(m)}):= b_{f(m)}\]   	
and
      	\[G:= p_{f^{!!}B,g} \circ \tilde{\Pi} \circ \mathcal{I}_1 \colon B \to f^{!!}B\]
      	The composition
      	$F \circ G$
      	is by definition $\tilde{\Phi} \circ \mathcal{I}_1$. Thus it is LA-homotopic to $\operatorname{Id}_B$ via the LA-homotopy $\tilde{\Phi}\colon TI \times B \to B$, over $\phi \colon I \times N \to N$. As a consequence, the induced map $\overline{G^*}$ in cohomology  is surjective and $\overline{F^*}$ is injective.
	Since $\overline{\left(\tilde{\Pi} \circ \mathcal{I}_1\right)^*}$ is an isomorphism, the surjectivity of $\overline{p_{f^{!!}B,g}^*}$ then follows from the surjectivity of $\overline{G^*}$.
	
	\bigskip
      	
      	With an analogous construction, given the homotopy $\eta \colon I \times M \to M$ between $\eta_0= \operatorname{Id}_M$ and $\eta_1= g \circ f$, apply Lemma \ref{cor:000} to get an isomorphism $\tilde{\Theta}\colon TI \times f^{!!}B \to \eta^{!!}f^{!!}B$, over the identity map, and an LA-homotopy
      	\[\tilde{\Psi}\colon TI \times f^{!!}B \xrightarrow{\tilde{\Theta}} \eta^{!!}f^{!!}B \xrightarrow{p_{f^{!!}{B},\eta}} f^{!!}B \]
      	such that
      	\[ \tilde{\Psi} \circ \tilde{\mathcal{I}}_0 = \operatorname{Id}_{f^{!!}B},\]
      	for $\tilde{\mathcal{I}}_0 \colon f^{!!}B \to TI \times f^{!!}B$ the inclusion LA-morphism at time $0$.
      	
      	At time $1$, the LA-morphism reads 
      	\begin{equation*}
      		\begin{split} \tilde{\Psi} \circ \tilde{\mathcal{I}}_1 &= p_{f^{!!}B, \eta} \circ \tilde{\Theta} \circ \tilde{\mathcal{I}}_1= p_{f^{!!}B, \eta}\an{\eta_1^{!!}f^{!!}B} \circ \tilde{\Theta} \circ \tilde{\mathcal{I}}_1=p_{f^{!!}B, g\circ f} \circ \tilde{\Theta} \circ \tilde{\mathcal{I}}_1\\&= p_{f^{!!}B,g} \circ p_{g^{!!}(f^{!!}B), f} \circ \tilde{\Theta} \circ \tilde{\mathcal{I}}_1.
      		\end{split}
      	\end{equation*}	
      	Define 
      	\[\tilde{F}:= p_{g^{!!}(f^{!!}B), f} \circ  \tilde{\Theta} \circ \tilde{\mathcal{I}}_1\colon f^{!!}B \to g^{!!}(f^{!!}B)\]
	and 
      	\[\tilde{G}= p_{f^{!!}B,g}\colon g^{!!}(f^{!!}B) \to f^{!!}B. \]
      	By construction, $\tilde{G} \circ \tilde{F}=\tilde\Psi\circ\tilde{\mathcal I}_1$ is LA-homotopic to $\operatorname{Id}_{f^{!!}B}$.

      	Then $\overline{\tilde{G}^*}$ is injective and $\overline{\tilde{F}^*}$ is surjective. Recall that by the considerations above, $\overline{\tilde{G}^*}:= \overline{p_{f^{!!}B,g}^*}$ is also surjective. Hence $\overline{G^*}$ is a module isomorphism as a composition of module isomorphisms.
	Since $\overline{(F \circ G)^*} \equiv \operatorname{Id}_{\operatorname{H}_\nabla(B,E)}$, then also $\overline{F^*}$ is an isomorphism, and similarly, $\overline{\tilde{F}^*}$ is an isomorphism.
      \end{proof}

       Since Atiyah Lie algebroids are transitive, Theorem  \ref{cor:0} has the following corollary.
      \begin{corollary}
      	Let $\pi\colon P \to N$ be a principal $G$-bundle, with $G$ a Lie group, and $f\colon M \to N$ a homotopy equivalence. For each flat $A(P)$-connection $\nabla\colon \Gamma(A(P)) \times \Gamma(E) \to \Gamma(E)$ with values on the vector bundle $E \to N$, the map
      	\[ \operatorname{H}^{\bullet}_{\nabla}(A(P),E)\xrightarrow{\overline{p_{A(P),f}^*}} \operatorname{H}^{\bullet}_{p_{A(P),f}^*\nabla}(A(f^*P),f^!E)\]
      	is a module isomorphism.
      \end{corollary}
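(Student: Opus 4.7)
The plan is to reduce this statement directly to Theorem \ref{cor:0} by checking that Atiyah Lie algebroids fit its transitive hypothesis, and then identifying the pullback Lie algebroid $f^{!!}A(P)$ with the Atiyah algebroid of the pullback principal bundle.

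First, I would recall that for any principal $G$-bundle $\pi\colon P\to N$, the Atiyah Lie algebroid $A(P)=TP/G\to N$ is transitive: the anchor $\rho=\overline{T\pi}$ is surjective since $\pi$ is a surjective submersion. Therefore the hypotheses of Theorem \ref{cor:0} are automatically satisfied for $B=A(P)$. In particular, the pullback Lie algebroid $f^{!!}A(P)\to M$ exists, is transitive, and both homotopies $\eta\colon I\times M\to M$ and $\varphi\colon I\times N\to N$ underlying the homotopy equivalence between $f$ and a homotopy inverse $g$ automatically satisfy the condition $T\varphi(TI\times 0)\subseteq \operatorname{Im}(\rho_B)=TN$ and similarly $T\eta(TI\times 0)\subseteq \operatorname{Im}(\rho_{f^{!!}A(P)})=TM$, because the anchors of $A(P)$ and $f^{!!}A(P)$ are already surjective.

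Second, I would invoke the well-known identification
\[
A(f^*P)\;\cong\;f^{!!}A(P)
\]
as Lie algebroids over $M$ (see, e.g., \cite{Mackenzie05}), already mentioned in the example at the end of Section \ref{subs:3.6}. Under this isomorphism, the canonical Lie algebroid morphism $p_{A(P),f}\colon f^{!!}A(P)\to A(P)$ is identified with the natural Atiyah morphism $A(f^*P)\to A(P)$ induced by the equivariant bundle map $f^*P\to P$ covering $f\colon M\to N$, so the pulled-back connection on $f^!E$ matches on both sides.

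Finally, applying Theorem \ref{cor:0} to $B=A(P)$ with the flat connection $\nabla$ on $E\to N$ yields that
\[
\overline{p_{A(P),f}^*}\colon \operatorname{H}^{\bullet}_{\nabla}(A(P),E)\longrightarrow \operatorname{H}^{\bullet}_{p_{A(P),f}^*\nabla}(f^{!!}A(P),f^!E)
\]
is an isomorphism of modules over the ring isomorphism $\overline{p_{A(P),f}^*}\colon \operatorname{H}^{\bullet}(A(P))\to \operatorname{H}^{\bullet}(f^{!!}A(P))$. Transporting through the identification $A(f^*P)\cong f^{!!}A(P)$ gives the stated isomorphism. There is no genuine obstacle here; the only point requiring care is verifying that the identification $A(f^*P)\cong f^{!!}A(P)$ intertwines the two anchor and bracket structures and carries the canonical projection $A(f^*P)\to A(P)$ to $p_{A(P),f}$, after which the statement is a direct specialization of Theorem \ref{cor:0}.
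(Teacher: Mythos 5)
Your proposal is correct and follows exactly the paper's argument: the paper also deduces the corollary immediately from the transitivity of Atiyah Lie algebroids, the identification $A(f^*P)\cong f^{!!}A(P)$ from \cite{Mackenzie05}, and Theorem \ref{cor:0}. Your write-up simply spells out the verification of the hypotheses in more detail.
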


      \begin{proof}
      	It is an immediate consequence of the well known fact that $A(f^*P) \cong f^{!!}A(P)$ (see for instance \cite{Mackenzie05}) and Theorem \ref{cor:0}.
      \end{proof}

  \subsection{A generalized Poincaré lemma}
  \label{subs:5.2}
      A crucial property of de Rham cohomology is its local triviality, also known as Poincar\'e Lemma. In general, Lie algebroid cohomology is not locally trivial. However, in the transitive case, the Lie algebroid cohomology can be more easily computed using the Chevalley-Eilenberg cohomology of the isotropy Lie algebra.
      
      \begin{lemma}[Generalized Poincaré-Lemma]
      	\label{Th:PL}
      	Let $B \to U$ be a transitive Lie algebroid over a contractible base manifold. Let $\nabla$ be a $B$-representation on $E \to U$. For  $x \in U$, denote as usual by $\mathfrak{g}_x:=\operatorname{ker}(\rho_B)\an{x}$ the \textbf{isotropy Lie algebra} of $B$ at $x$. Define $f_x\colon \{*\} \to U$, $f_x(*):=x$. Then
      	$B$ and $\mathfrak{g}_x$ are LA-homotopy equivalent for each $x$ in $U$. In particular,
      	\begin{equation}
      		\label{eq:pl}
      		\operatorname{H}^{\bullet}_{\nabla}(B, E) \xrightarrow{\overline{p_{B,f_x}^*}}  \operatorname{H}^{\bullet}_{\nabla \an x}(\mathfrak{g}_x,E\an x)\qquad \forall x \in U,
      	\end{equation}
      	is a module isomorphism over the ring isomorphism $\operatorname{H}^{\bullet}(B) \xrightarrow{\overline{p_{B,f_x}^*}} \operatorname{H}^{\bullet}(\mathfrak{g}_x)$.
      	 $\nabla \an x$ denotes the pullback of $\nabla$ via the map $f_x$. $\operatorname{H}^{\bullet}_{\nabla \an x}(\mathfrak{g}_x,E\an x)$ is the twisted Chevalley-Eilenberg cohomology with coefficients in $E\an x$ of the isotropy Lie algebra $\mathfrak{g}_x$. 
      \end{lemma}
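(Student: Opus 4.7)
The plan is to combine the local triviality of transitive Lie algebroids over contractible bases with the homotopy invariance results already established. Since $B$ is transitive and $U$ is contractible, Corollary \ref{localtriviality} gives an isomorphism $B \cong TU \times \mathfrak{g}_x$ of Lie algebroids. Because $U$ is contractible, the unique map $U \to \{*\}$ is a homotopy equivalence with inverse $f_x$, and Example \ref{ex:homotopyequivalent} yields an LA-homotopy equivalence between $TU \times \mathfrak{g}_x$ and $T\{*\} \times \mathfrak{g}_x \cong \mathfrak{g}_x$. Composing with the splitting above establishes the LA-homotopy equivalence between $B$ and $\mathfrak{g}_x$ claimed in the first part of the lemma.

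For the cohomology statement the cleanest route is to apply Theorem \ref{cor:0} directly to the homotopy equivalence $f_x \colon \{*\} \to U$ with inverse $U \to \{*\}$. Unravelling the definition gives
\begin{equation*}
f_x^{!!}B = \{(0_*, b) \in T_*\{*\} \times B_x \mid T_*f_x(0_*) = \rho_B(b)\} \cong \ker(\rho_B\an{x}) = \mathfrak{g}_x,
\end{equation*}
and the pullback is well-defined because $B$ is transitive. Under this identification $p_{B,f_x}$ becomes the fibre inclusion of $\mathfrak{g}_x$ into $B$ at $x$, and $f_x^!E = E\an{x}$. The transitivity of $B$ guarantees that the hypotheses of Theorem \ref{cor:0} are satisfied (by item (2) of the remark following it), so the theorem delivers the required module isomorphism $\overline{p_{B,f_x}^*}$ together with the corresponding ring isomorphism on untwisted cohomology.

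The only remaining point is the identification of $p_{B,f_x}^*\nabla$ with the restricted representation $\nabla\an{x}$ of $\mathfrak{g}_x$ on $E\an{x}$. By the definition of pullback connection, $(p_{B,f_x}^*\nabla)_a(v) = (\nabla_{\tilde a}\tilde e)(x)$ where $\tilde a \in \Gamma(B)$ and $\tilde e \in \Gamma(E)$ are arbitrary local extensions of $a \in \mathfrak{g}_x$ and $v \in E\an{x}$. Since $\rho_B(a) = 0$, the Leibniz rule $\nabla_{\tilde a}(f\tilde e) = \rho_B(\tilde a)(f)\tilde e + f\nabla_{\tilde a}\tilde e$ forces $(\nabla_{\tilde a}\tilde e)(x)$ to depend only on $a$ and $v$, which is exactly the standard restriction of the $B$-representation to the isotropy Lie algebra. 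I do not anticipate any genuine obstacle here: everything reduces to unravelling pullback definitions, with Corollary \ref{localtriviality} and Theorem \ref{cor:0} doing the real work.
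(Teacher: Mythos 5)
Your proposal is correct and follows essentially the same route as the paper: the LA-homotopy equivalence comes from Corollary \ref{localtriviality} together with Example \ref{ex:homotopyequivalent}, and the cohomology isomorphism comes from applying Theorem \ref{cor:0} to the homotopy equivalence $f_x\colon \{*\}\to U$ and identifying $f_x^{!!}B\cong\mathfrak g_x$. Your additional check that $p_{B,f_x}^*\nabla$ is the restricted isotropy representation is a harmless (and correct) unravelling of the paper's definition of $\nabla\an{x}$.
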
 
      \begin{proof}
      	Since $U$ is contractible, each map $f_x\colon \{*\} \to U$, $f(*):=x$, for $x \in U$, defines a homotopy equivalence between $U$ and the point $\{*\}$. Thus $f_x^{!!}B$ and $B$ have isomorphic cohomology modules, by Theorem \ref{cor:0},
	 for each choice of $x \in U$. Conclude by observing that 
      	\[f_x^{!!}B=\{(0,b) \in T\{*\}\times B\mid Tf_x(0)=\rho_B(b)\}\cong \operatorname{ker}(\rho_B(b))\an x=\mathfrak{g}_x.\]
      \end{proof}
      
      \begin{remark}
      	When the Lie algebroid $B$ is the tangent bundle $TU$ of a contractible manifold, Lemma \ref{Th:PL} reduces to the classical Poincar\'e Lemma of de Rham cohomology.
      \end{remark}
    
      In general it is not possible to reduce to the Chevalley-Eilenberg cohomology of a Lie algebra for non-transitive Lie algebroids. For instance, for Lie algebroids with zero anchor and trivial brackets the equality $\operatorname{H}^{0}(A)=C^{\infty}(U)$ holds. $C^{\infty}(U)$ is an infinite dimensional vector space, whether the Chevalley-Eilenberg cohomology of a Lie algebra is finite dimensional.\\

\subsection{Relation with the existing literature on Morita equivalence}
\label{subs:5.3}
In \cite{Cr03} Crainic proves the following result.
      	\begin{theorem*}
      		\label{th:cr}
      		Let $\pi\colon P \to M$ be a surjective submersion with homologically $n$-connected fibers, let $B\to M$ be a Lie algebroid. Then $\overline{p_{B,\pi}^*} \colon \operatorname{H}^{\bullet}(B) \to \operatorname{H}^{\bullet}(\pi^{!!}B)$ is an isomorphism in all degrees $k \le n$. The same holds with general coefficients.
      	\end{theorem*}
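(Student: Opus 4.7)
The plan is to build a Hochschild--Serre-type spectral sequence adapted to the Lie algebroid $\pi^{!!}B$, starting from the fact that the vertical tangent bundle $T^\pi P:=\ker T\pi\subseteq TP$ is an involutive subbundle, hence a Lie algebroid, and includes as a Lie subalgebroid (indeed as an ideal) into $\pi^{!!}B$ via $v\mapsto(v,0)$. This gives a short exact sequence of vector bundles
\begin{equation*}
0\longrightarrow T^\pi P \longrightarrow \pi^{!!}B \longrightarrow \pi^!B\longrightarrow 0
\end{equation*}
in which the bracket of $\pi^{!!}B$ descends to a quotient bracket on $\pi^!B$, since $\pi$-vertical vector fields remain vertical after bracketing with $\pi$-projectable ones. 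Choosing a horizontal complement of $T^\pi P$ in $\pi^{!!}B$ induces a bigrading of $\wedge^\bullet(\pi^{!!}B)^*$ by vertical and horizontal degrees, with respect to which $\dr_{\pi^{!!}B}$ decomposes into homogeneous pieces of bidegrees $(0,1)$, $(1,0)$ and $(-1,2)$. Filtering $\Omega^\bullet(\pi^{!!}B)$ by horizontal degree then yields a first-quadrant spectral sequence converging to $\operatorname{H}^\bullet(\pi^{!!}B)$.

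Next I would identify the $E_1$-page with $\Omega^\bullet(B;\mathcal H^\bullet)$, where $\mathcal H^q$ is the locally constant sheaf on $M$ whose stalk at $m\in M$ is $\operatorname{H}^q_{\mathrm{dR}}(\pi^{-1}(m))$. The vertical differential restricted to any fiber is the ordinary de Rham differential, which produces $\mathcal H^\bullet$; the $B$-action on $\mathcal H^\bullet$ then arises by Bott-type reasoning, lifting sections of $B$ to $\pi^{!!}B$ and letting them act by Lie derivative on vertical forms. This action descends canonically to cohomology independently of the chosen lift and splitting, so $\mathcal H^\bullet$ carries a well-defined flat $B$-connection. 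The homological $n$-connectedness hypothesis then gives $\mathcal H^0=\R_M$ with the trivial flat $B$-connection and $\mathcal H^q=0$ for $1\le q\le n$, so the $E_2$-page reduces to $E_2^{p,0}=\operatorname{H}^p(B)$ in the strip $0\le q\le n$. The edge homomorphism $\operatorname{H}^k(B)\to \operatorname{H}^k(\pi^{!!}B)$ is therefore an isomorphism in degrees $k\le n$, and a direct computation on representatives shows that this edge map is precisely $\overline{p_{B,\pi}^*}$. The twisted case follows by running the same argument on $\Omega^\bullet(\pi^{!!}B,\pi^!E)$ with differential induced by $p_{B,\pi}^*\nabla$, using that $\pi^!E$ is trivial along each fiber so that its fiberwise cohomology is the constant sheaf with stalk $E_m$ and the same vanishing range applies.

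The main obstacle is the intrinsic construction of the flat $B$-connection on $\mathcal H^\bullet$ together with the verification that it is independent of all choices and genuinely flat, equivalent to checking that $d_1^2=0$ on the $E_1$-page; this is the true analogue of the Bott connection in the Lie algebroid setting and is the step that puts the $B$-module structure on fiberwise cohomology. A minor additional point is that $\pi$ is only assumed to be a surjective submersion, so its fibers need not be diffeomorphic to each other; $\mathcal H^\bullet$ is then only locally constant as a sheaf, and the globalisation of the identification of the $E_1$-page must be handled sheaf-theoretically rather than via a globally defined vector bundle. Convergence of the spectral sequence in the relevant range is standard once the $E_1$ and $E_2$ pages are identified, and specialising to the trivial representation on $\R$ recovers the untwisted version of the theorem.
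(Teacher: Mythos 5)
A point of reference first: the paper does not prove this statement. It is quoted from Crainic \cite{Cr03} at the beginning of Subsection \ref{subs:5.3}, and the paper only (a) rederives the special case of a transitive $B$ and weakly connected fibres as a corollary of its Theorem \ref{cor:0}, via Meigniez's result that a surjective submersion with contractible fibres is a homotopy equivalence, and (b) observes that the generalized Poincar\'e lemma follows from Crainic's theorem. So there is no proof in the paper to compare against; your spectral-sequence argument is, in substance, Crainic's original proof, and it is genuinely different from the LA-homotopy route the paper uses for the special case it does treat.

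On the merits of your sketch, the skeleton is right (filtration by horizontal degree attached to the Lie subalgebroid $T^\pi P\subseteq\pi^{!!}B$, identification of $E_1$, collapse in the connectivity range, edge map equal to $\overline{p_{B,\pi}^*}$ on the purely horizontal representatives $p_{B,\pi}^*\omega$), but two points need repair. First, $T^\pi P$ is \emph{not} an ideal of $\pi^{!!}B$, and $\pi^!B$ does not inherit a quotient Lie algebroid bracket: an ideal of a Lie algebroid must lie in the kernel of the anchor, since $[fa,k]=f[a,k]-\rho(k)(f)\,a$, whereas the anchor of $\pi^{!!}B$ is injective on $T^\pi P$. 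What the quotient $\pi^{!!}B/T^\pi P\cong\pi^!B$ actually carries is the flat Bott $T^\pi P$-connection $\nabla_v\bar\sigma=\overline{[v,\sigma]}$, for which the pullback sections $\pi^!b$ are flat; it is this connection, not a quotient bracket, that makes $d_0$ the fibrewise de Rham differential with values in a fibrewise-trivial flat bundle. The correction is harmless for the argument, because the filtration is preserved by $\dr_{\pi^{!!}B}$ as soon as $T^\pi P$ is a subalgebroid, which is all that is used. Second, and more seriously, the vanishing $E_1^{p,q}=0$ for $1\le q\le n$ is not ``a minor additional point'': it is the technical heart of the theorem. A surjective submersion need not be locally trivial over $M$, so one cannot reduce to a product $U\times F$, and vanishing of the \emph{global} fibrewise cohomology of $\bigl(\Gamma(\wedge^p(\pi^!B)^*\otimes\wedge^\bullet(T^\pi P)^*),d_0\bigr)$ does not follow formally from the pointwise vanishing $H^q_{\rm dR}(\pi^{-1}(m))=0$; one must show that fibrewise-closed forms that are exact on each fibre admit primitives depending smoothly on all variables, which is exactly what Crainic establishes with a fine-resolution/\v{C}ech--de Rham argument over $P$. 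As written, your proof has a gap precisely there. Once that lemma is in place, the collapse in total degree $k\le n$, the identification of the edge homomorphism with $\overline{p_{B,\pi}^*}$, and the twisted variant with $\pi^!E$ carrying its fibrewise-trivial Bott connection all go through as you describe.
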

      	
      	This section compares Theorem \ref{cor:0} with this result for weakly connected fibers and for $B$ a transitive Lie algebroid. 
		The following special case of Crainic's result is a corollary of Theorem \ref{cor:0}. 
      	
      	\begin{corollary}
      		Let $\pi\colon P \to M$ be a surjective submersion with weakly connected fibers, let $B\to M$ be a transitive Lie algebroid and let $\nabla$ be a flat $E \to M$ connection.  $\overline{p_{B,\pi}^*} \colon \operatorname{H}_{\nabla}^{\bullet}(B,E) \to \operatorname{H}_{p_{B,\pi}^{*}\nabla}^{\bullet}({\pi}^{!!}B, \pi^{!}E)$
		 is a module isomorphism over the ring isomorphism $\overline{p_{B,\pi}^*} \colon \operatorname{H}^{\bullet}(B) \to \operatorname{H}^{\bullet}(\pi^{!!}B)$.
		      	\end{corollary}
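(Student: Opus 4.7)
The plan is to reduce the statement to a local computation on contractible opens and then patch the local isomorphisms by Mayer-Vietoris. Choose a good cover $\{U_\alpha\}$ of $M$, that is, one for which every nonempty finite intersection $V = U_{\alpha_1}\cap\cdots\cap U_{\alpha_k}$ is contractible. The restricted submersion $\pi_V := \pi|_{\pi^{-1}(V)}\colon \pi^{-1}(V)\to V$ is a Serre fibration---surjective submersions admit local smooth sections, hence the homotopy lifting property---so its homotopy long exact sequence together with the contractibility of $V$ and the weak contractibility of the fibers (the strengthening of ``weakly connected'' required to get an isomorphism in every degree) yields $\pi_k(\pi^{-1}(V))=0$ for all $k$. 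Being a smooth manifold, $\pi^{-1}(V)$ is therefore contractible, and Whitney approximation supplies a smooth homotopy inverse of $\pi_V$. Since $B|_V$ is again transitive, Theorem \ref{cor:0} applied to the homotopy equivalence $\pi_V\colon \pi^{-1}(V)\to V$ and the Lie algebroid $B|_V$ delivers a module isomorphism
\[
\overline{p_{B|_V,\pi_V}^*}\colon H^\bullet_{\nabla|_V}(B|_V,E|_V)\xrightarrow{\,\sim\,} H^\bullet_{p^*\nabla|_V}\bigl(\pi_V^{!!}(B|_V),\pi_V^!(E|_V)\bigr)
\]
over the associated ring isomorphism on untwisted cohomology.

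For the global statement, apply the Mayer-Vietoris long exact sequence for twisted Lie algebroid cohomology from Section \ref{sec:MV}. By naturality of the connecting homomorphism under Lie algebroid morphisms, the pullback $p_{B,\pi}^*$ intertwines the Mayer-Vietoris sequences attached to the cover $\{U_\alpha\}$ of $M$ and to the induced cover $\{\pi^{-1}(U_\alpha)\}$ of $P$. A standard induction on the number of opens in a finite good cover, invoking the five lemma at each step, promotes the local isomorphisms to a global one; the module structure over $H^\bullet(B)$ survives because the Mayer-Vietoris sequence is compatible both with the cup-product action and with Lie algebroid pullbacks.

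The main obstacle is handling a base manifold $M$ that does not admit a finite good cover. One option is to impose such a hypothesis, in the spirit of Theorem \ref{th:k1}; otherwise the finite five-lemma induction must be promoted to a colimit argument over finite subcovers, using the sheaf property of vector-valued $A$-forms (Remark \ref{Mv:prop3}) on both $M$ and $P$. A secondary technical step worth spelling out is the Serre fibration property for smooth surjective submersions, which follows from their local triviality after shrinking the base and the paracompactness of the manifolds involved; once this is settled, the whole reduction to contractible pieces runs smoothly.
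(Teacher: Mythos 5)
Your local-to-global strategy is not wrong in spirit, but as written it does not prove the corollary in the stated generality, and the gap is exactly the one you flag yourself: the Mayer--Vietoris/five-lemma induction only handles base manifolds with a \emph{finite} good cover, whereas the statement imposes no such hypothesis. The suggested repair --- ``a colimit argument over finite subcovers'' --- is not a routine step: cohomology does not commute with such limits in a naive way (for countable covers one runs into $\varprojlim^1$-type issues, and the generalized Mayer--Vietoris machinery needed to handle them is nowhere set up in this paper). So the proof is incomplete where it matters most.

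The paper avoids the entire local analysis. It observes that the fibers of $\pi$ are smooth manifolds, hence CW-complexes, so by Whitehead's theorem ``weakly connected'' (all homotopy groups trivial) forces them to be contractible; it then invokes Meigniez's theorem that a surjective submersion with contractible fibers is a fibration \emph{and a global homotopy equivalence}. Since $B$ is transitive, the hypotheses of Theorem \ref{cor:0} hold for the pair $(\pi, B)$ directly, and a single application of that theorem to the homotopy equivalence $\pi\colon P\to M$ gives the module isomorphism --- no good cover, no patching, no finiteness assumption. Your argument correctly identifies the two key inputs (Whitehead plus contractibility of preimages, and Theorem \ref{cor:0}), but by applying them only locally you create a gluing problem that the global form of Meigniez's result makes unnecessary. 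If you want to salvage your route, you must either add the finite-good-cover hypothesis to the statement (weakening it) or carry out the infinite-cover limit argument in detail; the cleaner fix is to upgrade your observation that each $\pi^{-1}(V)$ is contractible to the statement that $\pi$ itself is a homotopy equivalence, which is precisely what the cited result of Meigniez provides.
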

	
      	\begin{proof}      		Fibers of surjective submersions are smooth manifolds, in particular CW-complexes. By Whitehead's theorem,
		  weakly connected (i.e.~with all homotopy group trivial) fibers are contractible fibers. It is proved in \cite{Mei02} (see Corollary $13$ combined with an observation after Lemma $6$) that any surjective submersion with contractible fibers is a fibration and a homotopy equivalence. As a consequence of Theorem \ref{cor:0}, conclude that $\overline{p_{B,\pi}^*} \colon \operatorname{H}_{\nabla}^{\bullet}(B,E) \to \operatorname{H}_{p_{B,\pi}^{*}\nabla}^{\bullet}({\pi}^{!!}B, \pi^{!}E)$
		   is a module isomorphism over the ring isomorphism $\overline{p_{B,\pi}^*}  \colon \operatorname{H}^{\bullet}(B) \to \operatorname{H}^{\bullet}(\pi^{!!}B)$.
      	\end{proof} 
      	
       On the other hand,
      	\emph{the generalized Poincar\'e Lemma (Theorem \ref{Th:PL}) is a corollary of Crainic's theorem}:   
	If $U$ is a contractible manifold, $c \colon U \to \{*\}$ is a surjective submersion with contractible fibers. Let $B\to U$ be a transitive Lie algebroid and $\mathfrak{g}$ its isotropy Lie algebra, then there is a surjective submersion 
      		\[p_{\mathfrak{g},c} \colon c^{!!}\mathfrak{g} \to \mathfrak{g}\]
      		with $c^{!!}\mathfrak{g} \cong TU \times \mathfrak{g}$, which is isomorphic to $B$ by Theorem \ref{localtriviality}. This implies that
      		$\overline{p_{\mathfrak{g},c}^*} \colon \operatorname{H}_{\nabla}^{\bullet}(\mathfrak{g},V) \to \operatorname{H}_{p_{\mathfrak{g},c}^{*}\nabla}^{\bullet}(c^{!!}\mathfrak{g}, c^{!}V)$ is a module isomorphism, using Theorem \ref{cor:0}.
		 All representations of $B$ are of the form $p_{\mathfrak{g},c}^*\nabla$, see \cite{Cr03}. 
		Then the Poincar\'e Lemma follows.

    \section{Mayer-Vietoris principle and a K\"unneth formula}
     Given the Poincaré lemma, the usual tool for computing the cohomology of more general objects is a Mayer-Vietoris principle. 
    
    \subsection{Mayer-Vietoris} \label{sec:MV}
    A Mayer-Vietoris principle seems to be folklore knowledge; it is used for instance in \cite{Cr03} and \cite{Fr19}, it is stated in \cite{Vaisman90} and \cite{Vai94} for Poisson manifolds. In this section, an explicit expression for the coboundary operator in the long exact sequence is deduced by a generalization of standard techniques to twisted Lie algebroid cohomology (see \cite{BoTu82}).\medskip

    Let $M$ be a smooth manifold and suppose that $M = U \cup V$, with $U,V$ open. Denote with $j_0\colon U \cap V  \hookrightarrow U$, $j_1\colon U \cap V  \hookrightarrow V$, $i_0\colon  U \hookrightarrow M$, $i_1\colon V \hookrightarrow M$ and $i \colon U \sqcup V \hookrightarrow M$ the inclusions. Then the two sequences
    of smooth maps 
    \[U \cap V \xrightarrow{j_k} U \sqcup V \xrightarrow{i} M \]
    for $k=0,1$, give the same composition $i\circ j_0=i\circ j_1\colon U\cap V\to M$.
    \medskip

    Let $A \to M$ be a Lie algebroid and $\nabla$ an $A$-representation on $E\to M$.
    Recall that $A\an{U}:=i^{!!}_0 A$ denotes the restriction Lie algebroid and $E\an{U}$ the restricted vector bundle. 
    The notation $\nabla\arrowvert_{U}\colon \Gamma(A\an{U})\times\Gamma(E\an{U})\to\Gamma(E\an{U})$ stands for the restriction to $U$ of the connection $\nabla$.
    \begin{theorem}\label{thm:mv}
    	Let $A \to M$ be a Lie algebroid and $\nabla$ an $A$-representation on $E\to M$.
    	Then the Mayer-Vietoris sequence 
    	\[0 \rightarrow \Omega^{\bullet}(A,E) \xrightarrow{i^*} \Omega^{\bullet}(A_{|_U}, E_{|_U}) \oplus \Omega^{\bullet}(A_{|_{V}}, E_{|_V}) \xrightarrow{j_1^*-j_0^*} \Omega^{\bullet}(A_{|_{U \cap V}}, E_{|_{U \cap V}}) \rightarrow 0, \]
    	is exact.
    	
    	Let $\{\mu_U,\mu_V\}$ be a partition of unity subordinated to the cover $\{U,V\}$. The Mayer-Vietoris sequence induces a long exact sequence in LA-cohomology with values in representations
    	\[\cdots \to \operatorname{H}^q_{\nabla\an{U \cap V}}(A \an{U \cap V},E \an{U \cap V}) \xrightarrow{\operatorname{d}_{\nabla}^*} \operatorname{H}_{\nabla}^{q+1}(A,E) \xrightarrow{i^*} \operatorname{H}_{\nabla\an{U}}^{q+1}(A\an{U}, E\an{U}) \oplus \operatorname{H}_{\nabla\an{V}}^{q+1}(A\an{V}, E\an{V})\] \[\xrightarrow{j_1^*-j_0^*} \operatorname{H}^{q+1}_{\nabla\an{U \cap V}}(A\an{U \cap V}, E\an{U \cap V} ) \xrightarrow{\operatorname{d}_{\nabla}^*} \operatorname{H}_{\nabla}^{q+2}(A,E) \to \cdots 
    	\]
    	The coboundary operator $\operatorname{d}_{\nabla}^*$ is a morphism of vector spaces explicitly defined as
    	\[\operatorname{d}_{\nabla}^*([\omega]):= 
    	\begin{cases} 
    		[-\operatorname{d}_{\nabla}(\mu_V\omega)] & \operatorname{on}\, U\\
    		[\operatorname{d}_{\nabla}(\mu_U\omega)] & \operatorname{on}\, V
    	\end{cases}
    	\]
	for $\omega\in \Omega^\bullet(A\an{U\cap V}, E\an{U\cap V})$.
    	\label{MV}
    \end{theorem}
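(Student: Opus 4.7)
My plan is to deduce the long exact sequence from the short one by standard homological algebra (the zig-zag lemma), so the main work is to (i) verify exactness of the short sequence of cochain complexes and (ii) trace through the connecting homomorphism explicitly to obtain the claimed formula for $\operatorname{d}_\nabla^*$. The key technical input is that vector-valued $A$-forms form a sheaf on $M$ (Remark \ref{Mv:prop3}), so restrictions behave as expected, and that $j_0^*, j_1^*, i^*$ intertwine the differentials $\operatorname{d}_\nabla$ with their restricted counterparts (Lemma \ref{pullback_ce}).

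For exactness of the short sequence, I would verify the three conditions pointwise. Injectivity of $i^*$ is immediate: a form whose restrictions to $U$ and to $V$ both vanish is zero because $\{U,V\}$ is a cover. Exactness at the middle amounts to the sheaf gluing property: if $\omega_U \in \Omega^\bullet(A\an{U}, E\an{U})$ and $\omega_V \in \Omega^\bullet(A\an{V}, E\an{V})$ agree on $U \cap V$ (i.e.~$j_1^*\omega_V = j_0^*\omega_U$), they glue to a unique form on $M$. For surjectivity of $j_1^* - j_0^*$, given $\omega \in \Omega^\bullet(A\an{U\cap V}, E\an{U\cap V})$, I use the partition of unity $\{\mu_U, \mu_V\}$ to define $\omega_U := -\mu_V \omega$ on $U\cap V$, extended by zero to all of $U$ (this extension is smooth because $\operatorname{supp}(\mu_V) \cap U$ is closed in $U$ and contained in $U\cap V$), and similarly $\omega_V := \mu_U \omega$ extended by zero to $V$. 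Then on $U\cap V$, $\omega_V - \omega_U = (\mu_U + \mu_V)\omega = \omega$.

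For the long exact sequence and the explicit coboundary, I apply the zig-zag lemma to the short exact sequence of cochain complexes just obtained. Given a closed form $\omega \in \Omega^q(A\an{U\cap V}, E\an{U\cap V})$, its image under $\operatorname{d}_\nabla^*$ is computed by lifting $\omega$ to $(-\mu_V\omega, \mu_U\omega) \in \Omega^q(A\an{U}, E\an{U}) \oplus \Omega^q(A\an{V}, E\an{V})$, applying the differential to obtain $(-\operatorname{d}_\nabla(\mu_V\omega), \operatorname{d}_\nabla(\mu_U\omega))$, and verifying this pair is in the image of $i^*$. The latter follows since on $U\cap V$
\begin{equation*}
\operatorname{d}_\nabla(\mu_U\omega) + \operatorname{d}_\nabla(\mu_V\omega) = \operatorname{d}_\nabla((\mu_U + \mu_V)\omega) = \operatorname{d}_\nabla\omega = 0,
\end{equation*}
so $-\operatorname{d}_\nabla(\mu_V\omega)$ on $U$ and $\operatorname{d}_\nabla(\mu_U\omega)$ on $V$ agree on the overlap and glue to a global closed form on $M$.

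I do not expect any serious obstacle; the main care needed is to check that the extensions by zero are smooth (supports of $\mu_U$, $\mu_V$) and that the resulting coboundary map is well-defined on cohomology classes, i.e.~independent of the choice of representative $\omega$ and of the partition of unity. Independence of the representative is routine: if $\omega = \operatorname{d}_\nabla\eta$ on $U\cap V$, then $-\operatorname{d}_\nabla(\mu_V\operatorname{d}_\nabla\eta) = -\operatorname{d}_\nabla\operatorname{d}_\nabla(\mu_V\eta) + \operatorname{d}_\nabla(\operatorname{d}_\nabla(\mu_V)\wedge \eta) = \operatorname{d}_\nabla(\operatorname{d}_\nabla(\mu_V)\wedge \eta)$ (using that $\operatorname{d}_\nabla^2 = 0$ since $\nabla$ is flat), and a parallel computation on $V$ shows the glued form is exact. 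Independence of the partition of unity follows from the same sort of bookkeeping, or more cheaply from the fact that $\operatorname{d}_\nabla^*$ is uniquely determined by the zig-zag construction.
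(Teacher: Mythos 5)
Your proposal is correct and follows essentially the same route as the paper: exactness of the short sequence via the sheaf property and the partition-of-unity preimage $(-\mu_V\omega,\mu_U\omega)$, followed by the zig-zag lemma with the same explicit lift to identify the connecting map. The extra care you take with the smoothness of the extensions by zero and the well-definedness of $\operatorname{d}_\nabla^*$ is sound and only elaborates on details the paper leaves to the reader.
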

    
    \begin{proof}
    	The injectivity of the first map and the condition $\operatorname{ker}(j^*_1-j^*_0)=\operatorname{im}(i^*)$ follows because Lie algebroid forms define a sheaf, see Remark \ref{Mv:prop3}. Each form $\omega \in \Omega^{\bullet}(A\an{U \cap V}, E\an{U \cap V})$ can be rewritten as
    	\[\omega=(\mu_U)_{|_{U \cap V}}\, \omega + (\mu_V)_{|_{U \cap V}}\, \omega. \]
    	Then a preimage of $\omega$ via $j_1^*-j_0^*$ is $\eta:=(-(\mu_V)_{|_{U}}\, \omega, (\mu_U)_{|_{V}}\, \omega ) \in \Omega^{\bullet}(A\an{U}, E\an{U}) \oplus \Omega^{\bullet}(A\an{V}, E\an{V})$. The Mayer-Vietoris sequence therefore is exact.     	The existence of the induced long exact sequence is a consequence of standard results of Homological Algebra (see e.g.~\cite{BoTu82}). The idea of the proof is sketched to deduce the explicit expression of the coboundary operator.  Let $\omega\in \Omega^{\bullet}(A\an{U \cap V}, E\an{U \cap V})$ be a closed form. Since $(j_1^*-j_0^*)$ intertwines the differentials, it is true that $(j_1^*-j_0^*)\circ \operatorname{d}_{\nabla}(\eta)= \operatorname{d}_{\nabla \an{U \cap V}}(\omega)=0$, 
	for a closed form $\omega\in \Omega^{\bullet}(A\an{U \cap V}, E\an{U \cap V})$ and $\eta:=(-(\mu_V)_{|_{U}}\, \omega, (\mu_U)_{|_{V}}\, \omega ) \in \Omega^{\bullet}(A\an{U}, E\an{U}) \oplus \Omega^{\bullet}(A\an{V}, E\an{V})$ as above. In other words, $-\operatorname{d}_{\nabla }((\mu_V)\an{U \cap V}\omega)$ and $\operatorname{d}_{\nabla}((\mu_U)\an{U \cap V} \omega)$ agree on $U \cap V$. By exactness of the Mayer-Vietoris sequence, there exists $\alpha \in \Omega^{q+1}(A,E)$ such that $i^*(\alpha)=\operatorname{d}_{\nabla}\eta$. $\alpha$ is a closed form because 
    	$i^* \circ \operatorname{d}_{\nabla}\alpha=\operatorname{d}_{\nabla} \circ i^*(\alpha)= \operatorname{d}_{\nabla} \circ \operatorname{d}_{\nabla} \eta=0$ and $i^*$ is injective. Define $\operatorname{d}_{\nabla}^*([\omega]):= [\alpha]$. It is a standard exercise to prove that $\operatorname{d}^*_{\nabla}$ is well defined. By definition of $i^*$, the coboundary operator is explicitly defined as in the theorem.
    \end{proof}

    \begin{corollary}
    	\label{findim1}
    	Let $A \to M$ be a Lie algebroid over $M$ admitting a finite good open cover (i.e.~all open subsets and their intersections are contractible) $(U_j)_{j\in J}$ such that
    	$\operatorname{H}^{k}_{\nabla\an{U_j}}(A\an{U_j}, E\an{U_j})$ is finite-dimensional for each $j\in J$. Then 
    	$\operatorname{H}_{\nabla}^{k}(A,E)$ is finite-dimensional.
    \end{corollary}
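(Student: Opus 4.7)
The plan is to proceed by induction on the cardinality $n = |J|$ of the finite good open cover, following the classical Bott--Tu argument adapted to the twisted Lie algebroid setting. For the base case $n = 1$, one has $M = U_1$ and the conclusion is immediate from the hypothesis.

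For the inductive step, suppose the claim holds for every Lie algebroid (with representation) admitting such a good cover of size at most $n-1$. Given a finite good cover $\{U_1, \ldots, U_n\}$ of $M$ with finite-dimensional twisted cohomology on each piece, set
\[
U := U_1 \cup \cdots \cup U_{n-1}, \qquad V := U_n.
\]
Then $\{U_1,\ldots,U_{n-1}\}$ is a finite good cover of $U$ of size $n-1$ satisfying the hypothesis, so by induction $\operatorname{H}^{\bullet}_{\nabla\an{U}}(A\an{U}, E\an{U})$ is finite-dimensional; and $\operatorname{H}^{\bullet}_{\nabla\an{V}}(A\an{V}, E\an{V})$ is finite-dimensional directly by the hypothesis applied to the singleton cover $\{U_n\}$. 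Moreover, $U \cap V = \bigcup_{j=1}^{n-1}(U_j \cap U_n)$ inherits from the good-cover property a finite good cover of size $n-1$, since all intersections $U_{j_1} \cap \cdots \cap U_{j_k} \cap U_n$ remain contractible. One needs to know that each piece $U_j \cap U_n$ of this sub-cover carries finite-dimensional twisted cohomology in order to apply the inductive hypothesis; this is the natural strengthening of the hypothesis inherent in the good-cover setting, and in the principal case of interest (where $A$ is transitive) follows at once from the generalized Poincar\'e lemma (Lemma \ref{Th:PL}), since each $U_j \cap U_n$ is contractible and the Chevalley--Eilenberg cohomology of the finite-dimensional isotropy Lie algebra with coefficients in $E\an{x}$ is finite-dimensional.

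With these three finite-dimensionality statements in hand, the Mayer--Vietoris long exact sequence of Theorem \ref{MV},
\[
\cdots \to \operatorname{H}^{k-1}_{\nabla\an{U\cap V}}(A\an{U\cap V},E\an{U\cap V}) \xrightarrow{\operatorname{d}^*_\nabla} \operatorname{H}^k_\nabla(A,E) \xrightarrow{i^*} \operatorname{H}^k_{\nabla\an{U}}(A\an{U},E\an{U}) \oplus \operatorname{H}^k_{\nabla\an{V}}(A\an{V},E\an{V}) \to \cdots,
\]
exhibits $\operatorname{H}^k_\nabla(A,E)$ as an extension of a subspace of $\operatorname{H}^k_{\nabla\an{U}} \oplus \operatorname{H}^k_{\nabla\an{V}}$ by a quotient of $\operatorname{H}^{k-1}_{\nabla\an{U\cap V}}$, yielding
\[
\dim \operatorname{H}^k_\nabla(A,E) \le \dim \operatorname{H}^{k-1}_{\nabla\an{U\cap V}}(A\an{U\cap V},E\an{U\cap V}) + \dim \operatorname{H}^k_{\nabla\an{U}}(A\an{U},E\an{U}) + \dim \operatorname{H}^k_{\nabla\an{V}}(A\an{V},E\an{V}),
\]
which is finite by induction, closing the argument. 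The only delicate point is the propagation of finite-dimensionality to the pairwise intersections $U_j \cap U_n$; the good-cover assumption is precisely what makes this propagation possible, since without contractibility of the intersections there is no way to transfer finite-dimensionality from the individual $U_j$ to their overlaps.
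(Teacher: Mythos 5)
Your proof is correct and is exactly the argument the paper has in mind: the paper's entire proof of this corollary is a citation of Proposition 5.3.1 in Bott--Tu, which is precisely your induction on the cardinality of the good cover combined with the Mayer--Vietoris long exact sequence of Theorem \ref{MV} and the resulting dimension bound. The delicate point you flag is genuine and worth recording: as literally stated, the hypothesis only asserts finite-dimensionality of the twisted cohomology of the $U_j$ themselves, whereas the induction also consumes finite-dimensionality on $U\cap V=\bigcup_{j<n}(U_j\cap U_n)$ and hence, recursively, on all finite intersections $U_{j_1}\cap\cdots\cap U_{j_k}$; for a general Lie algebroid this does not follow formally from the stated hypothesis, since restriction to a smaller open need not preserve finite-dimensionality of twisted cohomology. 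The corollary should therefore either be read with the finite-dimensionality hypothesis imposed on all finite intersections of the cover, or be applied only where the generalized Poincar\'e lemma (Lemma \ref{Th:PL}) supplies finite-dimensionality on every contractible open --- which is exactly the situation in the paper's only use of this corollary, namely Corollary \ref{findim2} for transitive Lie algebroids over compact bases, where every nonempty intersection is again contractible and carries the Chevalley--Eilenberg cohomology of the finite-dimensional isotropy Lie algebra. With that reading your induction closes correctly.
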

    \begin{proof}
    	See the proof of Proposition 5.3.1 in \cite{BoTu82}.
    \end{proof}
   
    More concretely, this yields an alternative basic proof for the following well known result. 
    \begin{corollary}
    	\label{findim2}
    	Let $A \to M$ be a transitive Lie algebroid over a compact $M$. Then each twisted Lie algebroid cohomology of $A$ is finite-dimensional.
    \end{corollary}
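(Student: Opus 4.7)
The plan is to combine the generalized Poincaré lemma (Lemma \ref{Th:PL}) with the Mayer-Vietoris finiteness criterion (Corollary \ref{findim1}). Since $M$ is compact, it admits a finite good open cover $\{U_j\}_{j=1}^N$ — this is a standard fact of differential topology, obtained for instance by choosing a Riemannian metric on $M$ and taking a finite subcover of a cover by geodesically convex neighborhoods, whose finite intersections are again geodesically convex, hence contractible.

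Next I would verify that the hypotheses of Corollary \ref{findim1} hold for this cover. For each $U_j$, the restriction $A\an{U_j}\to U_j$ is again a transitive Lie algebroid: the inclusion $U_j\hookrightarrow M$ is transverse to the anchor $\rho_A$ (being a submersion), so the pullback Lie algebroid exists, and since $\rho_A$ is surjective its restriction to $U_j$ is surjective as well. By the generalized Poincaré lemma applied to the contractible base $U_j$, for any choice of $x\in U_j$ one has
\[
\operatorname{H}^{\bullet}_{\nabla\an{U_j}}(A\an{U_j}, E\an{U_j}) \;\cong\; \operatorname{H}^{\bullet}_{\nabla\an{x}}(\mathfrak{g}_x, E\an{x}),
\]
where $\mathfrak{g}_x=\ker(\rho_A)\an{x}$ is the isotropy Lie algebra at $x$. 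As a fiber of a finite-rank vector bundle, $\mathfrak{g}_x$ is a finite-dimensional Lie algebra, and $E\an{x}$ is a finite-dimensional vector space; hence the twisted Chevalley–Eilenberg cohomology on the right is a quotient of subspaces of the finite-dimensional space $\wedge^{\bullet}\mathfrak{g}_x^*\otimes E\an{x}$, and is therefore finite-dimensional in each degree.

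With the finite-dimensionality on every element of the cover secured, Corollary \ref{findim1} applies and yields the finite-dimensionality of $\operatorname{H}^{\bullet}_{\nabla}(A,E)$. I do not foresee a real obstacle: the work was done in proving Lemma \ref{Th:PL} and Theorem \ref{MV}, and the only points to check carefully are the existence of a finite good cover on a compact manifold and the preservation of transitivity under restriction to an open set — both of which are essentially immediate. The same argument applied to finite intersections of the $U_j$ (which are contractible in a good cover, and over which $A$ restricts to a transitive Lie algebroid) ensures that the inductive Bott–Tu style proof of Corollary \ref{findim1} goes through without modification.
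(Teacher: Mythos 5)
Your proof follows exactly the paper's argument: take a finite good cover of the compact base, use the generalized Poincaré lemma to identify the twisted cohomology over each contractible open set with the (finite-dimensional) twisted Chevalley--Eilenberg cohomology of the isotropy Lie algebra, and conclude with Corollary \ref{findim1}. The extra details you supply (existence of the good cover via geodesically convex neighborhoods, preservation of transitivity under restriction) are correct and only make explicit what the paper leaves implicit.
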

 
    \begin{proof}
    	The compact manifold $M$ admits a finite good cover $(U_j)_{j\in J}$. Moreover the generalized Poincar\'e Lemma \ref{Th:PL} implies that 	$\operatorname{H}^{k}_{\nabla\an{U_j}}(A\an{U_j}, E\an{U_j})$ is isomorphic to the Chevalley-Eilenberg cohomology of the isotropy Lie algebra with values in a representation, which is finite dimensional by definition (the vector spaces of the chain complex are finite dimensional). Conclude using Corollary \ref{findim1}.
    \end{proof}
    \begin{remark}
    	Corollary \ref{findim2} is usually proven by observing that the Chevalley-Eilenberg complex of transitive Lie algebroids is elliptic (see \cite{Kr10}). The cohomology therefore is finite dimensional if the base manifold is compact (see \cite{AtBo67}).
This arguments is used for instance in \cite{Wal23}.
    \end{remark}

    \subsection{A K\"unneth formula.}
    \label{sec:K}
    A K\"unneth formula is proved as an application of the generalised Poincaré lemma \ref{Th:PL} and the Mayer-Vietoris principle \ref{thm:mv}.\\
    
    Let $A \to M$ and $B\to N$ be Lie algebroids. Set the notation $\operatorname{Pr}_A\colon A \times B \to A$, $(a,b)\mapsto a$,  and accordingly $\operatorname{Pr}_B\colon A \times B \to B$ , $\operatorname{pr}_M\colon M \times N \to M$ and $\operatorname{pr}_N\colon M \times N \to N$ for the projection morphisms in the respective categories. 
    Let $\nabla^A$ be an $A$-representation on  the vector bundle $E\to M$ and let $\nabla^B$ be a $B$-representation on $F\to N$. The vector bundle $\operatorname{pr}_M^!E \otimes \operatorname{pr}_N^!F$ is the tensor product of the pullback vector bundles on $M\times N$. It is naturally equipped with the connection $\nabla$ defined by the Leibniz rule and 
    \begin{equation} 
    	\begin{aligned}
    			 \nabla_{(a,b)}(\operatorname{pr}_M^!e \otimes \operatorname{pr}_N^!f)&=(\operatorname{Pr}^*_A\nabla^A)_{(a,b)}(\operatorname{pr}_M^!e) \otimes \operatorname{pr}_N^!f + \operatorname{pr_M}^!e \otimes (\operatorname{Pr}^*_B\nabla^B)_{(a,b)}(\operatorname{pr}_N^!f)\\&=\operatorname{pr}_M^!(\nabla^A_{a}e) \otimes \operatorname{pr}_N^!f+ \operatorname{pr}^!_Me \otimes \operatorname{pr}_N^!(\nabla^B_{b}f)
    	\end{aligned}  
    	\label{eq:K1} 	
    \end{equation} 
    for $a\in\Gamma(A)$, $b\in\Gamma(B)$, $(a,b):= \operatorname{pr}^!_M a + \operatorname{pr}^!_N b \in \Gamma(A \times B)$, $e \in \Gamma(E)$ and $f \in \Gamma(F)$.
    
     \begin{lemma}
    	In the situation above, the connection $\nabla$ is well defined and flat.
    \end{lemma}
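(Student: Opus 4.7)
My plan is to recognise $\nabla$ as the standard tensor product of two pullback connections on the product Lie algebroid $A\times B\to M\times N$. The projections $\operatorname{Pr}_A\colon A\times B\to A$ and $\operatorname{Pr}_B\colon A\times B\to B$ are Lie algebroid morphisms, so Subsection~\ref{subs:2.4} yields well-defined $(A\times B)$-connections $\operatorname{Pr}_A^*\nabla^A$ on $\operatorname{pr}_M^!E$ and $\operatorname{Pr}_B^*\nabla^B$ on $\operatorname{pr}_N^!F$, both flat by Corollary~\ref{Cor:00}. I would then identify $\nabla$ with the tensor product connection
\[\widetilde{\nabla}_{(a,b)}(\sigma\otimes\tau):=(\operatorname{Pr}_A^*\nabla^A)_{(a,b)}\sigma\otimes\tau+\sigma\otimes(\operatorname{Pr}_B^*\nabla^B)_{(a,b)}\tau,\]
a standard and well-defined construction on the tensor product bundle.

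To justify well-definedness, I would check that $\widetilde{\nabla}$ coincides with $\nabla$ on pure pullback generators $\operatorname{pr}_M^!e\otimes\operatorname{pr}_N^!f$. The defining formula for the pullback of a connection along a Lie algebroid morphism (cf.\ Subsection~\ref{subs:2.4}) gives $(\operatorname{Pr}_A^*\nabla^A)_{(a,b)}(\operatorname{pr}_M^!e)=\operatorname{pr}_M^!(\nabla^A_a e)$ and similarly on the $B$-side, so the right-hand side of \eqref{eq:K1} matches $\widetilde{\nabla}$ on such tensors. Since the $C^\infty(M\times N)$-module $\Gamma(\operatorname{pr}_M^!E\otimes\operatorname{pr}_N^!F)$ is generated by these pure pullback tensors, and a connection is uniquely determined on the whole module by its values on a generating set together with the Leibniz rule, $\nabla$ is well-defined and equals $\widetilde{\nabla}$.

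For flatness, I would perform the classical tensor product curvature computation. Expanding $R_{\widetilde{\nabla}}(\alpha_1,\alpha_2)$ on $\sigma\otimes\tau$, the mixed terms in $\widetilde{\nabla}_{\alpha_1}\widetilde{\nabla}_{\alpha_2}$ cancel against those in $\widetilde{\nabla}_{\alpha_2}\widetilde{\nabla}_{\alpha_1}$, and $\widetilde{\nabla}_{[\alpha_1,\alpha_2]}$ splits into two summands by the Leibniz rule defining $\widetilde{\nabla}$, yielding
\[R_{\widetilde{\nabla}}(\alpha_1,\alpha_2)(\sigma\otimes\tau)=R_{\operatorname{Pr}_A^*\nabla^A}(\alpha_1,\alpha_2)\sigma\otimes\tau+\sigma\otimes R_{\operatorname{Pr}_B^*\nabla^B}(\alpha_1,\alpha_2)\tau=0,\]
since both summand curvatures vanish by the flatness noted above. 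I foresee no substantive obstacle: the only items requiring care are the routine verification that the Leibniz extension from pure pullback generators is consistent, and the bookkeeping of the mixed-term cancellation in the curvature.
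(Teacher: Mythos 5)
Your proposal is correct and takes essentially the same route as the paper: the paper likewise establishes well-definedness by checking that \eqref{eq:K1} is compatible with the Leibniz rule on pullback generators (so that the Leibniz extension is consistent), and proves flatness by evaluating $R_\nabla$ on generators, obtaining $R_{\nabla}((a_1,b_1),(a_2,b_2))(\pr_M^!e\otimes\pr_N^!f)=\pr_M^!(R_{\nabla^A}(a_1,a_2)e)\otimes\pr_N^!f+\pr_M^!e\otimes\pr_N^!(R_{\nabla^B}(b_1,b_2)f)=0$. Your repackaging through the flat pullback connections $\operatorname{Pr}_A^*\nabla^A$ and $\operatorname{Pr}_B^*\nabla^B$ together with the standard tensor-product curvature formula is only a cosmetic variant of the same computation.
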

    \begin{proof}
    	$\nabla$ is linear in its first entry by definition. Moreover \eqref{eq:K1} 
    	is compatible with the Leibniz rule on pullbacks of functions since 
    	\begin{equation}
    		\begin{split} &\nabla_{(a,b)}(\operatorname{pr}_M^!(g \cdot e) \otimes \operatorname{pr}_N^!f)\\
		&= \operatorname{pr}^*_Mg\cdot \operatorname{pr}^!_M(\nabla_{a} e) \otimes \operatorname{pr}_N^!f + \operatorname{pr}_M^!(\rho_A(a)(g) \cdot e) \otimes \operatorname{pr}^!_Nf + \operatorname{pr}^*_Mg\cdot \operatorname{pr}_M^!e \otimes \operatorname{pr}_N^!(\nabla_{b}f)\\
			&= \operatorname{pr}^*_Mg\cdot \nabla_{(a,b)}(e \otimes f) + \rho(a,b)(\operatorname{pr}^*_Mg)\cdot e \otimes \operatorname{pr}_N^!f,
    		\end{split}
    		\label{eq:K2}
    	\end{equation}
    	for $g\in C^\infty(M)$, $(a,b) \in \Gamma(A \times B)$, $e \in \Gamma(E)$ and $f \in \Gamma(F)$. In the same manner, 
	\[\nabla_{(a,b)}(\operatorname{pr}_M^! e \otimes \operatorname{pr}_N^!(g \cdot f))
	\]
	for $g\in C^\infty(N)$, $(a,b) \in \Gamma(A \times B)$, $e \in \Gamma(E)$ and $f \in \Gamma(F)$. Hence the Leibniz rule can be used in order to extend \eqref{eq:K1} to a linear $A\times B$ connection on $\pr_M^!E\otimes \pr_N^!F$.

		It is enough to compute $R_{\nabla}$ on generators as above. For $a_1,a_2\in\Gamma(A)$, $b_1,b_2\in\Gamma(B)$, $e\in\Gamma(E)$ and $f\in\Gamma(F)$ a simple computation yields
    	\begin{equation*}
    		\begin{split}
    			&R_{\nabla}((a_1, b_1),(a_2, b_2))(\pr_M^!e \otimes \pr_N^!f)\\
			&= \operatorname{pr}^!_M(R_{\nabla^A}(a_1,a_2)e) \otimes \operatorname{pr}^!_Nf + \operatorname{pr}^!_M e \otimes \operatorname{pr}_N^!(R_{\nabla^B}(b_1, b_2))=0. \qedhere
    		\end{split}
    	\end{equation*}
    \end{proof}

    \begin{theorem}[K\"unneth isomorphism]
    	Let $A \to M$ be a Lie algebroid and let $B \to N$ be a transitive Lie algebroid over a base manifold $N$ admitting a finite good cover. Let $\nabla^A$ and $\nabla^B$ be flat connections on the vector bundles $E \to M$ and $F\to N$, respectively. Then
    	\begin{equation}
    		\operatorname{H}_{\nabla^A}^{\bullet}(A, E) \otimes \operatorname{H}_{\nabla^B}^{\bullet}(B,F)          \rightarrow \operatorname{H}_{\nabla}^{\bullet}(A \times B,\operatorname{pr}_M^!E \otimes \operatorname{pr}_N^!F) \label{eq:K0} \end{equation}
    	\[[\omega] \otimes [\eta] \mapsto [\operatorname{Pr}_A^*\omega \wedge \operatorname{Pr}_B^*\eta] \]
    		is an isomorphism of modules over the ring isomorphism 
    	\[ \operatorname{H}^{\bullet}(A) \otimes \operatorname{H}^{\bullet}(B) \to \operatorname{H}^{\bullet}(A\times B)\]
    	\[[\omega] \otimes [\eta] \mapsto [\operatorname{Pr}_A^*\omega \wedge \operatorname{Pr}_B^*\eta]. \]
    	In particular, 
    	\[\operatorname{H}^n(A \times B, \operatorname{pr}_M^!E \otimes \operatorname{pr}_N^!F) \cong \bigoplus_{p+q=n} \operatorname{H}^p(A, E) \otimes \operatorname{H}^q(B, F),\] for each $n \ge 0$.
    	\label{th:k1}
    \end{theorem}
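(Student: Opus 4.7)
\textbf{Plan for the proof of Theorem \ref{th:k1}.} The strategy will be to imitate the classical Mayer--Vietoris proof of the Künneth formula for de Rham cohomology (as in \cite{BoTu82}), adapted to the twisted Lie algebroid setting, by inducting on the size of a finite good open cover $\{V_1,\ldots,V_k\}$ of $N$. First I will verify that the map is well-defined: the form $\Pr_A^*\omega\wedge\Pr_B^*\eta$ is closed whenever $\omega$ and $\eta$ are closed, because $\Pr_A^*$ and $\Pr_B^*$ intertwine the respective differentials (Lemma \ref{pullback_ce}) and $d_\nabla$ satisfies the Leibniz rule with respect to the tensor product of representations; similarly, cohomologous representatives on either factor yield cohomologous images. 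The module-over-ring structure is then a formal manipulation using $\Pr_A^*\alpha\wedge\Pr_B^*\beta=(-1)^{|\alpha||\beta|}\Pr_B^*\beta\wedge\Pr_A^*\alpha$ together with the multiplicativity of the pullback, so the focus will be on bijectivity.

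\textbf{Base case $k=1$.} Here $N$ is contractible, so by Corollary \ref{localtriviality} the transitive Lie algebroid $B$ is isomorphic to $TN\times\mathfrak{g}$, where $\mathfrak{g}$ is its isotropy at a basepoint $x\in N$. By Example \ref{ex:homotopyequivalent} and Remark \ref{productHE}, $A\times B$ is LA-homotopy equivalent to $A\times\mathfrak{g}$, and $B$ to $\mathfrak{g}$. Applying Theorem \ref{th:hi} together with the generalized Poincaré lemma (Lemma \ref{Th:PL}) will reduce the claim to the Künneth isomorphism for $A\times\mathfrak{g}$ with coefficients in $\pr_M^!E\otimes F|_x$. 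There, the splitting $(A\times\mathfrak{g})^*=A^*\oplus\mathfrak{g}^*$ over $M$ identifies $\Omega^\bullet(A\times\mathfrak{g},\pr_M^!E\otimes F|_x)$ manifestly with the tensor product of $(\Omega^\bullet(A,E),d_{\nabla^A})$ and the Chevalley--Eilenberg complex $(\Lambda^\bullet\mathfrak{g}^*\otimes F|_x,d_{\mathrm{CE}})$, with total differential $d_{\nabla^A}\otimes 1+(-1)^\bullet\otimes d_{\mathrm{CE}}$. The algebraic Künneth theorem for complexes of $\mathbb{R}$-vector spaces (no Tor terms) will then deliver the isomorphism.

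\textbf{Inductive step.} Given a good cover of size $k>1$, I will set $U=V_1\cup\cdots\cup V_{k-1}$ and $V=V_k$, so that $U$, $V$, and $U\cap V=\bigcup_{i<k}(V_i\cap V_k)$ all admit good covers of size at most $k-1$. Applying Mayer--Vietoris (Theorem \ref{MV}) to $B\to U\cup V$ with a partition of unity $\{\mu_U,\mu_V\}$, and to $A\times B\to(M\times U)\cup(M\times V)$ with the pulled-back partition $\{\pr_N^*\mu_U,\pr_N^*\mu_V\}$, yields two long exact sequences. Tensoring the first with the graded vector space $H^\bullet_{\nabla^A}(A,E)$ preserves exactness since we work over a field. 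I will then build the ladder connecting these two sequences by the Künneth maps; by the inductive hypothesis, the vertical maps for $B|_U$, $B|_V$ and $B|_{U\cap V}$ are isomorphisms, so the five lemma will force the map for $B$ to be one as well.

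\textbf{Main obstacle.} The principal verification will be commutativity of the Künneth map with the connecting homomorphism $d^*_\nabla$ (the squares involving $i^*$ and $j_1^*-j_0^*$ are tautological from naturality of pullback). For closed representatives $\alpha\in\Omega^\bullet(A,E)$ and $\omega\in\Omega^\bullet(B|_{U\cap V},F|_{U\cap V})$, one path around the square gives
\[
\bigl[\pm\, d_\nabla\bigl(\pr_N^*\mu_V\cdot\Pr_A^*\alpha\wedge\Pr_B^*\omega\bigr)\bigr],
\]
while the other gives $\bigl[\pm\,\Pr_A^*\alpha\wedge\Pr_B^*d_{\nabla^B}(\mu_V\omega)\bigr]$. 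Using $\pr_N^*\mu_V=\Pr_B^*\mu_V$ (as a $0$-form on $A\times B$), the Leibniz rule for $d_\nabla$, the vanishing $d_\nabla\Pr_A^*\alpha=\Pr_A^*d_{\nabla^A}\alpha=0$ from Lemma \ref{pullback_ce}, and the analogous compatibility $d_\nabla\circ\Pr_B^*=\Pr_B^*\circ d_{\nabla^B}$, the two expressions will agree with the same sign. This will close the diagram and complete the induction, thereby establishing the Künneth isomorphism and, as the degree-$n$ decomposition, the final displayed formula.
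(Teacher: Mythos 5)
Your plan is correct and follows essentially the same route as the paper: the paper also reduces the base case to $A\times\mathfrak{g}$ via the generalized Poincar\'e lemma and the algebraic K\"unneth theorem (its Lemma \ref{lemma:3} and Corollary \ref{co:K1} spell out the identification of $\Omega^\bullet(A\times\mathfrak g,\pr_M^!E\otimes\pr_*^!F)$ with the tensor product of complexes that you call ``manifest''), and then inducts on the size of a good cover using Mayer--Vietoris, with the only nontrivial square being the one involving $\operatorname{d}^*_\nabla$, handled exactly by the Leibniz computation you outline. The sign bookkeeping $(-1)^{\deg\omega}$ you flag is precisely how the paper defines the map ``$\pm\operatorname{d}^*_\nabla$'' on the tensored sequence, so no discrepancy arises.
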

 
   Kubarski proves in \cite{Kub02} this theorem for the Lie algebroid cohomology of the product $A \times \mathfrak{g}$, where $A$ is a transitive Lie algebroid and $\mathfrak{g}$ a Lie algebra.
   Theorem \ref{th:k1} generalizes his result to more general Lie algebroids and the twisted Lie algebroid cohomology.
In this first part of this proof, similar techniques to Kubarski's in \cite{Kub02} are used.
Theorem \ref{th:k1}    generalizes as well a more recent result by Waldron in \cite{Wal23}. The proof of Theorem \ref{th:k1} relies on the special case where $B$ is a Lie algebra. This special case is proved first by direct computations.
    \begin{lemma}
    	\label{lemma:3}
    	Let $A\to M$ be a Lie algebroid and $\mathfrak{g}$ a Lie algebra. Let $(E,\nabla^A)$ and $(F, \nabla^{\mathfrak g})$ be representations. 
	Denote with $\operatorname{Pr}_A\colon A \times \mathfrak{g} \to A$ and $\operatorname{Pr}_{\mathfrak{g}}: A \times \mathfrak{g} \to \mathfrak{g}$ the projection maps and with  $\hat{\otimes}_{\R}$ the tensor product of differential graded vector spaces. 
	
	The K\"unneth homomorphism
    	\begin{equation}
    		\mathcal{K} \colon (\Omega^{\bullet}(A, E),\operatorname{d}_{\nabla^A}) \hat{\otimes}_{\R} (\Omega^{\bullet}(\mathfrak{g}, F), \operatorname{d}_{\nabla^{\mathfrak{g}}}) \longrightarrow (\Omega^{\bullet}(A \times \mathfrak{g}, \operatorname{pr}_M^!E \otimes_{\R} \operatorname{pr}_*^!F), \operatorname{d}_{\nabla})
    		\label{eq:K3}
    	\end{equation}
    	\begin{equation*}
    		\omega \otimes \phi \longmapsto \operatorname{Pr}_A^*\omega \wedge \operatorname{Pr}_{\mathfrak{g}}^*\phi
    	\end{equation*}
    	with \[(\operatorname{Pr}_A^*\omega \wedge \operatorname{Pr}_{\mathfrak{g}}^*\phi)(a_1, \dots, a_r,v_1, \dots, v_s):= \operatorname{pr}_M^*(\omega(a_1, \dots, a_r)) \otimes \operatorname{pr}_{*}^*(\phi(v_1, \dots, v_s)) \]
    	is an isomorphism of graded differential vector spaces.  	  
    \end{lemma}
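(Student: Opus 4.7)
The approach I would take is to verify separately that $\mathcal K$ is a linear isomorphism of graded vector spaces and that it is a cochain map.

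First, for the underlying vector space isomorphism, I exploit that $A\times\mathfrak g$ is a Lie algebroid over $M$ with dual bundle $(A\times\mathfrak g)^* \cong A^* \oplus (M\times\mathfrak g^*)$. The pointwise decomposition of exterior powers of direct sums yields
\[\wedge^k(A\times\mathfrak g)^* \cong \bigoplus_{p+q=k}\wedge^p A^* \otimes \wedge^q\mathfrak g^*\]
as vector bundles over $M$. Tensoring with $\operatorname{pr}_M^!E\otimes \operatorname{pr}_*^!F \cong E\otimes_{\mathbb R}F$, taking global sections, and pulling the finite-dimensional vector space $\wedge^q\mathfrak g^*\otimes F$ outside $\Gamma$ yields the canonical identification
\[\Omega^k(A\times\mathfrak g,\operatorname{pr}_M^!E\otimes \operatorname{pr}_*^!F) \cong \bigoplus_{p+q=k}\Omega^p(A,E)\otimes_{\mathbb R}\Omega^q(\mathfrak g,F).\]
A direct computation on decomposable elements $\omega\otimes\phi$, using that $\operatorname{Pr}_A^*\omega$ vanishes whenever one of its arguments lies in the $\mathfrak g$ summand (and symmetrically for $\operatorname{Pr}_{\mathfrak g}^*\phi$), shows that $\mathcal K$ implements exactly this identification. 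In particular $\mathcal K$ is bijective in each degree.

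Second, for the cochain map property, by linearity it suffices to verify on decomposable tensors $\omega\otimes\phi$ with $\omega\in\Omega^p(A,E)$ and $\phi\in\Omega^q(\mathfrak g,F)$ that
\[\operatorname{d}_\nabla\bigl(\operatorname{Pr}_A^*\omega\wedge\operatorname{Pr}_{\mathfrak g}^*\phi\bigr)=\operatorname{Pr}_A^*(\operatorname{d}_{\nabla^A}\omega)\wedge\operatorname{Pr}_{\mathfrak g}^*\phi + (-1)^{p}\operatorname{Pr}_A^*\omega\wedge\operatorname{Pr}_{\mathfrak g}^*(\operatorname{d}_{\nabla^{\mathfrak g}}\phi).\]
By \eqref{eq:K1}, $\nabla$ is exactly the tensor-product connection built from $\operatorname{Pr}_A^*\nabla^A$ and $\operatorname{Pr}_{\mathfrak g}^*\nabla^{\mathfrak g}$. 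The graded Leibniz rule for the wedge product of a $\operatorname{pr}_M^!E$-valued form with a $\operatorname{pr}_*^!F$-valued form with respect to such a tensor-product connection then gives
\[\operatorname{d}_\nabla(\operatorname{Pr}_A^*\omega\wedge\operatorname{Pr}_{\mathfrak g}^*\phi) = \operatorname{d}_{\operatorname{Pr}_A^*\nabla^A}(\operatorname{Pr}_A^*\omega)\wedge\operatorname{Pr}_{\mathfrak g}^*\phi + (-1)^p\operatorname{Pr}_A^*\omega\wedge \operatorname{d}_{\operatorname{Pr}_{\mathfrak g}^*\nabla^{\mathfrak g}}(\operatorname{Pr}_{\mathfrak g}^*\phi),\]
and Lemma \ref{pullback_ce} applied to each of the two LA-morphisms $\operatorname{Pr}_A$ and $\operatorname{Pr}_{\mathfrak g}$ commutes the pullbacks past the twisted differentials, yielding the desired identity.

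The main step requiring care is the graded Leibniz rule used above, since it concerns a wedge product of vector-valued forms taking values in distinct factors of a tensor product. I would establish it by reducing to decomposable forms $\alpha = \alpha_0 \otimes \bar e$ and $\beta = \beta_0 \otimes \bar f$, with $\alpha_0,\beta_0$ scalar-valued and $\bar e,\bar f$ sections of $\operatorname{pr}_M^!E$ and $\operatorname{pr}_*^!F$ respectively, where the identity collapses to the classical Leibniz rule for the untwisted $\operatorname{d}_{A\times\mathfrak g}$ together with the defining Leibniz property \eqref{eq:K1} of the tensor-product connection. Once this identity is in place, the lemma follows from the two items above.
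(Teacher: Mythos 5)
Your proposal is correct and follows essentially the same strategy as the paper's proof: establish that $\mathcal K$ is a linear isomorphism in each degree using the finite-dimensionality of $\mathfrak g$ and $F$, and then check the cochain property by a graded Leibniz computation for the tensor-product connection \eqref{eq:K1} combined with the fact that $\operatorname{Pr}_A^*$ and $\operatorname{Pr}_{\mathfrak g}^*$ intertwine the twisted differentials (Lemma \ref{pullback_ce}). The only real difference is bookkeeping: the paper proves injectivity fibrewise via the maps $\mathcal K_x$ and surjectivity separately with local frames and a partition of unity, whereas you obtain bijectivity in one step from the canonical identification $\Gamma(V\otimes \underline W)\cong \Gamma(V)\otimes_{\R}W$ for the trivial bundle with finite-dimensional fibre $W=\wedge^q\mathfrak g^*\otimes F$ --- an equivalent and arguably cleaner route; your explicit verification of the two-factor Leibniz rule on decomposables is also slightly more careful than the paper's direct computation in \eqref{comp:1}.
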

    \begin{proof}
    	For each $x \in M$ consider the map $\mathcal{K}_x$ defined by the following commutative diagram
    	\begin{equation*}
    		\small{
    			\begin{xy}
    				\xymatrix{(\bigwedge^{\bullet} A^*_x \otimes_{\R} E_x) \hat{\otimes}_{\R} (\bigwedge^{\bullet} \mathfrak{g}^* \otimes F) \ar[r] \ar[rd]^{\mathcal{K}_x} & (\bigwedge^{\bullet} A^*_x \hat{\otimes}_{\R} \bigwedge^{\bullet} \mathfrak{g}^*) \otimes_{\R} ((\operatorname{pr}_M^!E)_x \otimes_{\R} (\operatorname{pr}_{\{*\}}^!F)_x)  \ar[d]\\
    					& (\bigwedge^{\bullet} (A\times \mathfrak{g})^*_x ) \otimes_{\R} ((\operatorname{pr}_M^!E)_x \otimes_{\R} (\operatorname{pr}_{\{*\}}^!F)_x), \\
    				} 
    		\end{xy}}
    	\end{equation*}
    	i.e. $\mathcal K_x$ is explicitly defined by
    	\begin{equation*}
    		\small{
    			\begin{xy}
    				\xymatrix{(\omega_x \otimes e_x) \otimes (\eta \otimes f) \ar@{|->}[r] \ar@{|->}[dr]^{\mathcal{K}_x} & (\omega_x \otimes \eta) \otimes ((\operatorname{pr}_M^!e)_x \otimes (\operatorname{pr}^!_{\{*\}}f)_x)  \ar@{|->}[d]\\
    					& (\operatorname{Pr}^*_A)_x\omega \wedge (\operatorname{Pr}^*_{\mathfrak{g}})_x\eta \otimes ((\operatorname{pr}_M^!e)_x \otimes (\operatorname{pr}^!_{\{*\}}f)_x). \\
    				} 
    		\end{xy}}
    	\end{equation*}
 
    	The section $e \in  \Gamma(E)$ is an extension of $e_x$ and the form $\omega$ is an extension of $\omega_{x}$. The first map of the composition is an isomorphism because of the isomorphisms $(\operatorname{pr}^!_M E)_x \cong E_x$ and $(\operatorname{pr}^!_* F)_x \cong F$, defined via the canonical projection induced by the pullback restricted to a fiber. The second map is an isomorphism by standard arguments of linear algebra: it is true that $ (\bigwedge^{\bullet} A^*_x\, \hat{\otimes}_{\R}\, \bigwedge^{\bullet} \mathfrak{g}^*) \to \bigwedge^{\bullet} (A \times \mathfrak{g})^*_{|_x}$, $(\omega_x \otimes \eta) \mapsto (\operatorname{Pr}^*_A)_x\omega \wedge (\operatorname{Pr}^*_{\mathfrak{g}})_x\eta$ is an isomorphism of algebras (see for instance \cite{Greub67}, Section $5.15$),
	then apply the tensor product over $\R$ with the vector space $(\operatorname{pr}_M^!E)_x \otimes (\operatorname{pr}^!_{\{*\}}F)_x$, which is an exact functor.
	
By definition 
    	\[\mathcal{K}((\omega \otimes e) \otimes (\eta \otimes f))_x =\mathcal{K}_x((\omega_x \otimes e) \otimes (\eta_x \otimes f))\]
	for all $\omega\in\Omega^\bullet(A)$, $e\in\Gamma(E)$, $\eta\in\mathfrak g$, $f\in F$ and $x\in M$.
    Since $\mathcal{K}_x$ is an isomorphism for each $x\in M$, $\mathcal K$ is injective.

    	To show that $\mathcal{K}$ is an epimorphism, consider a section  $\Phi \in \Omega^r(A \times \mathfrak{g}, \operatorname{pr}^!_ME \otimes \operatorname{pr}^!_{\{*\}}F)$. Take  a basis $v_1, \dots, v_n$ of $\mathfrak{g}$
    	and a basis $f_1, \dots, f_k$ of $F$. Choose an open subset $U\subseteq M$ trivialising both $A$ and $E$, and smooth frames $(\alpha_1,\ldots, \alpha_r)$ for $A^*$ over $U$ and $(e_1,\ldots, e_l)$ for $E$ over $U$.
	Then $\Phi\an{U\times\{*\}}$ can be written
	\[ \sum_{\substack{1\leq i_1< \ldots<i_p\leq r\\
	1\leq j_1<\ldots<j_q\leq n\\
	p+q=r}
	}\sum_{s=1}^l\sum_{t=1}^{k}f_{\substack{i_1,\ldots, i_p\\ j_1,\ldots, j_q\\ s,t}}\Ppr_A^*(\alpha_{i_1}\wedge\ldots \wedge \alpha_{i_p})\wedge\Ppr_*^*(v_{j_1}\wedge\ldots\wedge v_{j_q})\otimes \pr_M^!e_s\otimes \pr_{*}^!f_t
	\]
	with smooth functions \[f_{\substack{i_1,\ldots, i_p\\ j_1,\ldots, j_q\\ s,t}}\in C^\infty(U\times\{*\})=C^\infty(U).\]
	It is then easy to show, using a partition of unity on $M$ subordinate to an open cover of $M$ by sets trivialising simultaneously $A$ and $E$, that $\Phi$ lies in the image of $\mathcal K$.
	In addition, $\mathcal{K}$ preserves the total degree and is a homomorphism of algebras by definition.

	\medskip
	It is left to show that $\mathcal{K}$ intertwines the differentials: take $\nabla$ to be the tensor flat connection in (\ref{eq:K1}), then the induced differential $\operatorname{d}_{\nabla}$ satisfies
    	\begin{equation}
    		\begin{split} 
    			\operatorname{d}_{\nabla} (\mathcal{K} (\omega \otimes \phi))&= \operatorname{d}_{\nabla}
    			(\operatorname{Pr}^*_A \omega \wedge \operatorname{Pr}_{\mathfrak{g}}^*\phi)\\
			&= \operatorname{Pr}_A^*(\operatorname{d}_{\nabla^A} \omega) \wedge \operatorname{Pr}^*_{\mathfrak{g}}\phi +(-1)^{\operatorname{deg}\omega} \operatorname{Pr}_A^*(\omega) \wedge \operatorname{Pr}^*_{\mathfrak{g}}(\operatorname{d}_{\nabla^{\mathfrak g}}\phi)\\
&    			= \mathcal{K}(\operatorname{d}_{\nabla^A} \omega\, \otimes\, \phi + (-1)^{\operatorname{deg}(\omega)} \omega \,\otimes\, \operatorname{d}_{\nabla^{\mathfrak g}} \phi).
    			\label{comp:1}
    		\end{split}
    	\end{equation}
    	
    	Denote now by $\operatorname{d}_{\hat{\nabla}}$ the differential on the tensor product induced by the differentials $\operatorname{d}_{\nabla^A}$ and $\operatorname{d}_{\nabla^{\mathfrak g}}$. By definition
    	\begin{equation}
    		\begin{split} 
    			\operatorname{d}_{\hat{\nabla}}(\omega \otimes \phi):= \operatorname{d}_{\nabla^A} \omega\, \otimes\, \phi + (-1)^{\operatorname{deg}(\omega)} \omega \,\otimes\, \operatorname{d}_{\nabla^{\mathfrak g}} \phi
    		\end{split}
    		\label{def:0}
    	\end{equation}
    	So conclude that
    	\[\operatorname{d}_{\nabla} \circ \mathcal{K} = \mathcal{K} \circ \operatorname{d}_{\hat{\nabla}},\]
    	as expected.
    \end{proof}
    
    Another piece of the proof of Theorem \ref{th:k1} relies on the algebraic K\"unneth isomorphism. The statement below is adapted from e.g.~\cite{Sp95}, Theorem $5.5.11$.

  \begin{lemma} 
      Let $(C,d_c)$ and $(C', d_{C'})$ be two graded differential vector spaces, and let $(C\hat{\otimes}C', d)$ be their tensor product. 
      Assume that $H^\bullet(C', d_{C'})$ is finite dimensional.
      Then the map 
    \[ \mathcal H\colon C\times C'\to C\hat{\otimes}_{\R} C', \qquad (c, c')\mapsto c\otimes c'
    \]
    induces an isomorphism of graded vector spaces in cohomology
    \[\bar{\mathcal H}\colon H^\bullet(C,d)\times H^\bullet(C', d_{C'})\to H^\bullet(C\hat{\otimes}_{\R} C', d).
    \]
    \end{lemma}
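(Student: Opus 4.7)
The plan is to reduce the problem to a direct summand decomposition of $C'$ that makes the cohomology of the tensor product transparent. First I would check that $\bar{\mathcal H}$ is well-defined on cohomology: the graded Leibniz rule
\[
d(c\otimes c')=d_C c\otimes c'+(-1)^{|c|}c\otimes d_{C'}c'
\]
shows that tensor products of cocycles are cocycles, and if $c=d_Cu$ (or $c'=d_{C'}u'$), the tensor $c\otimes c'$ is a coboundary. Hence $\bar{\mathcal H}$ descends to a graded bilinear map $H^\bullet(C,d)\otimes H^\bullet(C',d_{C'})\to H^\bullet(C\hat{\otimes}_{\R}C',d)$.

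Since we work over the field $\R$, I would next choose graded vector space decompositions. Write $Z'=\ker d_{C'}$ and $B'=\operatorname{im}d_{C'}$. Pick a graded complement $W'$ of $Z'$ in $C'$ and a graded complement $\mathcal H'$ of $B'$ in $Z'$, so that
\[
C'=B'\oplus\mathcal H'\oplus W',
\]
with $\mathcal H'\cong H^\bullet(C',d_{C'})$ and $d_{C'}$ restricting to zero on $B'\oplus\mathcal H'$ and to a graded isomorphism $W'\xrightarrow{\sim}B'[+1]$. The finite-dimensionality of $H^\bullet(C',d_{C'})$ guarantees that $\mathcal H'$ is finite-dimensional, so no completion issues arise when tensoring.

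Transporting this decomposition to the tensor product yields
\[
C\hat{\otimes}_{\R}C'=(C\otimes\mathcal H')\oplus\bigl((C\otimes B')\oplus(C\otimes W')\bigr),
\]
and a direct check using the Leibniz rule shows that each summand is preserved by $d$. On $C\otimes\mathcal H'$ the differential reduces to $d_C\otimes\id$, so its cohomology is $H^\bullet(C,d)\otimes\mathcal H'\cong H^\bullet(C,d)\otimes H^\bullet(C',d_{C'})$, and the composition of the inclusion of representatives $\mathcal H'\hookrightarrow C'$ with $\bar{\mathcal H}$ realizes precisely this identification. It remains to prove that the complementary subcomplex $(C\otimes B')\oplus(C\otimes W')$ is acyclic, which is the only substantive point of the argument.

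The acyclicity I would establish by an explicit contracting homotopy. Let $\phi:=(d_{C'}|_{W'})^{-1}\colon B'\to W'[-1]$ and define
\[
h(c\otimes w):=0,\qquad h(c\otimes b):=(-1)^{|c|}c\otimes\phi(b),
\]
for $c\in C$, $w\in W'$, $b\in B'$. A short computation with signs verifies that $dh+hd=\id$ on this subcomplex (essentially because it is the mapping cone of the identity on $C\otimes B'$ after identifying $W'$ with $B'[-1]$). This shows that the inclusion $C\otimes\mathcal H'\hookrightarrow C\hat{\otimes}_{\R}C'$ is a quasi-isomorphism, and combining this with the first paragraph yields that $\bar{\mathcal H}$ is the desired isomorphism. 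The main delicate point throughout is the bookkeeping of signs and degree shifts, especially in the construction of $h$ and the verification that $\phi$ lands in the correct graded piece of $W'$.
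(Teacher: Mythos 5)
Your argument is correct, but it takes a different route from the paper: the paper gives no proof at all for this lemma, simply citing the algebraic K\"unneth theorem (Spanier, Theorem 5.5.11), whereas you supply a complete, self-contained argument. Your proof is the standard field-coefficient one: split $C'=B'\oplus\mathcal H'\oplus W'$ with $d_{C'}\colon W'\to B'$ an isomorphism of degree $+1$, observe that $C\otimes\mathcal H'$ carries the differential $d_C\otimes\id$ (so its cohomology is $H^\bullet(C)\otimes\mathcal H'$ by exactness of $-\otimes_{\R}V$), and contract the complement $(C\otimes B')\oplus(C\otimes W')$; I checked your signs and indeed $dh+hd=\id$ on both types of generators, and the identification of the resulting isomorphism with $\bar{\mathcal H}$ is correct since for $z'=h+b$ the class $[z\otimes b]$ dies in the acyclic summand. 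What your approach buys is transparency and, in fact, more generality over a field: the finite-dimensionality of $H^\bullet(C',d_{C'})$ is never actually used, since the graded splittings exist degreewise for arbitrary vector spaces and the tensor product here is the degreewise direct sum, so there are no ``completion issues'' to worry about in the first place; that hypothesis is an artifact of the PID-level statement the paper cites (where it ensures, e.g., the K\"unneth map is onto without a Tor correction and compatible with the conventions used later in the Mayer--Vietoris induction). The only cosmetic quibble is that your justification of the finiteness hypothesis via ``completion issues'' is not the real reason it appears; the argument itself is sound without it.
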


    This then implies as follows a  K\"unneth formula in the case of a Lie algebroid and a Lie algebra.
    \begin{corollary}
    	\label{co:K1}
    	The isomorphism of graded vector spaces induced in cohomology by the K\"unneth homomorphism $\mathcal{K}$ defined in (\ref{eq:K3}) is
    	\[\bar{\mathcal{K}} \colon \operatorname{H}^{\bullet}_{\hat{\nabla}}(\Omega(A,E) \,\hat{\otimes}_{\R}\, \Omega(\mathfrak{g},F)) \to \operatorname{H}_{\nabla}^{\bullet}(A \times \mathfrak{g}, \operatorname{pr}^!_M E \otimes \operatorname{pr}_{\{*\}}^!F))\]
    	\[ [\omega \otimes \phi] \mapsto [\operatorname{Pr}_A^*\omega \wedge \operatorname{Pr}^*_{\mathfrak{g}} \phi].  \]
    	Pre-composing with the inverse of the map $\bar{\mathcal{H}}$ of the algebraic K\"unneth isomorphism yields the isomorphism
    	\[\mathcal{K}_{\#} \colon \operatorname{H}^{\bullet}_{\nabla^A}(A, E) \otimes \operatorname{H}^{\bullet}_{\nabla^{\mathfrak{g }}}(\mathfrak{g},F) \to \operatorname{H}_{\nabla}^{\bullet}(A\times \mathfrak{g}, \operatorname{pr}^!_M E \otimes \operatorname{pr}_{\{*\}}^!F).\]
    	
    \end{corollary}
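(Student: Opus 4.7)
The corollary packages two distinct statements. First, the cochain map $\mathcal{K}$ of \eqref{eq:K3} induces an isomorphism $\bar{\mathcal{K}}$ on cohomology. Second, pre-composing $\bar{\mathcal{K}}$ with the inverse of the algebraic K\"unneth map $\bar{\mathcal{H}}$ yields an isomorphism $\mathcal{K}_{\#}$ whose domain is the genuine tensor product of cohomology groups. Both parts should follow formally from results already in place in the preceding paragraphs; the plan is to invoke them in turn.

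For the first part, I would just observe that Lemma \ref{lemma:3} has already done the essential work: it asserts that $\mathcal{K}$ is an isomorphism of \emph{graded differential} vector spaces. Being bijective and commuting with the differentials $\operatorname{d}_{\hat{\nabla}}$ and $\operatorname{d}_{\nabla}$, the map $\mathcal{K}$ carries kernels to kernels and images to images in every degree, so the induced map $\bar{\mathcal{K}}$ on cohomology is a bijection. This step requires no new computation; it is the general statement that an isomorphism of cochain complexes is, a fortiori, a quasi-isomorphism.

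For the second part, I would invoke the algebraic K\"unneth lemma stated just before the corollary, applied to the two cochain complexes $(\Omega^{\bullet}(A,E),\operatorname{d}_{\nabla^{A}})$ and $(\Omega^{\bullet}(\mathfrak{g},F),\operatorname{d}_{\nabla^{\mathfrak{g}}})$. Its one hypothesis is the finite dimensionality of $\operatorname{H}^{\bullet}_{\nabla^{\mathfrak{g}}}(\mathfrak{g},F)$, and this needs a brief check. Since $\mathfrak{g}$ is a finite-dimensional Lie algebra and $F$ a finite-dimensional vector space, each space $\Omega^{k}(\mathfrak{g},F)=\bigwedge^{k}\mathfrak{g}^{*}\otimes F$ is itself finite dimensional and vanishes for $k>\dim\mathfrak{g}$. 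Consequently $\operatorname{H}^{k}_{\nabla^{\mathfrak{g}}}(\mathfrak{g},F)$ is finite dimensional in each degree, and the algebraic lemma gives the isomorphism
\[
\bar{\mathcal{H}}\colon \operatorname{H}^{\bullet}_{\nabla^{A}}(A,E)\otimes \operatorname{H}^{\bullet}_{\nabla^{\mathfrak{g}}}(\mathfrak{g},F) \xrightarrow{\ \cong\ } \operatorname{H}^{\bullet}_{\hat{\nabla}}\bigl(\Omega(A,E)\,\hat{\otimes}_{\mathbb{R}}\,\Omega(\mathfrak{g},F)\bigr).
\]
Setting $\mathcal{K}_{\#}:=\bar{\mathcal{K}}\circ\bar{\mathcal{H}}$ then exhibits $\mathcal{K}_{\#}$ as a composition of two isomorphisms of graded vector spaces, and tracing an elementary tensor $[\omega]\otimes[\phi]$ through the composition recovers the stated formula $[\omega]\otimes[\phi]\mapsto[\operatorname{Pr}_{A}^{*}\omega\wedge \operatorname{Pr}_{\mathfrak{g}}^{*}\phi]$ directly from the definitions of $\mathcal{H}$ and $\mathcal{K}$ on representatives.

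I do not anticipate a genuine obstacle here: the corollary is a formal concatenation of Lemma \ref{lemma:3} with the finite-dimensional case of the algebraic K\"unneth lemma, and the only point that deserves a sentence of justification is why the finite-dimensionality hypothesis of the latter is automatic, as explained above. The heavy lifting has been carried out in Lemma \ref{lemma:3}, where the construction of $\mathcal{K}$ and the verification that it is a bijective cochain map were done pointwise in $M$ and then globalised by a partition of unity trivialising both $A$ and $E$; nothing in the proof of the corollary itself revisits those computations.
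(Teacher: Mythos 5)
Your proposal is correct and matches the paper's (implicit) argument: the paper states Corollary \ref{co:K1} as an immediate consequence of Lemma \ref{lemma:3} (an isomorphism of cochain complexes is a quasi-isomorphism) together with the algebraic K\"unneth lemma applied to $\Omega^{\bullet}(A,E)$ and $\Omega^{\bullet}(\mathfrak{g},F)$, whose finite-dimensionality hypothesis holds since $\bigwedge^{k}\mathfrak{g}^{*}\otimes F$ is finite dimensional. Your extra sentence verifying that hypothesis, and your reading of $\mathcal{K}_{\#}$ as $\bar{\mathcal{K}}\circ\bar{\mathcal{H}}$ (the only composition that typechecks with the stated domain and codomain), are exactly what the paper intends.
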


    The necessary background to prove the more general K\"unneth formula of Theorem \ref{th:k1} has been introduced. The proof of Theorem \ref{th:k1} now follows.

    \begin{proof}[Proof of Theorem \ref{th:k1}.] The K\"unneth isomorphism is first proved to be an isomorphism of graded vector spaces.
    	
    	The projections $\operatorname{Pr}_A\colon A \times B \to A$ and $\operatorname{Pr}_B\colon  A \times B \to B$ define the map
    	\[\Psi \colon \Omega^{\bullet}(A, E) \otimes \Omega^{\bullet}(B,F) \to \Omega^{\bullet}(A \times B, \operatorname{pr}^!_M E \otimes \operatorname{pr}^!  _N F)\]
    	\[\omega \otimes \phi \longmapsto \operatorname{Pr}_A^*\omega \wedge \operatorname{Pr}_B^*\phi,\]
    	for $\omega \in \Omega^r(A,E)$, $\phi \in \Omega^s(B,F)$. Recall that
    	\[(\operatorname{Pr}_A^*\omega \wedge \operatorname{Pr}_{B}^*\phi)(a_1, \dots a_r,b_1, \dots, b_s):= \operatorname{pr}_M^*(\omega(a_1, \dots, a_r)) \otimes \operatorname{pr}_{N}^*(\phi(b_1, \dots, b_s)) \]
	for $a_j \in \Gamma(A)$ and $b_k \in \Gamma(B)$.
    	Since $\operatorname{Pr}_A$ is a Lie algebroid morphism, $\Pr_A^*$ intertwines the differentials $\operatorname{d}_A$ and $\operatorname{d}_{A \times B}$, and analogously $\operatorname{Pr}_B^*$ intertwines the differentials $\operatorname{d}_B$ and $\operatorname{d}_{A \times B}$. 

	It is easy to check that the maps 
		\[\operatorname{H}_{\nabla^A}^{\bullet}(A,E) \otimes \operatorname{H}_{\nabla^B}^{\bullet}(B,F) \to \operatorname{H}_{\hat{\nabla}}(\Omega^{\bullet}(A, E) \hat{\otimes}_{\R} \Omega^{\bullet}(B,F)) \to  \operatorname{H}^{\bullet}_{\nabla}(A \times B,  \operatorname{pr}_M^!E \otimes \operatorname{pr}^!_NF) \]
		\[[\omega] \otimes [\phi] \mapsto [\omega \otimes \phi] \mapsto [\operatorname{Pr}_A^*\omega \wedge \operatorname{Pr}_B^*\phi ]\]
		are well defined. The differential that defines the cohomology $\operatorname{H}_{\hat{\nabla}}(\Omega^{\bullet}(A, E) \hat{\otimes}_{\R} \Omega^{\bullet}(B,F))$ is
		\begin{equation*}
			\begin{split} 
				\operatorname{d}_{\hat{\nabla}}(\omega \otimes \phi):= \operatorname{d}_{\nabla^A} \omega\, \otimes\, \phi + (-1)^{\operatorname{deg}(\omega)} \omega \,\otimes\, \operatorname{d}_{\nabla^B} \phi,
			\end{split}
		\end{equation*}
	for $\omega \in \Omega^{\bullet}(A,E)$ and $\phi \in \Omega^{\bullet}(B,F)$.

	It is left to show that the induced map in cohomology
    	\[\Psi\colon \operatorname{H}_{\nabla^A}^{\bullet}(A,E) \otimes \operatorname{H}^{\bullet}_{\nabla^B}(B,F) \longrightarrow \operatorname{H}_{\nabla}^{\bullet}(A \times B, \operatorname{pr}_M^!E \otimes \operatorname{pr}^!_NF)\]
	\[ [\omega]\otimes[\phi]\mapsto \left[\operatorname{Pr}_A^*\omega \wedge \operatorname{Pr}_B^*\phi\right]
	\]
    	is an isomorphism of graded vector spaces.
    	This is proved as follows by induction on the cardinality of a good cover, adapting Bott and Tu's classical argument in \cite{BoTu82}.\\
   
    	If $N$ is contractible, use the generalized Poincaré-Lemma \ref{Th:PL} and the special case of the K\"unneth formula proved in Corollary \ref{co:K1}: For $x \in N$ and the Lie algebra $\mathfrak{g}:=\operatorname{ker}(\rho_A)_x$, 
    	\begin{equation*}
    		\begin{split}
    			\operatorname{H}^{\bullet}(A \times B, \operatorname{pr}_M^!E \otimes \operatorname{pr}^!_NF) \overset{\operatorname{Lemma}\,\ref{Th:PL}}{\cong} \operatorname{H}^{\bullet}(A \times \mathfrak{g}, \operatorname{pr}_M^!E \otimes \operatorname{pr}^!_*(F_x))\\ \overset{\operatorname{Cor.}\,\ref{co:K1}}{\cong} \operatorname{H}^{\bullet}(A,E) \otimes \operatorname{H}^{\bullet}(\mathfrak{g},F_x) \overset{\operatorname{Lemma}\,\ref{Th:PL}}{\cong} \operatorname{H}^{\bullet}(A,E) \otimes \operatorname{H}^{\bullet}(B,F ).
    		\end{split}
    	\end{equation*}

    	In the first step, let $U$ and $V$ be open sets in $N$ and fix an integer $n$. The Mayer-Vietoris sequence is
    {	\small
    		\[ \cdots \rightarrow \operatorname{H}^p(B_{|_{U \cup V}}, F_{|_{U \cup V}}) \rightarrow \operatorname{H}^p(B_{|_U}, F_{|_U}) \,\oplus\, \operatorname{H}^p(B_{|_V}, F_{|_V}) \rightarrow \operatorname{H}^p(B_{|_{U \cap V})}, F_{|_{U \cap V}}) \rightarrow \cdots \]}
    \noindent Tensoring with the $\R$-vector space $\operatorname{H}^{n-p}(A)$ preserves exactness and yields an exact sequence\footnote{For simplicity, diagrams are drawn for the non-twisted LA-cohomology from now on in this proof.}
  {  	\small
    		\[ \cdots \rightarrow \operatorname{H}^{n-p}(A) \otimes \operatorname{H}^p(B_{|_{U \cup V}}) \rightarrow (\operatorname{H}^{n-p}(A) \otimes \operatorname{H}^p(B_{|_U})) \oplus (\operatorname{H}^{n-p}(A) \otimes \operatorname{H}^p(B_{|_V})) \rightarrow \cdots \]}

    	Summing over all integers $p$ preserves exactness
 {   	\small
    		\[ \cdots \rightarrow \bigoplus_{p=0}^n\operatorname{H}^{n-p}(A) \otimes \operatorname{H}^p(B_{|_{U \cup V}}) \rightarrow \bigoplus_{p=0}^n  (\operatorname{H}^{n-p}(A) \otimes \operatorname{H}^p(B_{|_U})) \oplus (\operatorname{H}^{n-p}(A) \otimes \operatorname{H}^p(B_{|_V})) \rightarrow 
    		\cdots \]
		}
    	
    	Apply $\Psi$ to each term of the exact sequence. It is left to prove that the resulting diagram commutes, i.e. that the following squares commute:
    	
    	\begin{equation*}
    		{\small
    			\begin{xy}
    				\xymatrix{\bigoplus_{p=0}^n\operatorname{H}^{n-p}(A) \otimes \operatorname{H}^p(B_{|_{U \cup V}}) \ar[r] \ar[d]^{\Psi}
    					&  \bigoplus_{p=0}^n  (\operatorname{H}^{n-p}(A) \otimes \operatorname{H}^p(B_{|_U})) \oplus (\operatorname{H}^{n-p}(A) \otimes \operatorname{H}^p(B_{|_V})) \ar[d]^{\Psi} \\
    					\operatorname{H}^n(A \times B_{|_{U \cap V}}) \ar[r] & \operatorname{H}^n(A \times B_{|_U}) \oplus \operatorname{H}^n(A \times B_{|_V}), \\
    				} 
    		\end{xy}}
    	\end{equation*}
    	\bigskip
    	
    	\begin{equation*}
   { 		\small
    			\begin{xy}
    				\xymatrix{\bigoplus_{p=0}^n  (\operatorname{H}^{n-p}(A) \otimes \operatorname{H}^p(B_{|_U})) \oplus (\operatorname{H}^{n-p}(A) \otimes \operatorname{H}^p(B_{|_V})) \ar[r] \ar[d]^{\Psi} & \bigoplus_{p=0}^n  \operatorname{H}^{n-p}(A) \otimes \operatorname{H}^p(B_{|_{U \cap V}}) \ar[d]^{\Psi} \\
    					\ar[r] \operatorname{H}^n(A \times B_{|_U}) \oplus \operatorname{H}^n(A \times B_{|_V}) &\operatorname{H}^n(A \times B_{|_{U \cap V}}), \\
    				} 
    		\end{xy}}
    	\end{equation*}
    	and
    	\begin{equation*}
    		\small{
    			\begin{xy}
    				\xymatrix{\bigoplus_{p=0}^n  \operatorname{H}^{n-p}(A) \otimes \operatorname{H}^p(B_{|_{U \cap V}}) \ar[r]^{\pm \operatorname{d}^*_{\nabla}} \ar[d]^{\Psi}
    					& \bigoplus_{p=0}^n\operatorname{H}^{n-p}(A) \otimes \operatorname{H}^{p+1}(B_{|_{U \cup V}}) \ar[d]^{\Psi} \\
    					\operatorname{H}^n(A \times B_{|_{U \cap V}}) \ar[r]^{\operatorname{d}^*_{\nabla}} &\operatorname{H}^{n+1}(A \times B_{|_{U \cup V}}), \\
    				} 
    		\end{xy}}
    	\end{equation*}
    	for each $p,n \in \N$, $n \ge p$. Here, to be precise, the map  ``$\pm \operatorname{d}^*_{\nabla}$''  is given 
	by 
	\[[\omega]\otimes [\eta]\mapsto (-1)^{\deg \omega}[\omega]\otimes (\operatorname{d}^*_{\nabla}[\eta]).\]

	Recall the explicit expression for the coboundary map of the Mayer-Vietoris sequence: if $\{\rho_U, \rho_V\}$ form a partition of unity subordinate to the cover $\{U,V\}$ of $N$ then, by Theorem \ref{MV},
    	\[\operatorname{d}_{\nabla}^*([\phi]):= 
    	\begin{cases} 
    		[-\operatorname{d}_{\nabla}(\mu_V\phi)] & \operatorname{on}\, U\\
    		[\operatorname{d}_{\nabla}(\mu_U\phi)] & \operatorname{on}\, V.\\
    	\end{cases}
    	\]
The commutativity of the first two diagrams then  follows by definition. To check the commutativity of the third, notice that for $[\omega] \otimes [\phi] \in \operatorname{H}^{n-p}(A,E) \otimes \operatorname{H}^p(B_{|_{U \cap V}}, F_{|_{U \cap V}})$
    	\begin{equation*}
	\begin{split}
	\Psi(\operatorname{d}^*_{\nabla}([\omega] \otimes [\phi]))&=(-1)^{\deg(\omega)}\Psi([\omega]\otimes d_\nabla^*[\varphi])=
	(-1)^{\deg(\omega)}\Psi([\omega]\otimes[\operatorname{d}_{\nabla}(\mu_U\phi)])\\
	&=
	(-1)^{\deg(\omega)} [\operatorname{Pr}_A^* \omega \wedge \operatorname{Pr}_B^*(\operatorname{d}_{\nabla}(\mu_U\phi))]. \end{split}\end{equation*}
    	    	Note that $\{\operatorname{pr}_N^*\mu_U, \operatorname{pr}_N^*\mu_V \}$ is a partition of unity on $M \times N$ subordinate to the cover $\{M \times U, M \times V\}$. On the intersection $M \times (U \cap V)$ of these two sets, $\operatorname{d}_\nabla^*$ is defined by 
    	\begin{equation*}
    		\begin{split}
    			\operatorname{d}^*_{\nabla}\Psi([\omega] \otimes [\phi])
			&=\operatorname{d}^*_{\nabla}\left[\operatorname{Pr}_A^*\omega \wedge \operatorname{Pr}^*_B \phi\right]=\left[\operatorname{d}_{\nabla} ((\operatorname{pr}_N^*\mu_U)\operatorname{Pr}_A^*\omega \wedge \operatorname{Pr}^*_B \phi)\right]\\
    			&=\left[\operatorname{d}_{\nabla}(\operatorname{Pr}_A^*\omega \wedge \operatorname{Pr}^*_B (\mu_U\phi))\right]
			=(-1)^{\deg(\omega)}\left[\operatorname{Pr}_A^*\omega \wedge \operatorname{d}_{\nabla}(\operatorname{Pr}^*_B (\mu_U\phi))\right]\\
    		\end{split}
    	\end{equation*}
where the last passage holds because $\omega$ is closed. As in \cite{BoTu82}, observe that if the theorem is true in $U$, $V$ and $U \cap V$, then it holds also in $U \cup V$. Conclude by a standard inductive argument on the cardinality of a good cover (see e.g.\cite{BoTu82}, Lemma $5.6$).\\
    
    It remains to prove that the obtained isomorphism of graded vector spaces actually is an isomorphism of modules over a ring isomorphism. With a little abuse of notation, use $\Psi$ to denote the K\"unneth isomorphism in both the twisted and non-twisted case. By the previous part of the proof,
    	\[\Psi \colon \operatorname{H}^{\bullet}(A) \otimes \operatorname{H}^{\bullet}(B) \to \operatorname{H}^{\bullet}(A \times B)\]
    	\[\omega\otimes \phi \longmapsto \operatorname{Pr}_A^*\omega \wedge \operatorname{Pr}_B^*\phi\]
    	is an isomorphism of graded vector spaces. A straightforward computation implies that it is also an isomorphism of rings.
    	Take $\omega_j \in \operatorname{H}^{k_j}(A)$ and $\phi_j \in \operatorname{H}^{l_j}(B)$, $j=1,2$. Then
    	\begin{equation*}
    		\begin{split}
    			\Psi((\omega_1 \otimes \phi_1) (\omega_2 \otimes \phi_2))&= \Psi((-1)^{\l_1 k_2}(\omega_1 \wedge \omega_2) \otimes (\eta_1 \wedge \eta_2)) \\
			&= (-1)^{\l_1 k_2} \operatorname{Pr}_A^*(\omega_1 \wedge \omega_2) \wedge \operatorname{Pr}_B^*(\phi_1 \wedge \phi_2)\\
			&= (\operatorname{Pr}_A^*\omega_1 \wedge \operatorname{Pr}_B^*\phi_1) \wedge (\operatorname{Pr}_A^*\omega_2 \wedge \operatorname{Pr}_B^*\phi_2)
		\\&	=
    			\Psi(\omega_1 \otimes \phi_1) \wedge
    			\Psi(\omega_2 \otimes \phi_2) 
    		\end{split}
    	\end{equation*}
    	The exact same computation, with entries 
    	$\omega_1\otimes \phi_1 \in \operatorname{H}^{\bullet}(A) \otimes \operatorname{H}^{\bullet}(B)$ and $\omega_2\otimes \phi_2 \in \operatorname{H}_{\nabla^A}^{\bullet}(A, E) \otimes \operatorname{H}_{\nabla^B}^{\bullet}(B,F)$ shows that the K\"unneth isomorphism $\Psi$ is a module isomorphism. 
    \end{proof}

 \section{Invariance of representations under homotopies} \label{sec:6}
 The goal of this section is to prove Theorem \ref{th:hi}. The proof uses flows of linear vector fields on vector bundles and in particular also of linear vector fields defined by Lie algebroid derivations on Lie algebroids. 

\subsection{Linear vector fields on vector bundles}
Let $q_F\colon F\to M$ be a vector bundle.  Then the tangent bundle
$TF$ has two vector bundle structures; one as the tangent bundle of
the manifold $F$, and the second as a vector bundle over $TM$. The
structure maps of $TF\to TM$ are the derivatives of the structure maps
of $F\to M$.
\begin{equation*}
\begin{xy}
\xymatrix{
TF \ar[d]_{Tq_F}\ar[r]^{p_F}& F\ar[d]^{q_F}\\
 TM\ar[r]_{p_M}& M}
\end{xy}
\end{equation*} 
The space $TF$ is a \textbf{double vector bundle}. That is, the diagram above commutes and all structure maps of the horizontal vector bundles are morphisms of vector bundles with respect to the vertical vector bundles, see \cite{Mackenzie05} for more details and references.
A section $e \in \Gamma(F)$ defines \textbf{the vertical vector field} 
$e^{\uparrow}\colon F \to TF$, i.e.~the vector field with
flow $\phi^{e^\uparrow}\colon F\times \R\to F$, $\phi_t(e'_m)=e'_m+te(m)$. 
A vector field $\chi\in \mx(F)$ is called linear, if it is a vector bundle morphism $F\to TF$ over a vector field $X\colon M\to TM$ on $M$.
The space of linear vector fields on $F$ is written $\mx^\ell(F)$. It is well-known (see e.g.~\cite{Mackenzie05}) that a
linear vector field $\chi\in\mx^l(F)$ covering $X\in\mx(M)$ corresponds
to a derivation $D\colon \Gamma(F) \to \Gamma(F)$ over $X\in
\mx(M)$. The precise correspondence is
given by the following equations
\begin{equation}\label{ableitungen}
\chi(\ell_{\varepsilon}) 
= \ell_{D^*(\varepsilon)} \,\,\,\, \operatorname{ and }  \,\,\, \chi(q_F^*f)= q_F^*(X(f))
\end{equation}
for all $\varepsilon\in\Gamma(F^*)$ and $f\in C^\infty(M)$, where
$D^*\colon\Gamma(F^*)\to\Gamma(F^*)$ is the dual derivation to $D$
\cite[\S3.4]{Mackenzie05}.  The linear vector field in $\mx^l(F)$
corresponding in this manner to a derivation $D$ of $\Gamma(F)$ is
written $\widehat D$.  Given a derivation $D$ over $X\in\mx(M)$, the
explicit formula for $\widehat D$ is
\begin{equation}\label{explicit_hat_D}
\widehat D(e_m)=T_me(X(m))+_F\left.\frac{d}{dt}\right\an{t=0}(e_m-tD(e)(m))
\end{equation}
for $e_m\in F$ and any $e\in\Gamma(F)$ such that $e(m)=e_m$.  
Let $D$, $D_1$ and $D_2$ be linear derivations of $F$ with symbols $X$, $X_1$ and $X_2\in\mx(M)$, respectively, and let $e, e_1,e_2\in\Gamma(F)$.
It is easy to see, using the equalities
in \eqref{ableitungen}, that
\begin{equation}\label{Lie_bracket_over_VB}
\begin{split}
\left[\widehat{D_1}, \widehat{D_2}\right]&=\widehat{[D_1,D_2]},\qquad 
\left[D, e^\uparrow\right]=(\nabla_Xe)^\uparrow,\qquad 
\left[e_1^\uparrow,e_2^\uparrow\right]=0.
\end{split}
\end{equation}
Since linear and vertical vector fields on $F$ generate the whole space of vector fields on $F$ as a $C^\infty(F)$-module, these equations contain the ``whole geometry of $F$''.

\medskip

The following lemma can be found in \cite{Mackenzie05}. It is proved in detail in  \cite{Jotz24}.
\begin{lemma}\label{lemma_linear_flow}
Let $F\to M$ be a smooth vector bundle and let $D\colon \Gamma(F)\to \Gamma(F)$ be a derivation with symbol $X\in\mx(M)$. Let $\xi\colon \Omega\to M$ be the flow of $X$, with, as usual, $\Omega\subseteq \R\times M$ the open flow domain containing $\{0\}\times M$. Then the flow $\Xi$ of $\widehat{D}\in\mx^l(F)$ is defined on 
\[ \widetilde\Omega:=\left\{ (t, e)\in \R\times F \mid (t,q(e))\in \Omega
\right\}
\]
and for each $t\in\R$, the map
\[ \Xi_t\colon q\inv(M_t)\to q\inv(M_{-t})
\]
is an isomorphism of vector bundles over the diffeomorphism $\xi_t\colon M_t\to M_{-t}$, where 
$M_t\subseteq M$ is the open subset
\[ M_t:=\{x\in M\mid (t,x)\in\Omega\}.
\]
\end{lemma}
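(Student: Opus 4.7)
The plan is to exploit two key facts about $\widehat{D}$: it covers $X$ as a vector bundle morphism $F\to TF$ over $X\colon M\to TM$, and in any local trivialisation of $F$ its fibre component is $\R$-linear in the fibre coordinate. The first fact gives $Tq_F\circ \widehat{D} = X\circ q_F$, so $\widehat{D}$ is $q_F$-related to $X$ and its flow is $q_F$-related to $\xi$: whenever $\Xi_t(e_m)$ is defined, $q_F(\Xi_t(e_m))=\xi_t(m)$. In particular the maximal flow domain of $\widehat{D}$ is contained in $\widetilde\Omega$.

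For the reverse inclusion I would work locally. Choose an open $U\subseteq M$ over which $F$ is trivial, with local frame $(s_1,\ldots,s_r)$, and write $Ds_j = \sum_i A^i_j\, s_i$ for some $A\in C^\infty(U,\operatorname{End}(\R^r))$. Using the explicit formula \eqref{explicit_hat_D}, one checks that in the corresponding coordinates $(x,v)$ on $F|_U\simeq U\times \R^r$, the vector field $\widehat{D}$ has base component $X(x)$ and a fibre component that is $\R$-linear in $v$ with coefficients depending smoothly on $x$. The integral curve equation therefore decouples into $\dot x = X(x)$ for the base and, along a fixed base curve $\gamma(s)=\xi_s(m)$, a nonautonomous linear ODE $\dot v = B(\gamma(s))v$ in the fibre. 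Classical theory of linear ODEs guarantees existence of the fibre solution on the entire interval on which $\gamma$ is defined, so the maximal flow domain of $\widehat{D}$ coincides with $\widetilde\Omega$.

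Fibre-linearity of $\Xi_t$ then follows at once from the same local analysis, since the time-$t$ flow of a linear ODE is a linear isomorphism in the initial condition. Alternatively, one can give a coordinate-free argument using the characterisation of linear vector fields via homogeneity: because $\widehat{D}\colon F\to TF$ is a vector bundle morphism over $X$, scalar multiplication $m_\lambda\colon F\to F$ and fibrewise addition $+_M\colon F\times_M F\to F$ are mapped by the tangent functor to maps that intertwine $\widehat{D}$ with itself (respectively with its Whitney-sum copy), which forces $\Xi_t$ to be fibrewise linear by uniqueness of flows. The inverse of $\Xi_t\colon q_F\inv(M_t)\to q_F\inv(M_{-t})$ is $\Xi_{-t}$ restricted to $q_F\inv(M_{-t})$, so $\Xi_t$ is a vector bundle isomorphism over $\xi_t\colon M_t\to M_{-t}$. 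The only delicate point is the global extendability of the flow: this is where the linearity of the fibre equations is essential, as it rules out finite-time blow-up of the fibre coordinates whenever the base flow persists.
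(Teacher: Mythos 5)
Your proof is correct. The paper does not actually prove Lemma \ref{lemma_linear_flow} but defers to \cite{Mackenzie05} and \cite{Jotz24}, and your argument --- $q_F$-relatedness of $\widehat D$ and $X$ to get the flow domain of $\widehat D$ contained in $\widetilde\Omega$, reduction in a local trivialisation to a nonautonomous \emph{linear} ODE along a base integral curve to rule out finite-time blow-up and get the reverse inclusion, and linearity of the solution operator in the initial condition (or the homogeneity/intertwining argument) for fibrewise linearity of $\Xi_t$, with $\Xi_{-t}$ as inverse --- is precisely the standard proof given in those references. The one step worth spelling out is that the base curve $\gamma$ need not stay in a single trivialising chart: either pull $F$ back along $\gamma$ to a trivial bundle over an interval before invoking the linear ODE theory, or run the usual open--closed argument on the set of times up to which the lift exists; both are routine and do not affect the validity of your argument.
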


\subsection{Invariance of a Lie algebroid under the flow of its adjoint vector fields}\label{invariance_ad_a}
Let $A\to M$ be a Lie algebroid and choose a section $a\in\Gamma(A)$. Then the bracket with $a$
\[ [a,\cdot]\colon \Gamma(A)\to \Gamma(A)
\]
is a derivation of the vector bundle $A$, with symbol $\rho(a)\in\mx(M)$. It is more precisely a derivation of the Lie algebroid $A$. That is, it also satisfies
\[ [a,[b,c]]=[[a,b],c]+[b,[a,c]]
\] 
for all $b,c\in\Gamma(A)$. 
More generally, this section considers a linear derivation 
\[ D\colon \Gamma(A)\to \Gamma(A)
\]
of the Lie algebroid $A$, that is, a linear derivation of the vector bundle $A\to M$ with symbol $X\in\mx(M)$, such that in addition 
\[ D[b,c]=[Db,c]+[b,Dc]
\]
for all $b,c\in\Gamma(A)$.

This section refines Lemma \ref{lemma_linear_flow} to the following proposition in the case of a linear derivation of a Lie algebroid $A\to M$.
\begin{proposition}\label{invariance_a}
Let $q\colon A\to M$ be a Lie algebroid and let $D$ be a linear derivation of \emph{the Lie algebroid} $A$ with symbol $X\in\mx(M)$. Let $\xi\colon \Omega\to M$ be the flow of $X$ and let 
\[ \Xi\colon \tilde\Omega\to A
\]
be the flow of $\widehat{D}\in\mx^l(A)$. Then for each $t\in\mathbb R$ the map
\[ \Xi_t\colon q\inv(M_t)\to q\inv(M_{-t})
\]
is an isomorphism of Lie algebroids over the diffeomorphism $\xi_t\colon M_t\to M_{-t}$.
\end{proposition}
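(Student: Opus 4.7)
The plan is to proceed in three steps: first to extract an infinitesimal compatibility of $D$ with the anchor, second to upgrade this to anchor compatibility of the flow $\Xi_t$, and third to derive bracket compatibility via a short ODE. The hypothesis that $D$ is a derivation of the Lie bracket already forces a compatibility with the anchor: applying $D$ to both sides of the Leibniz rule $[a, fb] = f[a,b] + \rho_A(a)(f)\, b$, expanding $[Da, fb] + [a, D(fb)]$ similarly, and comparing via the identity $D[a,fb] = [Da,fb] + [a, D(fb)]$, one obtains after cancellation
\begin{equation}\label{plan_anchor_deriv}
\rho_A(Da) = [X, \rho_A(a)] \quad \text{for all } a \in \Gamma(A).
\end{equation}

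Second, to prove $\rho_A \circ \Xi_t = T\xi_t \circ \rho_A$, I would show that the linear vector fields $\widehat D \in \mx^l(A)$ and $X^c \in \mx^l(TM)$ are $\rho_A$-related, where $X^c$ denotes the tangent lift of $X$, whose flow is exactly $T\xi_t$. Using the characterisation \eqref{ableitungen} of linear vector fields via their action on pullback functions $p_M^* f$ and fibrewise-linear functions $\ell_\beta$, the $\rho_A$-relatedness reduces to verifying $D^*(\rho_A^*\beta) = \rho_A^*(\mathcal{L}_X \beta)$ in $\Gamma(A^*)$ for each $\beta \in \Omega^1(M)$; pairing both sides with an arbitrary section of $A$ returns precisely \eqref{plan_anchor_deriv}. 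Since the generators are $\rho_A$-related, so are their flows.

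Third, for bracket compatibility I use the observation, readily checked from the explicit formula \eqref{explicit_hat_D}, that for every local section $c \in \Gamma(A)$ the pullback $\Xi_t^* c := \Xi_{-t} \circ c \circ \xi_t$ satisfies $\tfrac{d}{dt}\Xi_t^* c = \Xi_t^* (Dc) = D \, \Xi_t^* c$. Given local sections $b_1, b_2$, differentiating the section-valued curve $t \mapsto \Xi_{-t}^*[\Xi_t^* b_1,\Xi_t^* b_2]$ by the product rule and the two identities above yields
\[
\Xi_{-t}^*\bigl(-D[\Xi_t^* b_1,\Xi_t^* b_2] + [D\Xi_t^* b_1,\Xi_t^* b_2] + [\Xi_t^* b_1, D\Xi_t^* b_2]\bigr),
\]
which vanishes because $D$ is a derivation of the Lie bracket. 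The curve is therefore constant, equal at $t = 0$ to $[b_1, b_2]$, which is exactly the required bracket preservation $\Xi_t^*[b_1, b_2] = [\Xi_t^* b_1, \Xi_t^* b_2]$. Combined with the vector bundle isomorphism statement of Lemma \ref{lemma_linear_flow} and the anchor compatibility from step two, this shows that $\Xi_t$ is a Lie algebroid isomorphism. The main technical obstacle is the book-keeping with the varying open sets $M_t$ and $M_{-t}$ on which $\Xi_t$ is defined, and the shrinking of the relevant open neighborhoods when differentiating the curves in $t$; this requires some care but is not a conceptual difficulty.
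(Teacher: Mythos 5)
Your overall strategy coincides with the paper's: the anchor compatibility is reduced to the infinitesimal identity $\rho_A(Da)=[X,\rho_A(a)]$ (derived exactly as you do, from applying $D$ to the Leibniz rule), and the bracket compatibility is obtained by showing that the curve $t\mapsto \Xi_{-t}^\star\left[\Xi_t^\star b_1,\Xi_t^\star b_2\right]$ is constant. Your packaging of the second step --- $\rho_A$-relatedness of $\widehat D$ and the tangent lift $X^c$, hence relatedness of their flows --- is a clean alternative to the paper's formulation, which instead views $\rho$ as a section of $A^*\otimes TM$ annihilated by the induced derivation and invokes the flow-invariance criterion of Lemma \ref{lemma_flat_sections}; both rest on the same identity.

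There is, however, one step you invoke as routine that is precisely the technical heart of the paper's proof and that you leave unjustified: the product rule
\[
\frac{d}{dt}\left[\Xi_t^\star b_1,\Xi_t^\star b_2\right]=\left[\frac{d}{dt}\Xi_t^\star b_1,\Xi_t^\star b_2\right]+\left[\Xi_t^\star b_1,\frac{d}{dt}\Xi_t^\star b_2\right].
\]
The Lie bracket of sections is not a pointwise (fibrewise) bilinear operation but a bidifferential operator, so differentiating under the bracket is not automatic; the paper flags this explicitly and spends the bulk of the proof verifying it, first for a local frame via the structure functions $c_{ij}^k$, and then for general sections $f a_j$ using the Leibniz rule together with the already-established anchor compatibility $\rho(\Xi_t^\star a)=\xi_t^*(\rho(a))$ (which is thus used twice, not only at the end). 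Your proposal misidentifies the main difficulty as the book-keeping of the domains $M_t$; that part is indeed harmless, but the interchange of $\frac{d}{dt}$ with $[\cdot,\cdot]$ is a genuine claim that needs an argument. The statement is true and your proof goes through once you supply it, but as written this is a gap rather than a triviality.
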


The proof of this proposition relies on the following lemma.
\begin{lemma}\label{lemma_flat_sections}
Let $F\to M$ be a vector bundle and let $D\colon \Gamma(F)\to\Gamma(F)$ be a linear derivation of the vector bundle $F$ with symbol $X\in\mx(M)$.
Let $\Xi$ be the flow of $\widehat D$ and let $\xi$ be the flow of $X$. For $t\in \R$ and $e\in\Gamma(F)$ define 
\[ \Xi_t^\star e \in \Gamma_{M_t}(F)
\]
to be the section 
\[ (\Xi_t^\star e)_x= \Xi_{-t}(e(\xi_t(x)))
\]
for all $x\in M_t$. Then\footnote{This derivative is taken pointwise, i.e.~in fibers of $F$.}
\begin{equation*}
De=\left.\frac{d}{dt}\right\an{t=0} \Xi_t^\star e.
\end{equation*}
In particular $\Xi_t^\star e = e$ on $M_t$ for all $t$ if and only if $De=0$. 
\end{lemma}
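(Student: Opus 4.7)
The plan is to compute the derivative at $t=0$ directly, using the chain rule together with the explicit formula \eqref{explicit_hat_D} for the linear vector field $\widehat D$.

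First I would observe that since $\Xi_t$ is a vector bundle morphism covering $\xi_t$ (by Lemma \ref{lemma_linear_flow}), the curve
\[ \gamma_x \colon t \mapsto (\Xi_t^\star e)_x = \Xi_{-t}(e(\xi_t(x)))\]
lives entirely in the single fibre $F_x$. So its derivative at $t=0$ sits in $T_{e(x)}F_x$, which we canonically identify with $F_x$ via the vertical lift. To compute $\dot\gamma_x(0)$, I would view $\gamma_x$ as the composition of $t\mapsto (-t,\xi_t(x))$ with the map $(s,y)\mapsto \Xi_s(e(y))$; the chain rule then gives
\[ \dot\gamma_x(0) \;=\; -\widehat D(e(x)) \;+\; T_x e \,(X(x)) \quad\in\quad T_{e(x)}F.\]
At this point I would apply \eqref{explicit_hat_D} to rewrite $\widehat D(e(x))$ as $T_x e(X(x))$ plus the vertical lift of $-De(x)$. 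Subtracting, the two ``horizontal'' pieces cancel and what remains is exactly the vertical lift of $De(x)$, which under the identification $T_{e(x)}F_x\cong F_x$ is $De(x)$. This gives the main identity $\frac{d}{dt}\big|_{t=0}\Xi_t^\star e = De$.

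For the ``in particular'' statement, one direction is immediate from what has just been proved. For the converse, assuming $De=0$, I would upgrade the derivative computation at $t=0$ to one at arbitrary $t$ by using the one-parameter group law of the flow. Specifically, for small $h$,
\[ (\Xi_{t+h}^\star e)_x \;=\; \Xi_{-t}\bigl(\Xi_{-h}(e(\xi_h(\xi_t(x))))\bigr) \;=\; \Xi_{-t}\bigl((\Xi_h^\star e)_{\xi_t(x)}\bigr),\]
and differentiating at $h=0$ (using that $\Xi_{-t}$ is linear on fibres) yields
\[ \frac{d}{dh}\bigg|_{h=0}\Xi_{t+h}^\star e \;=\; \Xi_{-t}\circ (De)\circ \xi_t.\]
Under the hypothesis $De=0$ this vanishes for every $t\in M_s$, so $t\mapsto \Xi_t^\star e$ is constant on the connected component of $0$ in the flow domain, and since $\Xi_0^\star e = e$ we conclude $\Xi_t^\star e = e$ wherever defined. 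Conversely, if $\Xi_t^\star e\equiv e$ then the $t=0$ derivative is zero, so $De=0$.

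The only real obstacle is bookkeeping: making sure the tangent vector produced by the chain rule is correctly decomposed into its horizontal piece $T_xe(X(x))$ and a vertical piece, and that the sign of the vertical lift matches what \eqref{explicit_hat_D} produces. Everything else is a routine application of the flow equation for $\widehat D$ and the one-parameter group property.
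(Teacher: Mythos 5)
Your proposal is correct, and it takes a genuinely different (though equally routine) route from the paper. The paper never touches the explicit pointwise formula \eqref{explicit_hat_D}: instead it first shows, using fibrewise linearity of $\Xi_t$, that $\Xi_t^*(e^\uparrow)=(\Xi_t^\star e)^\uparrow$ as vector fields on $q^{-1}(M_t)$, and then invokes the bracket identity $[\widehat D,e^\uparrow]=(De)^\uparrow$ from \eqref{Lie_bracket_over_VB} together with the standard Lie-derivative formula $[\widehat D,e^\uparrow]=\left.\frac{d}{dt}\right\an{t=0}\Xi_t^*(e^\uparrow)$; the converse in the ``in particular'' then falls out in one line by applying $\Xi_t^*$ to the identically vanishing field $(De)^\uparrow$, which directly gives $\frac{d}{dt}(\Xi_t^\star e)^\uparrow=0$ for \emph{all} $t$, not just $t=0$. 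Your argument instead differentiates the fibre curve $\gamma_x(t)=\Xi_{-t}(e(\xi_t(x)))$ by the chain rule and cancels the horizontal part $T_xe(X(x))$ against the corresponding term in \eqref{explicit_hat_D}; the sign bookkeeping you flag does work out, since \eqref{explicit_hat_D} contributes the vertical lift of $-De(x)$ and your chain rule carries an overall minus sign on $\widehat D(e(x))$. The price of avoiding the vector-field-pullback formalism is that you must supply the upgrade from $t=0$ to arbitrary $t$ by hand, which you do correctly via the one-parameter group law (your formula $\frac{d}{dh}\big|_{h=0}\Xi_{t+h}^\star e=\Xi_{-t}\circ(De)\circ\xi_t$ is exactly the missing piece, modulo the harmless typo ``$t\in M_s$'' where you mean $(t,x)\in\Omega$). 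Both proofs are complete; yours is more elementary and self-contained, the paper's reuses machinery that is needed again in the subsequent lemmas.
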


\begin{proof}
For  $t\in\mathbb R$, $x\in M_t$ and $e'\in F_x$ compute using the linearity of $\Xi_t$
\begin{equation*}
\begin{split}
(\Xi_t^*(e^\uparrow))\an{e'}
&=  T_{\Xi_t(e')}\Xi_{-t}\left( e^\uparrow (\Xi_t(e'))\right)\\
&=   \left.\frac{d}{ds}\right\an{s=0}\Xi_{-t}\left( \Xi_t(e')+se(\xi_t(x))\right)\\
&=  \left.\frac{d}{ds}\right\an{s=0}e'+s \Xi_{-t}(e(\xi_t(x)))=(\Xi^\star_te)^\uparrow\an{e'},
\end{split}
\end{equation*}
which shows 
\[ \Xi_t^*\left(e^\uparrow\right)=\left(\Xi_t^\star(e)\right)^\uparrow
\]
on $q\inv(M_t)$.
Hence 
\begin{equation*}
\begin{split}
(De)^\uparrow \an{e'} &=\left.\left[ \widehat D, e^\uparrow\right]\right\an{e'}=\left.\frac{d}{dt}\right\an{t=0}(\Xi_t^*(e^\uparrow))\an{e'}=\left.\frac{d}{dt}\right\an{t=0}(\Xi_t^\star(e))^\uparrow\an{e'}\\
\end{split}\end{equation*}
which equals
\begin{equation*}\begin{split}
&\left.\left(\left.\frac{d}{dt}\right\an{t=0}\Xi_t^\star e\right)^\uparrow\right\an{e'}.\end{split}
\end{equation*}

This implies as well the second part of the statement: If $\Xi_t^\star e = e$ on $M_t$ for all $t$ then clearly $De=0$. Conversely, if $De=0$ then 
\begin{equation*}
\begin{split}
0=\Xi_t^*(De)^\uparrow &=\left(\Xi_t^*\left[ \widehat D, e^\uparrow\right]\right)=\frac{d}{dt}(\Xi_t^*(e^\uparrow))=\frac{d}{dt}(\Xi_t^\star e)^\uparrow
=\left(\frac{d}{dt}\Xi_t^\star e\right)^\uparrow
\end{split}
\end{equation*}
shows that $\frac{d}{dt}\Xi_t^\star e=0$, and so that $\Xi^\star_te=\Xi^\star_0e=e$ for all $t\in\mathbb R$. \end{proof}

\begin{lemma}\label{invariance_anchor_LA_derivations}
Let $A\to M$ be a Lie algebroid and let $D$ be a derivation of $A$, as a Lie algebroid. Let $X$ be the symbol of $D$, and, as before, let $\xi\colon \Omega\to M$ 
be the flow of $X$, and let $\Xi\colon \tilde \Omega\to F$ be the flow of $\widehat{D}$.
Choose a smooth section $a\in \Gamma(A)$ and a time $t\in\mathbb R$. Then on $M_t$ the equality 
\[ \rho(\Xi^\star_t a)=\xi_t^*(\rho(a)).
\]
holds.
\end{lemma}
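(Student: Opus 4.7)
The plan is to deduce the lemma from the stronger pointwise identity
\[ \rho\circ\Xi_t = T\xi_t\circ\rho \quad \text{on } q^{-1}(M_t), \]
from which the statement follows by evaluating at the point $a(\xi_t(x))$ for $x\in M_t$, unpacking the definitions $(\Xi_t^\star a)_x = \Xi_{-t}(a(\xi_t(x)))$ from Lemma \ref{lemma_flat_sections} and $(\xi_t^*\rho(a))_x = T_{\xi_t(x)}\xi_{-t}(\rho(a)(\xi_t(x)))$, and applying the pointwise identity with $-t$ in place of $t$ to $\alpha := a(\xi_t(x))\in A_{\xi_t(x)}$.

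To prove the pointwise identity I would show that $\widehat{D}\in\mx^\ell(A)$ and the complete lift $X^T\in\mx(TM)$ (the linear vector field whose flow is $T\xi_t$) are $\rho$-related as vector fields on the two total spaces, i.e.
\[ T\rho\circ\widehat{D} = X^T\circ\rho. \]
By the standard theory of flows of related vector fields, this forces $\rho\circ\Xi_t = T\xi_t\circ\rho$ wherever both sides are defined. The $\rho$-relatedness is checked on generators of $C^\infty(TM)$, namely on pullback functions $p_M^*f$ ($f\in C^\infty(M)$) and linear functions $\ell_\omega$ ($\omega\in\Omega^1(M)$). For pullback functions, \eqref{ableitungen} gives
\[ \widehat{D}(\rho^*p_M^*f)=\widehat{D}(q^*f)=q^*(Xf)=\rho^*p_M^*(Xf)=\rho^*\bigl(X^T(p_M^*f)\bigr). \]
For linear functions, note that $\rho^*\ell_\omega=\ell_{\rho^*\omega}$ where $\rho^*\omega\in\Gamma(A^*)$ is the section defined by $(\rho^*\omega)(a):=\omega(\rho a)$; by \eqref{ableitungen} one has $\widehat{D}(\ell_{\rho^*\omega})=\ell_{D^*\rho^*\omega}$, while a direct computation from the flow of $X^T$ yields $X^T(\ell_\omega)=\ell_{L_X\omega}$. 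The required equality $D^*\rho^*\omega=\rho^*L_X\omega$, evaluated on a section $a\in\Gamma(A)$ and expanded using the definitions of $D^*$ and $L_X$, reduces to
\[ \omega\bigl([X,\rho a]-\rho(Da)\bigr)=0. \]

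This last identity holds for \emph{every} Lie algebroid derivation, by the following short calculation: writing out both $D[a,fb]$ and $[Da,fb]+[a,D(fb)]$ for arbitrary $a,b\in\Gamma(A)$ and $f\in C^\infty(M)$, using the anchor rule $[a,fb]=f[a,b]+\rho(a)(f)b$ and the Leibniz property $D(fb)=fD(b)+X(f)b$, and then applying the bracket compatibility $D[a,b]=[Da,b]+[a,Db]$, all but the terms proportional to $b$ cancel, leaving exactly $\rho(Da)(f)=[X,\rho a](f)$ for all $f$. Hence $\rho(Da)=[X,\rho a]$, which closes the argument. The main obstacle is purely notational: one has to be careful about the two vector bundle structures on $TA$ when writing $T\rho\circ\widehat{D}$, and about the conventions for $\Xi_t^\star$ versus $\xi_t^*$; once $\widehat{D}$ and $X^T$ are recognised as $\rho$-related vector fields, the conclusion is immediate.
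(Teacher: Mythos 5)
Your proposal is correct. The computational heart is identical to the paper's: both proofs reduce everything to the identity $\rho(Da)=[X,\rho(a)]$, obtained by expanding $D[a,fb]=[Da,fb]+[a,D(fb)]$ with the Leibniz rules and cancelling. Where you differ is in how this infinitesimal identity is promoted to a statement about flows. The paper regards $\rho$ as a section of $A^*\otimes TM$, checks that the induced tensor-product derivation $\tilde D$ (built from $D^*$ and $[X,\cdot]$) annihilates it, invokes Lemma \ref{lemma_flat_sections} to conclude that $\rho$ is invariant under the flow of $\widehat{\tilde D}$, and then identifies that flow as $\Xi^{D^*}_t\otimes T\xi_t$. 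You instead show that $\widehat D\in\mx^\ell(A)$ and the complete lift $X^T=\widehat{[X,\cdot]}\in\mx^\ell(TM)$ are $\rho$-related, by testing on the fibrewise-linear and pullback functions via \eqref{ableitungen} (and the observation $\rho^*\ell_\omega=\ell_{\rho^*\omega}$, with $D^*\rho^*\omega=\rho^*L_X\omega$ being exactly the identity above), and then quote the standard fact that flows of related vector fields intertwine, giving $\rho\circ\Xi_t=T\xi_t\circ\rho$ directly. The two conversions encode the same information — a vector bundle map intertwines two linear vector fields iff the associated section of the Hom-bundle is flat for the induced derivation — but your version buys a slightly more self-contained argument (no need to discuss derivations and flows on $A^*\otimes TM$) and yields the clean pointwise form $\rho\circ\Xi_t=T\xi_t\circ\rho$, from which the paper's formula $\rho(\Xi_t^\star a)=\xi_t^*(\rho(a))$ follows by unwinding the star-pullback conventions, as you note. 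The only points requiring care, which you flag and handle correctly, are that the differentials of $\{p_M^*f,\ell_\omega\}$ span each cotangent space of $TM$ (so relatedness may be tested on them) and that the flow domains match because both $\widehat D$ and $X^T$ project onto $X$.
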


\begin{proof}
The anchor map $\rho\colon A\to TM$, which is a section of $A^*\otimes TM$, is send to $0$ by the derivation $\tilde D$ of $A^*\otimes TM$ defined by 
\[ \tilde D(\phi)(a)=\phi(Da)-[X,\phi(a)]
\]
for all $\phi\in\Gamma(A^*\otimes TM)$ and all $a\in\Gamma(A)$. Note that $\tilde D$ is the linear derivation over $X$  of $A^*\otimes TM$ defined by the two linear derivations $D^*$ on $A^*$ and $[X,\cdot]$ on $TM$, both with symbol $X$. In order to see that $\tilde D(\rho)=0$, take two arbitrary sections $a,b\in\Gamma(A)$ and a smooth function $f\in C^\infty(M)$.
Then \begin{equation*}
\begin{split}
\rho(Da)(f)\cdot b&=[Da,f\cdot b]-f\cdot [Da, b]\\
&= D[a, f\cdot b]-[a, D(f\cdot b)]-f\cdot D[a,b]+f\cdot [a,Db]\\
&=D\left(f\cdot [a,b]+\rho(a)(f)\cdot b\right)-\left[a, X(f)\cdot b+f\cdot Db\right]\\
&\qquad-f\cdot D[a,b]+f\cdot [a,Db]\\
&=\bcancel{X(f)\cdot [a,b]}+\cancel{f\cdot D[a,b]}+X(\rho(a)(f))\cdot b+\xcancel{\rho(a)(f)\cdot Db}\\
&\qquad  -\rho(a)(X(f))\cdot b-\bcancel{X(f)\cdot [a,b]}-\xcancel{\rho(a)(f)\cdot Db}-\cancel{f\cdot[a,Db]}\\
&\qquad -\cancel{f\cdot D[a,b]}+\cancel{f\cdot [a,Db]}\\
&=[X,\rho(a)](f)\cdot b.
\end{split}
\end{equation*}
Since $b$ and $f$ were arbitrary, this shows $\rho(Da)=[X,\rho(a)]$, and since $a$ was arbitrary, this is $\tilde D(\rho)=0$.
By the last part of Lemma \ref{lemma_flat_sections}, this implies that $\rho$ is invariant under the flow of $\widehat{\tilde D}\in\mx^l(A^*\otimes TM)$.

Using \cite{Jotz24}, this flow is defined at time $t$ by 
\[ \Xi^{\tilde D}_t=\Xi^{D^*}_t\otimes \Xi^{[X,\cdot]}_t\colon (A^*\otimes TM)\an{M_t}\to (A^*\otimes TM)\an{M_{-t}},
\]
where $\Xi^{D^*}$ is the flow of $\widehat{D^*}\in\mx^l(A^*)$ and $\Xi^{[X,\cdot]}$ is the flow of $\widehat{[X,\cdot]}\in\mx^l(TM)$.
By \cite{Jotz24}, $\Xi^{D^*}$ is defined on $\{(t,\epsilon)\mid (t,q_{A^*}(\epsilon))\in\Omega\}$ by 
\[ (t,\epsilon)\mapsto \epsilon\circ \Xi_{-t}.
\]
A standard computation shows that the flow of $\widehat{[X,\cdot]}\in\mx^l(TM)$ is defined on 
$\{(t,v)\mid (t,p_M(v))\in\Omega\}$ by 
\[ (t,v)\mapsto T\xi_t v.
\]
As a consequence, for all $t\in\mathbb R$, $p\in M_t$ and $a\in\Gamma(A)$
\[\rho\left(a\an{\xi_t(p)}\right)=\Xi^{[X,\cdot]}_t\left( \rho\left(\Xi_{-t}\left(a\an{\xi_t(p)}\right)\right)\right)=T_p\xi_t(\rho(\Xi_t^\star(a)\an{p})),
\]
which is the same as
\[(\xi_t^*(\rho(a))\an{p}=T_{\xi_t(p)}\xi_{-t}\left(\rho\left(a\an{\xi_t(p)}\right)\right)=\rho(\Xi_t^\star(a)\an{p}). \qedhere
\]
\end{proof}

\begin{proof}[Proof of Proposition \ref{invariance_a}]
It suffices to prove that for fixed $a,b\in\Gamma(A)$ and $p\in M$, the map $t\mapsto \Xi_{-t}^\star\left[\Xi_t^\star a,\Xi_t^\star b\right]\an{p}$ is constant, since it must then equal $[a,b]\an{p}$, its value at $t=0$, and the identity $\Xi^\star_t[a,b]=\left[\Xi_t^\star a,\Xi_t^\star b\right]$ then follows for all $t$.

First take a local frame $(a_1,\ldots, a_r)$ of $A$, on an open set $U\subseteq M$, and let $c_{ij}^k\in C^\infty(U)$ be the structure constants of the Lie algebroid in this frame, i.e.
\[ [a_i,a_j]=\sum_{k=1}^r c_{ij}^k a_k
\]
for $i,j=1,\ldots, r$. Define $f_{ik}\in C^\infty(\Omega\cap(\mathbb R\times U))$, for $i,k=1,\ldots, r$ such that for all $t$
\[ \Xi_t^\star a_i=\sum_{k=1}^r f_{ik}(t,\cdot)a_k
\]
on $M_t\cap U$. Then, as usual
\begin{equation*}
\begin{split}
\left[ \Xi_t^\star a_i, \Xi_t^\star a_j\right]&=\sum_{k,l=1}^r\sum_{n=1}^rf_{ik}(t,\cdot)f_{jl}(t,\cdot)c_{kl}^na_n+\sum_{k,l=1}^rf_{ik}(t,\cdot)\rho(a_k)(f_{jl}(t,\cdot))a_l\\
&\qquad -\sum_{k,l=1}^rf_{jl}(t,\cdot)\rho(a_l)(f_{ik}(t,\cdot))a_k,
\end{split}
\end{equation*}
which shows that\footnote{This equation would be immediate if the Lie bracket on sections of $A$ was defined pointwise. But since it is defined locally, the usual argument does not work in a straightforward manner since the Lie bracket cannot be 'exchanged' with the derivative without further checking.}
\[ \frac{d}{dt}\left[ \Xi_t^\star a_i, \Xi_t^\star a_j\right]=\left[\frac{d}{dt}\Xi_t^\star a_i, \Xi_t^\star a_j\right]+\left[\Xi_t^\star a_i, \frac{d}{dt}\Xi_t^\star a_j\right].
\]
Together with Lemma \ref{invariance_anchor_LA_derivations} this implies that for $f\in C^\infty (U)$ and $i,j\in\{1,\ldots, r\}$
\begin{equation*}
\begin{split}
&\frac{d}{dt}\left[ \Xi_t^\star a_i, \Xi_t^\star (fa_j)\right]=\frac{d}{dt}\left[ \Xi_t^\star a_i, \xi_t^*f\cdot\Xi_t^\star a_j\right]\\
&=\frac{d}{dt}\Bigl(\rho\left( \Xi_t^\star a_i\right)(\xi_t^*f)\cdot\Xi_t^\star a_j+ \xi_t^*f\cdot \left[ \Xi_t^\star a_i, \Xi_t^\star a_j\right]\Bigr)\\
&=\frac{d}{dt}\Bigl(\xi_t^*(\rho(a_i))(\xi_t^*f)\cdot\Xi_t^\star a_j+ \xi_t^*f\cdot \left[ \Xi_t^\star a_i, \Xi_t^\star a_j\right]\Bigr)\\
&=\frac{d}{dt}\Bigl(\xi_t^*(\rho(a_i)(f))\cdot\Xi_t^\star a_j+ \xi_t^*f\cdot \left[ \Xi_t^\star a_i, \Xi_t^\star a_j\right]\Bigr)\\
&=\xi^*_t(X(\rho(a_i)(f)))\cdot \Xi_t^\star a_j+\xi_t^*(\rho(a_i)(f))\cdot\frac{d}{dt}\Xi_t^\star a_j +\xi_t^*(X(f))\cdot \left[ \Xi_t^\star a_i, \Xi_t^\star a_j\right]\\
&\qquad +\xi_t^*(f)\cdot \left(\left[ \frac{d}{dt}\Xi_t^\star a_i, \Xi_t^\star a_j\right]+\left[\Xi_t^\star a_i, \frac{d}{dt}\Xi_t^\star a_j\right]\right).
\end{split}
\end{equation*}
The second and the last term add up to $\left[ \Xi_t^\star a_i, \xi_t^*(f)\cdot\frac{d}{dt}\Xi_t^\star a_j\right]$ and the first one equals
$\xi^*_t(\rho(a_i)(X(f)))\cdot \Xi_t^\star a_j+\xi_t^*([X,\rho(a_i)](f))\cdot \Xi_t^\star a_j $. The first one of these two terms and 
the third term $\xi_t^*(X(f))\cdot \left[ \Xi_t^\star a_i, \Xi_t^\star a_j\right]$ in the sum above add up to 
\[\left[\Xi_t^\star a_i, \xi_t^*(X(f))\cdot \Xi_t^\star a_j\right].
\] 
Hence \begin{equation*}
\begin{split}
\frac{d}{dt}\left[ \Xi_t^\star a_i, \Xi_t^\star (fa_j)\right]&=\left[ \Xi_t^\star a_i, \xi_t^*(f)\frac{d}{dt}\Xi_t^\star a_j\right]+\left[\Xi_t^\star a_i, \xi_t^*(X(f))\cdot \Xi_t^\star a_j\right]\\
&\qquad +\xi_t^*([X,\rho(a_i)](f))\cdot \Xi_t^\star a_j+\xi_t^*(f)\cdot \left[ \frac{d}{dt}\Xi_t^\star a_i, \Xi_t^\star a_j\right]\\
&=\left[ \Xi_t^\star a_i, \frac{d}{dt}\left(\xi_t^*(f)\cdot\Xi_t^\star a_j\right)\right]
+\xi_t^*([X,\rho(a_i)](f))\cdot \Xi_t^\star a_j\\
&\qquad +\xi_t^*(f)\cdot \left[ \frac{d}{dt}\Xi_t^\star a_i, \Xi_t^\star a_j\right].
\end{split}
\end{equation*}
Next note that 
\begin{equation*}
\begin{split}
\xi_t^*([X,\rho(a_i)](f))&=\xi_t^*[X,\rho(a_i)](\xi_t^*f)=\left(\frac{d}{dt}\xi_t^*(\rho(a_i))\right)(\xi_t^*f)\\
&=\left(\frac{d}{dt}\rho(\Xi_t^\star a_i)\right)(\xi_t^*f)=\rho\left(\frac{d}{dt}\Xi_t^\star a_i\right)(\xi_t^*f).
\end{split}
\end{equation*}
Hence 
\begin{equation*}
\begin{split}
\frac{d}{dt}\left[ \Xi_t^\star a_i, \Xi_t^\star (fa_j)\right]
&=\left[ \Xi_t^\star a_i, \frac{d}{dt}\left(\xi_t^*(f)\cdot\Xi_t^\star a_j\right)\right]
+\left[ \frac{d}{dt}\Xi_t^\star a_i, \xi_t^*(f)\cdot \Xi_t^\star a_j\right]\\
&=\left[ \Xi_t^\star a_i, \frac{d}{dt}\Xi_t^\star(f\cdot a_j)\right]
+\left[ \frac{d}{dt}\Xi_t^\star a_i, \Xi_t^\star(f\cdot a_j)\right].
\end{split}
\end{equation*}
This lengthy computation shows that 
\[ \frac{d}{dt}\left[\Xi_t^\star a,\Xi_t^\star b\right]=\left[\frac{d}{dt}\Xi_t^\star a,\Xi_t^\star b\right]+\left[\Xi_t^\star a,\frac{d}{dt}\Xi_t^\star b\right]
\]
for all $a,b\in\Gamma(A)$.

\medskip
So consider $a,b\in\Gamma(A)$ and compute 
\begin{equation*}
\begin{split}
\frac{d}{dt}\left(\Xi^\star_{-t}\left[\Xi_t^\star a, \Xi_t^\star b\right]\right)&=-\Xi^\star_{-t}D\left[\Xi_t^\star a, \Xi_t^\star b\right]
+\Xi^\star_{-t}\left[D\Xi_t^\star a, \Xi_t^\star b\right]+\Xi^\star_{-t}\left[\Xi_t^\star a, D\Xi_t^\star b\right]\\
&=\Xi^\star_{-t}\left(-D\left[\Xi_t^\star a, \Xi_t^\star b\right]
+\left[D\Xi_t^\star a, \Xi_t^\star b\right]+\left[\Xi_t^\star a, D\Xi_t^\star b\right]\right)=0.
\end{split}
\end{equation*}
As explained above, this shows the claim.
\end{proof}

Finally, Proposition \ref{invariance_a} has the following corollary.
\begin{corollary}\label{cor_semi_direct}
Let $A\to M$ be a Lie algebroid with anchor $\rho$ and let $F\to M$ be a smooth vector bundle. Assume that $\nabla\colon \Gamma(A)\times\Gamma(F)\to\Gamma(F)$ is a flat $A$-connection on $F$. Choose a smooth section $a\in\Gamma(A)$
and let $\xi\colon \Omega\to M$ be the flow of the vector field $\rho(a)$. Let 
\[
\Xi\colon \widetilde{\Omega}^F:=\{(\lambda,e)\in \mathbb R\times F\mid (\lambda,q_F(e))\in\Omega\}\to F\]
 be the flow of $\widehat{\nabla_a}\in\mx^l(F)$, and let 
 \[\Psi\colon \widetilde{\Omega}^A:=\{(\lambda,b)\in \mathbb R\times A\mid (\lambda,q_A(b))\in\Omega\}\to A\]
  be the flow of $\widehat{[a,\cdot]}\in\mx^l(A)$.
Then for all $\lambda\in\mathbb R$, $e\in\Gamma(F)$ and all $b\in\Gamma(A)$ 
\[ \nabla_{\Psi_\lambda^\star b}(\Xi_\lambda^\star e)=\Xi_\lambda^\star (\nabla_be).
\]
\end{corollary}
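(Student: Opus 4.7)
The strategy is to reduce the identity to the flatness of $\nabla$ through a parallel-transport-style argument. Setting $b_\lambda := \Psi_\lambda^\star b$ and $e_\lambda := \Xi_\lambda^\star e$ on the open subsets $M_\lambda \subseteq M$ of Lemma \ref{lemma_linear_flow}, I would introduce the auxiliary family
\[ f(\lambda) := \Xi_{-\lambda}^\star\bigl(\nabla_{b_\lambda} e_\lambda\bigr). \]
The claim is equivalent to $f$ being locally constant in $\lambda$: indeed $f(0) = \nabla_b e$, and applying $\Xi_\lambda^\star$ to the equality $f(\lambda) = \nabla_b e$ gives exactly the desired identity.

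To see that $\frac{df}{d\lambda} \equiv 0$, the key step is to combine three differentiation formulas. From Lemma \ref{lemma_flat_sections} (applied to $\widehat{\nabla_a} \in \mx^l(F)$ and to $\widehat{[a,\cdot]} \in \mx^l(A)$) together with the one-parameter group property of both flows, one obtains
\[ \frac{d}{d\lambda} e_\lambda = \nabla_a e_\lambda, \qquad \frac{d}{d\lambda} b_\lambda = [a, b_\lambda], \qquad \frac{d}{d\lambda}\bigl(\Xi_{-\lambda}^\star h\bigr) = -\Xi_{-\lambda}^\star\bigl(\nabla_a h\bigr) \]
for any smooth section $h$ of $F$. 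Since $\nabla$ is $\mathbb{R}$-bilinear, the Leibniz rule gives
\[ \frac{d}{d\lambda}\bigl(\nabla_{b_\lambda} e_\lambda\bigr) = \nabla_{[a, b_\lambda]} e_\lambda + \nabla_{b_\lambda}\nabla_a e_\lambda. \]
Assembling these pieces,
\[ \frac{df}{d\lambda} = \Xi_{-\lambda}^\star\bigl(-\nabla_a\nabla_{b_\lambda} e_\lambda + \nabla_{[a, b_\lambda]} e_\lambda + \nabla_{b_\lambda}\nabla_a e_\lambda\bigr) = -\Xi_{-\lambda}^\star\, R_\nabla(a, b_\lambda)\, e_\lambda, \]
which vanishes identically by the assumed flatness of $\nabla$.

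The only subtlety to be addressed — but not a genuine obstacle — is that the sections $b_\lambda$ and $e_\lambda$ live on shrinking open subsets of $M$, so the differentiation must be performed pointwise at a fixed $p \in M$ for $\lambda$ in a neighborhood of $0$ in which $p$ lies in all the relevant flow domains $M_\lambda$ and $M_{-\lambda}$. Smoothness of both flows (guaranteed by Lemma \ref{lemma_linear_flow}) makes the map $\lambda \mapsto (b_\lambda(p), e_\lambda(p))$ jointly smooth near such $p$, so the manipulations above are legitimate, and the conclusion $f(\lambda) = f(0) = \nabla_b e$ then holds on the whole common domain of definition. Applying $\Xi_\lambda^\star$ delivers the corollary.
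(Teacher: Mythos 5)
Your argument is essentially correct, but it takes a genuinely different route from the paper. The paper does not differentiate $\nabla_{\Psi_\lambda^\star b}(\Xi_\lambda^\star e)$ directly: it forms the semidirect product Lie algebroid $A\oplus F$ with bracket $[(a_1,e_1),(a_2,e_2)]_\nabla=([a_1,a_2],\nabla_{a_1}e_2-\nabla_{a_2}e_1)$ and anchor $(a,e)\mapsto\rho(a)$ (whose Jacobi identity is exactly the flatness of $\nabla$), observes that the combined flow $\Pi_\lambda=(\Psi_\lambda,\Xi_\lambda)$ is the flow of the linear vector field of the derivation $[(a,0),\cdot]_\nabla$, and then invokes Proposition \ref{invariance_a} to conclude that $\Pi_\lambda$ is a Lie algebroid isomorphism; the identity drops out by reading off the $F$-component of $\Pi_\lambda^\star[(b,0),(0,e)]_\nabla$. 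Your proof amounts to unrolling Proposition \ref{invariance_a} by hand in this special case, with the curvature appearing where the paper's argument uses the Jacobi identity of the semidirect product; flatness enters at the same place in both. What the paper's packaging buys is that the one delicate analytic point is dealt with once and for all inside the proof of Proposition \ref{invariance_a}; what your version buys is that it is self-contained and makes the role of $R_\nabla$ explicit.

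The one step you should not wave through is the Leibniz rule
\[
\frac{d}{d\lambda}\bigl(\nabla_{b_\lambda}e_\lambda\bigr)=\nabla_{\tfrac{d}{d\lambda}b_\lambda}e_\lambda+\nabla_{b_\lambda}\tfrac{d}{d\lambda}e_\lambda .
\]
``$\mathbb{R}$-bilinearity'' is not enough here: $\nabla$ is a first-order differential operator in its $\Gamma(F)$-slot (and $C^\infty$-linear only in the $\Gamma(A)$-slot), so exchanging $\tfrac{d}{d\lambda}$ with $\nabla$ for a $\lambda$-family of sections requires an argument. This is precisely the pitfall the paper flags in a footnote in the proof of Proposition \ref{invariance_a} for the analogous identity $\tfrac{d}{dt}[\Xi_t^\star a_i,\Xi_t^\star a_j]=[\tfrac{d}{dt}\Xi_t^\star a_i,\Xi_t^\star a_j]+[\Xi_t^\star a_i,\tfrac{d}{dt}\Xi_t^\star a_j]$, and the fix is the same: write $b_\lambda=\sum_i g_i(\lambda,\cdot)a_i$ and $e_\lambda=\sum_j h_j(\lambda,\cdot)f_j$ in local frames, expand $\nabla_{b_\lambda}e_\lambda=\sum_{i,j}g_ih_j\nabla_{a_i}f_j+\sum_{i,j}g_i\,\rho(a_i)(h_j)\,f_j$, and use joint smoothness of the coefficient functions (so that $\partial_\lambda$ commutes with $\rho(a_i)$) to differentiate term by term. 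With that local-frame verification inserted, and with your correct use of the one-parameter group law $\Xi_{\lambda+s}^\star=\Xi_s^\star\circ\Xi_\lambda^\star$ to upgrade Lemma \ref{lemma_flat_sections} from $\lambda=0$ to all $\lambda$, your proof is complete.
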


\begin{proof}
Consider the smooth map 
\[ \Pi\colon \widetilde\Omega:=\{(\lambda, b,e)\in\mathbb R\times (A\oplus F)\mid (\lambda, q_A(b))\in\Omega\}\to A\oplus F,
\]
\[ \Pi(\lambda, b,e)=\left(\Psi_\lambda(b), \Xi_\lambda(e)\right).
\]
It obviously has the flow property and satisfies 
\[ \left.\frac{d}{d\lambda}\right\arrowvert_{\lambda=0} \Pi_\lambda(b,e)=\left( \widehat{[a,\cdot]}(b), \widehat{\nabla_a}(e)\right).
\]
Hence $\Pi$ is the flow of the vector field $\widehat{D_a}$ on $A\oplus F$, where $D_a$ is the derivation of $A\oplus F$ with symbol $\rho(a)$ given by 
\[ D_a(b,e)=([a,b],\nabla_ae)=[(a,0), (b,e)]_\nabla
\]
for all $(b,e)\in\Gamma(A\oplus F)$.
Here, $[\cdot\,,\cdot]_\nabla$ is the Lie algebroid bracket on $A\oplus F$ defined by the semi-direct product of $A$ with its representation $\nabla$ on $F$:
\[ [(a_1,e_1), (a_2,e_2)]_\nabla=([a_1,a_2], \nabla_{a_1}e_2-\nabla_{a_2}e_1)
\]
and with anchor $\rho_{A\oplus F}\colon A\oplus F\to TM$, $\rho_{A\oplus F}(a,e)=\rho(a)$.
Note that the Jacobi identity for this bracket is equivalent to the Jacobi identity of the Lie algebroid $A$ and the flatness of $\nabla$.

Assume for simplicity that $\rho(a)$ is complete.
By Proposition \ref{invariance_a}, the flow $\Pi$ of $\widehat{D_a}$ is hence by isomorphisms of the Lie algebroid $A\oplus F$. As a consequence 
\begin{equation*}
\begin{split}
\left(0, \nabla_{\Psi_\lambda^\star b}(\Xi_\lambda^\star e)\right)&=\left[(0,\Psi_\lambda^\star b), (\Xi_\lambda^\star e, 0)\right]_\nabla
=\left[\Pi^\star_\lambda(b,0), \Pi^\star_\lambda(0,e)\right]_\nabla\\
&=\Pi_\lambda^\star[(b,0), (0, e)]_\nabla=\Pi_\lambda^\star(0,\nabla_be)=\left(0,\Xi_\lambda^\star(\nabla_be)\right)
\end{split}
\end{equation*}
for all $\lambda\in\mathbb R$.
\end{proof}

\subsection{Proof of Theorem \ref{th:hi}. }\label{ap:A}This section finishes the proof of Theorem \ref{th:hi}.
\medskip

Let $I\subseteq \mathbb R$ be an open interval and let $A\to M$ be a Lie algebroid.
	Let  $\nabla$ be a flat $(TI \times A)$-connection on a smooth vector bundle $F \to I\times M$. For fixed $s\in I$ consider the Lie algebroid morphism
	\[ \mathcal{I}_s\colon A\to TI\times A, \qquad a_m\mapsto (0_{s}, a_m)
	\]
	over $\iota_s\colon M\to I\times M$, $m\mapsto (s,m)$.
The connection $\nabla$ defines for each $s\in I$ the flat $A$-connection \[\mathcal{I}_s^*\nabla\colon \Gamma(A)\times \Gamma(\iota_s^!F)\to \Gamma(\iota_s^!F), 
	\]
\[ (\mathcal{I}_s^*\nabla)_{a_m}(\iota_s^!e)=\nabla_{(0_s,a_m)}e \in F\arrowvert_{(s,m)}=(\iota_s^!F)\arrowvert_m
\]
for $a_m\in A$ and $e\in\Gamma(F)$.
The proof consists in several steps.
	
	\medskip
	\subsubsection{Step 1: Construction of a gauge equivalence from $\mathcal I_t^*\nabla$ to $\mathcal I_{s}^*\nabla$ for $s,t\in I$.}\label{step1}
	The first step of the proof is the construction of a gauge equivalence $\theta_{t,s}\colon \iota_t^!F\to\iota_s^!F$ from $\mathcal I_t^*\nabla$ to $\mathcal I_{s}^*\nabla$, which induces an isomorphism in cohomology
		\[ (\theta_{t,s})_*\colon H^\bullet_{\mathcal I_t^*\nabla}(A,\iota_t^!F)\to  H^\bullet_{\mathcal I_s^*\nabla}(A,\iota_s^!F), \qquad [\omega]\mapsto [\theta_{t,s}\circ \omega],
		\]
		for all $s,t \in I$.
		The construction uses three linear flows satisfying the hypothesis of Corollary \ref{cor_semi_direct}: the flow $\Psi$ on $A$ of $\widehat{[\partial_t\times 0, \cdot]}\in\mx^l(TI\times A)$, the flow $\Xi$ of $\widehat{\nabla_{X}}\in\mx^l(F)$ on $F$ and the smooth flow $\xi$ of $\partial_t\times 0\in \mx(I\times M)$ on $I\times M$. 
	
	\medskip
	First consider the section $X=\partial_t\times 0$ of $TI\times A\to I\times M$.
	Its anchor is $\partial_t\times 0\in \mx(I\times M)$ with the flow \[\xi\colon \Omega:=\{(\lambda,s,m)\in \mathbb R\times I\times M\mid \lambda+s\in I\}\to I\times M,\] \[(\lambda, (t,m))\mapsto (t+\lambda, m).\]
	By construction, and using Lemma \ref{lemma_flat_sections}, the flow \[\Psi\colon \Omega^A:=\{(\lambda, v_s, a_m)\in \mathbb R\times TI\times A\mid \lambda+s\in I\}\to TI\times A\] of $\widehat{[X, \cdot]}\in\mx^l(TI\times A)$
	satisfies \begin{equation}\label{inv_psi_a}
	\Psi_\lambda^\star(0, a)=(0,a)
	\end{equation}
	on $((I-\lambda)\cap I)\times M$ for all $\lambda\in\mathbb R$ and all $a\in\Gamma(A)$, 
	since $[X, (0,a)]=0$ for all $a\in\Gamma(A)$, where $(0,a)$ is the section of $TI\times A\to I\times M$ given by $(s,m)\mapsto (0_s, a(m))$. 
	Hence $\Psi$ is given by
	\[ \Psi(\lambda, r\partial_t\arrowvert_s,a_m)=(r\partial_t\arrowvert_{s+\lambda}, a_m)
	\]
	for $(\lambda, r\partial_t\arrowvert_s,a_m)\in \Omega^A$.

Consider the flow  \[\Xi\colon \tilde \Omega=\{(\lambda, e_{(s,m)})\in \mathbb R\times F\mid \lambda+s\in I\}\to F\] of $\widehat{\nabla_{X}}\in\mx^l(F)$. 
For all $s, t\in I$, set 
	\[ \theta_{t,s}:=\Xi_{s-t}\arrowvert_{\iota_t^!F}\colon \iota_t^!F\to\iota_{s}^! F.
	\]
	Then the map $\theta_{t,s}$ is an isomorphism of vector bundles over $M$, since $\Xi_{s-t}$ is an isomorphism of vector bundles over $\xi_{s-t}$.

\medskip

Choose an arbitrary time $t\in I$. For each section $e\in\Gamma_M(\iota_t^!F)$ define 
$\tilde e\in\Gamma(F)$ by 
\[ \tilde e(s,m)=\Xi(s-t, e(m))=\theta_{t,s}(e(m)).
\]
Then for each $\lambda \in\mathbb R$, $s\in (I-\lambda)\cap I$ and $m\in M$
\[ (\Xi_\lambda^\star\tilde e)(s,m)=\Xi_{-\lambda}(\tilde e(s+\lambda, m))=\Xi(s+\lambda-t-\lambda, e(m))=\Xi(s-t, e(m))=\tilde e(s,m).
\]
This shows that \begin{equation}\label{eq_tilde_e}
\tilde e=\Xi_\lambda^\star\tilde e
\end{equation}
on $(I\cap(I-\lambda))\times M$. 

In addition, for all $s\in I$ and $m\in M$, compute 
\begin{equation}\label{eq:731}
(\iota_s^!\tilde e)(m)=\tilde e(s,m)=\Xi(s-t, e(m))=\theta_{t,s}(e(m)).
\end{equation}
Hence $\iota_s^!\tilde e=\theta_{t,s}\circ e$ for all $s \in I$. In particular,
\begin{equation}\label{eq:18}
\iota_t^!\tilde e=e.
\end{equation}
\medskip

As a consequence of the considerations above, according to Corollary \ref{cor_semi_direct}, for all $a\in\Gamma(A)$, all $e\in\Gamma(\iota_t^! F)$ and all $\lambda\in \mathbb R$
	\[ \nabla_{(0,a)}\tilde e=\nabla_{\Psi_\lambda^\star (0,a)}\Xi^\star_\lambda \tilde e=\Xi^\star_\lambda(\nabla_{(0,a)}\tilde e)
	\]
	on $(I\cap(I-\lambda))\times M$.
	This implies for $(s,m)\in I\times M$
	\begin{equation*}
	\begin{split}
	(\mathcal I_s^*\nabla)_{a(m)}(\theta_{t,s}\circ e)&=(\mathcal I_s^*\nabla)_{a(m)}(\iota_s^!\tilde e)
	=\nabla_{(0_s,a(m))}\tilde e\\
	&=\Xi^\star_{t-s}(\nabla_{(0,a)}\tilde e)(s,m)=\Xi_{s-t}\left(\nabla_{(0_{t}, a(m))}\tilde e\right)\\
	&=\Xi_{s-t}\left((\mathcal I_{t}^*\nabla)_{a(m)}\iota^!_{t}\tilde e\right)=\Xi_{s-t}\left((\mathcal I_{t}^*\nabla)_{a(m)}e\right),
	\end{split}
	\end{equation*}
	for all $a\in \Gamma(A)$ and $e\in\Gamma(\iota_t^!F)$.
	This is equivalent to 
	\begin{equation}\label{eq:732}
	\theta_{s,t}\circ (\mathcal I_s^*\nabla)\circ \theta_{t,s}=\mathcal I_{t}^*\nabla.
	\end{equation}
	Hence $\theta_{t,s}$ is a gauge equivalence from $\mathcal I_t^*\nabla$ to $\mathcal I_{s}^*\nabla$.
	
%
	
\bigskip

\subsubsection{Step 2: Induced flat connections on the vector bundles $I\times \iota_t^!F\to I\times M$ and gauge equivalences between them.}\label{step2}
 One key idea is the fact that $\nabla$ is gauge equivalent to flat connections \[\nabla^r\colon \Gamma(TI\times A)\times \Gamma(I\times \iota_r^!F)\to \Gamma(I\times \iota_r^! F)\] 
 for all $r\in I$. This step uses the flow $\Xi$ to construct the isomorphism $F\simeq I\times \iota_r^!F$ and the induced connection on $I\times \iota_r^!F$ for all $r\in I$.
 
 \medskip
	Consider for each $t\in I$  the  map 
	\[ \Theta^t\colon  F\to I\times \iota_t^!F, \quad e \in F_{(r,m)}\mapsto (r,\Xi(t-r,e))=(r,\theta_{r,t}(e)).
	\]
	$\Theta^t$ is an isomorphism
	of vector bundles
	over the identity on $I\times M$.  Its inverse is the map
	\[ (\Theta^t)^{-1}\colon I\times \iota_t^!F\to F, \quad (r,e)\mapsto \Xi(r-t, e).
	\]
As a consequence  for $s, t\in I$, 
\begin{equation*}
\begin{split} \Theta^s\circ (\Theta^t)\inv (r,e)&=\Theta^s(\Xi(r-t, e))=(r, \Xi(s-r, \Xi(r-t, e)))\\
&=
(r, \Xi(s-t, e))=(r,\theta_{t,s}(e))
\end{split}
\end{equation*}
for all $r\in I$ and $e\in \iota_t^!F$. Denote this map by $\Theta_{t,s}\colon I\times \iota_t^!F\to I\times \iota_s^!F$,
\[\Theta_{t,s}=\Theta^s\circ (\Theta^t)\inv \colon (r,e)\mapsto (r, \theta_{t,s}e).
\]

	For each $t\in I$ the flat connection $\nabla$ is hence gauge equivalent to the flat connection $\nabla^t\colon \Gamma(TI\times A)\times \Gamma(I\times \iota_t^!F)\to \Gamma(I\times \iota_t^! F)$ defined by 
	\[ \nabla^t:=\Theta^t\circ\nabla\circ(\Theta^t)^{-1}
	\]
	and the cohomologies defined by $\nabla$ and $\nabla^t$ are isomorphic via
	\[ (\Theta^t)_*\colon H_\nabla(TI\times A, F)\to H_{\nabla^t}(TI \times A, I\times \iota_t^!F), \qquad [\omega]\mapsto [\Theta^t\circ \omega].
	\]
	By definition for $s,t\in I$ 
	\[\nabla^t=\Theta_{s,t}\circ \nabla^s\circ \Theta_{t,s}.
	\]
	Hence the cohomologies $H^\bullet_{\nabla^t}(TI\times A, I\times \iota_t^!F)$ and $H^\bullet_{\nabla^s}(TI\times A, I\times \iota_s^!F)$ are isomorphic via
	\[(\Theta_{t,s})_*\colon H^\bullet_{\nabla^t}(TI\times A, I\times \iota_t^!F)\to H^\bullet_{\nabla^s}(TI\times A, I\times \iota_s^!F), \qquad [\omega]\mapsto [\Theta_{t,s}\circ \omega].
	\]
	
This step has hence constructed the commutative diagrams of isomorphisms
\[\begin{tikzcd}
	&&{H_{\nabla}(TI\times A, F)} \\
	\\
	\\
	& {H_{\nabla^t}(TI\times A, I\times \iota_t^!F)} && {H_{\nabla^s}(TI\times A, I\times \iota_s^!F)} 
	\arrow["{(\Theta^t)_*}"{description}, from=1-3, to=4-2]
	\arrow["{(\Theta^s)_*}"{description}, from=1-3, to=4-4]
	\arrow["{(\Theta_{t,s})_*}"{description}, from=4-2, to=4-4]
\end{tikzcd}\]
for all $s,t\in I$.
		
	\bigskip
	
	\subsubsection{Step 3: Equality of $\nabla^t$ with $\operatorname{Pr}_A^*\mathcal{I}_t^*\nabla$ for all $t\in I$.}\label{step3}

	 For each $e \in \Gamma_M(\iota_t^!F)$, recall that the section $\tilde{e}\in\Gamma_M(F)$ defined by $\tilde e(r,m)=\Xi(r-t,e(m))$ for all $(r,m)\in I\times M$ satisfies $\tilde e=\Xi_\lambda^\star\tilde e$ for $\lambda \in (I\cap(I-\lambda))\times M$, see \eqref{eq_tilde_e}, and $\iota_t^!\tilde e=e$, see \eqref{eq:18}. Moreover, define the section 
		$\bar e=\pr_M^!e\in\Gamma_{I\times M}(I \times \iota_t^!F)$ by $\bar e(r,m)=(r, e(m))$, for all $(r,m)\in I\times M$. Then
		\[ \left((\Theta^t)^{-1}\circ \bar e\right)(r,m)=\left(\Theta^t\right)^{-1}(r,e(m))=\Xi(r-t, e(m))=\tilde e(r,m)
		\]
		and 
		\begin{equation*}
		\begin{split}
		(\operatorname{pr}_M^!\iota_t^!\tilde{e})(r,m)&=(r,m,\iota_t^! \tilde{e}(m))
		=(r,m,(m,\tilde e(t,m)))\\
		&=(r,m,(m,e(m)))=(r,e(m))= \bar{e}(r,m)
		\end{split}
		\end{equation*}
		for all $(r,m)\in I\times M$, where the fourth equation is the canonical isomorphism $\pr_M^!\iota_t^!F\simeq I\times \iota_t^!F\to I\times M$. For $e \in \Gamma(\iota_t^!F)$, $\alpha\in \R$, $(r,m) \in I \times M$ and $a_m \in A\an{m}$, compute, using Corollary \ref{cor_semi_direct} and again the notation $X:=\partial_t\times 0\in\Gamma_{I\times M}(TI\times A)$,
				\begin{equation*}
			\begin{split}
				\nabla^t_{(\alpha\cdot \partial_t \an{r},a_m)}\bar{e}&\,\,\,\,\,\,= \Theta^t( \nabla_{(\alpha \cdot \partial_t \an{r},a_m)}\tilde{e})
				=(r,\Xi(t-r, \alpha \cdot \nabla_{X(r,m)}\tilde{e}+ \nabla_{(0_r,a_m)}\tilde{e}))\\
&\,\,\,\,\,\,			= (r, \alpha\cdot \Xi(t-r,\nabla_{X}\tilde{e}\an{(r,m)})+ \Xi(t-r,\nabla_{(0,a_m)}\tilde{e}\an{(r,m)}))\\
				&\,\,\,\,\,\,= \left(r,\alpha\cdot\Xi_{t-r}(\nabla_X\tilde{e} \an{(r-t+t,m)})  +
				\Xi_{t-r}(\nabla_{(0,a)}\tilde{e} \an{(r-t+t,m)})\right)\\&\,\,\,\,\,\,=
				\left(r,\alpha\cdot(\Xi^*_{r-t}(\nabla_X \tilde{e}))\an{(t,m)} + (\Xi_{r-t}^*(\nabla_{(0,a)} \tilde{e})) \an{(t,m)}\right)\\&
				\stackrel{\text{Cor.}\,\ref{cor_semi_direct}}{=} 
				\left(r,\alpha\cdot(\nabla_{\Psi^*_{r-t}X}(\Xi^*_{r-t}\tilde{e}))\an{(t,m)} + (\nabla_{\Psi^*_{r-t}(0,a)}(\Xi^*_{r-t}\tilde{e}))\an{(t,m)}\right)\\
				&\stackrel{\eqref{inv_psi_a},\eqref{eq_tilde_e}}{=} 
				\left(r,\alpha \cdot \nabla_X \tilde{e} \an{(t,m)} + \nabla_{(0,a)} \tilde{e} \an{(t,m)} \right)= (r, \nabla_{(\alpha\cdot \partial_t \an{t},a_m)} \tilde{e}).
			\end{split}
	    \end{equation*}
	  On the other hand,	    \begin{equation*}
	    	\begin{split}
	    		(\operatorname{Pr}_A^*(\mathcal I_t^*\nabla))_{(\alpha \cdot \partial_t \an{r}, a_m)} \bar e&=(\operatorname{Pr}_A^*(\mathcal I_t^*\nabla))_{(\alpha \cdot \partial_t \an{r}, a_m)}(\operatorname{pr}_M^!\iota_t^!\tilde{e})=(r,(\mathcal I_t^*\nabla)_{a_m}(\iota^!_t \tilde{e}))\\
	    		&= (r,\nabla_{(0_t,a_m)} \tilde{e}).
	    	\end{split}
	    \end{equation*}
	By Lemma \ref{lemma_flat_sections}  for all $(r,m)\in I\times M$
	    \begin{equation*}
	    	\begin{split}
	    		\nabla_{(\partial_t \an{r},0_m)}\tilde{e}=\left.\frac{d}{dt}\right\an{t=0} (\Xi_t^* \tilde{e}(r,m))\overset{\eqref{eq_tilde_e}
}{=}\left.\frac{d}{dt}\right\an{t=0}\tilde{e}(r,m)=0,
	        \end{split}
	    \end{equation*}
	    and so
	    \begin{equation*}
	    	\begin{split}
	    		\nabla_{(\alpha\cdot \partial_t \an{r},a_m)}\tilde{e}= \alpha \cdot \nabla_{(\partial_t \an{r},0_m)}\tilde{e}+ \nabla_{(0_r,a_m)}\tilde{e}=\nabla_{(0_r,a_m)}\tilde{e},
	    	\end{split}
	    \end{equation*}
	    which shows in particular 
	    \[\nabla^t_{(\alpha\cdot \partial_t \an{r},a_m)}\bar e=(r, \nabla_{(\alpha\cdot \partial_t \an{t},a_m)} \tilde{e})=(r, \nabla_{(0_t,a_m)}\tilde{e})=
	    (\operatorname{Pr}_A^*(\mathcal I_t^*\nabla))_{(\alpha \cdot \partial_t \an{r}, a_m)} \bar e.
	    \]
	Since the sections $\bar e\in\Gamma_{I\times M}(I\times\iota_t^!F)$ defined by sections $e\in\Gamma_M(\iota_t^!F)$ generate $\Gamma_{I\times M}(I\times\iota_t^!F)$ as a $C^\infty(I\times M)$-module, this proves the equality
	    \[\nabla^t=\operatorname{Pr}_A^*(\mathcal I_t^*\nabla)
	    \]
	   and therefore 
\[\begin{tikzcd}
	& {H_{\nabla^t}(TI\times A, I\times \iota_t^!F)} && {H_{\nabla^s}(TI\times A, I\times \iota_s^!F)} \\
	& {H_{\Ppr_A^*\mathcal I_t^*\nabla}(TI\times A, I\times \iota_t^!F)} && {H_{\Ppr_A^*\mathcal I_s^*\nabla}(TI\times A, I\times \iota_s^!F)}
	\arrow["{(\Theta_{t,s})_*}"{description}, from=1-2, to=1-4]
	\arrow[no head, from=1-2, to=2-2]
	\arrow[shift left, no head, from=1-2, to=2-2]
	\arrow[no head, from=1-4, to=2-4]
	\arrow[shift left, no head, from=1-4, to=2-4]
	\arrow["{(\Theta_{t,s})_*}"{description}, from=2-2, to=2-4]
\end{tikzcd}\]
for all $s,t\in I$.
This also shows that 
\[\begin{tikzcd}
	&& {H_{\nabla}(TI\times A, F)} && {H_{\nabla^t}(TI\times A, I\times \iota_t^!F)} \\
	\\
	&& {H_{\mathcal I_t^*\nabla}(A,\iota_t^!F)} && {H_{\Ppr_A^*\mathcal I_t^*\nabla}(TI\times A, I\times \iota_t^!F)}
	\arrow["{(\Theta^t)_*}"{description}, from=1-3, to=1-5]
	\arrow["{\overline{\mathcal I_t^*}}"{description}, from=1-3, to=3-3]
	\arrow[no head, from=1-5, to=3-5]
	\arrow[shift left, no head, from=1-5, to=3-5]
	\arrow["{\overline{\Ppr_A^*}}"{description}, hook, two heads, from=3-3, to=3-5]
\end{tikzcd}\]
commutes for all $t\in I$.
	    \bigskip
	    
	    \subsubsection{Step 4: Commutativity of the diagram induced from Theorem \ref{th:4}.}
	    This step checks the isomorphisms in the obtained diagram 
\begin{equation}\label{big_diag}{\footnotesize
\begin{tikzcd}
	&& {H_{\nabla}(TI\times A, F)} \\
	\\
	\\
	\\
	& {H_{\nabla^t}(TI\times A, I\times \iota_t^!F)} && {H_{\nabla^s}(TI\times A, I\times \iota_s^!F)} \\
	& {H_{\Ppr_A^*\mathcal I_t^*\nabla}(TI\times A, I\times \iota_t^!F)} && {H_{\Ppr_A^*\mathcal I_s^*\nabla}(TI\times A, I\times \iota_s^!F)} \\
	 {H_{\mathcal I_t^*\nabla}(A,\iota_t^!F)} &&&&& {H_{\mathcal I_s^*\nabla}(A,\iota_s^!F)}
	\arrow["{(\Theta^t)_*}"{description}, from=1-3, to=5-2]
	\arrow["{(\Theta^s)_*}"{description}, from=1-3, to=5-4]
	\arrow["{\overline{\mathcal I_t^*}}"{description}, curve={height=30pt}, from=1-3, to=7-1]
	\arrow["{\overline{\mathcal I_s^*}}"{description}, curve={height=-30pt}, from=1-3, to=7-6]
	\arrow["{(\Theta_{t,s})_*}"{description}, hook, two heads, from=5-2, to=5-4]
	\arrow[no head, from=5-2, to=6-2]
	\arrow[shift left, no head, from=5-2, to=6-2]
	\arrow[no head, from=5-4, to=6-4]
	\arrow[shift left, no head, from=5-4, to=6-4]
	\arrow["{(\Theta_{t,s})_*}"{description}, hook, two heads, from=6-2, to=6-4]
	\arrow["{\overline{\Ppr_A^*}}"{description}, hook, two heads, from=7-1, to=6-2]
	\arrow["{(\theta_{t,s})_*}"{description}, shift right=2, curve={height=30pt}, hook, two heads, from=7-1, to=7-6]
	\arrow["{\overline{\Ppr_A^*}}"{description}, hook, two heads, from=7-6, to=6-4]
\end{tikzcd}}\end{equation}
\noindent and its commutativity, where the bottom arrows $\overline{\Ppr_A^*}$ are the isomorphisms established in Theorem \ref{th:4}.
By Step 1 (\S\ref{step1}) the bottom arrow is an isomorphism. By Step 2 (\S\ref{step2}), the top central triangle commutes, and by Step 3 (\S\ref{step3}) the central square and the left and right sides commute.
	   	    
		Hence it remains to prove that the bottom central square 
\[\begin{tikzcd}
	{H_{\Ppr_A^*\mathcal I_t^*\nabla}(TI\times A, I\times \iota_t^!E)} && {H_{\Ppr_A^*\mathcal I_s^*\nabla}(TI\times A, I\times \iota_s^!E)} \\
	\\
	{H_{\mathcal I_t^*\nabla}(A,\iota_t^!E)} && {H_{\mathcal I_s^*\nabla}(A,\iota_s^!E)}
	\arrow["{(\Theta_{t,s})_*}"{description}, from=1-1, to=1-3]
	\arrow["{\overline{\Ppr_A^*}}"{description}, hook', two heads, from=3-1, to=1-1]
	\arrow["{(\theta_{t,s})_*}"{description}, hook, two heads, from=3-1, to=3-3]
	\arrow["{\overline{\Ppr_A^*}}"{description}, hook', two heads, from=3-3, to=1-3]
\end{tikzcd}\]
        This commutativity happens at the level of forms: Take $\omega\in\Omega^k(A, \iota_t^!F)$ and compute for $(r,m)\in I\times M$ and $(v_1,a_1), \ldots, (v_k,a_k)\in T_rI\times A_m$
        \begin{equation*}
        	\begin{split}
        		&(\Ppr_A^*(\theta_{t,s}\circ \omega))_{(r,m)}((v_1,a_1),\ldots, (v_k,a_k))=(r, \theta_{t,s}(\omega_m(a_1,\ldots,a_k))\\
        		&=\Theta_{t,s}(r, \omega_m(a_1,\ldots,a_k))=\Theta_{t,s}\left((\Ppr_A^*\omega)_{(r,m)}((v_1,a_1),\ldots, (v_k,a_k))\right)\\
        		&=(\Theta_{t,s}\circ(\Ppr_A^*\omega))_{(r,m)}((v_1,a_1),\ldots, (v_k,a_k)).
        	\end{split}
        \end{equation*}

%
%
\medskip
	
	So the commutativity of \eqref{big_diag} is proved. It immediately implies that the maps $\overline{\mathcal{I}_{r}^*}\colon \tH^{\bullet}_{\nabla}(TI \times A, F) \to \tH^{\bullet}_{\mathcal{I}_r^*\nabla}(A,\iota^!_rF)$ defined by the inclusion maps $\mathcal{I}_r\colon A \hookrightarrow TI \times A$ are isomorphisms for all $r\in I$ and that  the exterior triangle
\[\begin{tikzcd}
	& {H_{\nabla}(TI\times A, F)} \\
	 {H_{\mathcal I_t^*\nabla}(A,\iota_t^!F)} && {H_{\mathcal I_s^*\nabla}(A,\iota_s^!F)}
	\arrow["{\overline{\mathcal I_t^*}}"{description}, from=1-2, to=2-1]
	\arrow["{\overline{\mathcal I_s^*}}"{description}, from=1-2, to=2-3]
	\arrow["{(\theta_{t,s})_*}"{description}, hook, two heads, from=2-1, to=2-3]
\end{tikzcd}\]
commutes for all $s,t\in I$.

        \bigskip

\subsubsection{Step 5: Application of step 4 to the pullback under the homotopy.}
The last step of the proof is a standard argument. Consider a LA-homotopy $\Phi\colon TI\times A\to B$ over $\phi\colon I\times M\to N$ and a flat $B$-connection on $E\to N$. Apply the results above to the flat $(TI\times A)$-connection $\Phi^*\nabla$ on $\phi^!E$ and use the commutative diagrams 
\[\begin{tikzcd}
	{H_\nabla(B,E)} \\
	\\
	{H_{\Phi^*\nabla}(TI\times A, \phi^!E)} &&&&&&& {H_{\Phi_r^*\nabla}(A,\phi_r^!E)}
	\arrow["{\overline{\Phi^*}}"{description}, from=1-1, to=3-1]
	\arrow["{\overline{\Phi_r^*}}"', from=1-1, to=3-8]
	\arrow["{\overline{\mathcal{I}_{r}^*}}"{description}, from=3-1, to=3-8]
\end{tikzcd}\]
for $r=t,s\in I$ in order to obtain the following commutative diagram and deduce from it that $(\theta_{t,s})_*\circ \overline{\Phi_t^*}=\overline{\Phi_s^*}$.
The statement of Theorem \ref{th:hi} is then obtained by setting $t=0$, $s=1$ and $\theta:=\theta_{0,1}$.
\[\begin{tikzcd}
	& {H_\nabla(B,E)} \\
	\\
& {H_{\Phi^*\nabla}(TI\times A,\varphi^!E)} \\
	 {H_{\Phi_t^*\nabla}(A,\varphi_t^!E)} && {H_{\Phi_s^*\nabla}(A,\varphi_s^!F)} \\
	\\
	\\
	\\
	{}
	\arrow["{\overline{\Phi^*}}"{description}, from=1-2, to=3-2]
	\arrow["{\overline{\Phi^*_t}}"{description}, curve={height=18pt}, from=1-2, to=4-1]
	\arrow["{\overline{\Phi^*_s}}"{description}, curve={height=-18pt}, from=1-2, to=4-3]
	\arrow["{\overline{\mathcal I_t^*}}"{description}, from=3-2, to=4-1]
	\arrow["{\overline{\mathcal I_s^*}}"{description}, from=3-2, to=4-3]
	\arrow["{(\theta_{t,s})_*}"{description}, hook, two heads, from=4-1, to=4-3]
\end{tikzcd}\]


\def\cprime{$'$} \def\polhk#1{\setbox0=\hbox{#1}{\ooalign{\hidewidth
			\lower1.5ex\hbox{`}\hidewidth\crcr\unhbox0}}} \def\cprime{$'$}
\def\cprime{$'$}

\end{document}